
\documentclass[12pt]{article}
\usepackage{amsfonts}
\usepackage{amsmath}
\usepackage{array}
\usepackage{graphicx}
\usepackage{multirow}
\usepackage{xcolor}
\usepackage{float}
\usepackage{appendix}
\usepackage{algorithm}
\usepackage{algpseudocode}

\setcounter{MaxMatrixCols}{10}

\newtheorem{theorem}{Theorem}

\newtheorem{axiom}[theorem]{Axiom}
\newtheorem{case}{Case}

\newtheorem{conjecture}[theorem]{Conjecture}
\newtheorem{corollary}[theorem]{Corollary}

\newtheorem{definition}{Definition}
\newtheorem{example}[theorem]{Example}
\newtheorem{exercise}[theorem]{Exercise}
\newtheorem{lemma}[theorem]{Lemma}

\newtheorem{proposition}[theorem]{Proposition}
\newtheorem{remark}{Remark}

\newenvironment{proof}[1][Proof]{\noindent\textbf{#1.} }{\ \rule{0.5em}{0.5em}}
\textwidth 160mm \textheight 200mm \oddsidemargin -4mm
\algrenewcommand\algorithmicrequire{\textbf{Input:}}
\algrenewcommand\algorithmicensure{\textbf{Output:}}
\typeout{TCILATEX Macros for Scientific Word 3.0 <19 May 1997>.}
\typeout{NOTICE:  This macro file is NOT proprietary and may be
freely copied and distributed.}
\makeatletter

\textheight=20cm
\textwidth=17cm 
\oddsidemargin=0cm

\providecommand{\BOXEDSPECIAL}[4]{\hbox to #2{\raise #3\hbox to #2{\null #1\hfil}}}

\newcount\@hour\newcount\@minute\chardef\@x10\chardef\@xv60
\def\tcitime{
\def\@time{%
  \@minute\time\@hour\@minute\divide\@hour\@xv
  \ifnum\@hour<\@x 0\fi\the\@hour:%
  \multiply\@hour\@xv\advance\@minute-\@hour
  \ifnum\@minute<\@x 0\fi\the\@minute
  }}%

\@ifundefined{hyperref}{}{}

\@ifundefined{qExtProgCall}{\def\qExtProgCall#1#2#3#4#5#6{\relax}}{}
%
%
%
%
\def\QCTOpt[#1]#2{%
  \def\QCTOptB{#1}
  \def\QCTOptA{#2}
}
\def\QCTNOpt#1{%
  \def\QCTOptA{#1}
  \let\QCTOptB\empty
}
\def\Qct{%
  \@ifnextchar[{%
    \QCTOpt}{\QCTNOpt}
}
\def\QCBOpt[#1]#2{%
  \def\QCBOptB{#1}
  \def\QCBOptA{#2}
}
\def\QCBNOpt#1{%
  \def\QCBOptA{#1}
  \let\QCBOptB\empty
}
\def\Qcb{%
  \@ifnextchar[{%
    \QCBOpt}{\QCBNOpt}
}
\def\PrepCapArgs{%
  \ifx\QCBOptA\empty
    \ifx\QCTOptA\empty
      {}%
    \else
      \ifx\QCTOptB\empty
        {\QCTOptA}%
      \else
        [\QCTOptB]{\QCTOptA}%
      \fi
    \fi
  \else
    \ifx\QCBOptA\empty
      {}%
    \else
      \ifx\QCBOptB\empty
        {\QCBOptA}%
      \else
        [\QCBOptB]{\QCBOptA}%
      \fi
    \fi
  \fi
}
\newcount\GRAPHICSTYPE
\GRAPHICSTYPE=\z@
\def\GRAPHICSPS#1{%
 \ifcase\GRAPHICSTYPE
   \special{ps: #1}%
 \or
   \special{language "PS", include "#1"}%
 \fi
}%
%
%
%
\def\graffile#1#2#3#4#5{%
    \bgroup
    \leavevmode
    \@ifundefined{bbl@deactivate}{\def~{\string~}}{\activesoff}
    \raise -#4 \BOXTHEFRAME{%
       \BOXEDSPECIAL{#1}{#2}{#3}{#5}}%
    \egroup
}%
%
\def\draftbox#1#2#3#4{%
 \leavevmode\raise -#4 \hbox{%
  \frame{\rlap{\protect\tiny #1}\hbox to #2%
   {\vrule height#3 width\z@ depth\z@\hfil}%
  }%
 }%
}%
\newcount\draft
\draft=\z@

\newif\ifwasdraft
\wasdraftfalse

\def\GRAPHIC#1#2#3#4#5{%
 \ifnum\draft=\@ne\draftbox{#2}{#3}{#4}{#5}%
  \else\graffile{#1}{#3}{#4}{#5}{#2}%
  \fi
 }%
\def\addtoLaTeXparams#1{%
    \edef\LaTeXparams{\LaTeXparams #1}}%
%

\newif\ifBoxFrame \BoxFramefalse
\newif\ifOverFrame \OverFramefalse
\newif\ifUnderFrame \UnderFramefalse

\def\BOXTHEFRAME#1{%
   \hbox{%
      \ifBoxFrame
         \frame{#1}%
      \else
         {#1}%
      \fi
   }%
}

\def\doFRAMEparams#1{\BoxFramefalse\OverFramefalse\UnderFramefalse\readFRAMEparams#1\end}%
\def\readFRAMEparams#1{%
 \ifx#1\end%
  \let\next=\relax
  \else
  \ifx#1i\dispkind=\z@\fi
  \ifx#1d\dispkind=\@ne\fi
  \ifx#1f\dispkind=\tw@\fi
  \ifx#1t\addtoLaTeXparams{t}\fi
  \ifx#1b\addtoLaTeXparams{b}\fi
  \ifx#1p\addtoLaTeXparams{p}\fi
  \ifx#1h\addtoLaTeXparams{h}\fi
  \ifx#1X\BoxFrametrue\fi
  \ifx#1O\OverFrametrue\fi
  \ifx#1U\UnderFrametrue\fi
  \ifx#1w
    \ifnum\draft=1\wasdrafttrue\else\wasdraftfalse\fi
    \draft=\@ne
  \fi
  \let\next=\readFRAMEparams
  \fi
 \next
 }%
%

\def\IFRAME#1#2#3#4#5#6{%
      \bgroup
      \let\QCTOptA\empty
      \let\QCTOptB\empty
      \let\QCBOptA\empty
      \let\QCBOptB\empty
      #6%
      \parindent=0pt%
      \leftskip=0pt
      \rightskip=0pt
      \setbox0 = \hbox{\QCBOptA}%
      \@tempdima = #1\relax
      \ifOverFrame
          \typeout{This is not implemented yet}%
          \show\HELP
      \else
         \ifdim\wd0>\@tempdima
            \advance\@tempdima by \@tempdima
            \ifdim\wd0 >\@tempdima
               \textwidth=\@tempdima
               \setbox1 =\vbox{%
                  \noindent\hbox to \@tempdima{\hfill\GRAPHIC{#5}{#4}{#1}{#2}{#3}\hfill}\\%
                  \noindent\hbox to \@tempdima{\parbox[b]{\@tempdima}{\QCBOptA}}%
               }%
               \wd1=\@tempdima
            \else
               \textwidth=\wd0
               \setbox1 =\vbox{%
                 \noindent\hbox to \wd0{\hfill\GRAPHIC{#5}{#4}{#1}{#2}{#3}\hfill}\\%
                 \noindent\hbox{\QCBOptA}%
               }%
               \wd1=\wd0
            \fi
         \else
            \ifdim\wd0>0pt
              \hsize=\@tempdima
              \setbox1 =\vbox{%
                \unskip\GRAPHIC{#5}{#4}{#1}{#2}{0pt}%
                \break
                \unskip\hbox to \@tempdima{\hfill \QCBOptA\hfill}%
              }%
              \wd1=\@tempdima
           \else
              \hsize=\@tempdima
              \setbox1 =\vbox{%
                \unskip\GRAPHIC{#5}{#4}{#1}{#2}{0pt}%
              }%
              \wd1=\@tempdima
           \fi
         \fi
         \@tempdimb=\ht1
         \advance\@tempdimb by \dp1
         \advance\@tempdimb by -#2%
         \advance\@tempdimb by #3%
         \leavevmode
         \raise -\@tempdimb \hbox{\box1}%
      \fi
      \egroup%
}%
%
\def\DFRAME#1#2#3#4#5{%
 \begin{center}
     \let\QCTOptA\empty
     \let\QCTOptB\empty
     \let\QCBOptA\empty
     \let\QCBOptB\empty
     \ifOverFrame
        #5\QCTOptA\par
     \fi
     \GRAPHIC{#4}{#3}{#1}{#2}{\z@}
     \ifUnderFrame
        \nobreak\par\nobreak#5\QCBOptA
     \fi
 \end{center}%
 }%
%
\def\FFRAME#1#2#3#4#5#6#7{%
 \begin{figure}[#1]%
  \let\QCTOptA\empty
  \let\QCTOptB\empty
  \let\QCBOptA\empty
  \let\QCBOptB\empty
  \ifOverFrame
    #4
    \ifx\QCTOptA\empty
    \else
      \ifx\QCTOptB\empty
        \caption{\QCTOptA}%
      \else
        \caption[\QCTOptB]{\QCTOptA}%
      \fi
    \fi
    \ifUnderFrame\else
      \label{#5}%
    \fi
  \else
    \UnderFrametrue%
  \fi
  \begin{center}\GRAPHIC{#7}{#6}{#2}{#3}{\z@}\end{center}%
  \ifUnderFrame
    #4
    \ifx\QCBOptA\empty
      \caption{}%
    \else
      \ifx\QCBOptB\empty
        \caption{\QCBOptA}%
      \else
        \caption[\QCBOptB]{\QCBOptA}%
      \fi
    \fi
    \label{#5}%
  \fi
  \end{figure}%
 }%
%
%
%
%
%
\newcount\dispkind%

\def\makeactives{
  \catcode`\"=\active
  \catcode`\;=\active
  \catcode`\:=\active
  \catcode`\'=\active
  \catcode`\~=\active
}
\bgroup
   \makeactives
   \gdef\activesoff{%
      \def"{\string"}
      \def;{\string;}
      \def:{\string:}
      \def'{\string'}
    }
\egroup

\def\FRAME#1#2#3#4#5#6#7#8{%
 \bgroup
 \ifnum\draft=\@ne
   \wasdrafttrue
 \else
   \wasdraftfalse%
 \fi
 \def\LaTeXparams{}%
 \dispkind=\z@
 \def\LaTeXparams{}%
 \doFRAMEparams{#1}%
 \ifnum\dispkind=\z@\IFRAME{#2}{#3}{#4}{#7}{#8}{#5}\else
  \ifnum\dispkind=\@ne\DFRAME{#2}{#3}{#7}{#8}{#5}\else
   \ifnum\dispkind=\tw@
    \edef\@tempa{\noexpand\FFRAME{\LaTeXparams}}%
    \@tempa{#2}{#3}{#5}{#6}{#7}{#8}%
    \fi
   \fi
  \fi
  \ifwasdraft\draft=1\else\draft=0\fi{}%
  \egroup
 }%
%

\def\TEXUX#1{"texux"}

%
%
%
%
%
%
%
%
%
%

%
\long\def\QQQ#1#2{%
     \long\expandafter\def\csname#1\endcsname{#2}}%
\@ifundefined{QTP}{\def\QTP#1{}}{}
\@ifundefined{QEXCLUDE}{\def\QEXCLUDE#1{}}{}
\@ifundefined{Qlb}{}{}
\@ifundefined{Qlt}{}{}
\long\def\QQA#1#2{}%
\newcommand{\QTR}[2]{\csname text#1\endcsname{#2}}
\def\EXPAND#1[#2]#3{}%
\def\NOEXPAND#1[#2]#3{}%
\def\LaTeXparent#1{}%
\def\ChildStyles#1{}%
\def\ChildDefaults#1{}%
\def\QTagDef#1#2#3{}%

\@ifundefined{correctchoice}{}{}
\@ifundefined{HTML}{\def\HTML#1{\relax}}{}
\@ifundefined{TCIIcon}{\def\TCIIcon#1#2#3#4{\relax}}{}
\if@compatibility
  \typeout{Not defining UNICODE or CustomNote commands for LaTeX 2.09.}
\else
  \providecommand{\UNICODE}[2][]{}
  
\fi

%
\@ifundefined{StyleEditBeginDoc}{}{}
%
\def\QQfnmark#1{\footnotemark}

%
%
\@ifundefined{TCIMAKEINDEX}{}{\makeindex}%
%
\@ifundefined{abstract}{%
 \def\abstract{%
  \if@twocolumn
   \section*{Abstract (Not appropriate in this style!)}%
   \else \small
   \begin{center}{\bf Abstract\vspace{-.5em}\vspace{\z@}}\end{center}%
   \quotation
   \fi
  }%
 }{%
 }%
\@ifundefined{endabstract}{\def\endabstract
  {\if@twocolumn\else\endquotation\fi}}{}%
\@ifundefined{maketitle}{\def\maketitle#1{}}{}%
\@ifundefined{affiliation}{\def\affiliation#1{}}{}%
\@ifundefined{proof}{}{}%
\@ifundefined{endproof}{}{}%
\@ifundefined{newfield}{\def\newfield#1#2{}}{}%
\@ifundefined{chapter}{\def\chapter#1{\par(Chapter head:)#1\par }%
 \newcount\c@chapter}{}%
\@ifundefined{part}{\def\part#1{\par(Part head:)#1\par }}{}%
\@ifundefined{section}{\def\section#1{\par(Section head:)#1\par }}{}%
\@ifundefined{subsection}{\def\subsection#1%
 {\par(Subsection head:)#1\par }}{}%
\@ifundefined{subsubsection}{\def\subsubsection#1%
 {\par(Subsubsection head:)#1\par }}{}%
\@ifundefined{paragraph}{\def\paragraph#1%
 {\par(Subsubsubsection head:)#1\par }}{}%
\@ifundefined{subparagraph}{\def\subparagraph#1%
 {\par(Subsubsubsubsection head:)#1\par }}{}%
\@ifundefined{therefore}{}{}%
\@ifundefined{backepsilon}{}{}%
\@ifundefined{yen}{}{}%
\@ifundefined{registered}{%
   \def\registered{\relax\ifmmode{}\r@gistered
                    \else$\m@th\r@gistered$\fi}%
 \def\r@gistered{^{\ooalign
  {\hfil\raise.07ex\hbox{$\scriptstyle\rm\text{R}$}\hfil\crcr
  \mathhexbox20D}}}}{}%
\@ifundefined{Eth}{}{}%
\@ifundefined{eth}{}{}%
\@ifundefined{Thorn}{}{}%
\@ifundefined{thorn}{}{}%
%
\@ifundefined{degree}{}{}%
%
\newdimen\theight
\def\Column{%
 \vadjust{\setbox\z@=\hbox{\scriptsize\quad\quad tcol}%
  \theight=\ht\z@\advance\theight by \dp\z@\advance\theight by \lineskip
  \kern -\theight \vbox to \theight{%
   \rightline{\rlap{\box\z@}}%
   \vss
   }%
  }%
 }%
\def\qed{%
 \ifhmode\unskip\nobreak\fi\ifmmode\ifinner\else\hskip5\p@\fi\fi
 \hbox{\hskip5\p@\vrule width4\p@ height6\p@ depth1.5\p@\hskip\p@}%
 }%
\def\miss{\hbox{\vrule height2\p@ width 2\p@ depth\z@}}%
%
%
\def\tcol#1{{\baselineskip=6\p@ \vcenter{#1}} \Column}  %
%
%
\@ifundefined{note}{}{}%

\def\newfmtname{LaTeX2e}

\ifx\fmtname\newfmtname
  \DeclareOldFontCommand{\rm}{\normalfont\rmfamily}{\mathrm}
  \DeclareOldFontCommand{\sf}{\normalfont\sffamily}{\mathsf}
  \DeclareOldFontCommand{\tt}{\normalfont\ttfamily}{\mathtt}
  \DeclareOldFontCommand{\bf}{\normalfont\bfseries}{\mathbf}
  \DeclareOldFontCommand{\it}{\normalfont\itshape}{\mathit}
  \DeclareOldFontCommand{\sl}{\normalfont\slshape}{\@nomath\sl}
  \DeclareOldFontCommand{\sc}{\normalfont\scshape}{\@nomath\sc}
\fi

%


\@ifundefined{theorem}{\newtheorem{theorem}{Theorem}}{}
\@ifundefined{lemma}{}{}
\@ifundefined{corollary}{}{}
\@ifundefined{conjecture}{}{}
\@ifundefined{proposition}{\newtheorem{proposition}[theorem]{Proposition}}{}
\@ifundefined{axiom}{}{}
\@ifundefined{remark}{\newtheorem{remark}{Remark}}{}
\@ifundefined{example}{}{}
\@ifundefined{exercise}{}{}
\@ifundefined{definition}{\newtheorem{definition}{Definition}}{}

\@ifundefined{mathletters}{%
  \newcounter{equationnumber}
  \def\mathletters{%
     \addtocounter{equation}{1}
     \edef\@currentlabel{\theequation}%
     \setcounter{equationnumber}{\c@equation}
     \setcounter{equation}{0}%
     \edef\theequation{\@currentlabel\noexpand\alph{equation}}%
  }
  
}{}

\@ifundefined{BibTeX}{%
    \def\BibTeX{{\rm B\kern-.05em{\sc i\kern-.025em b}\kern-.08em
                 T\kern-.1667em\lower.7ex\hbox{E}\kern-.125emX}}}{}%
\@ifundefined{AmS}%
    {\def\AmS{{\protect\usefont{OMS}{cmsy}{m}{n}%
                A\kern-.1667em\lower.5ex\hbox{M}\kern-.125emS}}}{}%
\@ifundefined{AmSTeX}{}{}%
%

\def\@@eqncr{\let\@tempa\relax
    \ifcase\@eqcnt \def\@tempa{& & &}\or \def\@tempa{& &}%
      \else \def\@tempa{&}\fi
     \@tempa
     \if@eqnsw
        \iftag@
           \@taggnum
        \else
           \@eqnnum\stepcounter{equation}%
        \fi
     \fi
     \global\tag@false
     \global\@eqnswtrue
     \global\@eqcnt\z@\cr}

\def\TCItag{\@ifnextchar*{\@TCItagstar}{\@TCItag}}
\def\@TCItag#1{%
    \global\tag@true
    \global\def\@taggnum{(#1)}}
\def\@TCItagstar*#1{%
    \global\tag@true
    \global\def\@taggnum{#1}}
%
%
\def\tfrac#1#2{{\textstyle {#1 \over #2}}}%
\def\dfrac#1#2{{\displaystyle {#1 \over #2}}}%
%
%
%
%
%
%
%
%
%
%
%
%
%
%
%
%
%
%
%
%
%
%
%
%
%
%
%
%
%
%
%
%
%
%
%
%
%
%
%
%
%
%
%
%
%
%
%
%
%
%
%
%
%
%
%
%
%
%
%

%
%

\makeatother

\begin{document}

\title{\textbf{Robust approach for comparing two dependent normal
populations through Wald-type tests based on R\'{e}nyi's pseudodistance
estimators }}
\date{\today}
\author{Mar\'{\i}a Jaenada$^{a}$,Elena Castilla$^{b}$, Nirian Mart\'{\i}n$^{c,}$\thanks{$a$, Dep. of
		Statistics and Operations Research, Complutense University of Madrid;$b$, Dep. of Applied Mathematics, Materials Science and Engineering, and Electronic Technology,
		Rey Juan Carlos University; $c$,
		Dep. of Financial and Actuarial Economics \& Statistics, Complutense
		University of Madrid; $\ast $, Correponding author: Nirian Mart\'{\i}n,
		nirian@estad.ucm.es.} and Leandro Pardo$^{a}$}
\maketitle

\begin{abstract}
Since the two seminal papers by Fisher (1915, 1921) were published, the test
under a fixed value correlation coefficient null hypothesis for the
bivariate normal distribution constitutes an important statistical problem.
In the framework of asymptotic robust statistics, it remains being a topic
of great interest to be investigated. For this and other tests, focused on
paired correlated normal random samples, R\'{e}nyi's pseudodistance
estimators are proposed, their asymptotic distribution is established and an
iterative algorithm is provided for their computation. From them the
Wald-type test statistics are constructed for different problems of interest
and their influence function is theoretically studied. For testing null
correlation in different contexts, an extensive simulation study and two
real data based examples support the robust properties of our proposal.
\end{abstract}

\section{Introduction\protect\medskip}

In parametric estimation the role of divergence measures is very intuitive:
minimizing a suitable divergence measure between the data and the assumed
model in order to estimate the unknown parameters. These estimators are
called \textquotedblleft minimum divergence estimators\textquotedblright\
(MDEs). There is a growing body of literature that recognizes the importance
of MDEs on the basis of their robustness, without a significant loss of
efficiency, in comparison with the maximum likelihood estimator (MLE). \cite{Beran1977} showed that the minimum Hellinger distance estimator that minimizes
Hellinger distance between the modelled parametric density and its
non-parametric estimator is robust against small perturbation in the
underlying model. Other interesting results in relation to the MDEs can be
seen in \cite{tamura1986}, \cite{Simpson1987, Simpson1989}, \cite{Lindsay1994}, \cite{Pardo2006}, \cite{basu2011}, \cite{Broniatowski2012} and references therein.

In the case of continuous models, it is convenient to consider families of
divergence measures for which non-parametric estimators of the unknown
density function are needed. For instance, the theory developed by the cited
paper of Beran needs a non-parametric estimator of the unknown density
function. From this perspective, the density power divergence (DPD) family,
leading to the minimum density power divergence estimators (MDPDEs), is a
good example. For more details see \cite{basu2011}. However, there is
another important family of divergence measures which neither needs
non-parametric estimators, the R\'{e}nyi's pseudodistances (RPDs). This
family of pseudodistances will be considered in this paper.

Let $X_{1},\ldots ,X_{n}$ be a random sample of size $n$ from a population $X
$, having true and unknown density function $g,$ modelled by a parametric
family of densities $f_{\boldsymbol{\theta }}$ with $\boldsymbol{\theta }\in
\Theta \subset \mathbb{R}^{p}$. The RPD between the densities $f_{%
	\boldsymbol{\theta }}$ and $g$ is given, for a tuning parameter $\alpha >0$,
by 
\begin{align}
	R_{\alpha }\left( f_{\boldsymbol{\theta }},g\right) & =\frac{1}{\alpha +1}%
	\log \int_{-\infty }^{+\infty }f_{\boldsymbol{\theta }}^{\alpha +1}(x)dx 
	\notag \\
	& +\frac{1}{\alpha \left( \alpha +1\right) }\log \int_{-\infty }^{+\infty
	}g^{\alpha +1}(x)dx-\frac{1}{\alpha }\log \int_{-\infty }^{+\infty }f_{%
		\boldsymbol{\theta }}^{\alpha }(x)g(x)dx.  \label{1.1}
\end{align}%
The RPD was considered for the first time in \cite{Jones2001}. \cite{Fujisawa2008} used the RPD under the name of $\gamma $-cross entropy.
Due to the resemblance with the R\'{e}nyi divergence (\cite{Renyi1961}),
\cite{Broniatowski2012} named it RPD.

The RPD can be extended for $\alpha =0$ taking continuous limits on the left
yielding the expression 
\begin{equation*}
	R_{\alpha =0}\left( f_{\boldsymbol{\theta }},g\right) =\lim_{\alpha
		\downarrow 0}R_{\alpha }\left( f_{\boldsymbol{\theta }},g\right)
	=\int_{-\infty }^{+\infty }g(x)\log \frac{g(x)}{f_{\theta }(x)}dx,
\end{equation*}%
i.e., the RPD coincides with the Kullback-Leibler divergence (KLD) between $%
g $ and $f_{\boldsymbol{\theta }}$, at $\alpha =0$ (see \cite{Pardo2006}).

\cite{Broniatowski2012} established that the RPD is positive for any two
densities and for all values of the tuning parameter $\alpha >0$, $R_{\alpha
}\left( f_{\boldsymbol{\theta }},g\right) \geq 0$ and further $R_{\alpha
}\left( f_{\boldsymbol{\theta }},g\right) =0$ if and only if $f_{\boldsymbol{%
		\theta }}=g$. This property suggests the definition of the minimum RPD
estimators (MRPDEs) as the minimizer of the RPD between the assumed
distribution and the empirical distribution of the data. Therefore, the
MRPDE for the unknown parameter $\boldsymbol{\theta }$, based on the random
sample $X_{1},\ldots ,X_{n}$, $\widehat{\boldsymbol{\theta }}_{R,\alpha }=%
\widehat{\boldsymbol{\theta }}_{R,\alpha }(X_{1},\ldots ,X_{n})$, is given,
for a tuning parameter $\alpha >0$, by 
\begin{equation}
	\widehat{\boldsymbol{\theta }}_{R,\alpha }=\arg \sup_{\boldsymbol{\theta }\in
		\Theta }\sum\limits_{i=1}^{n}w_{\alpha }(\boldsymbol{\theta })f_{\boldsymbol{%
			\theta }}^{\alpha }(X_{i}),  \label{1.2}
\end{equation}%
where the weight is defined as $w_{\alpha }(\boldsymbol{\theta })=\kappa
_{\alpha }^{-\frac{\alpha }{\alpha +1}}(\boldsymbol{\theta })$ with%
\begin{equation}
	\kappa _{\alpha }(\boldsymbol{\theta })=\mathrm{E}[f_{\boldsymbol{\theta }%
	}^{\alpha }(X)]=\int_{-\infty }^{+\infty }f_{\boldsymbol{\theta }}^{\alpha
		+1}(x)dx.  \label{kappa}
\end{equation}%
Note that the value $\alpha =0$ was defined as the KLD and hence, the MRPDE
coincides with the MLE at $\alpha =0$.

The estimating equations, based on (\ref{1.2}), are given by 
\begin{equation}
	\sum_{i=1}^{n}\boldsymbol{\Psi }_{\alpha }(x_{i};\boldsymbol{%
		\theta })=\boldsymbol{0}_{p},  \label{eq}
\end{equation}%
where $\boldsymbol{0}_{p}$ is the null column vector of dimension $p$ and 
\begin{align}
	\boldsymbol{\Psi }_{\alpha }(x_{i};\boldsymbol{\theta })& =f_{\boldsymbol{%
			\theta }}^{\alpha }(x_{i})\left( \boldsymbol{u}_{\boldsymbol{\theta }%
	}(x_{i})-\boldsymbol{c}_{\alpha }\left( \boldsymbol{\theta }\right) \right) ,
	\notag \\
	\boldsymbol{u}_{\boldsymbol{\theta }}(x_{i})& =\tfrac{\partial }{\partial 
		\boldsymbol{\theta }}\log f_{\boldsymbol{\theta }}(x_{i})=\frac{\tfrac{%
			\partial }{\partial \boldsymbol{\theta }}f_{\boldsymbol{\theta }}(x_{i})}{f_{%
			\boldsymbol{\theta }}(x_{i})},  \notag \\
	\boldsymbol{c}_{\alpha }\left( \boldsymbol{\theta }\right) & =\frac{\tfrac{%
			\partial }{\partial \boldsymbol{\theta }}\log \kappa _{\alpha }(\boldsymbol{%
			\theta })}{\alpha +1}=\frac{\boldsymbol{\xi }_{\alpha }(\boldsymbol{\theta })%
	}{\kappa _{\alpha }(\boldsymbol{\theta })}=\left( c_{\alpha ,1}\left( 
	\boldsymbol{\theta }\right) ,\ldots ,c_{\alpha ,p}\left( \boldsymbol{\theta }%
	\right) \right) ^{T},  \label{cc}
\end{align}%
where $\kappa _{\alpha }(\boldsymbol{\theta })$ is given by (\ref{kappa}) and%
\begin{equation}
	\boldsymbol{\xi }_{\alpha }(\boldsymbol{\theta })=\frac{1}{\alpha +1}\tfrac{%
		\partial }{\partial \boldsymbol{\theta }}\kappa _{\alpha }(\boldsymbol{%
		\theta })=\mathrm{E}[f_{\boldsymbol{\theta }}^{\alpha }(X)\boldsymbol{u}_{%
		\boldsymbol{\theta }}(X)]=\int_{-\infty }^{+\infty }f_{\boldsymbol{\theta }%
	}^{\alpha +1}(x)\boldsymbol{u}_{\boldsymbol{\theta }}(x)dx.  \label{eps}
\end{equation}%
The MRPDE is an $M$-estimator and thus it asymptotic distribution and
influence function (IF) can be obtained based on the asymptotic theory of
the $M$-estimators. \cite{Broniatowski2012} studied the asymptotic
properties and robustness of the MRPDEs. In relation with the asymptotic
distribution they got 
\begin{equation}
	\sqrt{n}(\widehat{\boldsymbol{\theta }}_{R,\alpha }-\boldsymbol{\theta }_{0})%
	\underset{n\rightarrow \infty }{\overset{\mathcal{L}}{\rightarrow }}\mathcal{%
		N}\left( \boldsymbol{0}_{p},\boldsymbol{V}_{\alpha }\left( \boldsymbol{%
		\theta }_{0}\right) \right) ,  \label{asymptTheta}
\end{equation}%
where $\boldsymbol{\theta }_{0}$ is the true unknown value of $\boldsymbol{%
	\theta }$ and 
\begin{equation}
	\boldsymbol{V}_{\alpha }\left( \boldsymbol{\theta }\right) =\boldsymbol{S}%
	_{\alpha }^{-1}\left( \boldsymbol{\theta }\right) \boldsymbol{K}_{\alpha
	}\left( \boldsymbol{\theta }\right) \boldsymbol{S}_{\alpha }^{-1}\left( 
	\boldsymbol{\theta }\right) ,  \label{V}
\end{equation}%
with 
\begin{align}
	\boldsymbol{S}_{\alpha }\left( \boldsymbol{\theta }\right) & =-\mathrm{E}%
	\left[ \frac{\partial \boldsymbol{\Psi }_{\alpha }^{T}\left( X;\boldsymbol{%
			\theta }\right) }{\partial \boldsymbol{\theta }}\right] ,  \label{S} \\
	\boldsymbol{K}_{\alpha }\left( \boldsymbol{\theta }\right) & =\mathrm{E}%
	\left[ \boldsymbol{\Psi }_{\alpha }\left( X;\boldsymbol{\theta }\right) 
	\boldsymbol{\Psi }_{\alpha }^{T}\left( X;\boldsymbol{\theta }\right) \right]
	.  \label{K}
\end{align}%
The new result given in Section \ref{Sec1} provides a simplified version
which is very useful in practice.

At the same time \cite{Broniatowski2012} established that the IF of the
functional of the MRPDE of $\boldsymbol{\theta }$, $\boldsymbol{T}_{\alpha }$%
, is given by $\mathcal{IF}\left( x,\boldsymbol{T}_{\alpha },F_{\boldsymbol{%
		\theta }}\right) =\boldsymbol{S}_{\alpha }^{-1}\left( \boldsymbol{\theta }%
\right) \boldsymbol{\Psi }_{\alpha }\left( x,\boldsymbol{\theta }\right) $.
In aforementioned paper an application was presented to the multiple
regression model (MRM) with random covariates. \cite{toma2013}
used RP in order to define new robustness and efficiency measures. In the
same vein, \cite{castilla2020a} introduced Wald-type tests based on the
minimum RPD estimators for the MRM and its extension for Generalized Linear
models was presented in \cite{jaenada2021}. Further, \cite{castilla2020b} studied the MRPDE for the linear regression model in the ultra-high
dimensional set-up.

\section{Simplified version of the asymptotic variance-covariance matrix of R%
	\'{e}nyi's pseudodistance estimators\label{Sec1}}

This is a short but very important section as it establishes for the first
time new and short expressions of $\boldsymbol{S}_{\alpha }\left( 
\boldsymbol{\theta }\right) $ and $\boldsymbol{K}_{\alpha }\left( 
\boldsymbol{\theta }\right) $, given in (\ref{S}) and (\ref{K}), in terms of
a scalar $\kappa _{\alpha }(\boldsymbol{\theta })$, a vector $\boldsymbol{c}%
_{\alpha }\left( \boldsymbol{\theta }\right) $, and a matrix $\boldsymbol{J}%
_{\alpha }\left( \boldsymbol{\theta }\right) $, whose calculation of any
distribution is exactly the same as the one developed for MDPDEs, so the
complexity of the construction of the theory based on MRPDEs is not higher than the MDPDEs.

\begin{theorem}
	\label{Th1}The expression of the variance-covariance matrix in the
	asymptotic distribution, (\ref{asymptTheta}), is given by (\ref{V}) where 
	\begin{align}
		\boldsymbol{S}_{\alpha }\left( \boldsymbol{\theta }\right) & =\boldsymbol{J}%
		_{\alpha }\left( \boldsymbol{\theta }\right) -\kappa _{\alpha }(\boldsymbol{%
			\theta })\boldsymbol{c}_{\alpha }(\boldsymbol{\theta })\boldsymbol{c}%
		_{\alpha }^{T}(\boldsymbol{\theta }),  \label{S-1} \\
		\boldsymbol{K}_{\alpha }\left( \boldsymbol{\theta }\right) & =\boldsymbol{S}%
		_{2\alpha }\left( \boldsymbol{\theta }\right) +\kappa _{2\alpha }(%
		\boldsymbol{\theta })\left( \boldsymbol{c}_{2\alpha }(\boldsymbol{\theta })-%
		\boldsymbol{c}_{\alpha }(\boldsymbol{\theta })\right) \left( \boldsymbol{c}%
		_{2\alpha }(\boldsymbol{\theta })-\boldsymbol{c}_{\alpha }(\boldsymbol{%
			\theta })\right) ^{T},  \label{M}
	\end{align}%
	with%
	\begin{equation}
		\boldsymbol{J}_{\alpha }\left( \boldsymbol{\theta }\right) =\mathrm{E}[f_{%
			\boldsymbol{\theta }}^{\alpha }(X)\boldsymbol{u}_{\boldsymbol{\theta }}(X)%
		\boldsymbol{u}_{\boldsymbol{\theta }}^{T}(X)]=\int_{-\infty }^{+\infty }f_{%
			\boldsymbol{\theta }}^{\alpha +1}(x)\boldsymbol{u}_{\boldsymbol{\theta }}(x)%
		\boldsymbol{u}_{\boldsymbol{\theta }}^{T}(x)dx,  \label{J}
	\end{equation}%
	and the expressions of $\kappa _{\alpha }(\boldsymbol{\theta })$ and $%
	\boldsymbol{c}_{\alpha }\left( \boldsymbol{\theta }\right) $ were given by (%
	\ref{kappa}) and (\ref{cc})\ respectively.
\end{theorem}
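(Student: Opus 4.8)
The plan is to take the asymptotic normality statement (\ref{asymptTheta}) together with the sandwich form (\ref{V}) as already established from the general $M$-estimation theory applied to the estimating equations (\ref{eq})--(\ref{cc}) (see \cite{Broniatowski2012}), so that the only thing left to prove is that, with all expectations taken under $f_{\boldsymbol{\theta}}$ consistently with the definitions (\ref{kappa})--(\ref{J}), the matrices $\boldsymbol{S}_{\alpha}(\boldsymbol{\theta})$ in (\ref{S}) and $\boldsymbol{K}_{\alpha}(\boldsymbol{\theta})$ in (\ref{K}) collapse to the claimed closed forms. Throughout I will use the elementary identities $\tfrac{\partial}{\partial\theta_{j}}f_{\boldsymbol{\theta}}=f_{\boldsymbol{\theta}}u_{\boldsymbol{\theta},j}$ and $\tfrac{\partial}{\partial\theta_{j}}u_{\boldsymbol{\theta},k}=\tfrac{\partial^{2}}{\partial\theta_{j}\partial\theta_{k}}\log f_{\boldsymbol{\theta}}=\tfrac{\partial_{j}\partial_{k}f_{\boldsymbol{\theta}}}{f_{\boldsymbol{\theta}}}-u_{\boldsymbol{\theta},j}u_{\boldsymbol{\theta},k}$, together with $\boldsymbol{\xi}_{\alpha}(\boldsymbol{\theta})=\kappa_{\alpha}(\boldsymbol{\theta})\boldsymbol{c}_{\alpha}(\boldsymbol{\theta})$ and $\boldsymbol{c}_{\alpha}(\boldsymbol{\theta})=\tfrac{1}{\alpha+1}\tfrac{\partial}{\partial\boldsymbol{\theta}}\log\kappa_{\alpha}(\boldsymbol{\theta})$, which are immediate from (\ref{cc})--(\ref{eps}).

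For $\boldsymbol{K}_{\alpha}$ the computation is direct. Writing $\boldsymbol{\Psi}_{\alpha}\boldsymbol{\Psi}_{\alpha}^{T}=f_{\boldsymbol{\theta}}^{2\alpha}(\boldsymbol{u}_{\boldsymbol{\theta}}-\boldsymbol{c}_{\alpha})(\boldsymbol{u}_{\boldsymbol{\theta}}-\boldsymbol{c}_{\alpha})^{T}$ and integrating against $f_{\boldsymbol{\theta}}$, the four resulting integrals are $\int f_{\boldsymbol{\theta}}^{2\alpha+1}\boldsymbol{u}_{\boldsymbol{\theta}}\boldsymbol{u}_{\boldsymbol{\theta}}^{T}=\boldsymbol{J}_{2\alpha}(\boldsymbol{\theta})$, $\int f_{\boldsymbol{\theta}}^{2\alpha+1}\boldsymbol{u}_{\boldsymbol{\theta}}=\boldsymbol{\xi}_{2\alpha}(\boldsymbol{\theta})=\kappa_{2\alpha}(\boldsymbol{\theta})\boldsymbol{c}_{2\alpha}(\boldsymbol{\theta})$, and $\int f_{\boldsymbol{\theta}}^{2\alpha+1}=\kappa_{2\alpha}(\boldsymbol{\theta})$, by (\ref{kappa}), (\ref{eps}) and (\ref{J}) read at the tuning value $2\alpha$. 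This gives $\boldsymbol{K}_{\alpha}=\boldsymbol{J}_{2\alpha}-\kappa_{2\alpha}(\boldsymbol{c}_{\alpha}\boldsymbol{c}_{2\alpha}^{T}+\boldsymbol{c}_{2\alpha}\boldsymbol{c}_{\alpha}^{T})+\kappa_{2\alpha}\boldsymbol{c}_{\alpha}\boldsymbol{c}_{\alpha}^{T}$; adding and subtracting $\kappa_{2\alpha}\boldsymbol{c}_{2\alpha}\boldsymbol{c}_{2\alpha}^{T}$ and using (\ref{S-1}) at $2\alpha$, that is $\boldsymbol{S}_{2\alpha}=\boldsymbol{J}_{2\alpha}-\kappa_{2\alpha}\boldsymbol{c}_{2\alpha}\boldsymbol{c}_{2\alpha}^{T}$, one recognises the right-hand side of (\ref{M}). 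So this step is purely algebraic once (\ref{S-1}) is known, and it suffices to prove (\ref{S-1}) for a generic $\alpha$.

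For $\boldsymbol{S}_{\alpha}$ I differentiate componentwise: $\tfrac{\partial}{\partial\theta_{j}}\bigl[f_{\boldsymbol{\theta}}^{\alpha}(u_{\boldsymbol{\theta},k}-c_{\alpha,k})\bigr]=\alpha f_{\boldsymbol{\theta}}^{\alpha}u_{\boldsymbol{\theta},j}(u_{\boldsymbol{\theta},k}-c_{\alpha,k})+f_{\boldsymbol{\theta}}^{\alpha}\bigl(\partial_{j}\partial_{k}\log f_{\boldsymbol{\theta}}-\partial_{j}c_{\alpha,k}\bigr)$, replace $\partial_{j}\partial_{k}\log f_{\boldsymbol{\theta}}$ by $\partial_{j}\partial_{k}f_{\boldsymbol{\theta}}/f_{\boldsymbol{\theta}}-u_{\boldsymbol{\theta},j}u_{\boldsymbol{\theta},k}$, and integrate against $f_{\boldsymbol{\theta}}$. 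The first group yields $\alpha\boldsymbol{J}_{\alpha}-\alpha\kappa_{\alpha}\boldsymbol{c}_{\alpha}\boldsymbol{c}_{\alpha}^{T}$ after using $\int f_{\boldsymbol{\theta}}^{\alpha+1}\boldsymbol{u}_{\boldsymbol{\theta}}=\kappa_{\alpha}\boldsymbol{c}_{\alpha}$; the $-u_{\boldsymbol{\theta},j}u_{\boldsymbol{\theta},k}$ part contributes $-\boldsymbol{J}_{\alpha}$; and the remaining pieces $\int f_{\boldsymbol{\theta}}^{\alpha}\partial_{j}\partial_{k}f_{\boldsymbol{\theta}}$ and $-\kappa_{\alpha}\,\partial_{j}c_{\alpha,k}$ are handled by differentiating $\kappa_{\alpha}$ twice, $\partial_{j}\partial_{k}\kappa_{\alpha}=(\alpha+1)\bigl[\alpha(\boldsymbol{J}_{\alpha})_{jk}+\int f_{\boldsymbol{\theta}}^{\alpha}\partial_{j}\partial_{k}f_{\boldsymbol{\theta}}\bigr]$, and by differentiating $c_{\alpha,k}=\tfrac{1}{\alpha+1}\partial_{k}\log\kappa_{\alpha}$, $\partial_{j}c_{\alpha,k}=\tfrac{\partial_{j}\partial_{k}\kappa_{\alpha}}{(\alpha+1)\kappa_{\alpha}}-(\alpha+1)c_{\alpha,j}c_{\alpha,k}$. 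Substituting, the two occurrences of $\partial_{j}\partial_{k}\kappa_{\alpha}$ cancel, the $\alpha$-multiples of $\boldsymbol{J}_{\alpha}$ and of $\kappa_{\alpha}\boldsymbol{c}_{\alpha}\boldsymbol{c}_{\alpha}^{T}$ collapse, and one is left with $\mathrm{E}[\partial\boldsymbol{\Psi}_{\alpha}^{T}/\partial\boldsymbol{\theta}]=-\boldsymbol{J}_{\alpha}+\kappa_{\alpha}\boldsymbol{c}_{\alpha}\boldsymbol{c}_{\alpha}^{T}$, i.e. $\boldsymbol{S}_{\alpha}=\boldsymbol{J}_{\alpha}-\kappa_{\alpha}\boldsymbol{c}_{\alpha}\boldsymbol{c}_{\alpha}^{T}$, which is (\ref{S-1}) (symmetry makes the transpose convention in (\ref{S}) irrelevant here).

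The only delicate point is justifying the interchange of differentiation and integration, namely the existence and integrability of $f_{\boldsymbol{\theta}}^{\alpha}\partial_{j}\partial_{k}f_{\boldsymbol{\theta}}$ and of $f_{\boldsymbol{\theta}}^{\alpha+1}u_{\boldsymbol{\theta},j}u_{\boldsymbol{\theta},k}$; I will cover this by the same standard regularity conditions on $f_{\boldsymbol{\theta}}$ already used for the $M$-estimation asymptotics in \cite{Broniatowski2012}. Beyond that the argument is bookkeeping, the crucial non-mechanical observation being the mutual cancellation of the Hessian-of-$\kappa_{\alpha}$ contributions, which is exactly what makes $\boldsymbol{S}_{\alpha}$ — and, via the $2\alpha$ substitution, $\boldsymbol{K}_{\alpha}$ — expressible through $\kappa_{\alpha}$, $\boldsymbol{c}_{\alpha}$ and $\boldsymbol{J}_{\alpha}$ alone.
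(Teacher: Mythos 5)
Your proof is correct and takes essentially the same route as the paper's: the $\boldsymbol{K}_{\alpha}$ computation (expanding $\boldsymbol{\Psi}_{\alpha}\boldsymbol{\Psi}_{\alpha}^{T}$, reading $\kappa$, $\boldsymbol{\xi}$, $\boldsymbol{J}$ at $2\alpha$, then adding and subtracting $\kappa_{2\alpha}\boldsymbol{c}_{2\alpha}\boldsymbol{c}_{2\alpha}^{T}$) is identical, and your $\boldsymbol{S}_{\alpha}$ derivation rests on the same differentiation-under-the-integral cancellation, merely organized through the Hessian of $\kappa_{\alpha}$ where the paper differentiates the first-moment identity $\boldsymbol{\xi}_{\alpha}=\kappa_{\alpha}\boldsymbol{c}_{\alpha}$ once. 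No gaps.
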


\begin{proof}
	See Appendix \ref{A1}.\medskip
\end{proof}

\section{Minimum R\'{e}nyi pseudodistance estimators for two dependent
	populations with normal distribution}

In the previous results univariate case was considered, but it is
straightforward to extend it for the multivariate set-up. In this paper we
are considering the bidimensional normal distribution model, and so in the
following the role of $x$ is replaced by $(x,y)$ and all the integrals
are in $%
\mathbb{R}
^{2}$. In addition, we are going to get Wald-type test statistics for
testing different composite null hypothesis regarding the model parameters.

Let $(X,Y)$ be a bidimensional normal model with density function 
\begin{equation}
	f_{\boldsymbol{\theta }}(x,y)=\tfrac{1}{2\pi \sigma _{1}\sigma _{2}\sqrt{%
			1-\rho ^{2}}}\exp \left\{ -\tfrac{1}{2(1-\rho ^{2})}\left[ \left( \tfrac{%
		x-\mu _{1}}{\sigma _{1}}\right) ^{2}+\left( \tfrac{y-\mu _{2}}{\sigma _{2}}%
	\right) ^{2}-2\rho \left( \tfrac{x-\mu _{1}}{\sigma _{1}}\right) \left( 
	\tfrac{y-\mu _{2}}{\sigma _{2}}\right) \right] \right\} ,  \label{model}
\end{equation}%
$\sigma _{1},\sigma _{2}>0$, $\mu _{1},\mu _{2}\in \mathbb{R}$ and $-1<\rho
<1$, and we shall denote by 
\begin{equation}
	\boldsymbol{\theta }=\left( \mu _{1},\mu _{2},\sigma _{1},\sigma _{2},\rho
	\right) ^{T}  \label{theta}
\end{equation}%
the model parameters belonging to the parameter space $\Theta =\mathbb{R}%
^{2}\times \mathbb{R}_{+}^{2}\times (-1,1)$.

We are interested, on the basis of a random sample of size $n$, $%
(X_{1},Y_{1}),\ldots ,\allowbreak (X_{n},Y_{n})$, in obtaining the MRPDE for 
$\boldsymbol{\theta }$, as well as the asymptotic distribution. Further, we
aim to develop Wald-type tests, in the bidimensional normal model, based on
MRPDE. Some preliminary results from which proofs the reader could find many
clues were presented in \cite{martin2020}.

\begin{proposition}
	\label{Prop2}For the bidimensional normal model, (\ref{model}), the vector
	of score functions is given by%
	\begin{equation}
		\boldsymbol{u}_{\boldsymbol{\theta }}(x,y)=(u_{\mu _{1}}(x,y),u_{\mu
			_{2}}(x,y),u_{\sigma _{1}}(x,y),u_{_{\sigma _{2}}}(x,y),u_{\rho }(x,y))^{T},
		\label{u}
	\end{equation}%
	where%
	\begin{align*}
		u_{\mu _{1}}(x,y)
		&=\frac{1}{\sigma _{1}\left( 1-\rho ^{2}\right) }\left[ \frac{x-\mu _{1}%
		}{\sigma _{1}}-\rho \frac{y-\mu _{2}}{\sigma _{2}}\right] , \\
		u_{\mu _{2}}(x,y)
		&=\frac{1}{\sigma _{2}\left( 1-\rho ^{2}\right) }\left[ \frac{y-\mu _{2}%
		}{\sigma _{2}}-\rho \frac{x-\mu _{1}}{\sigma _{1}}\right] ,
	\end{align*}%
	\begin{align*}
		u_{\sigma _{1}}(x,y)
		& =-\frac{1}{\sigma _{1}}-\frac{1}{\sigma _{1}(1-\rho ^{2})}\left[ \rho 
		\frac{x-\mu _{1}}{\sigma _{1}}\frac{y-\mu _{2}}{\sigma _{2}}-\left( \frac{%
			x-\mu _{1}}{\sigma _{1}}\right) ^{2}\right] , \\
		u_{\sigma _{2}}(x,y)
		& =-\frac{1}{\sigma _{2}}-\frac{1}{\sigma _{2}(1-\rho ^{2})}\left[ \rho 
		\frac{x-\mu _{1}}{\sigma _{1}}\frac{y-\mu _{2}}{\sigma _{2}}-\left( \frac{%
			y-\mu _{2}}{\sigma _{2}}\right) ^{2}\right] ,\\
		u_{\rho }(x,y) 
		&=%
		\frac{1}{(1-\rho ^{2})}\left[ \rho +\frac{x-\mu _{1}}{\sigma _{1}}\frac{%
			y-\mu _{2}}{\sigma _{2}}\right] \\
		& \hspace{0.2cm} -\frac{\rho }{(1-\rho ^{2})^{2}}\left[ \left( \frac{x-\mu _{1}}{\sigma _{1}%
		}\right) ^{2}+\left( \frac{y-\mu _{2}}{\sigma _{2}}\right) ^{2}-2\rho \frac{%
			x-\mu _{1}}{\sigma _{1}}\frac{y-\mu _{2}}{\sigma _{2}}\right] .
	\end{align*}
\end{proposition}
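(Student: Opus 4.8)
The plan is to compute $\boldsymbol{u}_{\boldsymbol{\theta}}(x,y)=\tfrac{\partial}{\partial\boldsymbol{\theta}}\log f_{\boldsymbol{\theta}}(x,y)$ directly from the definition recalled in (\ref{cc}), by taking the logarithm of the density (\ref{model}) and differentiating componentwise. First I would write, with the shorthand $z_{1}=\tfrac{x-\mu_{1}}{\sigma_{1}}$, $z_{2}=\tfrac{y-\mu_{2}}{\sigma_{2}}$ and $Q(x,y)=z_{1}^{2}+z_{2}^{2}-2\rho z_{1}z_{2}$,
\[
\log f_{\boldsymbol{\theta}}(x,y)=-\log(2\pi)-\log\sigma_{1}-\log\sigma_{2}-\tfrac{1}{2}\log(1-\rho^{2})-\frac{1}{2(1-\rho^{2})}Q(x,y).
\]
Only the last two terms depend on $\mu_{1}$ and $\mu_{2}$, so differentiating the quadratic form by the chain rule ($\partial z_{1}/\partial\mu_{1}=-1/\sigma_{1}$, $\partial z_{2}/\partial\mu_{1}=0$, and symmetrically for $\mu_{2}$) gives $\partial_{\mu_{1}}Q=-\tfrac{2}{\sigma_{1}}(z_{1}-\rho z_{2})$, and after multiplying by $-\tfrac{1}{2(1-\rho^{2})}$ one obtains the stated $u_{\mu_{1}}$ and $u_{\mu_{2}}$.

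For $\sigma_{1}$ and $\sigma_{2}$ there is an extra contribution from the $-\log\sigma_{j}$ terms, which produces the leading $-1/\sigma_{j}$ summand; the remainder again comes from the chain rule applied to $Q$ using $\partial z_{1}/\partial\sigma_{1}=-z_{1}/\sigma_{1}$, giving $\partial_{\sigma_{1}}Q=-\tfrac{2}{\sigma_{1}}(z_{1}^{2}-\rho z_{1}z_{2})$, and collecting terms yields the expressions for $u_{\sigma_{1}}$ and $u_{\sigma_{2}}$. For $\rho$ one differentiates both $-\tfrac{1}{2}\log(1-\rho^{2})$, contributing $\rho/(1-\rho^{2})$, and the product $-\tfrac{1}{2}(1-\rho^{2})^{-1}Q(x,y)$, in which the $\rho$-dependence enters through the prefactor, with derivative $-\rho(1-\rho^{2})^{-2}$ of $-\tfrac{1}{2}(1-\rho^{2})^{-1}$, and also through the term $-2\rho z_{1}z_{2}$ inside $Q$, contributing $z_{1}z_{2}/(1-\rho^{2})$; grouping the three pieces gives the two-line formula for $u_{\rho}$.

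The computation is entirely elementary, so there is no real obstacle; the one place to be careful is the $\rho$-derivative, where the product rule mixes the derivative of the $(1-\rho^{2})^{-1}$ prefactor with the explicit $\rho$ appearing inside $Q(x,y)$, and the two resulting terms must be kept separate to match the displayed expression. No earlier result is needed beyond the identity $\boldsymbol{u}_{\boldsymbol{\theta}}=\tfrac{\partial}{\partial\boldsymbol{\theta}}\log f_{\boldsymbol{\theta}}$.
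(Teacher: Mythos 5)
Your computation is correct: direct differentiation of $\log f_{\boldsymbol{\theta}}$ with the chain rule on $z_1,z_2$ and the product rule in the $\rho$-derivative reproduces all five components exactly, and the paper itself states Proposition \ref{Prop2} without a written proof, treating it as exactly this elementary calculation. Nothing further is needed.
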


\begin{proposition}
	\label{Prop4}For the bidimensional normal model, (\ref{model}), the
	expressions of (\ref{eps}) and (\ref{kappa}) are given by%
	\begin{equation}
		\boldsymbol{c}_{\alpha }\left( \boldsymbol{\theta }\right) =\left( c_{\alpha
		}(\mu _{1}),c_{\alpha }(\mu _{2}),c_{\alpha }(\sigma _{1}),c_{\alpha
		}(\sigma _{2}),c_{\alpha }(\rho )\right) ^{T}=%
		\begin{pmatrix}
			\boldsymbol{c}_{1,\alpha }\left( \boldsymbol{\theta }\right) \\ 
			\boldsymbol{c}_{2,\alpha }\left( \boldsymbol{\theta }\right)%
		\end{pmatrix}%
		,  \label{d}
	\end{equation}%
	where%
	\begin{equation*}
		\boldsymbol{c}_{1,\alpha }\left( \boldsymbol{\theta }\right)  =\boldsymbol{0%
		}_{2}, \hspace{0.3cm}
		\boldsymbol{c}_{2,\alpha }\left( \boldsymbol{\theta }\right)  =\frac{\alpha 
		}{\alpha +1}\boldsymbol{D}_{2,\sigma _{1},\sigma _{2}}^{-1}%
		\begin{pmatrix}
			-1 \\ 
			-1 \\ 
			\frac{\rho }{1-\rho ^{2}}%
		\end{pmatrix}%
		=\frac{\alpha }{\alpha +1}%
		\begin{pmatrix}
			-\frac{1}{\sigma _{1}} \\ 
			-\frac{1}{\sigma _{2}} \\ 
			\frac{\rho }{1-\rho ^{2}}%
		\end{pmatrix}%
		,
	\end{equation*}%
	with 
	\begin{equation}
		\boldsymbol{D}_{2,\sigma _{1},\sigma _{2}}=\mathrm{diag}\{\sigma _{1},\sigma
		_{2},1\},  \label{D}
	\end{equation}%
	and%
	\begin{equation}
		\kappa _{\alpha }(\boldsymbol{\theta })=\frac{1}{k^{\alpha }(\boldsymbol{%
				\theta })\left( \alpha +1\right) },  \label{eq:kappa}
	\end{equation}%
	with%
	\begin{equation}
		k(\boldsymbol{\theta })=2\pi \sigma _{1}\sigma _{2}\sqrt{1-\rho ^{2}}.
		\label{k}
	\end{equation}
\end{proposition}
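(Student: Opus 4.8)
The plan is to reduce everything to the normalizing constant $\kappa_{\alpha}(\boldsymbol{\theta})$ and then exploit the identity recorded in (\ref{eps})--(\ref{cc}): since $\boldsymbol{\xi}_{\alpha}(\boldsymbol{\theta})=\tfrac{1}{\alpha+1}\tfrac{\partial}{\partial\boldsymbol{\theta}}\kappa_{\alpha}(\boldsymbol{\theta})$, we have $\boldsymbol{c}_{\alpha}(\boldsymbol{\theta})=\boldsymbol{\xi}_{\alpha}(\boldsymbol{\theta})/\kappa_{\alpha}(\boldsymbol{\theta})=\tfrac{1}{\alpha+1}\tfrac{\partial}{\partial\boldsymbol{\theta}}\log\kappa_{\alpha}(\boldsymbol{\theta})$. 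Thus once $\kappa_{\alpha}$ is known in closed form, the vector $\boldsymbol{c}_{\alpha}$ is obtained by plain componentwise differentiation, and Proposition \ref{Prop2} is needed only as an independent cross-check.

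First I would compute $\kappa_{\alpha}(\boldsymbol{\theta})=\int_{\mathbb{R}^{2}}f_{\boldsymbol{\theta}}^{\alpha+1}(x,y)\,dx\,dy$. Write the density (\ref{model}) as $f_{\boldsymbol{\theta}}(x,y)=k(\boldsymbol{\theta})^{-1}\exp\{-\tfrac{1}{2}Q(x,y)\}$ with $k(\boldsymbol{\theta})=2\pi\sigma_{1}\sigma_{2}\sqrt{1-\rho^{2}}$ and $Q$ the quadratic form in the exponent, so that $f_{\boldsymbol{\theta}}^{\alpha+1}(x,y)=k(\boldsymbol{\theta})^{-(\alpha+1)}\exp\{-\tfrac{\alpha+1}{2}Q(x,y)\}$. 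The key observation is that $\exp\{-\tfrac{\alpha+1}{2}Q(x,y)\}$ is, up to a multiplicative constant, the density of a bivariate normal with the same mean $(\mu_{1},\mu_{2})$, the same correlation $\rho$, and standard deviations $\sigma_{j}/\sqrt{\alpha+1}$ (the factor $\alpha+1$ rescales the covariance matrix by $(\alpha+1)^{-1}$, leaving $\rho$ fixed). Hence its integral over $\mathbb{R}^{2}$ equals $2\pi\tfrac{\sigma_{1}}{\sqrt{\alpha+1}}\tfrac{\sigma_{2}}{\sqrt{\alpha+1}}\sqrt{1-\rho^{2}}=k(\boldsymbol{\theta})/(\alpha+1)$, and therefore $\kappa_{\alpha}(\boldsymbol{\theta})=k(\boldsymbol{\theta})^{-(\alpha+1)}\cdot k(\boldsymbol{\theta})/(\alpha+1)=k(\boldsymbol{\theta})^{-\alpha}/(\alpha+1)$, which is (\ref{eq:kappa}).

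Next I would take logarithms, $\log\kappa_{\alpha}(\boldsymbol{\theta})=-\alpha\log k(\boldsymbol{\theta})-\log(\alpha+1)$ with $\log k(\boldsymbol{\theta})=\log(2\pi)+\log\sigma_{1}+\log\sigma_{2}+\tfrac{1}{2}\log(1-\rho^{2})$, and differentiate: $\tfrac{\partial}{\partial\mu_{1}}\log k=\tfrac{\partial}{\partial\mu_{2}}\log k=0$, $\tfrac{\partial}{\partial\sigma_{j}}\log k=1/\sigma_{j}$, and $\tfrac{\partial}{\partial\rho}\log k=-\rho/(1-\rho^{2})$. Substituting into $\boldsymbol{c}_{\alpha}(\boldsymbol{\theta})=-\tfrac{\alpha}{\alpha+1}\tfrac{\partial}{\partial\boldsymbol{\theta}}\log k(\boldsymbol{\theta})$ gives $\boldsymbol{c}_{\alpha}(\boldsymbol{\theta})=\tfrac{\alpha}{\alpha+1}(0,0,-1/\sigma_{1},-1/\sigma_{2},\rho/(1-\rho^{2}))^{T}$; separating the first two (mean) coordinates yields $\boldsymbol{c}_{1,\alpha}(\boldsymbol{\theta})=\boldsymbol{0}_{2}$ and $\boldsymbol{c}_{2,\alpha}(\boldsymbol{\theta})=\tfrac{\alpha}{\alpha+1}(-1/\sigma_{1},-1/\sigma_{2},\rho/(1-\rho^{2}))^{T}=\tfrac{\alpha}{\alpha+1}\boldsymbol{D}_{2,\sigma_{1},\sigma_{2}}^{-1}(-1,-1,\rho/(1-\rho^{2}))^{T}$, since $\boldsymbol{D}_{2,\sigma_{1},\sigma_{2}}^{-1}=\mathrm{diag}\{1/\sigma_{1},1/\sigma_{2},1\}$. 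This is exactly (\ref{d}).

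As a consistency check one can instead evaluate $\boldsymbol{\xi}_{\alpha}(\boldsymbol{\theta})=\int_{\mathbb{R}^{2}}f_{\boldsymbol{\theta}}^{\alpha+1}\boldsymbol{u}_{\boldsymbol{\theta}}$ directly from Proposition \ref{Prop2}: after the rescaling above the integral becomes an expectation under the $(\alpha+1)$-deflated bivariate normal, the $\mu_{1},\mu_{2}$ components vanish by the oddness of $u_{\mu_{j}}$ in the centered variables, and the remaining three components reduce to second moments of that deflated normal. I expect the only genuine bookkeeping hazard to be the $\rho$-component (equivalently the $u_{\rho}$ integral), where one must carry the $(1-\rho^{2})$ and $(1-\rho^{2})^{2}$ denominators correctly; everything else is routine Gaussian integration, and the regularity needed to differentiate $\kappa_{\alpha}$ under the integral sign is standard for the exponential-family form of (\ref{model}).
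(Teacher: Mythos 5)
Your proposal is correct: the computation of $\kappa_{\alpha}(\boldsymbol{\theta})$ via the rescaled Gaussian integral and the derivation of $\boldsymbol{c}_{\alpha}(\boldsymbol{\theta})=\tfrac{1}{\alpha+1}\tfrac{\partial}{\partial\boldsymbol{\theta}}\log\kappa_{\alpha}(\boldsymbol{\theta})$ by differentiating $\log k(\boldsymbol{\theta})$ both check out and reproduce (\ref{eq:kappa}) and (\ref{d}) exactly. The paper does not print a proof of this proposition, but your route is precisely the one its own identities (\ref{cc}) and (\ref{eps}) set up, so there is nothing to add.
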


In the following theorem we shall present the expressions of the matrices $%
\boldsymbol{K}_{\alpha }\left( \boldsymbol{\theta }\right) $ and $%
\boldsymbol{S}_{\alpha }\left( \boldsymbol{\theta }\right) $, defined in (%
\ref{M}) and (\ref{S-1}). But first, it is necessary to provide the
following result.

\begin{proposition}
	\label{Prop3}For the bidimensional normal model, (\ref{model}), we have the
	following results concerning with the integrals of the cross product for the
	score functions%
	\begin{equation*}
		\boldsymbol{J}_{\alpha }\left( \boldsymbol{\theta }\right) =\left( 
		\begin{array}{cc}
			\boldsymbol{J}_{1,\alpha }\left( \boldsymbol{\theta }\right) & \boldsymbol{0}%
			_{2\times 3} \\ 
			\boldsymbol{0}_{3\times 2} & \boldsymbol{J}_{2,\alpha }\left( \boldsymbol{%
				\theta }\right)%
		\end{array}%
		\right) ,
	\end{equation*}%
	where 
	\begin{align}
		\boldsymbol{J}_{1,\alpha }\left( \boldsymbol{\theta }\right) & =\frac{1}{%
			k^{\alpha }(\boldsymbol{\theta })(\alpha +1)^{2}}\boldsymbol{D}_{1,\sigma
			_{1},\sigma _{2}}^{-1}\boldsymbol{J}_{1}(\rho )\boldsymbol{D}_{1,\sigma
			_{1},\sigma _{2}}^{-1},  \label{J1} \\
		\boldsymbol{J}_{1}(\rho )& =\frac{1}{1-\rho ^{2}}\left( 
		\begin{array}{cc}
			1 & -\rho \\ 
			-\rho & 1%
		\end{array}%
		\right) ,  \notag \\
		\boldsymbol{J}_{2,\alpha }\left( \boldsymbol{\theta }\right) & =\frac{1}{%
			k^{\alpha }(\boldsymbol{\theta })(\alpha +1)^{3}}\boldsymbol{D}_{2,\sigma
			_{1},\sigma _{2}}^{-1}\boldsymbol{J}_{2,\alpha }(\rho )\boldsymbol{D}%
		_{2,\sigma _{1},\sigma _{2}}^{-1},  \notag \\
		\boldsymbol{J}_{2,\alpha }(\rho )& =\frac{1}{1-\rho ^{2}}%
		\begin{pmatrix}
			\alpha ^{2}-\rho ^{2}(\alpha ^{2}+1)+2 & \alpha ^{2}-\rho ^{2}(\alpha ^{2}+1)
			& -\rho (\alpha ^{2}+1) \\ 
			\alpha ^{2}-\rho ^{2}(\alpha ^{2}+1) & \alpha ^{2}-\rho ^{2}(\alpha ^{2}+1)+2
			& -\rho (\alpha ^{2}+1) \\ 
			-\rho (\alpha ^{2}+1) & -\rho (\alpha ^{2}+1) & \frac{\rho ^{2}(\alpha
				^{2}+1)+1}{1-\rho ^{2}}%
		\end{pmatrix}%
		,  \notag
	\end{align}%
	with%
	\begin{equation}
		\boldsymbol{D}_{1,\sigma _{1},\sigma _{2}}=\mathrm{diag}\{\sigma _{1},\sigma
		_{2}\},  \label{D1}
	\end{equation}%
	$\boldsymbol{D}_{2,\sigma _{1},\sigma _{2}}$ is given by (\ref{D}) and $k(%
	\boldsymbol{\theta })$ by (\ref{k}).
\end{proposition}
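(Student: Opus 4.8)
The plan is to standardize the two coordinates, which turns the matrix-valued integral $(\ref{J})$ into a moment matrix of a bivariate normal law, and then to read off the block structure from the parity of the score functions. Concretely, I would set $z_{1}=(x-\mu _{1})/\sigma _{1}$ and $z_{2}=(y-\mu _{2})/\sigma _{2}$, so that $dx\,dy=\sigma _{1}\sigma _{2}\,dz_{1}dz_{2}$ and $f_{\boldsymbol{\theta }}(x,y)=(\sigma _{1}\sigma _{2})^{-1}\phi _{\rho }(z_{1},z_{2})$, where $\phi _{\rho }$ is the standard bivariate normal density with correlation $\rho $. Inspecting Proposition \ref{Prop2}, the score vector factorizes as $\boldsymbol{u}_{\boldsymbol{\theta }}(x,y)=\boldsymbol{D}^{-1}\widetilde{\boldsymbol{u}}_{\rho }(z_{1},z_{2})$ with $\boldsymbol{D}=\mathrm{diag}\{\sigma _{1},\sigma _{2},\sigma _{1},\sigma _{2},1\}$, whose $(\mu _{1},\mu _{2})$ and $(\sigma _{1},\sigma _{2},\rho )$ diagonal sub-blocks are exactly $\boldsymbol{D}_{1,\sigma _{1},\sigma _{2}}$ and $\boldsymbol{D}_{2,\sigma _{1},\sigma _{2}}$, and where $\widetilde{\boldsymbol{u}}_{\rho }$ depends only on $(z_{1},z_{2})$ and $\rho $. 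Substituting into $(\ref{J})$ and pulling the $\sigma $-scaling out of the integral gives $\boldsymbol{J}_{\alpha }(\boldsymbol{\theta })=(\sigma _{1}\sigma _{2})^{-\alpha }\boldsymbol{D}^{-1}\big(\int_{\mathbb{R}^{2}}\phi _{\rho }^{\alpha +1}\widetilde{\boldsymbol{u}}_{\rho }\widetilde{\boldsymbol{u}}_{\rho }^{T}dz_{1}dz_{2}\big)\boldsymbol{D}^{-1}$.

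Next I would identify the reweighted density. Since $\phi _{\rho }^{\alpha +1}$ has the same Gaussian kernel as, hence is proportional to, the density $g_{\rho ,\alpha }$ of a centered bivariate normal vector $(Z_{1},Z_{2})$ with $\mathrm{Var}(Z_{i})=1/(\alpha +1)$ and correlation $\rho $, matching normalizing constants gives $\phi _{\rho }^{\alpha +1}=\big((2\pi \sqrt{1-\rho ^{2}})^{\alpha }(\alpha +1)\big)^{-1}g_{\rho ,\alpha }$. Combining this with $(\sigma _{1}\sigma _{2})^{-\alpha }$ and using $k(\boldsymbol{\theta })=2\pi \sigma _{1}\sigma _{2}\sqrt{1-\rho ^{2}}$ from $(\ref{k})$, one obtains $\boldsymbol{J}_{\alpha }(\boldsymbol{\theta })=\tfrac{1}{k^{\alpha }(\boldsymbol{\theta })(\alpha +1)}\boldsymbol{D}^{-1}\mathrm{E}_{g_{\rho ,\alpha }}[\widetilde{\boldsymbol{u}}_{\rho }\widetilde{\boldsymbol{u}}_{\rho }^{T}]\boldsymbol{D}^{-1}$, so the whole task collapses to computing one $5\times 5$ moment matrix of a bivariate normal.

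For the block structure I would note that, written in $(z_{1},z_{2})$, the two $\mu $-scores $\widetilde{u}_{\mu _{1}},\widetilde{u}_{\mu _{2}}$ are odd while $\widetilde{u}_{\sigma _{1}},\widetilde{u}_{\sigma _{2}},\widetilde{u}_{\rho }$ are even under $(z_{1},z_{2})\mapsto (-z_{1},-z_{2})$, and $g_{\rho ,\alpha }$ is invariant under this reflection; hence the $(\mu ,\,\sigma \rho )$ cross-blocks of $\mathrm{E}_{g_{\rho ,\alpha }}[\widetilde{\boldsymbol{u}}_{\rho }\widetilde{\boldsymbol{u}}_{\rho }^{T}]$ vanish and the announced $2+3$ block decomposition follows. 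The $(\mu _{1},\mu _{2})$ block requires only the second moments $\mathrm{E}[Z_{i}^{2}]=1/(\alpha +1)$ and $\mathrm{E}[Z_{1}Z_{2}]=\rho /(\alpha +1)$; a one-line computation gives $\mathrm{E}_{g_{\rho ,\alpha }}[\widetilde{\boldsymbol{u}}_{1,\rho }\widetilde{\boldsymbol{u}}_{1,\rho }^{T}]=\tfrac{1}{\alpha +1}\boldsymbol{J}_{1}(\rho )$ for the subvector $\widetilde{\boldsymbol{u}}_{1,\rho }$ of the first two components, and reinserting $\boldsymbol{D}_{1,\sigma _{1},\sigma _{2}}$ together with the prefactor $\tfrac{1}{k^{\alpha }(\boldsymbol{\theta })(\alpha +1)}$ reproduces $\boldsymbol{J}_{1,\alpha }(\boldsymbol{\theta })$ in $(\ref{J1})$ with its $(\alpha +1)^{2}$ denominator.

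The remaining $3\times 3$ block is where the work lies. The scores $\widetilde{u}_{\sigma _{1}},\widetilde{u}_{\sigma _{2}},\widetilde{u}_{\rho }$ are affine in the quadratics $z_{1}^{2},z_{2}^{2},z_{1}z_{2}$, so every entry of $\widetilde{\boldsymbol{u}}_{2,\rho }\widetilde{\boldsymbol{u}}_{2,\rho }^{T}$ (the last three components) is a polynomial of degree at most $4$ in $(z_{1},z_{2})$, and its $g_{\rho ,\alpha }$-expectation is computed by Isserlis' formula: $\mathrm{E}[Z_{i}^{4}]=3/(\alpha +1)^{2}$, $\mathrm{E}[Z_{1}^{3}Z_{2}]=\mathrm{E}[Z_{1}Z_{2}^{3}]=3\rho /(\alpha +1)^{2}$, $\mathrm{E}[Z_{1}^{2}Z_{2}^{2}]=(1+2\rho ^{2})/(\alpha +1)^{2}$, together with the second moments above for the cross terms with the constant part of the $\sigma $-scores. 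After expanding each entry and collecting the constant, quadratic and quartic contributions, one checks — using polynomial factorizations such as $2t^{3}-3t^{2}+1=(t-1)^{2}(2t+1)$ and $2t^{2}-3t+1=(2t-1)(t-1)$ with $t=\rho ^{2}$ — that each entry equals $\tfrac{1}{(\alpha +1)^{2}}$ times the corresponding entry of $\boldsymbol{J}_{2,\alpha }(\rho )$; multiplying by $\tfrac{1}{k^{\alpha }(\boldsymbol{\theta })(\alpha +1)}$ and conjugating by $\boldsymbol{D}_{2,\sigma _{1},\sigma _{2}}^{-1}$ then gives the stated expression with its $(\alpha +1)^{3}$ denominator. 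The hard part will be precisely this last computation: keeping the bookkeeping of the fourth-order moments straight and performing the polynomial simplifications so that the apparent $\alpha $-dependence of the six entries collapses into the compact matrix $\boldsymbol{J}_{2,\alpha }(\rho )$.
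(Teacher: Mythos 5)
Your proposal is correct and complete as a strategy. The paper itself never prints a proof of Proposition \ref{Prop3} (it only points to \cite{martin2020} for ``clues''), so there is no in-paper argument to compare against; your route --- standardizing to $(z_1,z_2)=((x-\mu_1)/\sigma_1,(y-\mu_2)/\sigma_2)$, writing $\phi_\rho^{\alpha+1}=\bigl((2\pi\sqrt{1-\rho^2})^{\alpha}(\alpha+1)\bigr)^{-1}g_{\rho,\alpha}$ with $g_{\rho,\alpha}$ the centered bivariate normal of variances $1/(\alpha+1)$ and correlation $\rho$, killing the off-diagonal blocks by the parity of the scores under $(z_1,z_2)\mapsto(-z_1,-z_2)$, and finishing the $3\times3$ block with Isserlis' fourth moments --- is the natural one and is consistent with how $\kappa_\alpha(\boldsymbol{\theta})$ is obtained in Proposition \ref{Prop4} and with the $\boldsymbol{D}$-conjugated form of every matrix in Theorem \ref{Th4}. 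I verified representative entries of your final step: with $\widetilde{u}_{\sigma_1}=-1+\tfrac{z_1^2-\rho z_1z_2}{1-\rho^2}$ one gets $\mathrm{E}_{g_{\rho,\alpha}}[\widetilde{u}_{\sigma_1}^2]=1-\tfrac{2}{\alpha+1}+\tfrac{3-2\rho^2}{(\alpha+1)^2(1-\rho^2)}=\tfrac{\alpha^2-\rho^2(\alpha^2+1)+2}{(\alpha+1)^2(1-\rho^2)}$, and similarly $\mathrm{E}_{g_{\rho,\alpha}}[\widetilde{u}_{\sigma_1}\widetilde{u}_{\rho}]=-\tfrac{\rho(\alpha^2+1)}{(\alpha+1)^2(1-\rho^2)}$ and $\mathrm{E}_{g_{\rho,\alpha}}[\widetilde{u}_{\rho}^2]=\tfrac{\rho^2(\alpha^2+1)+1}{(\alpha+1)^2(1-\rho^2)^2}$, each of which reproduces the corresponding entry of $\boldsymbol{J}_{2,\alpha}(\rho)$ after multiplying by your prefactor $1/(k^{\alpha}(\boldsymbol{\theta})(\alpha+1))$ and conjugating by $\boldsymbol{D}_{2,\sigma_1,\sigma_2}^{-1}$. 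One cosmetic remark: the sample factorizations you quote are only illustrative; the polynomials that actually arise are $2\rho^4-5\rho^2+3=(1-\rho^2)(3-2\rho^2)$ and $2\rho^4-3\rho^2+1=(1-\rho^2)(1-2\rho^2)$, together with $(\alpha+1)^2-2(\alpha+1)=\alpha^2-1$, but this does not affect the argument.
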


\begin{theorem}
	\label{Th4}For the bidimensional normal model, (\ref{model}), we have the
	following results concerning with the expectations of the estimating
	equations%
	\begin{equation*}
		\boldsymbol{S}_{\alpha }(\boldsymbol{\boldsymbol{\theta }})=\left( 
		\begin{array}{cc}
			\boldsymbol{S}_{1,\alpha }(\boldsymbol{\boldsymbol{\theta }}) & \boldsymbol{0%
			}_{2\times 3} \\ 
			\boldsymbol{0}_{3\times 2} & \boldsymbol{S}_{2,\alpha }(\boldsymbol{%
				\boldsymbol{\theta }})%
		\end{array}%
		\right) ,
	\end{equation*}%
	with%
	\begin{equation*}
		\boldsymbol{S}_{1,\alpha }(\boldsymbol{\boldsymbol{\theta }}) =\boldsymbol{J%
		}_{1,\alpha }\left( \boldsymbol{\theta }\right) , \hspace{0.3cm}
		\boldsymbol{S}_{2,\alpha }(\boldsymbol{\boldsymbol{\theta }}) =\frac{1}{%
			k^{\alpha }(\boldsymbol{\theta })(\alpha +1)^{3}}\boldsymbol{D}_{2,\sigma
			_{1},\sigma _{2}}^{-1}\boldsymbol{S}_{2,1}(\rho )\boldsymbol{D}_{2,\sigma
			_{1},\sigma _{2}}^{-1},  \notag
	\end{equation*}%
	and%
	\begin{equation}
		\boldsymbol{S}_{2,1}(\rho )=\frac{1}{1-\rho ^{2}}%
		\begin{pmatrix}
			2-\rho ^{2} & -\rho ^{2} & -\rho \\ 
			-\rho ^{2} & 2-\rho ^{2} & -\rho \\ 
			-\rho & -\rho & \frac{1+\rho ^{2}}{1-\rho ^{2}}%
		\end{pmatrix}%
		.  \label{s2b}
	\end{equation}%
	On the other hand 
	\begin{equation*}
		\boldsymbol{K}_{\alpha }\left( \boldsymbol{\theta }\right) =\left( 
		\begin{array}{cc}
			\boldsymbol{K}_{1,\alpha }\left( \boldsymbol{\theta }\right) & \boldsymbol{0}%
			_{2\times 3} \\ 
			\boldsymbol{0}_{3\times 2} & \boldsymbol{K}_{2,\alpha }\left( \boldsymbol{%
				\theta }\right)%
		\end{array}%
		\right)
	\end{equation*}%
	where 
	\begin{align*}
		\boldsymbol{K}_{1,\alpha }\left( \boldsymbol{\theta }\right) & =\boldsymbol{J%
		}_{1,2\alpha }\left( \boldsymbol{\theta }\right) , \\
		\boldsymbol{K}_{2,\alpha }\left( \boldsymbol{\theta }\right) & =\tfrac{1}{%
			k^{2\alpha }(\boldsymbol{\theta })(2\alpha +1)^{3}\left( \alpha +1\right)
			^{2}}\boldsymbol{D}_{2,\sigma _{1},\sigma _{2}}^{-1}\boldsymbol{K}_{2,\alpha
		}(\rho )\boldsymbol{D}_{2,\sigma _{1},\sigma _{2}}^{-1},
	\end{align*}%
	with%
	\begin{equation}
		\boldsymbol{K}_{2,\alpha }(\rho )=(\alpha +1)^{2}\boldsymbol{S}_{2,1}(\rho
		)+\alpha ^{2}\boldsymbol{S}_{2,2}(\rho ),  \label{k2-1}
	\end{equation}%
	and%
	\begin{equation}
		\boldsymbol{S}_{2,2}(\rho )=\tfrac{1}{1-\rho ^{2}}%
		\begin{pmatrix}
			1-\rho ^{2} & 1-\rho ^{2} & -\rho \\ 
			1-\rho ^{2} & 1-\rho ^{2} & -\rho \\ 
			-\rho & -\rho & \frac{\rho ^{2}}{1-\rho ^{2}}%
		\end{pmatrix}%
		.  \label{s2bb}
	\end{equation}%
	For both, $\boldsymbol{J}_{1,\alpha }\left( \boldsymbol{\theta }\right) $ is
	given by (\ref{J1}), $\boldsymbol{D}_{2,\sigma _{1},\sigma _{2}}$ by (\ref{D}%
	) and $k(\boldsymbol{\theta })$ by (\ref{k}).
\end{theorem}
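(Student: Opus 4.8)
The plan is to specialize the identities (\ref{S-1}) and (\ref{M}) of Theorem~\ref{Th1}, which express $\boldsymbol{S}_{\alpha}(\boldsymbol{\theta})$ and $\boldsymbol{K}_{\alpha}(\boldsymbol{\theta})$ through $\boldsymbol{J}_{\alpha}(\boldsymbol{\theta})$, $\kappa_{\alpha}(\boldsymbol{\theta})$ and $\boldsymbol{c}_{\alpha}(\boldsymbol{\theta})$, to the bidimensional normal model (\ref{model}) by substituting the closed forms of these quantities supplied by Propositions~\ref{Prop3} and~\ref{Prop4}, and then reading off the two diagonal blocks.

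First I would exploit the block structure. By Proposition~\ref{Prop4} the first two components of $\boldsymbol{c}_{\alpha}(\boldsymbol{\theta})$ vanish, $\boldsymbol{c}_{1,\alpha}(\boldsymbol{\theta})=\boldsymbol{0}_{2}$, so both rank-one matrices $\boldsymbol{c}_{\alpha}\boldsymbol{c}_{\alpha}^{T}$ and $(\boldsymbol{c}_{2\alpha}-\boldsymbol{c}_{\alpha})(\boldsymbol{c}_{2\alpha}-\boldsymbol{c}_{\alpha})^{T}$ have zero first ($2\times 2$) block and zero off-diagonal ($2\times 3$ and $3\times 2$) blocks, their only nonzero part being the lower $3\times 3$ corner. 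Since $\boldsymbol{J}_{\alpha}(\boldsymbol{\theta})$ is block diagonal (Proposition~\ref{Prop3}), so are $\boldsymbol{S}_{\alpha}(\boldsymbol{\theta})$ and $\boldsymbol{K}_{\alpha}(\boldsymbol{\theta})$, and in particular $\boldsymbol{S}_{1,\alpha}(\boldsymbol{\theta})=\boldsymbol{J}_{1,\alpha}(\boldsymbol{\theta})$ and $\boldsymbol{K}_{1,\alpha}(\boldsymbol{\theta})=\boldsymbol{S}_{1,2\alpha}(\boldsymbol{\theta})=\boldsymbol{J}_{1,2\alpha}(\boldsymbol{\theta})$, which settles the $(\mu_{1},\mu_{2})$ block.

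For the $(\sigma_{1},\sigma_{2},\rho)$ block I would write $\boldsymbol{c}_{2,\alpha}(\boldsymbol{\theta})=\tfrac{\alpha}{\alpha+1}\boldsymbol{D}_{2,\sigma_{1},\sigma_{2}}^{-1}\boldsymbol{v}$ with $\boldsymbol{v}=(-1,-1,\tfrac{\rho}{1-\rho^{2}})^{T}$, recall $\kappa_{\alpha}(\boldsymbol{\theta})=1/(k^{\alpha}(\boldsymbol{\theta})(\alpha+1))$, and use the expression of $\boldsymbol{J}_{2,\alpha}(\boldsymbol{\theta})$ from Proposition~\ref{Prop3}. Factoring the common $\tfrac{1}{k^{\alpha}(\boldsymbol{\theta})(\alpha+1)^{3}}\boldsymbol{D}_{2,\sigma_{1},\sigma_{2}}^{-1}(\,\cdot\,)\boldsymbol{D}_{2,\sigma_{1},\sigma_{2}}^{-1}$ out of $\boldsymbol{S}_{2,\alpha}(\boldsymbol{\theta})=\boldsymbol{J}_{2,\alpha}(\boldsymbol{\theta})-\kappa_{\alpha}(\boldsymbol{\theta})\boldsymbol{c}_{2,\alpha}(\boldsymbol{\theta})\boldsymbol{c}_{2,\alpha}^{T}(\boldsymbol{\theta})$ reduces the claim to the purely algebraic identity $\boldsymbol{J}_{2,\alpha}(\rho)-\alpha^{2}\boldsymbol{v}\boldsymbol{v}^{T}=\boldsymbol{S}_{2,1}(\rho)$; checking the four distinct entries of these $3\times 3$ matrices shows the $\alpha^{2}$ terms cancel, leaving exactly (\ref{s2b}) — note in particular that the $\rho$-part of $\boldsymbol{S}_{2,\alpha}$ turns out to be independent of $\alpha$, which is why it is denoted $\boldsymbol{S}_{2,1}$. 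Replacing $\alpha$ by $2\alpha$ then gives the lower block of $\boldsymbol{S}_{2\alpha}(\boldsymbol{\theta})$ as $\tfrac{1}{k^{2\alpha}(\boldsymbol{\theta})(2\alpha+1)^{3}}\boldsymbol{D}_{2,\sigma_{1},\sigma_{2}}^{-1}\boldsymbol{S}_{2,1}(\rho)\boldsymbol{D}_{2,\sigma_{1},\sigma_{2}}^{-1}$.

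Finally, for $\boldsymbol{K}_{2,\alpha}(\boldsymbol{\theta})$ I would simplify $\boldsymbol{c}_{2,2\alpha}(\boldsymbol{\theta})-\boldsymbol{c}_{2,\alpha}(\boldsymbol{\theta})=\bigl(\tfrac{2\alpha}{2\alpha+1}-\tfrac{\alpha}{\alpha+1}\bigr)\boldsymbol{D}_{2,\sigma_{1},\sigma_{2}}^{-1}\boldsymbol{v}=\tfrac{\alpha}{(2\alpha+1)(\alpha+1)}\boldsymbol{D}_{2,\sigma_{1},\sigma_{2}}^{-1}\boldsymbol{v}$, so that with $\kappa_{2\alpha}(\boldsymbol{\theta})=1/(k^{2\alpha}(\boldsymbol{\theta})(2\alpha+1))$ the rank-one correction in (\ref{M}) equals $\tfrac{\alpha^{2}}{k^{2\alpha}(\boldsymbol{\theta})(2\alpha+1)^{3}(\alpha+1)^{2}}\boldsymbol{D}_{2,\sigma_{1},\sigma_{2}}^{-1}\boldsymbol{v}\boldsymbol{v}^{T}\boldsymbol{D}_{2,\sigma_{1},\sigma_{2}}^{-1}$. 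Adding it to the lower block of $\boldsymbol{S}_{2\alpha}(\boldsymbol{\theta})$ and pulling out the factor $\tfrac{1}{k^{2\alpha}(\boldsymbol{\theta})(2\alpha+1)^{3}(\alpha+1)^{2}}\boldsymbol{D}_{2,\sigma_{1},\sigma_{2}}^{-1}(\,\cdot\,)\boldsymbol{D}_{2,\sigma_{1},\sigma_{2}}^{-1}$ leaves $(\alpha+1)^{2}\boldsymbol{S}_{2,1}(\rho)+\alpha^{2}\boldsymbol{v}\boldsymbol{v}^{T}$, and one checks directly that $\boldsymbol{v}\boldsymbol{v}^{T}=\boldsymbol{S}_{2,2}(\rho)$ of (\ref{s2bb}), which gives (\ref{k2-1}). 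The only genuine work is the elementary verification of the two $3\times 3$ matrix identities $\boldsymbol{J}_{2,\alpha}(\rho)-\alpha^{2}\boldsymbol{v}\boldsymbol{v}^{T}=\boldsymbol{S}_{2,1}(\rho)$ and $\boldsymbol{v}\boldsymbol{v}^{T}=\boldsymbol{S}_{2,2}(\rho)$; the rest is careful bookkeeping of the scalar factors $k^{-\alpha}(\boldsymbol{\theta})$, $(\alpha+1)$, $(2\alpha+1)$, and I expect keeping those powers straight under the $\alpha\mapsto 2\alpha$ substitution to be the only place where a slip is easy.
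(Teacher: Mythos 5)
Your proposal is correct and follows essentially the same route as the paper's proof in Appendix~\ref{A2}: the vanishing of $\boldsymbol{c}_{1,\alpha}(\boldsymbol{\theta})$ gives the first block trivially, and for the second block you factor out $\tfrac{1}{k^{\alpha}(\boldsymbol{\theta})(\alpha+1)^{3}}\boldsymbol{D}_{2,\sigma_{1},\sigma_{2}}^{-1}(\cdot)\boldsymbol{D}_{2,\sigma_{1},\sigma_{2}}^{-1}$ and verify $\boldsymbol{J}_{2,\alpha}(\rho)-\alpha^{2}\boldsymbol{S}_{2,2}(\rho)=\boldsymbol{S}_{2,1}(\rho)$ with $\boldsymbol{S}_{2,2}(\rho)=\boldsymbol{v}\boldsymbol{v}^{T}$, exactly as the paper does. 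The bookkeeping of the scalar factors under $\alpha\mapsto 2\alpha$ in your last paragraph also matches the paper's computation of $\boldsymbol{K}_{2,\alpha}(\boldsymbol{\theta})$.
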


\begin{proof}
	See Appendix \ref{A2}.\medskip
\end{proof}

Inverting the diagonal blocks of $\boldsymbol{S}_{\alpha }(\boldsymbol{%
	\boldsymbol{\theta }})$, we obtain 
\begin{equation}
	\boldsymbol{S}_{1,\alpha }^{-1}(\boldsymbol{\boldsymbol{\theta }})=k^{\alpha
	}(\boldsymbol{\theta })\left( \alpha +1\right) ^{2}\boldsymbol{D}_{1,\sigma
		_{1},\sigma _{2}}\boldsymbol{J}_{1}^{-1}(\rho )\boldsymbol{D}_{1,\sigma
		_{1},\sigma _{2}},  \label{s1}
\end{equation}%
where%
\begin{equation*}
	\boldsymbol{J}_{1}^{-1}(\rho )=\left( 
	\begin{array}{cc}
		1 & \rho \\ 
		\rho & 1%
	\end{array}%
	\right)
\end{equation*}%
and 
\begin{equation}
	\boldsymbol{S}_{2,\alpha }^{-1}(\boldsymbol{\boldsymbol{\theta }})=k^{\alpha
	}(\boldsymbol{\theta })\left( \alpha +1\right) ^{3}\boldsymbol{D}_{2,\sigma
		_{1},\sigma _{2}}\boldsymbol{S}_{2,1}^{-1}(\rho )\boldsymbol{D}_{2,\sigma
		_{1},\sigma _{2}},  \label{s2}
\end{equation}%
where%
\begin{equation}
	\boldsymbol{S}_{2,1}^{-1}(\rho )=\frac{1}{2}%
	\begin{pmatrix}
		1 & \rho ^{2} & \rho (1-\rho ^{2}) \\ 
		\rho ^{2} & 1 & \rho (1-\rho ^{2}) \\ 
		\rho (1-\rho ^{2}) & \rho (1-\rho ^{2}) & 2(1-\rho ^{2})^{2}%
	\end{pmatrix}%
	.  \label{s2rho}
\end{equation}%
Therefore, (\ref{V}) is given by 
\begin{equation}
	\boldsymbol{\boldsymbol{V}}_{\alpha }\boldsymbol{\left( {\boldsymbol{\theta }%
		}\right) =}\left( 
	\begin{array}{cc}
		\boldsymbol{V}_{1,\alpha }\boldsymbol{\left( {\boldsymbol{\theta }}\right) }
		& \boldsymbol{0}_{2\times 3} \\ 
		\boldsymbol{0}_{3\times 2} & \boldsymbol{V}_{2,\alpha }\boldsymbol{\left( {%
				\boldsymbol{\theta }}\right) }%
	\end{array}%
	\right) ,  \label{1.4}
\end{equation}%
where 
\begin{align}
	\boldsymbol{V}_{1,\alpha }({\boldsymbol{\theta }})& =\boldsymbol{S}%
	_{1,\alpha }^{-1}\left( \boldsymbol{\theta }\right) \boldsymbol{K}_{1,\alpha
	}\left( \boldsymbol{\theta }\right) \boldsymbol{S}_{1,\alpha }^{-1}\left( 
	\boldsymbol{\theta }\right) =\frac{\left( \alpha +1\right) ^{4}}{\left(
		2\alpha +1\right) ^{2}}\boldsymbol{D}_{1,\sigma _{1},\sigma _{2}}\boldsymbol{%
		J}_{1}^{-1}(\rho )\boldsymbol{D}_{1,\sigma _{1},\sigma _{2}},  \label{v1} \\
	\boldsymbol{V}_{2,\alpha }({\boldsymbol{\theta }})& =\boldsymbol{S}%
	_{2,\alpha }^{-1}\left( \boldsymbol{\theta }\right) \boldsymbol{K}_{2,\alpha
	}\left( \boldsymbol{\theta }\right) \boldsymbol{S}_{2,\alpha }^{-1}\left( 
	\boldsymbol{\theta }\right) =\frac{\left( \alpha +1\right) ^{4}}{\left(
		2\alpha +1\right) ^{3}}\boldsymbol{D}_{2,\sigma _{1},\sigma _{2}}\boldsymbol{%
		V}_{2,\alpha }\left( \rho \right) \boldsymbol{D}_{2,\sigma _{1},\sigma _{2}},
	\label{v2}
\end{align}%
with%
\begin{equation}
	\boldsymbol{V}_{2,\alpha }\left( \rho \right) =\boldsymbol{S}%
	_{2,1}^{-1}(\rho )\boldsymbol{K}_{2,\alpha }(\rho )\boldsymbol{S}%
	_{2,1}^{-1}(\rho )=(\alpha +1)^{2}\boldsymbol{S}_{2,1}^{-1}(\rho )+\alpha
	^{2}\boldsymbol{S}_{2,1}^{-1}(\rho )\boldsymbol{S}_{2,2}(\rho )\boldsymbol{S}%
	_{2,1}^{-1}(\rho ).  \label{v2rho}
\end{equation}

Based on the previous results we have the following Theorem.

\begin{theorem}
	\label{Th5}For the bidimensional normal model, (\ref{model}), the MRPDE for $%
	\boldsymbol{\theta }$, 
	\begin{equation}
		\widehat{\boldsymbol{\theta }}_{R,\alpha }=(\widehat{\mu }_{1,R,\alpha },%
		\widehat{\mu }_{2,R,\alpha },\widehat{\sigma }_{1,R,\alpha },\widehat{\sigma 
		}_{2,R,\alpha },\widehat{\rho }_{R,\alpha })^{T},  \label{thetaR}
	\end{equation}%
	is obtained as a solution of%
	\begin{equation*}
		\sum_{i=1}^{n}w_{i,\boldsymbol{\theta }}^{-\alpha }\left( \boldsymbol{u}_{%
			\boldsymbol{\theta }}(X_{i},Y_{i})-\boldsymbol{c}_{\alpha }(\boldsymbol{%
			\theta })\right) =\boldsymbol{0}_{5},
	\end{equation*}%
	with 
	\begin{equation}
		w_{i,\boldsymbol{\theta }}=\exp \left\{ \tfrac{1}{2(1-\rho ^{2})}\left[ (%
		\tfrac{X_{i}-\mu _{1}}{\sigma _{1}})^{2}+(\tfrac{Y_{i}-\mu _{2}}{\sigma _{2}}%
		)^{2}-2\rho \tfrac{X_{i}-\mu _{1}}{\sigma _{1}}\tfrac{Y_{i}-\mu _{2}}{\sigma
			_{2}}\right] \right\} ,  \label{eq:f_alpha}
	\end{equation}%
	$\boldsymbol{u}_{\boldsymbol{\theta }}(X_{i},Y_{i})$ is given in Proposition %
	\ref{Prop2} and $\boldsymbol{c}_{\alpha }(\boldsymbol{\theta })$\ in
	Proposition \ref{Prop4}. The corresponding asymptotic distribution is%
	\begin{equation}
		\sqrt{n}(\widehat{\boldsymbol{\theta }}_{R,\alpha }-\boldsymbol{\theta }_{0})%
		\underset{n\rightarrow \infty }{\overset{\mathcal{L}}{\rightarrow }}\mathcal{%
			N}\left( \boldsymbol{0}_{p},\boldsymbol{V}_{\alpha }\left( \boldsymbol{%
			\theta }_{0}\right) \right) ,  \label{1.5}
	\end{equation}%
	where $\boldsymbol{\theta }_{0}$ is the true unknown value of (\ref{theta})
	and $\boldsymbol{V}_{\alpha }\left( {\boldsymbol{\theta }}\right) $ was
	given in (\ref{1.4}).
\end{theorem}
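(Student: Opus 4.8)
The plan is to split the argument into two parts: first deriving the displayed estimating equations, and then reading off the asymptotic law from the general $M$-estimation theory already set up in the paper. For the estimating equations I would start from the generic characterization (\ref{eq})--(\ref{cc}), namely $\sum_{i=1}^{n}\boldsymbol{\Psi }_{\alpha }((X_{i},Y_{i});\boldsymbol{\theta })=\boldsymbol{0}_{5}$ with $\boldsymbol{\Psi }_{\alpha }=f_{\boldsymbol{\theta }}^{\alpha }\bigl(\boldsymbol{u}_{\boldsymbol{\theta }}-\boldsymbol{c}_{\alpha }(\boldsymbol{\theta })\bigr)$, now with $x$ replaced by the pair $(x,y)$ and $f_{\boldsymbol{\theta }}$ the bivariate normal density (\ref{model}). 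The key observation is that raising (\ref{model}) to the power $\alpha$ factorizes as $f_{\boldsymbol{\theta }}^{\alpha }(X_{i},Y_{i})=k^{-\alpha }(\boldsymbol{\theta })\,w_{i,\boldsymbol{\theta }}^{-\alpha }$, where $k(\boldsymbol{\theta })$ is the normalizing constant (\ref{k}) and $w_{i,\boldsymbol{\theta }}$ is exactly the exponential factor (\ref{eq:f_alpha}). Since $k^{-\alpha }(\boldsymbol{\theta })>0$ is a common scalar multiplying every summand, the estimating equation is equivalent to $\sum_{i=1}^{n}w_{i,\boldsymbol{\theta }}^{-\alpha }\bigl(\boldsymbol{u}_{\boldsymbol{\theta }}(X_{i},Y_{i})-\boldsymbol{c}_{\alpha }(\boldsymbol{\theta })\bigr)=\boldsymbol{0}_{5}$, and the explicit forms of $\boldsymbol{u}_{\boldsymbol{\theta }}$ and $\boldsymbol{c}_{\alpha }(\boldsymbol{\theta })$ are supplied by Propositions \ref{Prop2} and \ref{Prop4}; equivalently one may differentiate the objective $\sum_{i}w_{\alpha }(\boldsymbol{\theta })f_{\boldsymbol{\theta }}^{\alpha }(X_{i},Y_{i})$ in (\ref{1.2}) directly and recognize the same factorization.

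For the asymptotic distribution I would invoke the general MRPDE limit law (\ref{asymptTheta}) together with Theorem \ref{Th1}: the MRPDE is an $M$-estimator, so under the usual regularity conditions (identifiability of $\boldsymbol{\theta }$ in the bivariate normal family, smoothness of $f_{\boldsymbol{\theta }}$ in $\boldsymbol{\theta }$, finiteness of the moments entering $\boldsymbol{S}_{\alpha }$ and $\boldsymbol{K}_{\alpha }$, and non-singularity of $\boldsymbol{S}_{\alpha }$) one has $\sqrt{n}(\widehat{\boldsymbol{\theta }}_{R,\alpha }-\boldsymbol{\theta }_{0})\overset{\mathcal{L}}{\rightarrow }\mathcal{N}(\boldsymbol{0}_{5},\boldsymbol{S}_{\alpha }^{-1}\boldsymbol{K}_{\alpha }\boldsymbol{S}_{\alpha }^{-1})$. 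These conditions are routine here because Theorem \ref{Th4} (together with Proposition \ref{Prop3}) gives $\boldsymbol{S}_{\alpha }(\boldsymbol{\theta })$ and $\boldsymbol{K}_{\alpha }(\boldsymbol{\theta })$ in closed form, in particular exhibiting them as finite and block-diagonal; it then remains to check that the blocks are invertible, which follows from $\det \boldsymbol{J}_{1}(\rho )=(1-\rho ^{2})^{-1}\neq 0$ and $\det \boldsymbol{S}_{2,1}(\rho )\neq 0$ for all $\rho \in (-1,1)$ (indeed the explicit inverses $\boldsymbol{J}_{1}^{-1}(\rho )$ and $\boldsymbol{S}_{2,1}^{-1}(\rho )$ are displayed in (\ref{s1}) and (\ref{s2rho})). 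The matrix $\boldsymbol{V}_{\alpha }(\boldsymbol{\theta }_{0})=\boldsymbol{S}_{\alpha }^{-1}\boldsymbol{K}_{\alpha }\boldsymbol{S}_{\alpha }^{-1}$ is then assembled block by block using the inverse formulas (\ref{s1})--(\ref{s2rho}) and the identity (\ref{k2-1}), which is precisely the content of (\ref{1.4})--(\ref{v2rho}) worked out just above the theorem.

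The main obstacle is bookkeeping rather than anything conceptual: one must track the $\boldsymbol{D}_{1,\sigma _{1},\sigma _{2}}$ and $\boldsymbol{D}_{2,\sigma _{1},\sigma _{2}}$ conjugations and the powers of $k(\boldsymbol{\theta })$ and of $(\alpha +1)$ so that they cancel correctly in the sandwich $\boldsymbol{S}_{\alpha }^{-1}\boldsymbol{K}_{\alpha }\boldsymbol{S}_{\alpha }^{-1}$, leaving the clean scalars $(\alpha +1)^{4}/(2\alpha +1)^{2}$ and $(\alpha +1)^{4}/(2\alpha +1)^{3}$ in (\ref{v1})--(\ref{v2}); the substantive computations of $\boldsymbol{S}_{\alpha }$, $\boldsymbol{K}_{\alpha }$ and $\boldsymbol{J}_{\alpha }$ are already discharged by Theorem \ref{Th4} and Proposition \ref{Prop3}, and the regularity conditions underlying (\ref{asymptTheta}) for this model can be imported from \cite{martin2020} or adapted from \cite{Broniatowski2012}.
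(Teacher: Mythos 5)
Your proposal is correct and follows essentially the same route as the paper: the paper gives no separate proof of Theorem \ref{Th5} precisely because, as you observe, the estimating equations are the generic ones (\ref{eq}) with the factorization $f_{\boldsymbol{\theta }}^{\alpha }(X_{i},Y_{i})=k^{-\alpha }(\boldsymbol{\theta })w_{i,\boldsymbol{\theta }}^{-\alpha }$ and the positive constant $k^{-\alpha }(\boldsymbol{\theta })$ cancelled, while the limit law is (\ref{asymptTheta}) with $\boldsymbol{V}_{\alpha }$ assembled from Theorem \ref{Th4} and the displayed inverses (\ref{s1})--(\ref{s2rho}) exactly as in (\ref{1.4})--(\ref{v2rho}). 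Your added remarks on invertibility of the diagonal blocks and the regularity conditions are consistent with what the paper leaves implicit.
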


The following algorithm is useful for computing the MRPDE of $\boldsymbol{%
	\theta }$ given in Theorem \ref{Th5}. It works iteratively for a sequence of
increasing values of the tuning parameter, $\alpha \in \{\alpha
_{k}\}_{k=0}^{K}$ with $\alpha _{k}=\frac{k}{K}$,
having a very simple iterative
scheme and converging rapidly to the final optimal value. As the MLEs have
an explicit expression, the tuning parameter $\alpha _{0}=0$ initializes the
iterations. Herein the following parameter transformation is considered 
\begin{equation*}
	\boldsymbol{\vartheta }=\left( \mu _{1},\mu _{2},\zeta _{1}^{2},\zeta
	_{2}^{2},\rho \right) ^{T}
	\hspace{0.3cm} \text{where} \hspace{0.3cm} 
	\sigma _{j}^{2}=(\alpha +1)\zeta _{j}^{2},\quad j=1,2.
\end{equation*}%
The strength of the algorithm is its simplicity for estimating in a chained
way and with the semi-explicit expressions given in the inner iterations,
with expression which mimics the MLEs as weighted version (see the
corresponding proof given in Appendix \ref{A3}). The updating recursive
elements comprise only of the weights, as the name Iteratively Reweighted
Moments Algorithm suggests. 
The semi-explicit expressions given in the inner iterations of Algorithm \ref%
{Algo} are particular cases of (6) and (7) of \cite{toma2015} 
with $N$ equals $2$. However, our proposed algorithm is different from the
one proposed in the Monte-Carlo simulations of Toma and Leoni-Aubin (2015)
in two features. First, their algorithm does not consider outer iterations
as the estimation is initialized always with the MLE and second, they do not
consider the reparameterization of the variance-covariance matrix. Both are
crucial features, first one to save iterations when a grid of the tuning
parameter is considered, taking into account the smoothness of the objective
function. The second one justifies our proposed Iteratively Reweighted
Moments Algorithm to be an EM algorithm. It is well-known that some
M-estimators are obtained from estimating equations associated to scale
mixture of normal distributions, and their corresponding iterative
reweighted algorithm is an EM algorithm. Under weak conditions, each step of
IRLS increases the objective function and the solution of the iterative
reweighted algorithm converges to a local maximum of the likelihood
function. The minimum pseudodistance distance estimators for bivariate
normal populations falls within this class of M-estimators. From Algorithm %
\ref{Algo} and taking into account the Bayesian interpretation of the EM
algorithm (see for example Section 12.2 in \cite{little2019}, we
obtain the following way of understanding the weights update or the E step.
Let us consider the sample $(X_{i}^{(k)},Y_{i}^{(k)},Z_{i}^{(k)})$, $%
i=1,\ldots ,n$, with $(X_{i}^{(k)},Y_{i}^{(k)})$ being a bivariate normal
distribution with parameters $\boldsymbol{\vartheta }_{k}=( \mu
_{1},\mu _{2},\zeta _{1,k}^{2},\zeta _{2,k}^{2},\rho ) ^{T}$, where $%
\zeta _{j,k}^{2}=\frac{1}{\alpha _{k}+1}\sigma _{j}^{2}$, $j=1,2$, and $%
Z_{i}^{(k)}$ being an auxiliary random variable (a latent random variable
for the usual EM algorithm with missing data). Assuming that $Z_{i}^{(k)}$
is a random variable degenerated at $\frac{\alpha _{k}}{\alpha _{k}+1}$
(prior distribution), it holds $\left( (X_{i}^{(k)},Y_{i}^{(k)})\left\vert Z_{i}^{(k)}=z\right. \right) $ behaves as a bivariate
normal distribution with parameters $(\mu _{1},\mu _{2},\frac{\zeta
	_{1,k}^{2}}{z},\frac{\zeta _{2,k}^{2}}{z},\rho )^{T}$, while the weight is
the expectation of the posterior distribution (line 13 of Algorithm \ref%
{Algo}),%
\begin{equation*}
	\widehat{\varpi }_{i,\alpha _{k}}=E\left[ Z_{i}^{(k)}\left\vert
	(X_{i}^{(k)},Y_{i}^{(k)})\right. \right] ,\quad i=1,\ldots ,n.
\end{equation*}
The estimator of $\boldsymbol{\vartheta }_{k}$ is updated in the M step
(line 14 of Algorithm \ref{Algo}). While parameters $\zeta _{j,k}^{2}$, $%
j=1,2$, produce a shrinkage effect on the original variance parameters $%
\sigma _{j}^{2}$, $j=1,2$, the estimates associated with small values the
weights, $\widehat{\varpi }_{i,\alpha _{k}}^{(k)}$, $i=1,\ldots ,n$, produce
a down-weighting effect on estimates of $\boldsymbol{\vartheta }_{k}$\ to
prevent from high value of the Mahalanobis distance (line 11) for the $i$-th
observation and $k$-th tuning parameter $\alpha _{k}$, $k=1,\ldots ,n$.

\begin{algorithm}
	\scriptsize
	\caption{Iteratively Reweighted Moments Algorithm for MRPDE of $\boldsymbol{\vartheta }$}
	\label{Algo}
	\begin{algorithmic}[1] 
		\Require  $K$, $\{\alpha _{k}=\frac{k}{K}\}_{k=0}^{K}$, $\{(X_{i},Y_{i})\}_{i=1}^{n}$, $\xi $;
		\State $k\leftarrow 0$, $r_{k=0}\leftarrow 0$
		\Comment {Inizialization}
		\State $\mathrm{vec}\{\widehat{\varpi}_{i,\alpha
			_{k=0}=0,(r_{k=0}=0)}\}_{i=1}^{n}=\boldsymbol{1}_{n} $
		\State Compute $\widehat{\boldsymbol{\vartheta }}_{\alpha
			_{k=0}=0,(r_{k=0}=0)}$:
		\begin{align*}
			\widehat{\mu }_{1,R,\alpha _{k=0}=0,(r_{k=0}=0)}& =\frac{\sum_{i=1}^{n}X_{i}%
			}{n},\quad \widehat{\zeta }_{1,R,\alpha _{k=0}}^{2}=\frac{%
				\sum_{i=1}^{n}(X_{i}-\widehat{\mu }_{1,R,\alpha _{0}=0,(r_{k=0}=0)})^{2}}{n},
			\\
			\widehat{\mu }_{2,R,\alpha _{k=0}=0,(r_{k=0}=0)}& =\frac{\sum_{i=1}^{n}Y_{i}%
			}{n},\quad \widehat{\zeta }_{2,R,\alpha _{k=0}}^{2}=\frac{%
				\sum_{i=1}^{n}(Y_{i}-\widehat{\mu }_{2,R,\alpha _{0}=0,(r_{k=0}=0)})^{2}}{n},
			\\
			\widehat{\rho }_{R,\alpha _{k=0}=0,(r_{k=0}=0)}& =\frac{\sum_{i=1}^{n}\frac{%
					X_{i}-\widehat{\mu }_{1,R,\alpha _{k=0}=0,(r_{k=0}=0)}}{\widehat{\zeta }%
					_{1,R,\alpha _{k=0}=0,(r_{k=0}=0)}}\frac{Y_{i}-\widehat{\mu }_{2,R,\alpha
						_{0}=0,(r_{k=0}=0)}}{\widehat{\zeta }_{2,R,\alpha _{k=0}=0,(r_{k=0}=0)}}}{n};
		\end{align*}
		
		\While{$k<K$}
		\Comment {Outer loop starts}
		\State $k\leftarrow k+1$, $r_{k}\leftarrow 0$
		\State $\mathrm{vec}\{\widehat{\varpi}_{i,\alpha
			_{k},(r_{k}=0)}\}_{i=1}^{n}\leftarrow \mathrm{vec}\{\widehat{\varpi}_{i,\alpha _{k-1,r_{k-1}}}\}_{i=1}^{n}$
		\State $\widehat{\boldsymbol{\vartheta}}_{\alpha _{k},(r_{k}=0)}=\widehat{\boldsymbol{\vartheta}}_{\alpha
			_{k-1,r_{k-1}}}$
		\Comment {Inner loop starts}
		\Repeat 
		\State  $\mathrm{vec}\{\widehat{X}_{i,\alpha _{k},(r_{k})}\}_{i=1}^{n}\leftarrow 
		\mathrm{vec}\left\{ \tfrac{X_{i}-\widehat{\mu }_{1,R,\alpha _{k},(r_{k})}}{%
			\widehat{\zeta }_{1,R,\alpha _{k},(r_{k})}}\right\} _{i=1}^{n}$
		\State  $\mathrm{vec}\{\widehat{Y}_{i,\alpha _{k},(r_{k})}\}_{i=1}^{n}\leftarrow 
		\mathrm{vec}\left\{ \tfrac{Y_{i}-\widehat{\mu }_{2,R,\alpha _{k},(r_{k})}}{%
			\widehat{\zeta }_{2,R,\alpha _{k},(r_{k})}}\right\} _{i=1}^{n}$
		\State  $\mathrm{vec}\{d_{i,\alpha _{k},(r_{k})}^{2}\}_{i=1}^{n}\leftarrow \mathrm{%
			vec}\left\{ \tfrac{\widehat{X}_{i,\alpha _{k},(r_{k})}^{2}+\widehat{Y}%
			_{i,\alpha _{k},(r_{k})}^{2}-2\widehat{\rho }_{1,R,\alpha _{k},(r_{k})}%
			\widehat{X}_{i,\alpha _{k},(r_{k})}\widehat{Y}_{i,\alpha _{k},(r_{k})}}{%
			1- \widehat{\rho }_{1,R,\alpha _{k},(r_{k})} ^{2}}\right\}
		_{i=1}^{n}$
		\State $r_{k} \leftarrow r_{k}+1, $
		\State $\mathrm{vec}\{\widehat{\varpi }_{i,\alpha
			_{k},(r_{k})}\}_{i=1}^{n}\leftarrow \mathrm{vec}\{\exp ^{-\frac{\alpha _{h}}{%
				\alpha _{h}+1}}(\frac{1}{2}d_{i,\alpha _{k},(r_{k}-1)}^{2})\}_{i=1}^{n}$
		\State Compute $\widehat{\boldsymbol{\vartheta }}_{\alpha
			_{k},(r_{k})}$:%
		\begin{align*}
			t_{\alpha _{k},(r_{k})}& =\sum_{i=1}^{n}\widehat{\varpi }_{i,\alpha
				_{k},(r_{k})}, \\
			\widehat{\mu }_{1,R,\alpha _{k},(r_{k})}& =\frac{\sum_{i=1}^{n}\widehat{%
					\varpi }_{i,\alpha _{k},(r_{k})}X_{i}}{t_{\alpha _{k},(r_{k})}},\quad 
			\widehat{\zeta }_{1,R,\alpha _{k},(r_{k})}^{2}=\frac{\sum_{i=1}^{n}\widehat{%
					\varpi }_{i,\alpha _{k},(r_{k})}(X_{i}-\widehat{\mu }_{1,R,\alpha
					_{k},(r_{k})})^{2}}{t_{\alpha _{k},(r_{k})}}, \\
			\widehat{\mu }_{2,R,\alpha _{k},(r_{k})}& =\frac{\sum_{i=1}^{n}\widehat{%
					\varpi }_{i,\alpha _{k},(r_{k})}Y_{i}}{t_{\alpha _{k},(r_{k})}},\quad 
			\widehat{\zeta }_{2,R,\alpha _{k},(r_{k})}^{2}=\frac{\sum_{i=1}^{n}\widehat{%
					\varpi }_{i,\alpha _{k},(r_{k})}(Y_{i}-\widehat{\mu }_{2,R,\alpha
					_{k},(r_{k})})^{2}}{t_{\alpha _{k},(r_{k})}}, \\
			\widehat{\rho }_{R,\alpha _{k},(r_{k})}& =\frac{\sum_{i=1}^{n}\widehat{%
					\varpi }_{i,\alpha _{k},(r_{k})}\tfrac{X_{i}-\widehat{\mu }_{1,R,\alpha
						_{k},(r_{k})}}{\widehat{\zeta }_{1,R,\alpha _{k},(r_{k})}}\tfrac{Y_{i}-%
					\widehat{\mu }_{2,R,\alpha _{k},(r_{k})}}{\widehat{\zeta }_{2,R,\alpha
						_{k},(r_{k})}}}{t_{\alpha _{k},(r_{k})}}
		\end{align*}
		\Until{ $\left\Vert \widehat{\boldsymbol{\vartheta}}_{\alpha _{k},(r_{k})}-\widehat{\boldsymbol{\vartheta}}_{\alpha _{k},(r_{k}-1)}\right\Vert _{2}<\xi $}
		\Comment {Inner loop ends}
		\EndWhile
		\Comment {Outer loop ends}
		\Ensure $\{\widehat{\boldsymbol{\vartheta}}_{\alpha _{k}}=\widehat{\boldsymbol{\vartheta}}_{\alpha _{k},(r_{k})}\}_{k=0}^{K}$.
	\end{algorithmic}
\end{algorithm}

It is clear that due to the invariance property of the MRPDEs, it holds%
\begin{equation*}
	\widehat{\sigma }_{j,R,\alpha }=\sqrt{\alpha +1}\widehat{\zeta }_{j,R,\alpha
	},\quad j=1,2.
\end{equation*}

\section{Wald-type tests based on R\'{e}nyi's pseudodistance estimators \label%
	{secWald}}

Based on the asymptotic distribution of the MRPDE for $\boldsymbol{\theta }$%
, $\widehat{\boldsymbol{\theta }}_{R,\alpha }$, given in Theorem \ref{Th5},
we present Wald-type tests for testing composite null hypothesis regarding
bidimensional normal model parameters.

The restricted parameter space $\Theta _{0}\subset \Theta =\mathbb{R}%
^{2}\times \mathbb{R}_{+}^{2}\times (-1,1)$, is often defined by a set of $r$
restrictions of the form 
\begin{equation}
	\boldsymbol{m}(\boldsymbol{\theta })=\boldsymbol{0}_{r},  \label{2.5}
\end{equation}%
where $\boldsymbol{\theta }$ is (\ref{theta}) and $\boldsymbol{m}:\;\Theta
\rightarrow \mathbb{R}^{r}$ (see Serfling 1980). Assume that the $5\times r$
matrix 
\begin{equation}
	\boldsymbol{M}({\boldsymbol{\theta }})=\frac{\partial \boldsymbol{m}^{T}(%
		\boldsymbol{\theta })}{\partial {\boldsymbol{\theta }}}  \label{2.6}
\end{equation}%
exists and is continuous in ${\boldsymbol{\theta }}$, and rank$\left( 
\boldsymbol{M}({\boldsymbol{\theta }})\right) =r$, where $r\leq 5$. Let $%
(X_{1},Y_{1}),\ldots ,\allowbreak (X_{n},Y_{n})$ be a random sample of size $%
n$ from a distribution modelled by the bidimensional normal model
probability density function $f_{\boldsymbol{\theta }}(x,y)$, where ${%
	\boldsymbol{\theta }}\in \Theta $. Our interest is in testing the hypothesis 
\begin{equation}
	H_{0}:{\boldsymbol{\theta }}\in \Theta _{0}\quad \text{against}\quad H_{1}:%
	\boldsymbol{\theta }\notin \Theta _{0},  \label{4.1}
\end{equation}%
where $\Theta _{0} = \{\boldsymbol{\theta} \in \Theta : \boldsymbol{m}(\boldsymbol{\theta}) = \boldsymbol{0} \}$. 

\begin{definition}
	\label{defW}Let $\widehat{\boldsymbol{\theta }}_{R,\alpha }$ be the MDPDE of 
	${\boldsymbol{\theta }}$. The family of proposed Wald-type test statistics
	for testing the null hypothesis in (\ref{4.1}) is given by 
	\begin{equation}
		W_{n,\alpha }(\widehat{\boldsymbol{\theta }}_{R,\alpha })=n\boldsymbol{m}%
		^{T}(\widehat{\boldsymbol{\theta }}_{R,\alpha })\left( \boldsymbol{M}^{T}(%
		\widehat{\boldsymbol{\theta }}_{R,\alpha })\boldsymbol{V}_{\alpha }(\widehat{%
			\boldsymbol{\theta }}_{R,\alpha })\boldsymbol{M}(\widehat{\boldsymbol{\theta 
		}}_{R,\alpha })\right) ^{-1}\boldsymbol{m}(\widehat{\boldsymbol{\theta }}%
		_{R,\alpha }),  \label{4.2}
	\end{equation}%
	where the matrix $\boldsymbol{V}_{\alpha }$ is as in (\ref{1.4}) and the
	functions $\boldsymbol{m}(\cdot )$ and $\boldsymbol{M}(\cdot )$\ are defined
	in (\ref{2.5}) and (\ref{2.6}).
\end{definition}

\begin{theorem} \label{Thm7}
	\label{th:Wald} The asymptotic null distribution of the proposed Wald-type
	test statistics given in (\ref{4.2}) is chi-square with $r$ degrees of
	freedom, $\chi^2_r$.
\end{theorem}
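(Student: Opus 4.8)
The plan is to derive the limiting law by combining the asymptotic normality of the estimator established in Theorem \ref{Th5} with a first-order expansion of the constraint function and the classical quadratic-form characterization of the chi-square distribution. Throughout I fix an arbitrary true value $\boldsymbol{\theta}_0\in\Theta_0$, so that under $H_0$ the constraint $\boldsymbol{m}(\boldsymbol{\theta}_0)=\boldsymbol{0}_r$ holds. By Theorem \ref{Th5} we have $\sqrt{n}(\widehat{\boldsymbol{\theta}}_{R,\alpha}-\boldsymbol{\theta}_0)\overset{\mathcal{L}}{\rightarrow}\mathcal{N}(\boldsymbol{0}_5,\boldsymbol{V}_\alpha(\boldsymbol{\theta}_0))$, and in particular $\widehat{\boldsymbol{\theta}}_{R,\alpha}\overset{P}{\rightarrow}\boldsymbol{\theta}_0$.

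First I would apply the delta method to the map $\boldsymbol{m}$. Since, by the hypotheses imposed before (\ref{4.1}), $\boldsymbol{M}(\boldsymbol{\theta})=\partial\boldsymbol{m}^{T}(\boldsymbol{\theta})/\partial\boldsymbol{\theta}$ exists and is continuous near $\boldsymbol{\theta}_0$, a first-order Taylor expansion together with the consistency of $\widehat{\boldsymbol{\theta}}_{R,\alpha}$ gives
\[
\sqrt{n}\,\boldsymbol{m}(\widehat{\boldsymbol{\theta}}_{R,\alpha})=\sqrt{n}\,\bigl(\boldsymbol{m}(\widehat{\boldsymbol{\theta}}_{R,\alpha})-\boldsymbol{m}(\boldsymbol{\theta}_0)\bigr)\overset{\mathcal{L}}{\rightarrow}\mathcal{N}\bigl(\boldsymbol{0}_r,\ \boldsymbol{\Sigma}_\alpha(\boldsymbol{\theta}_0)\bigr),\qquad \boldsymbol{\Sigma}_\alpha(\boldsymbol{\theta}_0)=\boldsymbol{M}^{T}(\boldsymbol{\theta}_0)\boldsymbol{V}_\alpha(\boldsymbol{\theta}_0)\boldsymbol{M}(\boldsymbol{\theta}_0).
\]
Next I would verify that $\boldsymbol{\Sigma}_\alpha(\boldsymbol{\theta}_0)$ is nonsingular: the block-diagonal form of $\boldsymbol{V}_\alpha(\boldsymbol{\theta})$ in (\ref{1.4})–(\ref{v2rho}) exhibits each block as a positive scalar multiple of a positive definite matrix, so $\boldsymbol{V}_\alpha(\boldsymbol{\theta}_0)$ is positive definite, and since $\mathrm{rank}(\boldsymbol{M}(\boldsymbol{\theta}_0))=r$ the $r\times r$ matrix $\boldsymbol{M}^{T}\boldsymbol{V}_\alpha\boldsymbol{M}$ has full rank $r$. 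Then the standard fact that $\boldsymbol{Z}^{T}\boldsymbol{\Sigma}^{-1}\boldsymbol{Z}\sim\chi^2_r$ whenever $\boldsymbol{Z}\sim\mathcal{N}(\boldsymbol{0}_r,\boldsymbol{\Sigma})$ with $\boldsymbol{\Sigma}$ nonsingular of order $r$ shows that $n\,\boldsymbol{m}^{T}(\widehat{\boldsymbol{\theta}}_{R,\alpha})\boldsymbol{\Sigma}_\alpha^{-1}(\boldsymbol{\theta}_0)\boldsymbol{m}(\widehat{\boldsymbol{\theta}}_{R,\alpha})\overset{\mathcal{L}}{\rightarrow}\chi^2_r$.

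Finally, to reach the statistic $W_{n,\alpha}(\widehat{\boldsymbol{\theta}}_{R,\alpha})$ in (\ref{4.2}), which uses $\boldsymbol{M}$ and $\boldsymbol{V}_\alpha$ evaluated at $\widehat{\boldsymbol{\theta}}_{R,\alpha}$ rather than at $\boldsymbol{\theta}_0$, I would use the consistency $\widehat{\boldsymbol{\theta}}_{R,\alpha}\overset{P}{\rightarrow}\boldsymbol{\theta}_0$ and the continuity of $\boldsymbol{\theta}\mapsto\boldsymbol{M}(\boldsymbol{\theta})$ and $\boldsymbol{\theta}\mapsto\boldsymbol{V}_\alpha(\boldsymbol{\theta})$ to get $\boldsymbol{M}^{T}(\widehat{\boldsymbol{\theta}}_{R,\alpha})\boldsymbol{V}_\alpha(\widehat{\boldsymbol{\theta}}_{R,\alpha})\boldsymbol{M}(\widehat{\boldsymbol{\theta}}_{R,\alpha})\overset{P}{\rightarrow}\boldsymbol{\Sigma}_\alpha(\boldsymbol{\theta}_0)$; because the limit is invertible, the inverses converge in probability as well. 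Slutsky's theorem then lets me replace $\boldsymbol{\Sigma}_\alpha^{-1}(\boldsymbol{\theta}_0)$ by $\bigl(\boldsymbol{M}^{T}(\widehat{\boldsymbol{\theta}}_{R,\alpha})\boldsymbol{V}_\alpha(\widehat{\boldsymbol{\theta}}_{R,\alpha})\boldsymbol{M}(\widehat{\boldsymbol{\theta}}_{R,\alpha})\bigr)^{-1}$ in the quadratic form without altering the limiting law, yielding $W_{n,\alpha}(\widehat{\boldsymbol{\theta}}_{R,\alpha})\overset{\mathcal{L}}{\rightarrow}\chi^2_r$.

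The main obstacle is not any single deduction — each step is classical — but the careful marshalling of the regularity conditions that make them applicable: the consistency and asymptotic normality of the MRPDE with the stated covariance (Theorem \ref{Th5}), the positive definiteness of $\boldsymbol{V}_\alpha(\boldsymbol{\theta}_0)$ that guarantees invertibility of $\boldsymbol{\Sigma}_\alpha(\boldsymbol{\theta}_0)$, and the differentiability of $\boldsymbol{m}$ with continuous, full-rank Jacobian near $\boldsymbol{\theta}_0$. Once these are in hand, the delta method together with Slutsky's theorem completes the argument.
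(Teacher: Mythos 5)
Your proposal is correct and follows essentially the same route as the paper's proof: a first-order (delta-method) expansion of $\boldsymbol{m}$ at $\boldsymbol{\theta}_0$ combined with the asymptotic normality of $\widehat{\boldsymbol{\theta}}_{R,\alpha}$, the quadratic-form characterization of $\chi^2_r$, and Slutsky's theorem to justify plugging the estimator into $\boldsymbol{M}$ and $\boldsymbol{V}_\alpha$. Your explicit verification that $\boldsymbol{M}^{T}(\boldsymbol{\theta}_0)\boldsymbol{V}_\alpha(\boldsymbol{\theta}_0)\boldsymbol{M}(\boldsymbol{\theta}_0)$ is nonsingular is a small refinement of a step the paper leaves implicit, but the argument is the same.
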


\begin{proof}
	See Appendix \ref{app:Thm7}.
\end{proof}

We will reject the null hypothesis in (\ref{2.5}) if $W_{n,\alpha }(\widehat{%
	\boldsymbol{\theta }}_{R,\alpha })>\chi _{r,\varsigma }^{2}$, where $\chi
_{r,\varsigma }^{2}$ is the upper percentage point of order $\varsigma $ of
the $\chi _{r}^{2}$ distribution. Based on Definition \ref{defW}\ and
Theorem \ref{th:Wald}, the following subsections are devoted to derive a
variety hypothesis tests for the bidimensional normal model. The proofs of the stated formulas are given in Appendix \ref{A5}.

\begin{case}
	\label{subSec1}\textbf{(Comparing means of two dependent populations with
		normal distribution)}.\textbf{\medskip }\texttt{\newline
	}If we are interested in testing%
	\begin{equation}
		H_{0}:\mu _{1}=\mu _{2},  \label{TestMeans}
	\end{equation}%
	the corresponding Wald-type test statistics based on MRPDEs is%
	\begin{equation}
		W_{n,\alpha }(\widehat{\boldsymbol{\theta }}_{R,\alpha })=n\frac{\left(
			2\alpha +1\right) ^{2}}{\left( \alpha +1\right) ^{4}}\frac{(\widehat{\mu }%
			_{1,R,\alpha }-\widehat{\mu }_{2,R,\alpha })^{2}}{(\widehat{\sigma }%
			_{1,R,\alpha }-\widehat{\sigma }_{2,R,\alpha })^{2}+2(1-\widehat{\rho }%
			_{R,\alpha })\widehat{\sigma }_{1,R,\alpha }\widehat{\sigma }_{2,R,\alpha }},
		\label{waldCase1}
	\end{equation}%
	and its asymptotic distribution is a chi-squared distribution with one
	degree of freedom under (\ref{TestMeans}).
\end{case}

In Case \ref{subSec1}\ a non-standard Behrens--Fisher problem is covered,
i.e., a comparison of the means of two populations which may possess not
only different variances, but also a non-null correlation. It is of great
interest to be aware that formulating the same problem as a paired test
constructed taking the difference of both populations, $V=X-Y$, as a single
population problem for testing $H_{0}:\mu _{V}=0$, with an unknown variance $%
\sigma _{V}^{2}$, the same value of the Wald-type test statistics%
\begin{equation*}
	W_{n,\alpha }(\widehat{\mu }_{V,R,\alpha },\widehat{\sigma }_{V,R,\alpha })=n%
	\frac{\left( 2\alpha +1\right) ^{2}}{\left( \alpha +1\right) ^{4}}\frac{%
		\widehat{\mu }_{V,R,\alpha }^{2}}{\widehat{\sigma }_{V,R,\alpha }^{2}}
\end{equation*}%
is obtained ($W_{n,\alpha }(\widehat{\boldsymbol{\theta }}_{R,\alpha
})=W_{n,\alpha }(\widehat{\mu }_{V,R,\alpha },\widehat{\sigma }_{V,R,\alpha
})$) from the invariance property of the R\'{e}nyi's pseudodistance
estimators, since $\mu _{V}=\mu _{1}-\mu _{2}$, $\sigma _{V}^{2}=\sigma
_{1}^{2}+\sigma _{2}^{2}-2\rho \sigma _{1}\sigma _{2}=(\sigma _{1}-\sigma
_{2})^{2}+2(1-\rho )\sigma _{1}\sigma _{2}$.

The most efficient classic procedure to address this problem is the paired $%
t $-test, i.e 
\begin{equation*}
	T_{V}=\sqrt{n}\frac{\overline{V}_{n}}{S_{V,n-1}},
\end{equation*}%
where%
\begin{equation*}
	\overline{V}_{n}=\overline{X}_{n}-\overline{Y}_{n},
\end{equation*}%
\begin{align*}
	S_{V,n-1}^{2}& =\frac{1}{n-1}\sum_{i=1}^{n}(V_{i}-\overline{V}%
	_{n})^{2}=S_{X,n-1}^{2}+S_{Y,n-1}^{2}-2S_{XY,n-1}, \\
	S_{X,n-1}^{2}& =\frac{1}{n-1}\sum_{i=1}^{n}(X_{i}-\overline{X}_{n})^{2}=%
	\frac{n}{n-1}\widehat{\sigma }_{1,R,\alpha =0}^{2}, \\
	S_{Y,n-1}^{2}& =\frac{1}{n-1}\sum_{i=1}^{n}(Y_{i}-\overline{Y}_{n})^{2}=%
	\frac{n}{n-1}\widehat{\sigma }_{2,R,\alpha =0}^{2}, \\
	S_{XY,n-1}& =\frac{1}{n-1}\sum_{i=1}^{n}(X_{i}-\overline{X}_{n})(Y_{i}-%
	\overline{Y}_{n})=\frac{n}{n-1}\widehat{\rho }_{R,\alpha =0}\widehat{\sigma }%
	_{2,R,\alpha =0}\widehat{\sigma }_{1,R,\alpha =0}.
\end{align*}%
Its exact distribution is a Student-$t$ with $n-1$ degrees of freedom, $%
t_{n-1}$.

\begin{case}
	\label{subSec2}\textbf{(Comparing variances of two dependent populations
		with normal distribution)}.\textbf{\medskip }\texttt{\newline
	}If we are interested in testing%
	\begin{equation}
		H_{0}:\sigma _{1}=\sigma _{2}.  \label{H0}
	\end{equation}%
	the corresponding Wald-type test statistics based on MRPDEs is%
	\begin{equation}
		W_{n,\alpha }(\widehat{\boldsymbol{\theta }}_{R,\alpha })=n\frac{\left(
			2\alpha +1\right) ^{3}}{\left( \alpha +1\right) ^{6}}\frac{(\widehat{\sigma }%
			_{1,R,\alpha }-\widehat{\sigma }_{2,R,\alpha })^{2}}{\beta _{\alpha }(%
			\widehat{\boldsymbol{\theta }}_{R,\alpha })},  \label{TS}
	\end{equation}%
	where%
	\begin{equation}
		\beta _{\alpha }(\widehat{\boldsymbol{\theta }}_{R,\alpha })=\left[ \frac{1}{%
			4}\left( \tfrac{\alpha }{\alpha +1}\right) ^{2}+\frac{1}{2}\right] (\widehat{%
			\sigma }_{1,R,\alpha }-\widehat{\sigma }_{2,R,\alpha })^{2}+(1-\widehat{\rho 
		}_{R,\alpha }^{2})\widehat{\sigma }_{1,R,\alpha }\widehat{\sigma }%
		_{2,R,\alpha }.  \label{b2}
	\end{equation}%
	The asymptotic distribution of (\ref{TS}) is a chi-squared distribution with 
	$1$ degree of freedom under (\ref{H0}).
\end{case}

\begin{case}
	\label{subSec3}\textbf{(Fixing a value of the for correlation coefficient of
		two normal populations)}.\textbf{\medskip }\texttt{\newline
	}If we are interested in testing%
	\begin{equation}
		H_{0}:\rho =\rho _{0},  \label{H0b}
	\end{equation}%
	the corresponding Wald-type test statistics based on MRPDEs is%
	\begin{equation}
		W_{n,\alpha }(\widehat{\boldsymbol{\theta }}_{R,\alpha })=n\frac{\left(
			2\alpha +1\right) ^{3}}{\left( \alpha +1\right) ^{6}}\frac{(\widehat{\rho }%
			_{R,\alpha }-\rho _{0})^{2}}{(1-\widehat{\rho }_{R,\alpha }^{2})^{2}},
		\label{TS2}
	\end{equation}%
	and its asymptotic distribution is a chi-squared distribution with $1$
	degree of freedom under (\ref{H0b}).
\end{case}

The classic Wald and Rao test statistics are given by%
\begin{align*}
	W_{n,\alpha =0}(\widehat{\boldsymbol{\theta }}_{R,\alpha =0})& =n\frac{(%
		\widehat{\rho }_{R,\alpha =0}-\rho _{0})^{2}}{(1-\widehat{\rho }_{R,\alpha
			=0}^{2})^{2}}, \\
	R_{n,\alpha =0}(\widehat{\boldsymbol{\theta }}_{R,\alpha =0})& =n\frac{(%
		\widehat{\rho }_{R,\alpha =0}-\rho _{0})^{2}}{(1-\rho _{0}\widehat{\rho }%
		_{R,\alpha =0})^{2}},
\end{align*}%
where%
\begin{equation*}
	\widehat{\rho }_{R,\alpha =0}=\frac{ \sum_{i=1}^{n}(X_{i}-\bar{X}_{n})(Y_{i}-%
		\bar{Y}_{n})}{\sqrt{ \sum_{i=1}^{n}(X_{i}-\bar{X}_{n})^{2}}\sqrt{%
			\sum_{i=1}^{n}(Y_{i}-\bar{Y}_{n})^{2}}}=R_{XY},
\end{equation*}%
but%
\begin{equation*}
	W_{n,\alpha =0}^{\prime }(\widehat{\boldsymbol{\theta }}_{R,\alpha =0})=n%
	\frac{(\widehat{\rho }_{R,\alpha =0}-\rho _{0})^{2}}{(1-\rho _{0}^{2})^{2}}
\end{equation*}%
convergences more rapidly to the chi-square distribution with $1$ degree of
freedom (see \cite{anderson2003}). The extension of $W_{n,\alpha =0}^{\prime }(%
\widehat{\boldsymbol{\theta }}_{R,\alpha =0})$ to%
\begin{equation}
	W_{n,\alpha }^{\prime }(\widehat{\boldsymbol{\theta }}_{R,\alpha })=n\frac{%
		\left( 2\alpha +1\right) ^{3}}{\left( \alpha +1\right) ^{6}}\frac{(\widehat{%
			\rho }_{R,\alpha }-\rho _{0})^{2}}{(1-\rho _{0}^{2})^{2}},  \label{TS2b}
\end{equation}%
is directly obtained from the same proof of $W_{n,\alpha =0}(\widehat{%
	\boldsymbol{\theta }}_{R,\alpha =0})$, since in (\ref{key}) $\widehat{\rho }%
_{R,\alpha =0}$\ can be replaced by $\rho _{0}$. In the particular case of
fixing $\rho _{0}=0$ under the null, (\ref{H0b}), it holds $W_{n,\alpha
	=0}^{\prime }(\widehat{\boldsymbol{\theta }}_{R,\alpha =0})=R_{n,\alpha =0}(%
\widehat{\boldsymbol{\theta }}_{R,\alpha =0})$

\begin{case}
	\label{subSec4}\textbf{(Comparing means and variances of two dependent
		populations with normal distribution)}.\textbf{\medskip }\texttt{\newline
	}If we are interested in testing%
	\begin{equation}
		H_{0}:\mu _{1}=\mu _{2}\text{ and }\sigma _{1}=\sigma _{2},
		\label{testCase2}
	\end{equation}%
	the corresponding Wald-type test statistics based on MRPDEs is%
	\begin{equation}
		\begin{aligned}
			W_{n,\alpha }(\widehat{\boldsymbol{\theta }}_{R,\alpha })=n\frac{\left(
				2\alpha +1\right) ^{2}}{\left( \alpha +1\right) ^{4}}
			\bigg( &\frac{(\widehat{%
					\mu }_{1,R,\alpha }-\widehat{\mu }_{2,R,\alpha })^{2}}{(\widehat{\sigma }%
				_{1,R,\alpha }-\widehat{\sigma }_{1,R,\alpha })^{2}+2(1-\widehat{\rho }%
				_{R,\alpha }^{2})\widehat{\sigma }_{1,R,\alpha },\widehat{\sigma }%
				_{2,R,\alpha }}\\
			&+\frac{\left( 2\alpha +1\right) (\widehat{\sigma }%
				_{1,R}^{\alpha }-\widehat{\sigma }_{2,R}^{\alpha })^{2}}{\left( \alpha
				+1\right) ^{2}\beta _{\alpha }(\widehat{\boldsymbol{\theta }}_{R,\alpha })}%
			\bigg) ,
		\end{aligned} \label{waldCase2}
	\end{equation}%
	and its asymptotic distribution is a chi-squared distribution with $2$
	degrees of freedom under (\ref{testCase2}).
\end{case}

\begin{case}
	\label{subSec5}\textbf{(Fixing a value for covariance of two normal
		populations)}.\textbf{\medskip }\texttt{\newline
	}If we are interested in testing%
	\begin{equation}
		H_{0}:\sigma _{1}\sigma _{2}\rho =\sigma _{12,0},  \label{testChi}
	\end{equation}%
	where $\sigma _{12,0}\in 
	\mathbb{R}
	$, the corresponding Wald-type test statistics based on MRPDEs is 
	\begin{equation}
		W_{n,\alpha }(\widehat{\boldsymbol{\theta }}_{R,\alpha })=n\frac{\left(
			2\alpha +1\right) ^{3}}{\left( \alpha +1\right) ^{4}}\frac{\left( \widehat{%
				\sigma }_{1,R,\alpha }\widehat{\sigma }_{2,R,\alpha }\widehat{\rho }%
			_{R,\alpha }-\sigma _{12,0}\right) ^{2}}{\widehat{\sigma }_{1,R,\alpha }^{2}%
			\widehat{\sigma }_{2,R,\alpha }^{2}\left[ \left( \alpha +1\right) ^{2}(%
			\widehat{\rho }_{R,\alpha }^{2}+1)+\frac{\alpha ^{2}}{2}\widehat{\rho }%
			_{R,\alpha }^{2}\right] }.  \label{chi}
	\end{equation}%
	The asymptotic distribution of (\ref{chi}) is a chi-squared distribution
	with $1$ degree of freedom under (\ref{testChi}).
\end{case}

\begin{case}
	\label{subSec6}\textbf{(Fixing values for means of two dependent populations
		with normal distribution)}.\textbf{\medskip }\texttt{\newline
	}If we are interested in testing%
	\begin{equation}
		H_{0}:\mu _{1}=\mu _{1,0}\text{ and }\mu _{2}=\mu _{2,0}\text{,}
		\label{testw}
	\end{equation}%
	the corresponding Wald-type test statistics based on MRPDEs is%
	\begin{equation}
		\begin{aligned}
			W_{n,\alpha }(\widehat{\boldsymbol{\theta }}_{R,\alpha })=n\dfrac{\left(
				2\alpha +1\right) ^{2}}{\left( \alpha +1\right) ^{4}}\bigg[
			&\frac{\left( \dfrac{%
					\widehat{\mu }_{1,R,\alpha }-\mu _{1,0}}{\widehat{\sigma }_{1,R,\alpha }}-%
				\dfrac{\widehat{\mu }_{2,R,\alpha }-\mu _{2,0}}{\widehat{\sigma }%
					_{2,R,\alpha }}\right) ^{2}}{1-%
				\widehat{\rho }_{R,\alpha }^{2}}\\
			&\frac{+2(1-\widehat{\rho }_{R,\alpha })\left(\frac{\widehat{%
						\mu}_{1,R,\alpha }-\mu _{1,0}}{\widehat{\sigma }_{1,R,\alpha }}\right) \left(\frac{%
					\widehat{\mu }_{2,R,\alpha }-\mu _{2,0}}{\widehat{\sigma }_{2,R,\alpha }}\right)}{1-%
				\widehat{\rho }_{R,\alpha }^{2}} \bigg]
		\end{aligned}\label{w}
	\end{equation}%
	The asymptotic distribution of (\ref{w}) is a chi squared distribution with $%
	2$ degrees of freedom under (\ref{testw}).
\end{case}

\begin{case}
	\label{SecVarCov}\textbf{(Fixing values for variances and covariance of two
		dependent populations with normal distribution)}.\textbf{\medskip }\texttt{%
		\newline
	}If we are interested in testing%
	\begin{equation}
		H_{0}:\sigma _{1}=\sigma _{1,0}\text{, }\sigma _{2}=\sigma _{2,0},\text{ }%
		\sigma _{1}\sigma _{2}\rho =\sigma _{12,0},  \label{testchi2}
	\end{equation}%
	the corresponding Wald-type test statistics based on MRPDEs is%
	\begin{equation}
		W_{n,\alpha }(\widehat{\boldsymbol{\theta }}_{R,\alpha })=n\frac{(2\alpha
			+1)^{3}}{(\alpha +1)^{4}}\boldsymbol{w}_{\alpha }^{T}(\widehat{\boldsymbol{%
				\theta }}_{R,\alpha })\boldsymbol{V}_{2,\alpha }^{-1}(\widehat{\rho }%
		_{R,\alpha })\boldsymbol{w}_{\alpha }(\widehat{\boldsymbol{\theta }}%
		_{R,\alpha }),  \label{chi2}
	\end{equation}%
	where%
	\begin{align*}
		\boldsymbol{w}_{\alpha }(\widehat{\boldsymbol{\theta }}_{R,\alpha }) &=%
		\boldsymbol{D}_{2,\widehat{\sigma }_{1,R,\alpha },\widehat{\sigma }%
			_{2,R,\alpha }}^{-1}\left( \boldsymbol{M}_{22}^{T}(\boldsymbol{\theta }%
		)\right) ^{-1}\boldsymbol{m}(\widehat{\boldsymbol{\theta }}_{R,\alpha })\\
		&=%
		\begin{pmatrix}
			1-\frac{\sigma _{1,0}}{\widehat{\sigma }_{1,R,\alpha }} \\ 
			1-\frac{\sigma _{2,0}}{\widehat{\sigma }_{2,R,\alpha }} \\ 
			\widehat{\rho }_{R,\alpha }-\frac{\sigma _{12,0}}{\widehat{\sigma }%
				_{1,R,\alpha }\widehat{\sigma }_{2,R,\alpha }}-\widehat{\rho }_{R,\alpha
			}\left( 2-\frac{\sigma _{1,0}}{\widehat{\sigma }_{1,R,\alpha }}-\frac{\sigma
				_{2,0}}{\widehat{\sigma }_{2,R,\alpha }}\right)%
		\end{pmatrix}%
		,
	\end{align*}%
	with 
	\begin{equation*}
		\boldsymbol{V}_{2,\alpha }^{-1}(\widehat{\rho }_{R,\alpha })=(\alpha +1)^{2}%
		\boldsymbol{S}_{2,1}(\widehat{\rho }_{R,\alpha })\left[ \boldsymbol{S}_{2,1}(%
		\widehat{\rho }_{R,\alpha })+\left( \frac{\alpha }{\alpha +1}\right) ^{2}%
		\boldsymbol{S}_{2,2}(\widehat{\rho }_{R,\alpha })\right] ^{-1}\boldsymbol{S}%
		_{2,1}(\widehat{\rho }_{R,\alpha }),
	\end{equation*}%
	$\boldsymbol{S}_{2,1}(\widehat{\rho }_{R,\alpha })$ given by (\ref{s2b}) and 
	$\boldsymbol{S}_{2,2}(\widehat{\rho }_{R,\alpha })$\ by (\ref{s2bb}). The
	asymptotic distribution of (\ref{chi2}) is a chi squared with $3$ degrees of
	freedom under (\ref{testchi2}).
\end{case}

\section{Study of the Influence Function}

In the precedent sections we have developed the MRPDE for $\boldsymbol{%
	\theta }$ in the bidimensional normal model, as well as Wald-type tests
based on MRPDE, as a robust alternative to the MLE and classic Wald-type
tests. In this section, we will theoretically justify the robustness of the
proposed estimators through the study of its Influence Function (IF). The IF
(\cite{hampel1986}) for any estimator defined in terms of an statistical
functional $\boldsymbol{T}(F)$ from the true distribution $F$, is defined as

\begin{equation}
	\mathcal{IF}(t,\boldsymbol{T},F)=\lim_{\varepsilon \downarrow 0}\frac{%
		\boldsymbol{T}((1-\varepsilon )F+\varepsilon \Delta _{t})-\boldsymbol{T}(F)}{%
		\varepsilon }=\left. \frac{\partial \boldsymbol{T}(F_{\varepsilon })}{%
		\partial \varepsilon }\right\vert _{\varepsilon =0^{+}},  \label{eq:robust_1}
\end{equation}%
with $\varepsilon $ being the contamination proportion and $\Delta _{t}$
being the degenerate distribution at the contamination point $t$. Thus, the
IF, as a function of $t$, measures the standardized asymptotic bias caused
by the infinitesimal contamination at the point $t$. The maximum of this IF
over $t$ indicates the extent of bias due to contamination and so smaller
its value, the more robust the estimator is. Note that, in this context, the
statistical functional $\boldsymbol{T}_{\alpha }$ corresponding to the MRPDE
is defined as the minimizer of $R_{\alpha }(f_{\boldsymbol{\theta }},g)$ in (%
\ref{1.1}).


IF for MLE in the bidimensional normal model has been widely studied in
literature. For example, \cite{Devlin1975} presented
the IF for the Pearson's correlation coefficient $\rho$. A proof was given
years later by \cite{Chernick1982}. IFs for the mean and variance can be found,
in \cite{Radhakrishnan} and \cite{isogai1989}, among
others.

In \cite{Broniatowski2012} (Theorem 5), the IF of Renyi's
pseudodistances-based estimators was provided in a general form and
particularized to some particular models. \cite{castilla2021} generalized
this result to the case of independent not identically distributed
observations. Based on these results, in Theorem \ref{th:IF} we present the
IF associated to the MRPDE of $\boldsymbol{\theta }$ the bidimensional
normal model. A detailed proof of the following result is provided in
Appendix \ref{A4}.

\begin{theorem}
	\label{th:IF} Let us consider the bidimensional normal model (\ref{model}).
	The IF associated to the MRPDE of $\boldsymbol{\theta }$ is given by 
	\begin{equation*}
		\mathcal{IF}((x,y)^{T},\boldsymbol{T}_{\alpha },F_{\boldsymbol{\theta }})=(%
		\mathcal{IF}_{\alpha }(\mu _{1}),\mathcal{IF}_{\alpha }(\mu _{2}),\mathcal{IF%
		}_{\alpha }(\sigma _{1}),\mathcal{IF}_{\alpha }(\sigma _{2}),\mathcal{IF}%
		_{\alpha }(\rho ))^{T},
	\end{equation*}%
	where%
	\begin{align}
		\mathcal{IF}_{\alpha }(\mu _{1})& =(\alpha +1)^{2}w_{\boldsymbol{\theta }%
		}^{-\alpha /(1-\rho ^{2})}(x,y)(x-\mu _{1}),  \label{IF1} \\
		\mathcal{IF}_{\alpha }(\mu _{2})& =(\alpha +1)^{2}w_{\boldsymbol{\theta }%
		}^{-\alpha /(1-\rho ^{2})}(x,y)(y-\mu _{2}),  \label{IF2}
	\end{align}%
	\begin{align}
		\mathcal{IF}_{\alpha }(\sigma _{1})& =\frac{\left( \alpha +1\right) ^{3}}{2}%
		w_{\boldsymbol{\theta }}^{-\alpha }(x,y)\tfrac{\sigma _{1}}{1-\rho ^{2}}%
		\left[ \left( \tfrac{x-\mu _{1}}{\sigma _{1}}\right) ^{2}-\rho ^{2}\left( 
		\tfrac{y-\mu _{2}}{\sigma _{2}}\right) ^{2}-(1-\rho ^{2})(1+2\rho ^{2})%
		\tfrac{1}{\alpha +1}\right] ,  \label{IF3} \\
		\mathcal{IF}_{\alpha }(\sigma _{2})& =\frac{\left( \alpha +1\right) ^{3}}{2}%
		w_{\boldsymbol{\theta }}^{-\alpha }(x,y)\tfrac{\sigma _{2}}{1-\rho ^{2}}%
		\left[ \left( \tfrac{y-\mu _{2}}{\sigma _{2}}\right) ^{2}-\rho ^{2}\left( 
		\tfrac{x-\mu _{1}}{\sigma _{1}}\right) ^{2}-(1-\rho ^{2})(1+2\rho ^{2})%
		\tfrac{1}{\alpha +1}\right] ,  \label{IF4} \\
		\mathcal{IF}_{\alpha }(\rho )& =\left( \alpha +1\right) ^{3}w_{\boldsymbol{%
				\theta }}^{-\alpha }(x,y)\left\{ -\tfrac{\rho }{2}\left[ \left( \tfrac{x-\mu
			_{1}}{\sigma _{1}}\right) ^{2}+\left( \tfrac{x-\mu _{1}}{\sigma _{1}}\right)
		^{2}\right] +\tfrac{x-\mu _{1}}{\sigma _{1}}\tfrac{y-\mu _{2}}{\sigma _{2}}%
		\right\} ,  \label{IF5}
	\end{align}%
	with%
	\begin{equation*}
		w_{\boldsymbol{\theta }}(x,y)=\exp \left\{ \tfrac{1}{2(1-\rho ^{2})}\left[ (%
		\tfrac{x-\mu _{1}}{\sigma _{1}})^{2}+(\tfrac{y-\mu _{2}}{\sigma _{2}}%
		)^{2}-2\rho \tfrac{x-\mu _{1}}{\sigma _{1}}\tfrac{y-\mu _{2}}{\sigma _{2}}%
		\right] \right\} .
	\end{equation*}
\end{theorem}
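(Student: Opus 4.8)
Our plan is to specialize the general influence–function formula for minimum R\'enyi pseudodistance estimators, namely $\mathcal{IF}((x,y)^{T},\boldsymbol{T}_{\alpha},F_{\boldsymbol{\theta}})=\boldsymbol{S}_{\alpha}^{-1}(\boldsymbol{\theta})\,\boldsymbol{\Psi}_{\alpha}((x,y)^{T};\boldsymbol{\theta})$, which is Theorem 5 of \cite{Broniatowski2012} (recalled in the Introduction) and which can equivalently be obtained here by differentiating the estimating equation $\int\boldsymbol{\Psi}_{\alpha}((x,y)^{T};\boldsymbol{T}(F_{\varepsilon}))\,dF_{\varepsilon}(x,y)=\boldsymbol{0}_{5}$ along $F_{\varepsilon}=(1-\varepsilon)F_{\boldsymbol{\theta}}+\varepsilon\Delta_{(x,y)}$ at $\varepsilon=0^{+}$, using $\int\boldsymbol{\Psi}_{\alpha}\,dF_{\boldsymbol{\theta}}=\boldsymbol{0}_{5}$ together with the definition (\ref{S}) of $\boldsymbol{S}_{\alpha}$. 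Hence the whole task reduces to evaluating the matrix--vector product $\boldsymbol{S}_{\alpha}^{-1}(\boldsymbol{\theta})\,\boldsymbol{\Psi}_{\alpha}((x,y)^{T};\boldsymbol{\theta})$ for the bidimensional normal model with the explicit ingredients already assembled above.

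The next step is to substitute $\boldsymbol{\Psi}_{\alpha}((x,y)^{T};\boldsymbol{\theta})=f_{\boldsymbol{\theta}}^{\alpha}(x,y)\big(\boldsymbol{u}_{\boldsymbol{\theta}}(x,y)-\boldsymbol{c}_{\alpha}(\boldsymbol{\theta})\big)$ with the score vector of Proposition \ref{Prop2}, the vector $\boldsymbol{c}_{\alpha}(\boldsymbol{\theta})$ of Proposition \ref{Prop4} (in particular $\boldsymbol{c}_{1,\alpha}(\boldsymbol{\theta})=\boldsymbol{0}_{2}$), and the identity $f_{\boldsymbol{\theta}}^{\alpha}(x,y)=k^{-\alpha}(\boldsymbol{\theta})\,w_{\boldsymbol{\theta}}^{-\alpha}(x,y)$ that follows at once from (\ref{model}), (\ref{k}) and (\ref{eq:f_alpha}). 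The crucial simplification is the block structure: by Theorem \ref{Th4} the matrix $\boldsymbol{S}_{\alpha}(\boldsymbol{\theta})$ is block diagonal, with a $2\times2$ block $\boldsymbol{S}_{1,\alpha}$ governing $(\mu_{1},\mu_{2})$ and a $3\times3$ block $\boldsymbol{S}_{2,\alpha}$ governing $(\sigma_{1},\sigma_{2},\rho)$, whose inverses are available in closed form in (\ref{s1})--(\ref{s2rho}); therefore the influence function decouples into a location part $\boldsymbol{S}_{1,\alpha}^{-1}(\boldsymbol{\theta})f_{\boldsymbol{\theta}}^{\alpha}(x,y)(u_{\mu_{1}},u_{\mu_{2}})^{T}$ and a scale--correlation part $\boldsymbol{S}_{2,\alpha}^{-1}(\boldsymbol{\theta})f_{\boldsymbol{\theta}}^{\alpha}(x,y)\big((u_{\sigma_{1}},u_{\sigma_{2}},u_{\rho})^{T}-\boldsymbol{c}_{2,\alpha}(\boldsymbol{\theta})\big)$, which can be handled separately.

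For the location part one observes that $(u_{\mu_{1}},u_{\mu_{2}})^{T}=\boldsymbol{D}_{1,\sigma_{1},\sigma_{2}}^{-1}\boldsymbol{J}_{1}(\rho)\boldsymbol{D}_{1,\sigma_{1},\sigma_{2}}^{-1}(x-\mu_{1},y-\mu_{2})^{T}$, with $\boldsymbol{J}_{1}(\rho)$ and $\boldsymbol{D}_{1,\sigma_{1},\sigma_{2}}$ as in Proposition \ref{Prop3}, so that multiplication by $\boldsymbol{S}_{1,\alpha}^{-1}(\boldsymbol{\theta})=k^{\alpha}(\boldsymbol{\theta})(\alpha+1)^{2}\boldsymbol{D}_{1,\sigma_{1},\sigma_{2}}\boldsymbol{J}_{1}^{-1}(\rho)\boldsymbol{D}_{1,\sigma_{1},\sigma_{2}}$ telescopes to $k^{\alpha}(\boldsymbol{\theta})(\alpha+1)^{2}(x-\mu_{1},y-\mu_{2})^{T}$; absorbing the $f_{\boldsymbol{\theta}}^{\alpha}$ prefactor then yields (\ref{IF1})--(\ref{IF2}). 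For the scale--correlation part one rewrites $(u_{\sigma_{1}},u_{\sigma_{2}},u_{\rho})^{T}-\boldsymbol{c}_{2,\alpha}(\boldsymbol{\theta})$ in terms of the standardized residuals $(x-\mu_{1})/\sigma_{1}$ and $(y-\mu_{2})/\sigma_{2}$, factors $\boldsymbol{D}_{2,\sigma_{1},\sigma_{2}}^{-1}$ out on the left, performs the fixed $3\times3$ product against $\boldsymbol{S}_{2,1}^{-1}(\rho)$ from (\ref{s2rho}), and finally reattaches $\boldsymbol{D}_{2,\sigma_{1},\sigma_{2}}$ to reach (\ref{IF3})--(\ref{IF5}). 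The main obstacle is precisely this last computation: one must keep careful track of the constant $\tfrac{\alpha}{\alpha+1}$ coming in through $\boldsymbol{c}_{2,\alpha}$, of the several $1-\rho^{2}$ denominators hidden inside $u_{\sigma_{1}},u_{\sigma_{2}},u_{\rho}$ and inside $\boldsymbol{S}_{2,1}^{-1}(\rho)$, and check that all the cross terms collapse into the compact forms asserted; everything else is routine substitution. A convenient sanity check is to set $\alpha=0$, where the five expressions must reduce to the classical influence functions of the bivariate normal maximum likelihood estimator for $(\mu_{1},\mu_{2},\sigma_{1},\sigma_{2},\rho)$ recorded in \cite{Devlin1975} and \cite{isogai1989}.
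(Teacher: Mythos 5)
Your proposal is correct and follows essentially the same route as the paper's proof in Appendix~\ref{A4}: both invoke Theorem~5 of \cite{Broniatowski2012} to reduce the problem to computing $\boldsymbol{S}_{\alpha }^{-1}\left( \boldsymbol{\theta }\right) f_{\boldsymbol{\theta }}^{\alpha }(x,y)\left[ \boldsymbol{u}_{\boldsymbol{\theta }}(x,y)-\boldsymbol{c}_{\alpha }(\boldsymbol{\theta })\right]$, exploit the block-diagonal structure of $\boldsymbol{S}_{\alpha }$ with the closed-form inverses (\ref{s1})--(\ref{s2rho}), and absorb $f_{\boldsymbol{\theta }}^{\alpha }=k^{-\alpha }(\boldsymbol{\theta })w_{\boldsymbol{\theta }}^{-\alpha }$ into the weight factor. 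Your telescoping observation $\boldsymbol{S}_{1,\alpha }^{-1}\cdot \boldsymbol{D}_{1,\sigma _{1},\sigma _{2}}^{-1}\boldsymbol{J}_{1}(\rho )\boldsymbol{D}_{1,\sigma _{1},\sigma _{2}}^{-1}=k^{\alpha }(\boldsymbol{\theta })(\alpha +1)^{2}\boldsymbol{I}_{2}$ is a marginally cleaner way to dispatch the location block, and you only describe rather than execute the $3\times 3$ product yielding (\ref{IF3})--(\ref{IF5}), but the plan and all its ingredients coincide with the paper's.
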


\begin{remark}
	In particular, for $\alpha =0$ (MLE),%
	\begin{align*}
		\mathcal{IF}_{\alpha =0}(\mu _{1})& =x-\mu _{1}, \\
		\mathcal{IF}_{\alpha =0}(\mu _{2})& =y-\mu _{2},
	\end{align*}%
	\begin{align*}
		\mathcal{IF}_{\alpha =0}(\sigma _{1})& =\tfrac{\sigma _{1}}{2(1-\rho ^{2})}%
		\left[ \left( \tfrac{x-\mu _{1}}{\sigma _{1}}\right) ^{2}-\rho ^{2}\left( 
		\tfrac{y-\mu _{2}}{\sigma _{2}}\right) ^{2}\right] -\tfrac{\sigma _{1}}{2}%
		(1+2\rho ^{2}), \\
		\mathcal{IF}_{\alpha =0}(\sigma _{2})& =\tfrac{\sigma _{2}}{2(1-\rho ^{2})}%
		\left[ \left( \tfrac{y-\mu _{2}}{\sigma _{2}}\right) ^{2}-\rho ^{2}\left( 
		\tfrac{x-\mu _{1}}{\sigma _{1}}\right) ^{2}\right] -\tfrac{\sigma _{1}}{2}%
		(1+2\rho ^{2}),
	\end{align*}%
	\begin{equation*}
		\mathcal{IF}_{\alpha =0}(\rho )=-\tfrac{\rho }{2}\left[ \left( \tfrac{x-\mu
			_{1}}{\sigma _{1}}\right) ^{2}+\left( \tfrac{y-\mu _{2}}{\sigma _{2}}\right)
		^{2}\right] +\tfrac{x-\mu _{1}}{\sigma _{1}}\tfrac{y-\mu _{2}}{\sigma _{2}}.
	\end{equation*}
	
	The IF (\ref{eq:IF}) is bounded for positive values of the parameter $\alpha 
	$, $\alpha >0$, and is unbounded at the MLE, $\alpha =0$.
\end{remark}

\begin{figure}[]
	\centering
	\begin{tabular}{cc}
		\includegraphics[scale=0.3]{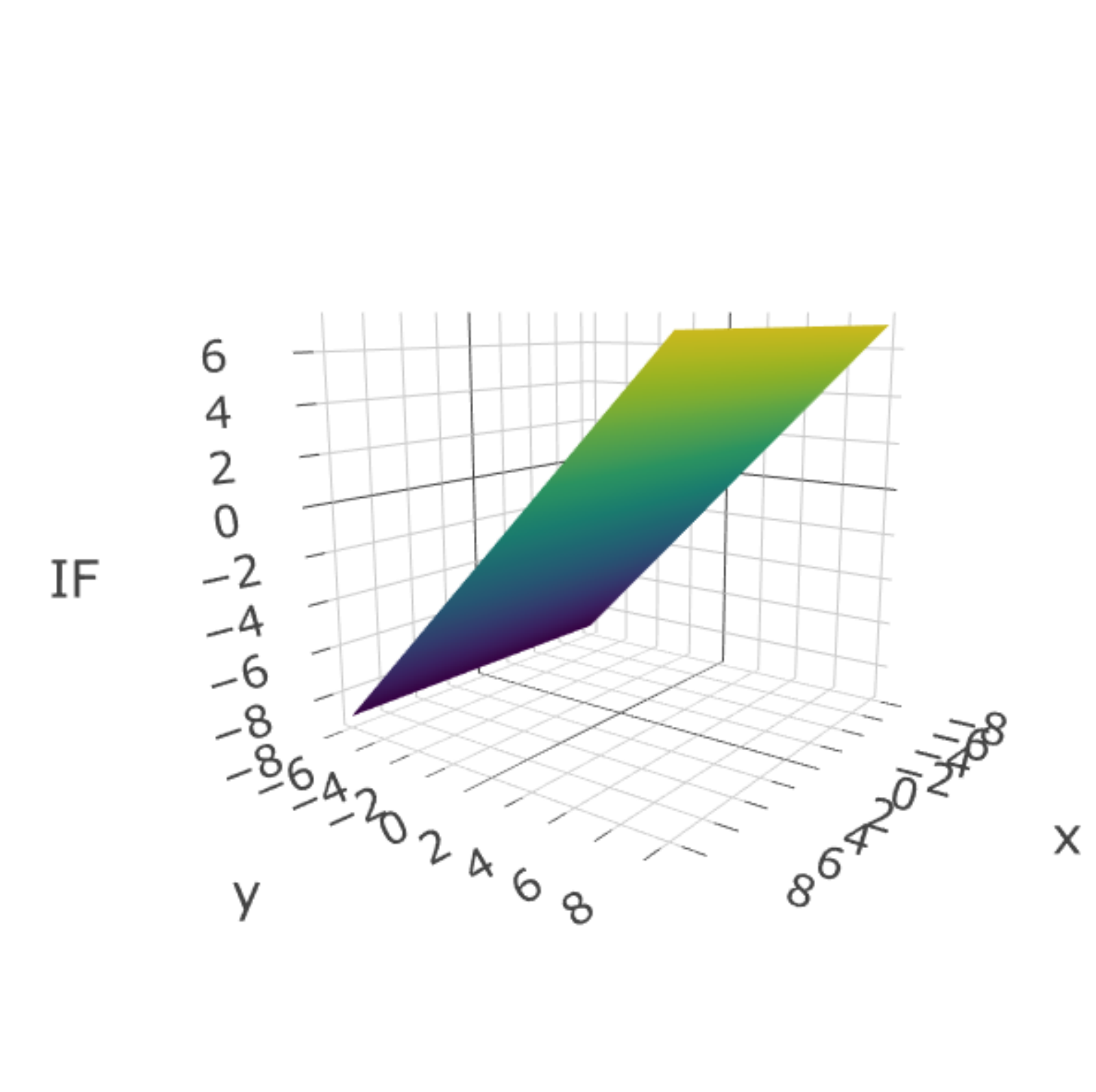} & %
		\includegraphics[scale=0.3]{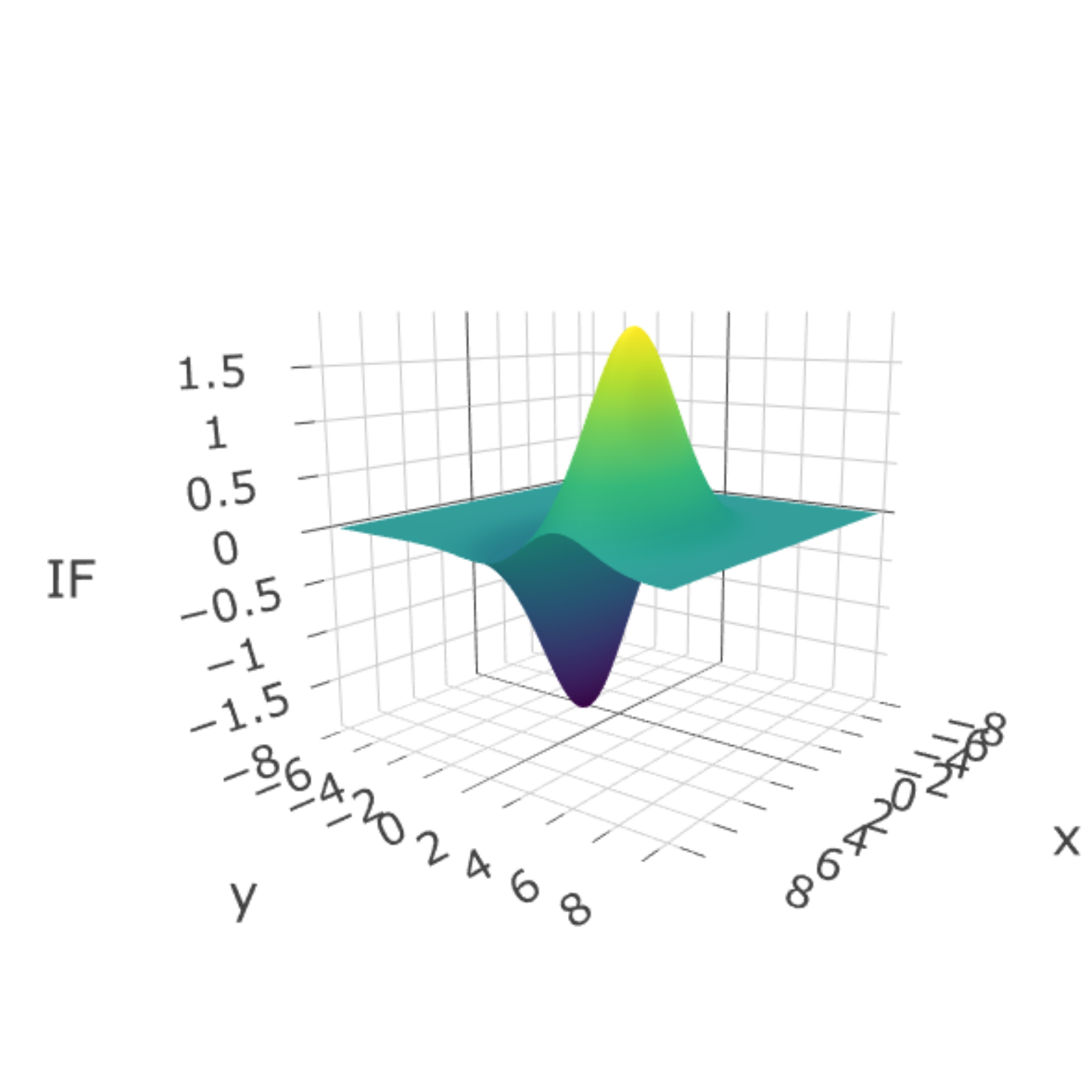} \\ 
		\includegraphics[scale=0.3]{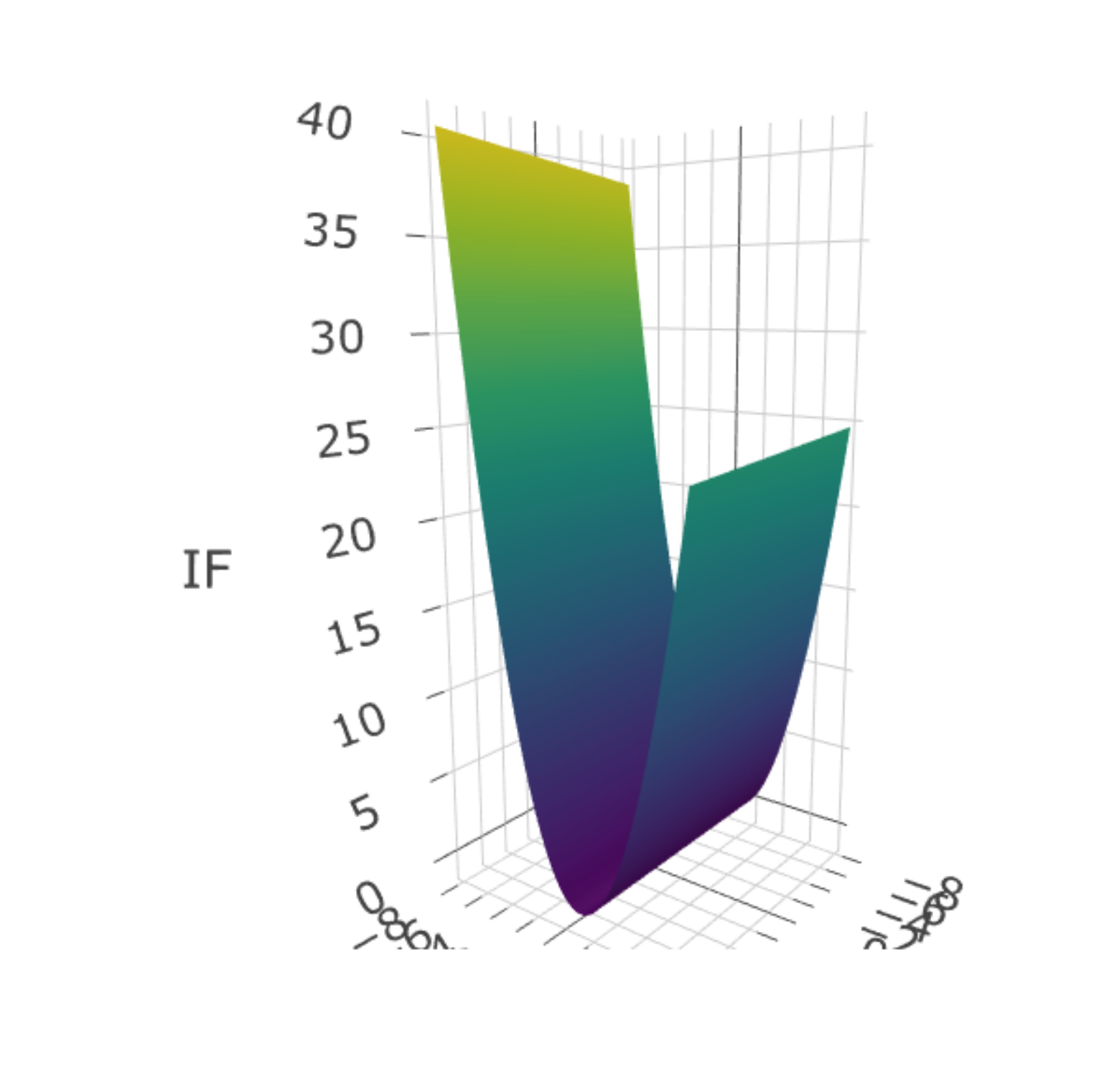} & %
		\includegraphics[scale=0.3]{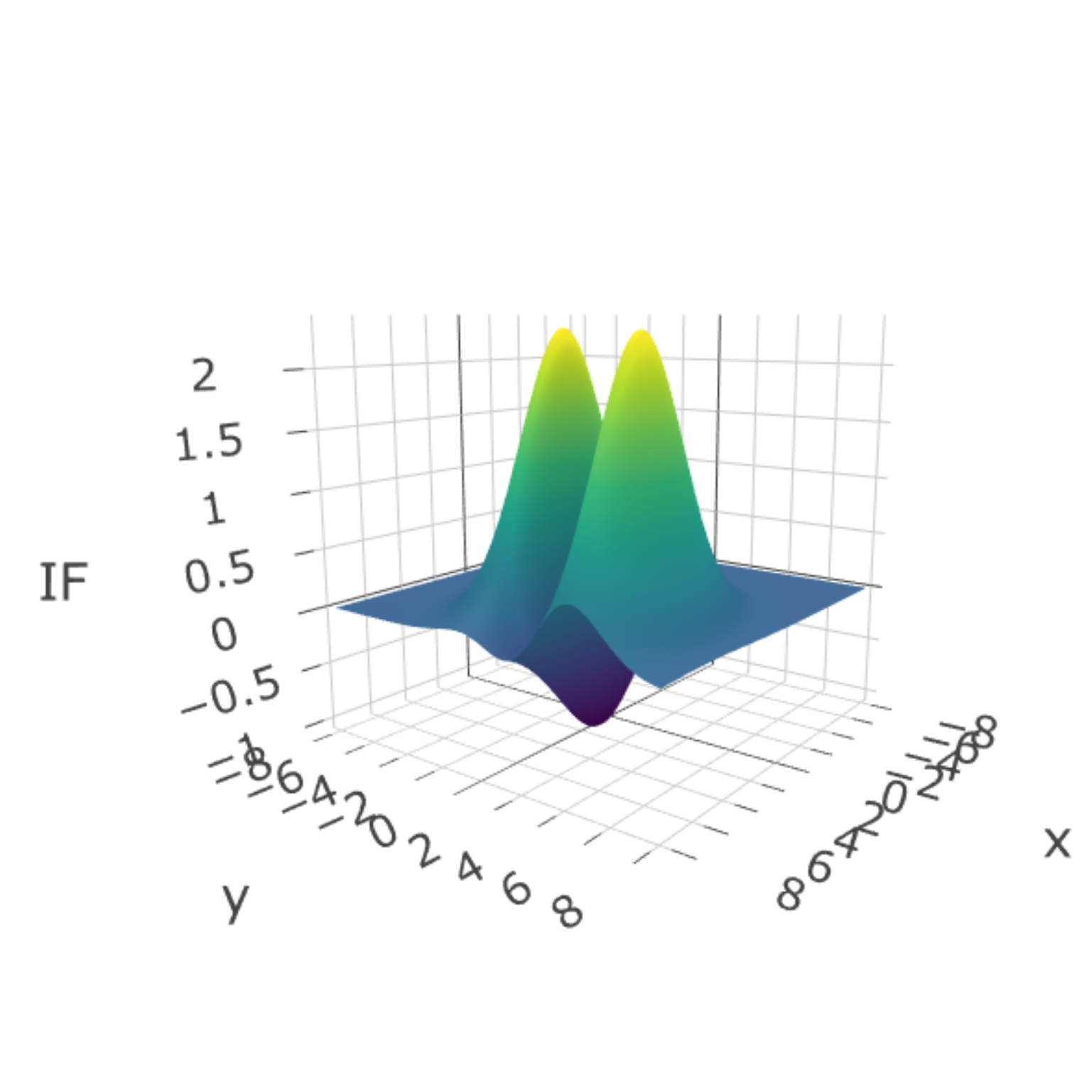} \\ 
		\includegraphics[scale=0.3]{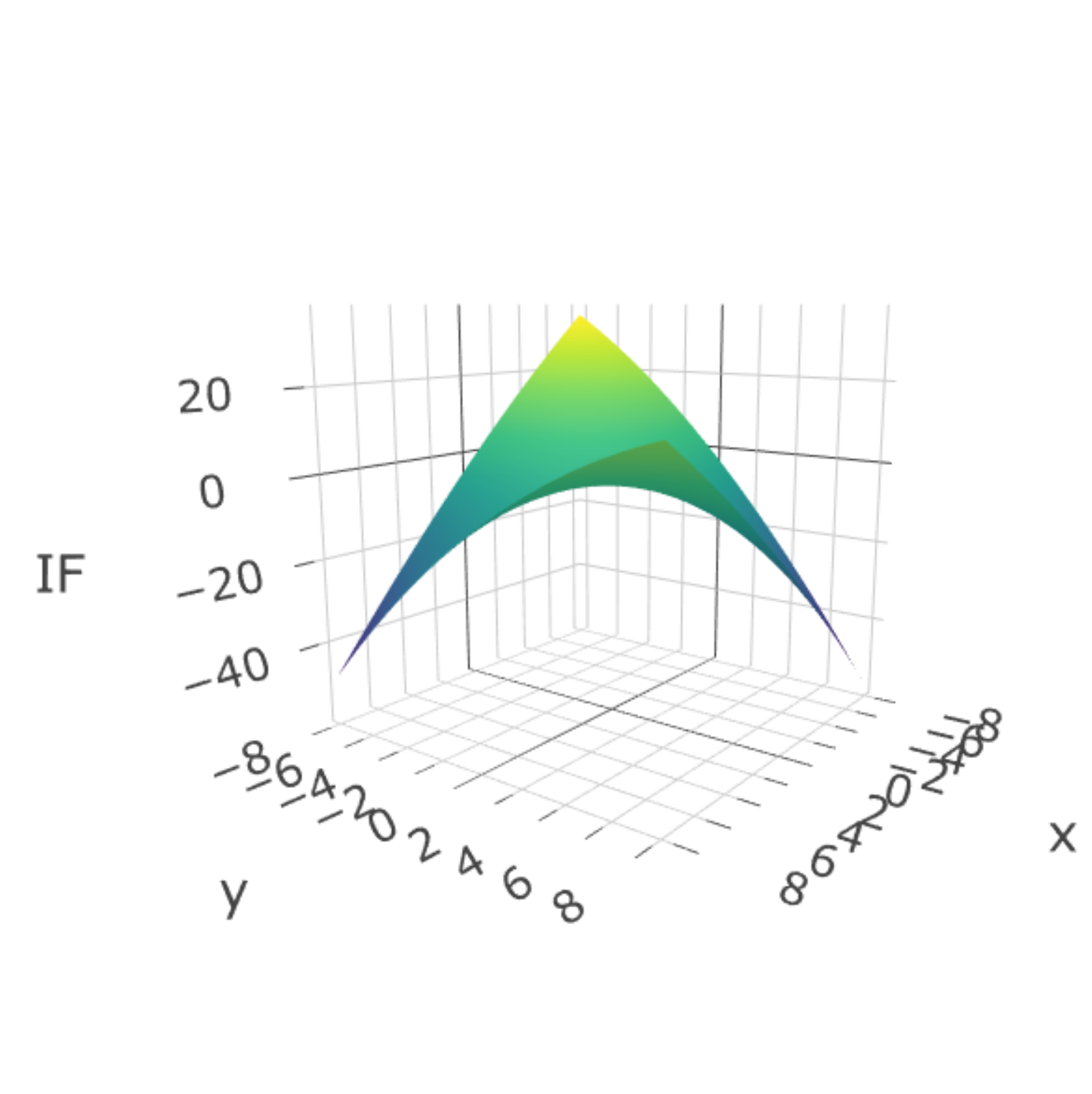} & %
		\includegraphics[scale=0.3]{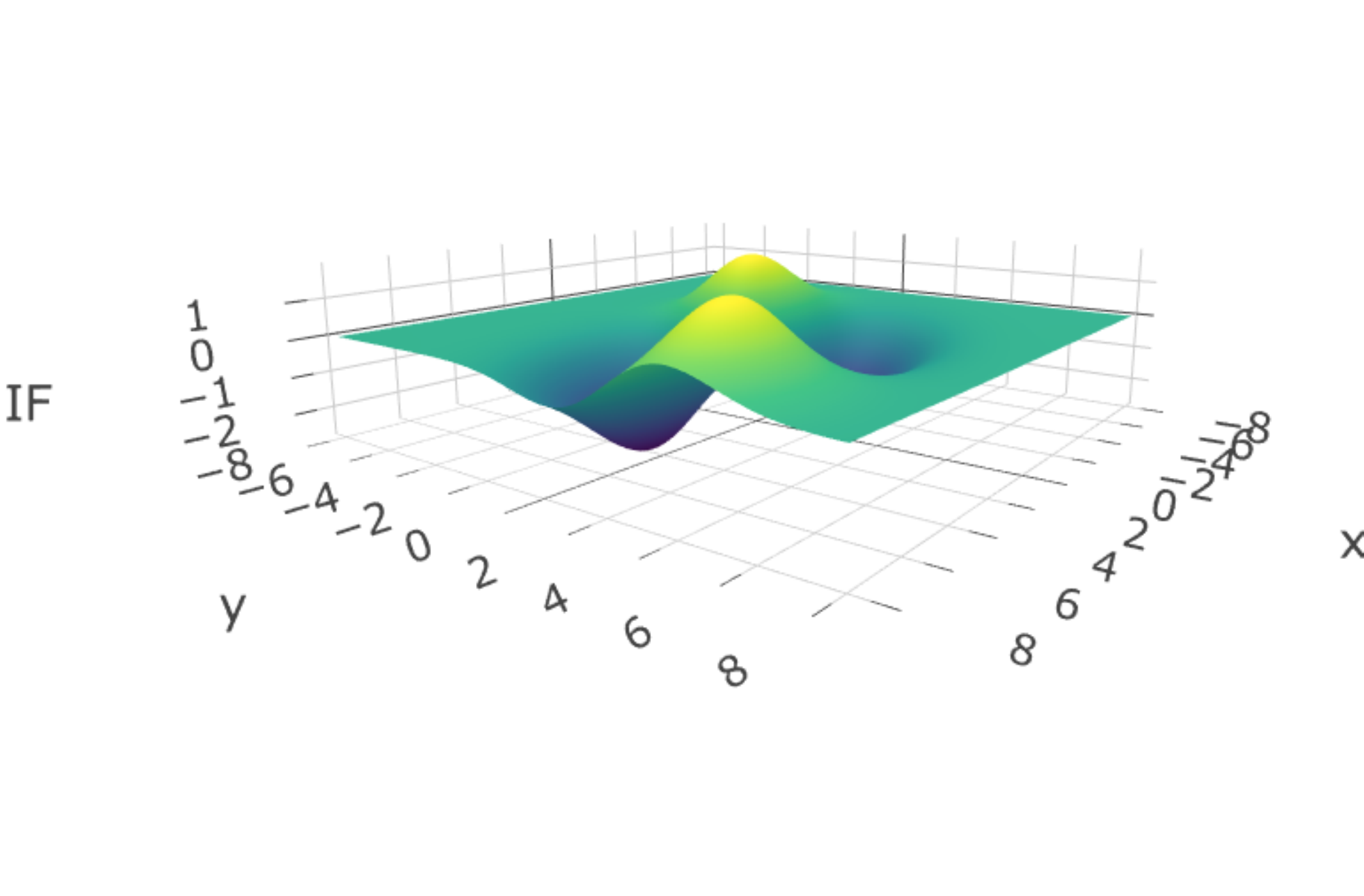}%
	\end{tabular}%
	\caption{$\mathcal{IF}_{\protect\alpha }(\protect\mu _{1})$ (above), $%
		\mathcal{IF}_{\protect\alpha }(\protect\sigma _{1})$ (middle) and $\mathcal{%
			IF}_{\protect\alpha }(\protect\rho )$ (below) for $\protect\alpha =0$ (left)
		and $\protect\alpha =0.3$ (right), with $\boldsymbol{\protect\theta }%
		=(1,2,1,1.5,0.3)^{T}$. }
	\label{fig:IF}
\end{figure}

Once we have computed the IF for the minimum RP estimators, we can define
and study the IF for the Wald-type test statistics defined in (\ref{4.2}).
As noted by \cite{castilla2021}, when the corresponding IF is identically
zero and is therefore necessary to consider the second order IF of the
proposed Wald-type tests functional $W_{\alpha}$.

\begin{theorem}
	Let us consider the bidimensional normal model (\ref{model}). The second
	order IF of the proposed Wald-type test functionals for testing simple null
	hypothesis in (\ref{4.1}) is given by 
	\begin{align}
		& \mathcal{IF}_{2}((x,y)^{T},W_{\alpha },F_{\boldsymbol{\theta }})  \notag \\
		& =2(\mathcal{IF}((x,y)^{T},\boldsymbol{T}_{\alpha },F_{\boldsymbol{\theta }%
		}))^{T}\boldsymbol{M}(\boldsymbol{\theta })\left( \boldsymbol{M}^{T}(%
		\boldsymbol{\theta })\boldsymbol{V}_{\alpha }(\boldsymbol{\theta })%
		\boldsymbol{M}(\boldsymbol{\theta })\right) ^{-1}\boldsymbol{M}^{T}(%
		\boldsymbol{\theta })\mathcal{IF}((x,y)^{T},\boldsymbol{T}_{\alpha },F_{%
			\boldsymbol{\theta }}),  \label{eq:IF}
	\end{align}%
	where $\mathcal{IF}((x,y)^{T},\boldsymbol{T}_{\alpha },F_{\boldsymbol{\theta 
	}})$ is given in Theorem \ref{th:IF}.
\end{theorem}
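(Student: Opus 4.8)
The plan is to treat the Wald-type statistic as a smooth function of the estimating functional $\boldsymbol{T}_\alpha$ and to expand it to second order along the contamination path $F_\varepsilon=(1-\varepsilon)F_{\boldsymbol{\theta}}+\varepsilon\Delta_{(x,y)^T}$. Introduce the real-valued map $g$ on $\Theta$ by $g(\boldsymbol{\theta})=\boldsymbol{m}^T(\boldsymbol{\theta})(\boldsymbol{M}^T(\boldsymbol{\theta})\boldsymbol{V}_\alpha(\boldsymbol{\theta})\boldsymbol{M}(\boldsymbol{\theta}))^{-1}\boldsymbol{m}(\boldsymbol{\theta})$, so that $W_\alpha(F_\varepsilon)=g(\boldsymbol{T}_\alpha(F_\varepsilon))$. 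Writing $\boldsymbol{\theta}(\varepsilon)=\boldsymbol{T}_\alpha(F_\varepsilon)$, Fisher consistency gives $\boldsymbol{\theta}(0)=\boldsymbol{\theta}$, and the definition of the IF in (\ref{eq:robust_1}) together with Theorem~\ref{th:IF} identifies $\boldsymbol{\theta}'(0)=\mathcal{IF}((x,y)^T,\boldsymbol{T}_\alpha,F_{\boldsymbol{\theta}})$. Since the second-order IF is $\mathcal{IF}_2=\partial^2 W_\alpha(F_\varepsilon)/\partial\varepsilon^2$ at $\varepsilon=0^+$, the chain rule gives $\mathcal{IF}_2=\boldsymbol{\theta}'(0)^T\,\nabla^2 g(\boldsymbol{\theta})\,\boldsymbol{\theta}'(0)+\nabla g(\boldsymbol{\theta})^T\boldsymbol{\theta}''(0)$, where $\nabla g$ and $\nabla^2 g$ are evaluated at the true (null) value $\boldsymbol{\theta}$.

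The decisive fact is that under a simple null hypothesis $\boldsymbol{m}(\boldsymbol{\theta})=\boldsymbol{0}_r$. First I would differentiate $g$ once: with $\boldsymbol{A}(\boldsymbol{\theta})=\boldsymbol{M}^T(\boldsymbol{\theta})\boldsymbol{V}_\alpha(\boldsymbol{\theta})\boldsymbol{M}(\boldsymbol{\theta})$, each component of $\nabla g$ is a sum of a term $2(\partial_j\boldsymbol{m}^T)\boldsymbol{A}^{-1}\boldsymbol{m}$ and a term $\boldsymbol{m}^T(\partial_j\boldsymbol{A}^{-1})\boldsymbol{m}$, both carrying a factor $\boldsymbol{m}(\boldsymbol{\theta})$; hence $\nabla g(\boldsymbol{\theta})=\boldsymbol{0}_5$. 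This removes the $\nabla g(\boldsymbol{\theta})^T\boldsymbol{\theta}''(0)$ term — so the second-order IF of $\boldsymbol{T}_\alpha$ itself is never needed — and also re-derives the vanishing of the ordinary IF of $W_\alpha$ observed just before the theorem. Then I would differentiate a second time: in $\nabla^2 g$ every summand still containing $\boldsymbol{m}(\boldsymbol{\theta})$, or a derivative of $\boldsymbol{A}^{-1}$ multiplied by $\boldsymbol{m}(\boldsymbol{\theta})$, vanishes at $\boldsymbol{\theta}$, leaving only $\partial_k\partial_j g|_{\boldsymbol{\theta}}=2(\partial_j\boldsymbol{m}^T)\boldsymbol{A}^{-1}(\partial_k\boldsymbol{m})$. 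Arranging these entries into a matrix and recalling $\boldsymbol{M}(\boldsymbol{\theta})=\partial\boldsymbol{m}^T(\boldsymbol{\theta})/\partial\boldsymbol{\theta}$ yields $\nabla^2 g(\boldsymbol{\theta})=2\boldsymbol{M}(\boldsymbol{\theta})(\boldsymbol{M}^T(\boldsymbol{\theta})\boldsymbol{V}_\alpha(\boldsymbol{\theta})\boldsymbol{M}(\boldsymbol{\theta}))^{-1}\boldsymbol{M}^T(\boldsymbol{\theta})$.

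Substituting this Hessian and $\boldsymbol{\theta}'(0)=\mathcal{IF}((x,y)^T,\boldsymbol{T}_\alpha,F_{\boldsymbol{\theta}})$ into $\mathcal{IF}_2=\boldsymbol{\theta}'(0)^T\nabla^2 g(\boldsymbol{\theta})\boldsymbol{\theta}'(0)$ produces exactly (\ref{eq:IF}); inserting the explicit components from Theorem~\ref{th:IF} gives the fully spelled-out expression if wanted. The main obstacle is bookkeeping rather than anything deep: one must expand $\boldsymbol{m}^T\boldsymbol{A}^{-1}\boldsymbol{m}$ twice, keeping track of the $\partial\boldsymbol{A}^{-1}=-\boldsymbol{A}^{-1}(\partial\boldsymbol{A})\boldsymbol{A}^{-1}$ contributions and all mixed terms, and verify that every resulting term except the retained quadratic one vanishes once $\boldsymbol{m}(\boldsymbol{\theta})=\boldsymbol{0}_r$ is imposed. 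A secondary point is to justify that $\varepsilon\mapsto\boldsymbol{T}_\alpha(F_\varepsilon)$ is twice differentiable at $0$, so that the second-order Taylor step is legitimate; this follows from the $M$-estimating equation (\ref{eq}) characterizing the MRPDE and the implicit function theorem under the usual regularity conditions on $f_{\boldsymbol{\theta}}$, as in the first-order analyses of \cite{Broniatowski2012} and \cite{castilla2021}.
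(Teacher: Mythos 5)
Your derivation is correct: writing $W_\alpha(F_\varepsilon)=g(\boldsymbol{T}_\alpha(F_\varepsilon))$ with $g(\boldsymbol{\theta})=\boldsymbol{m}^T(\boldsymbol{\theta})\boldsymbol{A}^{-1}(\boldsymbol{\theta})\boldsymbol{m}(\boldsymbol{\theta})$, noting that $\boldsymbol{m}(\boldsymbol{\theta})=\boldsymbol{0}_r$ under the null kills $\nabla g(\boldsymbol{\theta})$ (hence the first-order IF and the $\boldsymbol{\theta}''(0)$ term) and leaves only the Hessian block $2\boldsymbol{M}(\boldsymbol{\theta})\boldsymbol{A}^{-1}(\boldsymbol{\theta})\boldsymbol{M}^T(\boldsymbol{\theta})$, is exactly the standard argument behind (\ref{eq:IF}). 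The paper itself supplies no proof of this theorem (it defers to the cited reference on second-order IFs of Wald-type functionals), and your chain-rule computation is the same route and is complete, including the sensible remark on twice-differentiability of $\varepsilon\mapsto\boldsymbol{T}_\alpha(F_\varepsilon)$ via the $M$-estimating equations.
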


Note that the second-order IF of the proposed Wald-type tests is a quadratic
function of the corresponding IF of the MRPDE. Therefore, the boundedness of
the IF of MRPDE at $\alpha >0$ also indicates the boundedness of the IF of
the Wald-type test functionals, implying its robustness against
contamination.

\begin{figure}[]
	\centering
	\begin{tabular}{cc}
		\includegraphics[scale=0.33]{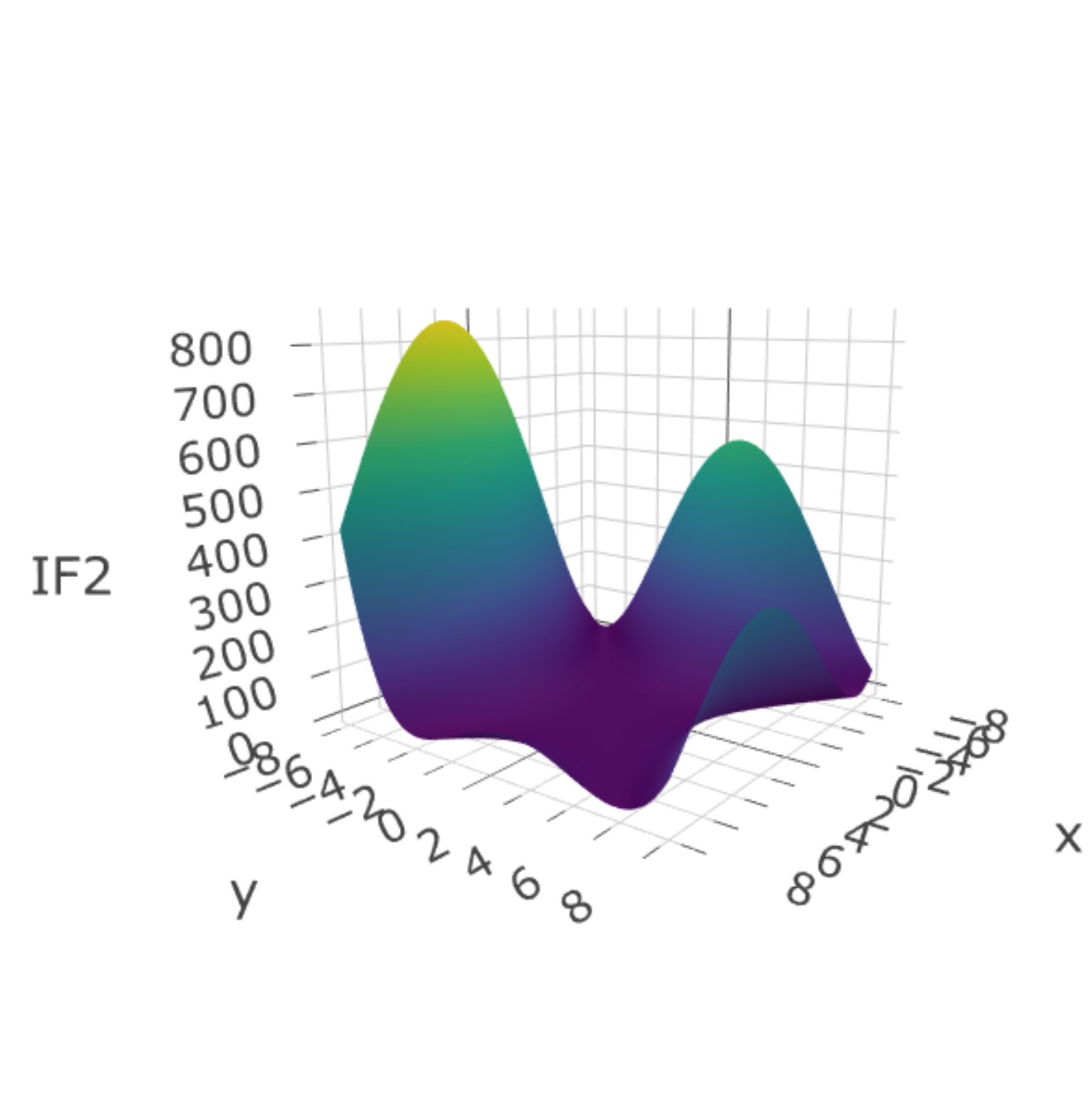} & %
		\includegraphics[scale=0.33]{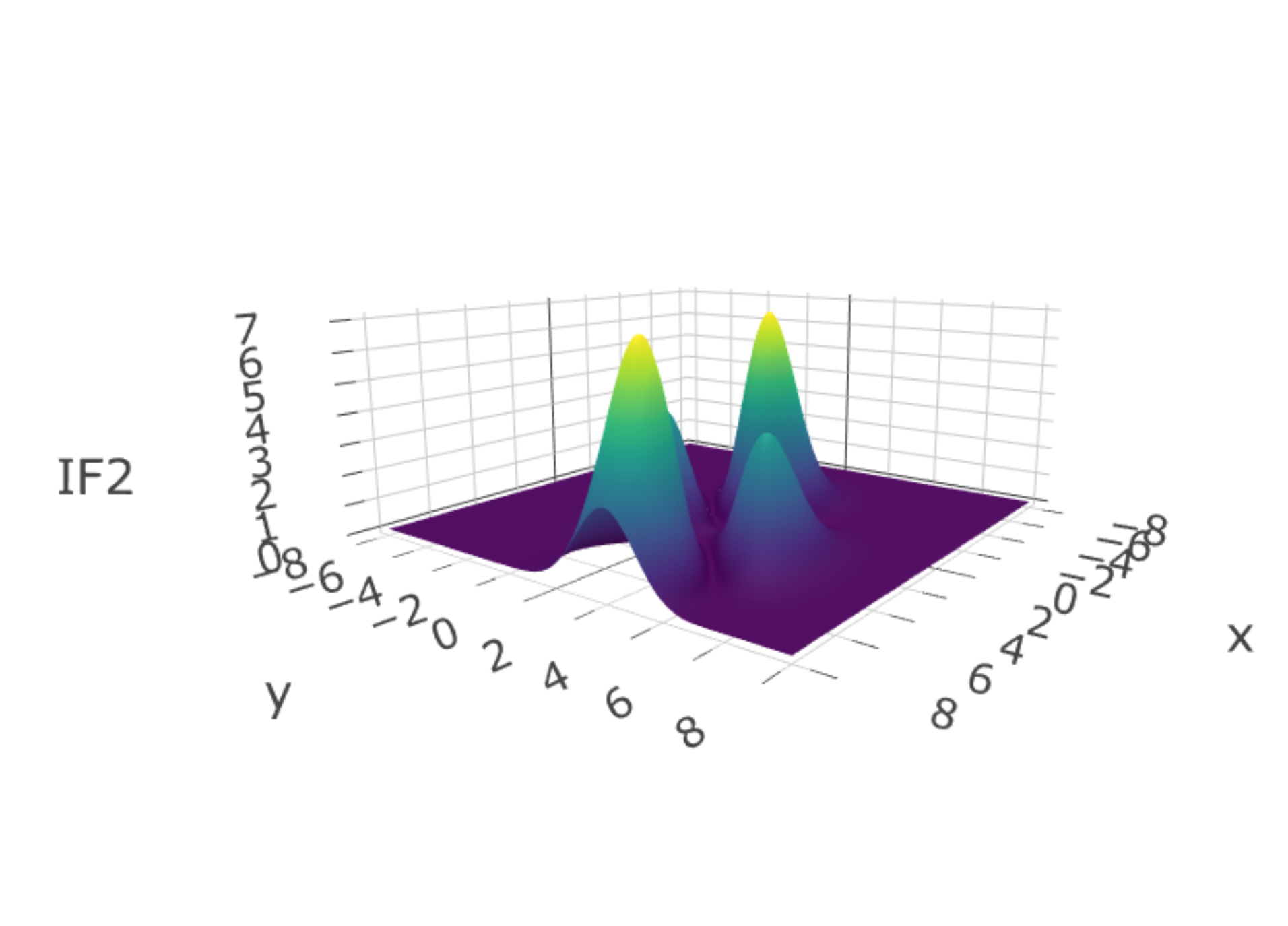}%
	\end{tabular}%
	\caption{$\mathcal{IF}_{2}((x,y)^{T},W_{\protect\alpha },F_{\boldsymbol{%
				\protect\theta }})$ for testing $H_{0}$: $\protect\sigma _{1}=\protect\sigma %
		_{2}$, with $\protect\alpha =0$ (left) and $\protect\alpha =0.3$ (right),
		and $\boldsymbol{\protect\theta }=(1,2,1,1.5,0.3)^{T}$. }
	\label{fig:IF2}
\end{figure}

\newpage

\section{Simulation study\label{sim}}

It is well-known that the Morgan-Pitman test is best unbiased and best
invariant test statistic for testing equality of variances (see \cite{Morgan1939}, \cite{Pitman1939} and \cite{hsul1940}). The idea behind the Morgan-Pitman
test allows us to include for testing equality of variances not only the
family of test statistics given in Case \ref{subSec2} but also the family of
test statistics given in Case \ref{subSec3} for the data $%
(u_{1},v_{1}),\ldots ,(u_{n},v_{n})$, where $U=X+Y$ and $V=X-Y$ are
transformed variables. As $\mathrm{Cov}[U,V]=\sigma _{1}^{2}-\sigma _{2}^{2}$%
, testing $H_{0}$: $\sigma _{1}=\sigma _{2}$ given in (\ref{H0}) (Case \ref%
{subSec2}), matches $H_{0}$: $\rho _{UV}=0$, from (\ref{H0b}) with $\rho
_{0}=0$ (Case \ref{subSec3}). We have compared these two families of
test statistics, first (\ref{TS}) in Tables \ref{table3}, \ref{table1}, \ref%
{table5}, \ref{table7}, rewritten as%
\begin{equation}
	W_{n,\alpha }(\widehat{\gamma }_{R,\alpha },\widehat{\rho }_{R,\alpha })=n%
	\frac{\left( 2\alpha +1\right) ^{3}}{\left( \alpha +1\right) ^{6}}\frac{(%
		\widehat{\gamma }_{R,\alpha }-1)^{2}}{\beta _{\alpha }(\widehat{\gamma }%
		_{R,\alpha },\widehat{\rho }_{R,\alpha })},  \label{simW1}
\end{equation}%
where%
\begin{align}
	\widehat{\gamma }_{R,\alpha }& =\frac{\widehat{\sigma }_{1,R,\alpha }}{%
		\widehat{\sigma }_{2,R,\alpha }},  \notag \\
	\beta _{\alpha }(\widehat{\gamma }_{R,\alpha },\widehat{\rho }_{R,\alpha })&
	=\frac{1}{4}\left[ \left( \tfrac{\alpha }{\alpha +1}\right) ^{2}+2\right] (%
	\widehat{\gamma }_{R,\alpha }-1)^{2}+(1-\widehat{\rho }_{R,\alpha }^{2})%
	\widehat{\gamma }_{R,\alpha }.  \label{beta}
\end{align}%
and second (\ref{TS2b}) in Tables \ref{table3b}, \ref{table1b}, \ref{table5b}%
, \ref{table7b}, rewritten as%
\begin{equation}
	W_{n,\alpha }^{\prime }(\widehat{\rho }_{UV,R,\alpha })=n\frac{\left(
		2\alpha +1\right) ^{3}}{\left( \alpha +1\right) ^{6}}\widehat{\rho }%
	_{UV,R,\alpha }^{2}.  \label{simW2}
\end{equation}%
A third one, (\ref{TS2}), was also considered but do not present the results
here as\ the corresponding results were very bad in comparison with (\ref%
{TS2b}). In addition, the exact Morgan-Pitman test,%
\begin{equation}
	T_{MP}=\widehat{\rho }_{UV,R,\alpha =0}\sqrt{\frac{n-2}{1-\widehat{\rho }%
			_{UV,R,\alpha =0}^{2}}},  \label{MorganPitman}
\end{equation}%
is taken into account, whose exact distribution is a Student $t$ with $n-2$
degrees of freedom ($t_{n-2}$) under $H_{0}$, with 
\begin{equation*}
	\widehat{\rho }_{UV,R,\alpha =0}=\frac{\sum_{i=1}^{n}(U_{i}-\bar{U}%
		_{n})(V_{i}-\bar{V}_{n})}{\sqrt{\sum_{i=1}^{n}(U_{i}-\bar{U}_{n})^{2}}\sqrt{%
			\sum_{i=1}^{n}(V_{i}-\bar{V}_{n})^{2}}}=R_{UV},
\end{equation*}%
being the Pearson correlation coefficient, i.e. the MLE of $\rho _{UV}$.
Furthermore, we included the simulated significance level of the Morgan
Pitman test described in (\ref{MorganPitman}) in the aforemetioned tables
and calculated by simulation $\mathrm{MSE}(\widehat{\gamma }_{R,\alpha
})=\left\vert \widehat{\gamma }_{R,\alpha }-1\right\vert $ in Tables \ref%
{table4}, \ref{table2}, \ref{table6}, \ref{table8} as well as $\mathrm{MSE}(%
\widehat{\gamma }_{R,\alpha })=\left\vert \widehat{\rho }_{R,\alpha
}\right\vert $ in Tables \ref{table4b}, \ref{table2b}, \ref{table6b}, \ref%
{table8b}.

So as to evaluate the performance of the proposed Wald-type tests, we
considered the bidimensional normal model (\ref{model}) for the true
parameters values $\mu _{1}=\mu _{2}=0$, $\sigma _{1}=\sigma _{2}=1$ and the
different correlations between the normal variables $\rho \in
\{0,0.\allowbreak 3,0.\allowbreak 6,0.\allowbreak 9\}.$ Additionally, in
order to evaluate the robustness of the Wald-type tests, we analysed ten
different scenarios of contamination:

\begin{itemize}
	\item Pure data
	
	\item Slightly contaminated data : We replace a $5\%,10\%$ and $20\%$ of the
	samples by a bidimensional normal distribution, substituting the true
	parameter values $\sigma _{1}^{\prime }=\sigma _{2}^{\prime }=1$ by $\sigma
	_{1}^{\prime }=\sigma _{2}^{\prime }=\sqrt{3}.$
	
	\item Contaminated data : We replace a $5\%,10\%$ and $20\%$ of the samples
	by a bidimensional Student $t$ distribution with $d=5$ degrees of freedom.
	
	\item Heavily contaminated data : We replace a $5\%,10\%$ and $20\%$ of the
	samples by a bidimensional normal distribution, substituting the true
	parameter value $\sigma _{2}^{\prime }=1$ by $\sigma _{2}^{\prime }=5.$
\end{itemize}

\begin{center}
	\begin{table}[htbp]  \tabcolsep2.8pt  \centering%
		\begin{tabular}{llllllllllll}
			\hline
			&  &  & \multicolumn{3}{c}{slightly} & \multicolumn{3}{c}{regular} & 
			\multicolumn{3}{c}{heavily} \\ 
			$\rho $ & $\alpha $ & pure & $0.05$ & $0.10$ & $0.20$ & $0.05$ & $0.10$ & $%
			0.20$ & $0.05$ & $0.10$ & $0.20$ \\ \hline
			$0$ & $0$ & 0.169 & 0.175 & 0.182 & 0.190 & 0.173 & 0.179 & 0.186 & 0.279 & 
			0.380 & 0.522 \\ 
			& $0.1$ & 0.170 & 0.173 & 0.178 & 0.184 & 0.172 & 0.175 & 0.178 & 0.209 & 
			0.281 & 0.432 \\ 
			& $0.2$ & 0.176 & 0.177 & 0.182 & 0.187 & 0.178 & 0.179 & 0.181 & 0.187 & 
			0.220 & 0.320 \\ 
			& $0.3$ & 0.187 & 0.187 & 0.192 & 0.198 & 0.188 & 0.189 & 0.192 & 0.191 & 
			0.209 & 0.267 \\ 
			& $0.5$ & 0.223 & 0.224 & 0.229 & 0.236 & 0.228 & 0.227 & 0.230 & 0.226 & 
			0.234 & 0.264 \\ 
			& $0.7$ & 0.307 & 0.327 & 0.313 & 0.320 & 0.304 & 0.384 & 0.313 & 0.330 & 
			0.317 & 0.351 \\ \hline
			$0.3$ & $0$ & 0.161 & 0.169 & 0.175 & 0.182 & 0.164 & 0.168 & 0.176 & 0.278
			& 0.376 & 0.523 \\ 
			& $0.1$ & 0.162 & 0.167 & 0.170 & 0.176 & 0.162 & 0.164 & 0.169 & 0.204 & 
			0.273 & 0.430 \\ 
			& $0.2$ & 0.168 & 0.171 & 0.174 & 0.180 & 0.168 & 0.168 & 0.173 & 0.181 & 
			0.210 & 0.313 \\ 
			& $0.3$ & 0.178 & 0.181 & 0.184 & 0.189 & 0.177 & 0.177 & 0.183 & 0.184 & 
			0.199 & 0.256 \\ 
			& $0.5$ & 0.213 & 0.216 & 0.219 & 0.225 & 0.214 & 0.213 & 0.218 & 0.218 & 
			0.226 & 0.252 \\ 
			& $0.7$ & 0.290 & 0.290 & 0.304 & 0.329 & 0.298 & 0.289 & 0.295 & 0.290 & 
			0.312 & 0.353 \\ \hline
			$0.6$ & $0$ & 0.133 & 0.142 & 0.146 & 0.150 & 0.139 & 0.142 & 0.147 & 0.268
			& 0.371 & 0.522 \\ 
			& $0.1$ & 0.134 & 0.140 & 0.142 & 0.146 & 0.137 & 0.139 & 0.140 & 0.178 & 
			0.252 & 0.417 \\ 
			& $0.2$ & 0.139 & 0.143 & 0.145 & 0.149 & 0.141 & 0.143 & 0.143 & 0.154 & 
			0.181 & 0.288 \\ 
			& $0.3$ & 0.147 & 0.151 & 0.154 & 0.157 & 0.149 & 0.152 & 0.152 & 0.156 & 
			0.169 & 0.227 \\ 
			& $0.5$ & 0.176 & 0.179 & 0.184 & 0.189 & 0.176 & 0.181 & 0.183 & 0.183 & 
			0.191 & 0.220 \\ 
			& $0.7$ & 0.241 & 0.248 & 0.253 & 0.254 & 0.237 & 0.245 & 0.244 & 0.250 & 
			0.268 & 0.289 \\ \hline
			$0.9$ & $0$ & 0.074 & 0.077 & 0.079 & 0.081 & 0.075 & 0.077 & 0.081 & 0.240
			& 0.367 & 0.525 \\ 
			& $0.1$ & 0.074 & 0.076 & 0.077 & 0.078 & 0.075 & 0.075 & 0.077 & 0.099 & 
			0.162 & 0.348 \\ 
			& $0.2$ & 0.077 & 0.078 & 0.079 & 0.080 & 0.077 & 0.077 & 0.078 & 0.082 & 
			0.098 & 0.170 \\ 
			& $0.3$ & 0.081 & 0.082 & 0.084 & 0.084 & 0.081 & 0.081 & 0.083 & 0.085 & 
			0.092 & 0.120 \\ 
			& $0.5$ & 0.097 & 0.098 & 0.101 & 0.102 & 0.099 & 0.097 & 0.099 & 0.101 & 
			0.107 & 0.121 \\ 
			& $0.7$ & 0.131 & 0.131 & 0.136 & 0.143 & 0.137 & 0.130 & 0.135 & 0.136 & 
			0.148 & 0.204 \\ \hline
		\end{tabular}
		\caption{Simulated mean square error of the MRPDE for ratio of variances,
			$\protect\widehat{\gamma}_{R,\alpha}$, when $n=25$\label{table4}} 
	\end{table}%
	
	\begin{table}[htbp]  \tabcolsep2.8pt  \centering%
		\begin{tabular}{llllllllllll}
			\hline
			&  &  & \multicolumn{3}{c}{slightly} & \multicolumn{3}{c}{regular} & 
			\multicolumn{3}{c}{heavily} \\ 
			$\rho $ & $\alpha $ & pure & $0.05$ & $0.10$ & $0.20$ & $0.05$ & $0.10$ & $%
			0.20$ & $0.05$ & $0.10$ & $0.20$ \\ \hline
			$0$ & $0$ & 0.059 & 0.070 & 0.081 & 0.093 & 0.067 & 0.074 & 0.089 & 0.352 & 
			0.585 & 0.852 \\ 
			& $0.1$ & 0.058 & 0.063 & 0.072 & 0.079 & 0.062 & 0.064 & 0.069 & 0.172 & 
			0.362 & 0.699 \\ 
			& $0.2$ & 0.059 & 0.064 & 0.069 & 0.075 & 0.063 & 0.064 & 0.066 & 0.094 & 
			0.180 & 0.438 \\ 
			& $0.3$ & 0.064 & 0.068 & 0.071 & 0.079 & 0.067 & 0.067 & 0.072 & 0.081 & 
			0.124 & 0.276 \\ 
			& $0.5$ & 0.085 & 0.091 & 0.094 & 0.103 & 0.093 & 0.089 & 0.097 & 0.100 & 
			0.115 & 0.182 \\ 
			& $0.7$ & 0.144 & 0.148 & 0.155 & 0.167 & 0.154 & 0.146 & 0.159 & 0.157 & 
			0.173 & 0.217 \\ \hline
			& \multicolumn{1}{c}{MP} & 0.051 & 0.062 & 0.071 & 0.082 & 0.059 & 0.065 & 
			0.080 & 0.341 & 0.572 & 0.844 \\ \hline
			$0.3$ & $0$ & 0.061 & 0.075 & 0.082 & 0.092 & 0.062 & 0.072 & 0.086 & 0.365
			& 0.596 & 0.860 \\ 
			& $0.1$ & 0.060 & 0.066 & 0.071 & 0.078 & 0.056 & 0.060 & 0.069 & 0.179 & 
			0.361 & 0.708 \\ 
			& $0.2$ & 0.061 & 0.064 & 0.067 & 0.075 & 0.058 & 0.058 & 0.068 & 0.097 & 
			0.181 & 0.433 \\ 
			& $0.3$ & 0.065 & 0.066 & 0.070 & 0.079 & 0.062 & 0.063 & 0.073 & 0.081 & 
			0.123 & 0.271 \\ 
			& $0.5$ & 0.089 & 0.093 & 0.097 & 0.104 & 0.088 & 0.088 & 0.098 & 0.101 & 
			0.122 & 0.181 \\ 
			& $0.7$ & 0.146 & 0.152 & 0.156 & 0.172 & 0.148 & 0.143 & 0.154 & 0.158 & 
			0.181 & 0.223 \\ \hline
			& \multicolumn{1}{c}{MP} & 0.052 & 0.063 & 0.071 & 0.082 & 0.054 & 0.062 & 
			0.075 & 0.352 & 0.584 & 0.852 \\ \hline
			$0.6$ & $0$ & 0.057 & 0.072 & 0.081 & 0.090 & 0.067 & 0.075 & 0.090 & 0.394
			& 0.631 & 0.883 \\ 
			& $0.1$ & 0.056 & 0.064 & 0.071 & 0.075 & 0.060 & 0.064 & 0.070 & 0.175 & 
			0.368 & 0.713 \\ 
			& $0.2$ & 0.058 & 0.064 & 0.068 & 0.072 & 0.060 & 0.062 & 0.070 & 0.095 & 
			0.167 & 0.430 \\ 
			& $0.3$ & 0.063 & 0.069 & 0.074 & 0.077 & 0.065 & 0.067 & 0.074 & 0.085 & 
			0.116 & 0.265 \\ 
			& $0.5$ & 0.088 & 0.093 & 0.102 & 0.109 & 0.088 & 0.091 & 0.099 & 0.101 & 
			0.120 & 0.183 \\ 
			& $0.7$ & 0.148 & 0.148 & 0.160 & 0.171 & 0.143 & 0.151 & 0.155 & 0.157 & 
			0.179 & 0.226 \\ \hline
			& \multicolumn{1}{c}{MP} & 0.047 & 0.062 & 0.069 & 0.076 & 0.057 & 0.064 & 
			0.079 & 0.380 & 0.616 & 0.875 \\ \hline
			$0.9$ & $0$ & 0.064 & 0.078 & 0.084 & 0.091 & 0.068 & 0.077 & 0.092 & 0.465
			& 0.719 & 0.932 \\ 
			& $0.1$ & 0.061 & 0.068 & 0.073 & 0.075 & 0.062 & 0.065 & 0.073 & 0.125 & 
			0.282 & 0.647 \\ 
			& $0.2$ & 0.060 & 0.067 & 0.073 & 0.072 & 0.063 & 0.063 & 0.071 & 0.075 & 
			0.111 & 0.280 \\ 
			& $0.3$ & 0.066 & 0.069 & 0.079 & 0.077 & 0.068 & 0.068 & 0.075 & 0.073 & 
			0.094 & 0.162 \\ 
			& $0.5$ & 0.089 & 0.093 & 0.105 & 0.110 & 0.097 & 0.092 & 0.102 & 0.100 & 
			0.118 & 0.146 \\ 
			& $0.7$ & 0.147 & 0.151 & 0.165 & 0.171 & 0.157 & 0.153 & 0.163 & 0.158 & 
			0.185 & 0.217 \\ \hline
			& \multicolumn{1}{c}{MP} & 0.051 & 0.064 & 0.069 & 0.077 & 0.057 & 0.063 & 
			0.078 & 0.448 & 0.706 & 0.927 \\ \hline
		\end{tabular}
		\caption{Simulated significance level for testing equal variances through
			$W_{n,\alpha }(\widehat{\gamma }_{R,\alpha },\widehat{\rho }_{R,\alpha })$ given by
			(\ref{simW1}) and the Morgan-Pitman test, when $n=25$\label{table3}} 
	\end{table}%
	
	\begin{table}[htbp]  \tabcolsep2.8pt  \centering%
		\begin{tabular}{llllllllllll}
			\hline
			&  &  & \multicolumn{3}{c}{slightly} & \multicolumn{3}{c}{regular} & 
			\multicolumn{3}{c}{heavily} \\ 
			$\rho $ & $\alpha $ & pure & $0.05$ & $0.10$ & $0.20$ & $0.05$ & $0.10$ & $%
			0.20$ & $0.05$ & $0.10$ & $0.20$ \\ \hline
			$0$ & $0$ & 0.165 & 0.171 & 0.177 & 0.185 & 0.169 & 0.174 & 0.181 & 0.317 & 
			0.448 & 0.627 \\ 
			& $0.1$ & 0.167 & 0.169 & 0.174 & 0.180 & 0.168 & 0.170 & 0.174 & 0.225 & 
			0.322 & 0.515 \\ 
			& $0.2$ & 0.172 & 0.173 & 0.177 & 0.183 & 0.173 & 0.174 & 0.177 & 0.193 & 
			0.238 & 0.371 \\ 
			& $0.3$ & 0.181 & 0.182 & 0.187 & 0.192 & 0.183 & 0.183 & 0.187 & 0.192 & 
			0.218 & 0.297 \\ 
			& $0.5$ & 0.213 & 0.214 & 0.218 & 0.224 & 0.217 & 0.215 & 0.219 & 0.220 & 
			0.234 & 0.272 \\ 
			& $0.7$ & 0.270 & 0.272 & 0.277 & 0.284 & 0.275 & 0.273 & 0.278 & 0.277 & 
			0.291 & 0.320 \\ \hline
			$0.3$ & $0$ & 0.165 & 0.173 & 0.178 & 0.185 & 0.168 & 0.173 & 0.180 & 0.325
			& 0.453 & 0.636 \\ 
			& $0.1$ & 0.166 & 0.171 & 0.174 & 0.180 & 0.166 & 0.169 & 0.173 & 0.227 & 
			0.321 & 0.520 \\ 
			& $0.2$ & 0.171 & 0.175 & 0.178 & 0.184 & 0.171 & 0.173 & 0.177 & 0.194 & 
			0.236 & 0.370 \\ 
			& $0.3$ & 0.181 & 0.184 & 0.187 & 0.192 & 0.181 & 0.182 & 0.187 & 0.194 & 
			0.216 & 0.294 \\ 
			& $0.5$ & 0.213 & 0.217 & 0.220 & 0.225 & 0.213 & 0.213 & 0.220 & 0.223 & 
			0.234 & 0.271 \\ 
			& $0.7$ & 0.270 & 0.275 & 0.278 & 0.287 & 0.271 & 0.270 & 0.277 & 0.280 & 
			0.293 & 0.322 \\ \hline
			$0.6$ & $0$ & 0.163 & 0.172 & 0.177 & 0.183 & 0.169 & 0.172 & 0.179 & 0.342
			& 0.477 & 0.664 \\ 
			& $0.1$ & 0.165 & 0.170 & 0.173 & 0.178 & 0.168 & 0.169 & 0.172 & 0.225 & 
			0.324 & 0.534 \\ 
			& $0.2$ & 0.171 & 0.175 & 0.177 & 0.181 & 0.172 & 0.174 & 0.175 & 0.192 & 
			0.230 & 0.370 \\ 
			& $0.3$ & 0.180 & 0.183 & 0.186 & 0.191 & 0.181 & 0.184 & 0.184 & 0.193 & 
			0.212 & 0.290 \\ 
			& $0.5$ & 0.212 & 0.215 & 0.218 & 0.225 & 0.212 & 0.215 & 0.218 & 0.221 & 
			0.232 & 0.271 \\ 
			& $0.7$ & 0.270 & 0.272 & 0.280 & 0.287 & 0.268 & 0.275 & 0.276 & 0.279 & 
			0.294 & 0.323 \\ \hline
			$0.9$ & $0$ & 0.166 & 0.173 & 0.178 & 0.183 & 0.168 & 0.172 & 0.180 & 0.382
			& 0.542 & 0.729 \\ 
			& $0.1$ & 0.167 & 0.171 & 0.174 & 0.176 & 0.167 & 0.167 & 0.172 & 0.199 & 
			0.283 & 0.514 \\ 
			& $0.2$ & 0.172 & 0.175 & 0.178 & 0.179 & 0.172 & 0.171 & 0.175 & 0.179 & 
			0.201 & 0.294 \\ 
			& $0.3$ & 0.181 & 0.183 & 0.187 & 0.189 & 0.182 & 0.180 & 0.184 & 0.186 & 
			0.198 & 0.235 \\ 
			& $0.5$ & 0.215 & 0.216 & 0.223 & 0.228 & 0.217 & 0.214 & 0.219 & 0.220 & 
			0.232 & 0.254 \\ 
			& $0.7$ & 0.285 & 0.286 & 0.296 & 0.307 & 0.289 & 0.284 & 0.294 & 0.294 & 
			0.311 & 0.344 \\ \hline
		\end{tabular}%
		\caption{Simulated mean square error of the MRPDE for correlation coefficient,
			$\protect\widehat{\rho}_{R,\alpha}$, when $n=25$\label{table4b}} 
	\end{table}%
	
	\begin{table}[htbp]  \tabcolsep2.8pt  \centering%
		\begin{tabular}{llllllllllll}
			\hline
			&  &  & \multicolumn{3}{c}{slightly} & \multicolumn{3}{c}{regular} & 
			\multicolumn{3}{c}{heavily} \\ 
			$\rho $ & $\alpha $ & pure & $0.05$ & $0.10$ & $0.20$ & $0.05$ & $0.10$ & $%
			0.20$ & $0.05$ & $0.10$ & $0.20$ \\ \hline
			$0$ & $0$ & 0.053 & 0.065 & 0.074 & 0.085 & 0.062 & 0.069 & 0.082 & 0.345 & 
			0.576 & 0.847 \\ 
			& $0.1$ & 0.053 & 0.058 & 0.065 & 0.072 & 0.057 & 0.059 & 0.063 & 0.162 & 
			0.351 & 0.689 \\ 
			& $0.2$ & 0.053 & 0.058 & 0.063 & 0.068 & 0.057 & 0.058 & 0.061 & 0.087 & 
			0.170 & 0.425 \\ 
			& $0.3$ & 0.058 & 0.060 & 0.064 & 0.071 & 0.060 & 0.061 & 0.065 & 0.073 & 
			0.115 & 0.264 \\ 
			& $0.5$ & 0.076 & 0.082 & 0.085 & 0.095 & 0.085 & 0.080 & 0.090 & 0.092 & 
			0.106 & 0.168 \\ 
			& $0.7$ & 0.132 & 0.137 & 0.143 & 0.154 & 0.141 & 0.135 & 0.146 & 0.144 & 
			0.160 & 0.203 \\ \hline
			& \multicolumn{1}{c}{MP} & 0.051 & 0.062 & 0.071 & 0.082 & 0.059 & 0.065 & 
			0.080 & 0.341 & 0.572 & 0.844 \\ \hline
			$0.3$ & $0$ & 0.055 & 0.067 & 0.074 & 0.084 & 0.056 & 0.065 & 0.079 & 0.357
			& 0.588 & 0.855 \\ 
			& $0.1$ & 0.055 & 0.059 & 0.063 & 0.072 & 0.051 & 0.054 & 0.062 & 0.170 & 
			0.350 & 0.699 \\ 
			& $0.2$ & 0.055 & 0.058 & 0.060 & 0.067 & 0.051 & 0.051 & 0.062 & 0.088 & 
			0.170 & 0.422 \\ 
			& $0.3$ & 0.058 & 0.059 & 0.062 & 0.072 & 0.055 & 0.056 & 0.066 & 0.073 & 
			0.113 & 0.258 \\ 
			& $0.5$ & 0.080 & 0.084 & 0.088 & 0.094 & 0.079 & 0.080 & 0.088 & 0.091 & 
			0.111 & 0.168 \\ 
			& $0.7$ & 0.134 & 0.138 & 0.143 & 0.157 & 0.137 & 0.132 & 0.141 & 0.144 & 
			0.167 & 0.208 \\ \hline
			& \multicolumn{1}{c}{MP} & 0.052 & 0.063 & 0.071 & 0.082 & 0.054 & 0.062 & 
			0.075 & 0.352 & 0.584 & 0.852 \\ \hline
			$0.6$ & $0$ & 0.049 & 0.064 & 0.072 & 0.080 & 0.060 & 0.067 & 0.082 & 0.386
			& 0.621 & 0.878 \\ 
			& $0.1$ & 0.048 & 0.057 & 0.063 & 0.066 & 0.053 & 0.056 & 0.064 & 0.164 & 
			0.353 & 0.705 \\ 
			& $0.2$ & 0.049 & 0.056 & 0.060 & 0.064 & 0.052 & 0.055 & 0.060 & 0.087 & 
			0.155 & 0.417 \\ 
			& $0.3$ & 0.055 & 0.060 & 0.065 & 0.068 & 0.057 & 0.058 & 0.065 & 0.075 & 
			0.104 & 0.251 \\ 
			& $0.5$ & 0.077 & 0.082 & 0.089 & 0.098 & 0.077 & 0.082 & 0.087 & 0.089 & 
			0.106 & 0.168 \\ 
			& $0.7$ & 0.135 & 0.134 & 0.148 & 0.159 & 0.130 & 0.138 & 0.142 & 0.146 & 
			0.166 & 0.211 \\ \hline
			& \multicolumn{1}{c}{MP} & 0.047 & 0.062 & 0.069 & 0.076 & 0.057 & 0.064 & 
			0.079 & 0.380 & 0.616 & 0.875 \\ \hline
			$0.9$ & $0$ & 0.055 & 0.067 & 0.072 & 0.081 & 0.060 & 0.067 & 0.081 & 0.453
			& 0.710 & 0.929 \\ 
			& $0.1$ & 0.052 & 0.059 & 0.063 & 0.064 & 0.053 & 0.055 & 0.063 & 0.112 & 
			0.268 & 0.635 \\ 
			& $0.2$ & 0.051 & 0.057 & 0.061 & 0.063 & 0.052 & 0.053 & 0.060 & 0.064 & 
			0.098 & 0.265 \\ 
			& $0.3$ & 0.055 & 0.059 & 0.065 & 0.066 & 0.057 & 0.057 & 0.064 & 0.064 & 
			0.082 & 0.148 \\ 
			& $0.5$ & 0.080 & 0.084 & 0.094 & 0.100 & 0.088 & 0.082 & 0.091 & 0.089 & 
			0.106 & 0.140 \\ 
			& $0.7$ & 0.152 & 0.158 & 0.171 & 0.182 & 0.161 & 0.157 & 0.168 & 0.164 & 
			0.191 & 0.236 \\ \hline
			& \multicolumn{1}{c}{MP} & 0.051 & 0.064 & 0.069 & 0.077 & 0.057 & 0.063 & 
			0.078 & 0.448 & 0.706 & 0.927 \\ \hline
		\end{tabular}%
		\caption{Simulated significance level for testing null correlation coefficient through
			$W_{n,\alpha }^{\prime }(\widehat{\rho }_{UV,R,\alpha })$ given by
			(\ref{simW2}) and the Morgan-Pitman test, when $n=25$\label{table3b}} 
	\end{table}%
\end{center}

We repeated the same schema for a nominal type I error, $\varsigma
=0.\allowbreak 05$, for\ different sample sizes $n\in \{15,25,50,100\}$, but
in the main document only the case of $n=25$ is presented (the remaining
sizes are included in the Appendix \ref{A6}). We report, for the different
values of the tuning parameter $\alpha \in \{0,0.\allowbreak 1,0.\allowbreak
2,0.\allowbreak 3,0.\allowbreak 5,0.\allowbreak 7\}$, the
simulated mean square error (MSE) committed in the estimation of $\gamma
=\sigma _{1}/\sigma _{2}$\ and $\rho $ as well as the simulated significance
level of the tests, computed as the number of times the null hypothesis is
rejected out of the total simulated samples $R=15,000$.

With pure data, as expected, the MSEs and closeness of the simulated
significance level of both asymptotic tests, (\ref{simW1}) and (\ref{simW2}%
), to the nominal significance level, $\varsigma =0.\allowbreak 05$, is
improved as the sample size, $n$, increases.
For MSEs under contamination $%
\alpha \in \{0.\allowbreak 1,0.\allowbreak 2\}$ tuning parameters outperform
the MSEs with $\alpha = 0$ but the greatest improvement under contamination
is for the simulated significance levels of $W_{n,\alpha }^{\prime }(%
\widehat{\rho }_{UV,R,\alpha })$, given in (\ref{simW2}), when $\alpha
=0.\allowbreak 2$, since it is always better than any other, included the
well-known Morgan-Pitman test, for all the considered scenarios.

For MSEs under contamination, MRPDEs with $\alpha \geq 0.\allowbreak 1$ outperform the MSEs with $\alpha = 0$ (the MLE) for all the considered scenarios. Furthermore, the effect of contamination is accented for low sample sizes. The greater contamination, the greater the optimal choice of the tuning parameter is, although moderate values of $\alpha$ over $0.2$ generally offer a suitable trade-off between efficiency and robustness.
Conversely, contamination is quite difficult to measure in real-life datasets, and therefore an optimality criterion for the choice of the best tuning parameter is of great practical interest. Several criteria have been proposed in the robustness literature for choosing optimal values of the DPD tuning parameter, that can be straightforward extended for the RP divergence. 
\cite{warwick2005} proposed a useful data-based procedure 
based on the minimization of an estimate of the asymptotic MSE, given by
\begin{equation} \label{eq:Choice}
	\widehat{\text{MSE}}(\alpha) = (\widehat{\boldsymbol{\theta}}_{R,\alpha}-\boldsymbol{\theta}_P)^T(\widehat{\boldsymbol{\theta}}_{R,\alpha}-\boldsymbol{\theta}_P)  + \frac{1}{n}\operatorname{tr}(\boldsymbol{V}_\alpha(\widehat{\boldsymbol{\theta}}_{R,\alpha})),
\end{equation}
where the $\operatorname{tr}$ denotes the trace of the matrix $\boldsymbol{V}_\alpha(\widehat{\boldsymbol{\theta}}_{R,\alpha})$ given in (\ref{V}) and $\boldsymbol{\theta}_P$ is a pilot estimator used for assessing the bias. Naturally, the previous criterion was considerably pilot-dependent, as the pilot invariably draws the optimal estimator towards itself. \cite{Basak2021} improved the method by iteratively updating the pilot with the optimal estimate obtained, and so the process was repeated until there was no further change in the optimal estimate. The iterative algorithm empirically shown  to alleviate the pilot dependency of the optimal choice.
Then, Basak et al. algorithm could be adopted in real life problems for choosing the best MRPDE of the bivariate normal distribution, iteratively minimizing formula (\ref{eq:Choice}). The MLE or MRPDE with moderate values of the tuning parameter $\alpha=0.2$ can be used as initial pilot estimators.
Further, applying transformed Wald-type test statistics, $W_{n,\alpha }^{\prime }(\widehat{\rho }_{UV,R,\alpha })$ given in (\ref{simW2}) entails a clear improvement in terms of significance levels under contamination. In this case, we would empirically recommend the choice of $\alpha =0.\allowbreak 2$  since it is better than any other, included the well-known Morgan-Pitman test, in most of the considered scenarios.

\section{Illustrative examples}

\subsection{Cork data set: comparing means or variances}

Originally studied in \cite{Rao}, there is a well-known and publicity
available real data set, the cork data set. It is included in several R
packages (\cite{rcore2020}), in particular in \texttt{agridat} as a \texttt{%
	box.cork} data. The data report the weights of cork boring of the trunk of $%
28$ trees in the north, east, west and south sides. Rao pointed out that
there exist positive correlation between the reported pairs of $4$
variables, and sometimes it is assumed that they follow a normal
distribution. Four-dimensional normality is an arguable issue since using
the R package \texttt{MVN}, in four out of five tests could multivariate
normality be rejected with significance level $0.05$, as shown in Table \ref%
{tableN} (left hand side). We focussed on the two variables devoted to east
and south sides respectively, as in \cite{Wilcox2015, Wilcox2016}, performed
two-dimensional normal tests and this time all the tests rejected according
to Table \ref{tableN} (central columns). We did not study the robustness of
an estimator and test statistic as in \cite{Wilcox2015}, in the sense of being
resistant to data coming from distribution different from the bivariate
normal as required for the original data. Our proposed estimator and
test statistic are robust in the sense of being resistant to outliers once
normality is being assumed. Having this in mind, the data were transformed
using the base $e$ logarithm and as shown in Table \ref{tableN} (right hand
side) and in four out five tests could not be multivariate normality
rejected. In addition, outliers were studied for transformed data through
the scatter-plot with confidence ellipses shown in Figure \ref{fig:scatter1}%
, concluding that observations $18$ and $16$ were suspicious to be outliers.

Taking the root of the Wald-type test statistics given in Cases \ref{subSec1}%
, \ref{subSec2} and \ref{subSec3}, as well as the Wald-type test given in
Section \ref{sim} based on transformed variables, here we are going to
provide alternative but equivalent expressions in practice, having the same $%
p$-value. From the original data, $(x_{1},y_{1}),\ldots ,(x_{n},y_{n})$, the 
$z$-type test statistic for equal variances and based on MRPDEs, is given by%
\begin{equation}
	\begin{aligned}
		Z_{n,\alpha }(\widehat{\gamma }_{R,\alpha },\widehat{\rho }_{R,\alpha })&=%
		\mathrm{sign}(\widehat{\gamma }_{R,\alpha }-1)\sqrt{W_{n,\alpha }(\widehat{%
				\gamma }_{R,\alpha },\widehat{\rho }_{R,\alpha })}\\
		&=\sqrt{n}\left( \frac{%
			\sqrt{2\alpha +1}}{\alpha +1}\right) ^{3}\frac{\widehat{\gamma }_{R,\alpha
			}-1}{\sqrt{\beta _{\alpha }(\widehat{\gamma }_{R,\alpha },\widehat{\rho }%
				_{R,\alpha })}},
	\end{aligned}  \label{z3}
\end{equation}%
with $\beta _{\alpha }(\widehat{\gamma }_{R,\alpha },\widehat{\rho }%
_{R,\alpha })$\ given by (\ref{beta}), has a standard normal asymptotic
distribution. From the transformed data $(u_{1},v_{1}),\ldots ,(u_{n},v_{n})$%
, where $U=X+Y$ and $V=X-Y$, the $z$-type test statistic for equal variances
and based on MRPDEs, is given by%
\begin{equation}
	Z_{n,\alpha }^{\prime }(\widehat{\rho }_{UV,R,\alpha })=\mathrm{sign}(%
	\widehat{\rho }_{UV,R,\alpha })\sqrt{W_{n,\alpha }^{\prime }(\widehat{\rho }%
		_{UV,R,\alpha })}=\sqrt{n}\left( \frac{\sqrt{2\alpha +1}}{\alpha +1}\right)
	^{3}\widehat{\rho }_{UV,R,\alpha }.  \label{z2}
\end{equation}%
Notice that the paired $t$-test for the same test (null correlation), with
exact distribution $t_{n-2}$, is the Morgan-Pitman test $T_{MP}$\ given by (%
\ref{MorganPitman}). The $z$-type test statistic for equal means and based
on MRPDEs and the transformed data $%
v_{1},\ldots ,v_{n}$, where $V=X-Y$, is given by%
\begin{equation}
	Z_{n,\alpha }(\widehat{\mu }_{V,R,\alpha },\widehat{\sigma }_{V,R,\alpha })=%
	\mathrm{sign}(\widehat{\mu }_{V,R,\alpha })\sqrt{W_{n,\alpha }(\widehat{\mu }%
		_{V,R,\alpha },\widehat{\sigma }_{V,R,\alpha })}=\sqrt{n}\frac{2\alpha +1}{%
		\left( \alpha +1\right) ^{2}}\frac{\widehat{\mu }_{V,R,\alpha }}{\widehat{%
			\sigma }_{V,R,\alpha }}.  \label{z1}
\end{equation}%
Notice that the paired $t$-test for the same test (equal means), with exact
distribution $t_{n-1}$, is given by%
\begin{equation*}
	T_{V}=\sqrt{\frac{n-1}{n}}Z_{n,\alpha =0}(\widehat{\mu }_{V,R,\alpha },%
	\widehat{\sigma }_{V,R,\alpha }).
\end{equation*}

The results of the classic exact tests, $T_{V}$ for testing (\ref{TestMeans}%
) or $T_{MP}$ for (\ref{H0}), with two-sided alternative, are summarized in\
Table \ref{TableC}. The decision, with $0.05$ significance level, is
opposite for both versions of the data, since the null hypothesis cannot be
rejected for the full data set, while it is rejected for the outliers
deleted data.

\begin{table}[]
	\caption{Normality tests for three versions of the corn data set}
	\label{tableN}
	\begin{center}
		\begin{tabular}[H]{ccccccc}
			& \multicolumn{2}{c}{4-dimensional} & \multicolumn{2}{c}{2-dimensional} & 
			\multicolumn{2}{c}{$\log $ 2-dimensional} \\ \hline
			normality test & value & $p$-value & value & $p$-value & value & $p$-value
			\\ \hline
			\multicolumn{1}{l}{1.- \texttt{Doornik-Hansen}} & \multicolumn{1}{r}{\texttt{%
					16.123}} & \multicolumn{1}{r}{\texttt{0.041}} & \multicolumn{1}{r}{\texttt{%
					9.833}} & \multicolumn{1}{r}{\texttt{0.043}} & \multicolumn{1}{r}{\texttt{%
					2.490}} & \multicolumn{1}{r}{\texttt{0.646}} \\ 
			\multicolumn{1}{l}{2.- \texttt{Henze-Zirkler}} & \multicolumn{1}{r}{\texttt{%
					0.999}} & \multicolumn{1}{r}{\texttt{0.011}} & \multicolumn{1}{r}{\texttt{%
					1.236}} & \multicolumn{1}{r}{\texttt{0.003}} & \multicolumn{1}{r}{\texttt{%
					0.784}} & \multicolumn{1}{r}{\texttt{0.053}} \\ 
			\multicolumn{1}{l}{3.- \texttt{Royston}} & \multicolumn{1}{r}{\texttt{12.161}
			} & \multicolumn{1}{r}{\texttt{0.003}} & \multicolumn{1}{r}{\texttt{11.784}}
			& \multicolumn{1}{r}{\texttt{0.002}} & \multicolumn{1}{r}{\texttt{5.564}} & 
			\multicolumn{1}{r}{\texttt{0.047}} \\ 
			\multicolumn{1}{l}{4.- \texttt{E-statistic}} & \multicolumn{1}{r}{\texttt{%
					1.276}} & \multicolumn{1}{r}{\texttt{0.007}} & \multicolumn{1}{r}{\texttt{%
					1.473}} & \multicolumn{1}{r}{\texttt{0.001}} & \multicolumn{1}{r}{\texttt{%
					0.931}} & \multicolumn{1}{r}{\texttt{0.053}} \\ 
			\multicolumn{1}{l}{5a.- \texttt{Mardia: Skewness}} & \multicolumn{1}{r}{%
				\texttt{20.890}} & \multicolumn{1}{r}{\texttt{0.404}} & \multicolumn{1}{r}{%
				\texttt{10.231}} & \multicolumn{1}{r}{\texttt{0.037}} & \multicolumn{1}{r}{%
				\texttt{2.622}} & \multicolumn{1}{r}{\texttt{0.623}} \\ 
			\multicolumn{1}{l}{5b.- \texttt{Mardia: Kurtosis}} & \multicolumn{1}{r}{%
				\texttt{-0.398}} & \multicolumn{1}{r}{\texttt{0.690}} & \multicolumn{1}{r}{%
				\texttt{0.899}} & \multicolumn{1}{r}{\texttt{0.369}} & \multicolumn{1}{r}{%
				\texttt{-0.795}} & \multicolumn{1}{r}{\texttt{0.427}} \\ \hline
		\end{tabular}%
	\end{center}
\end{table}

\begin{figure}[]
	\centering
	\includegraphics[width=0.65\textwidth, keepaspectratio]{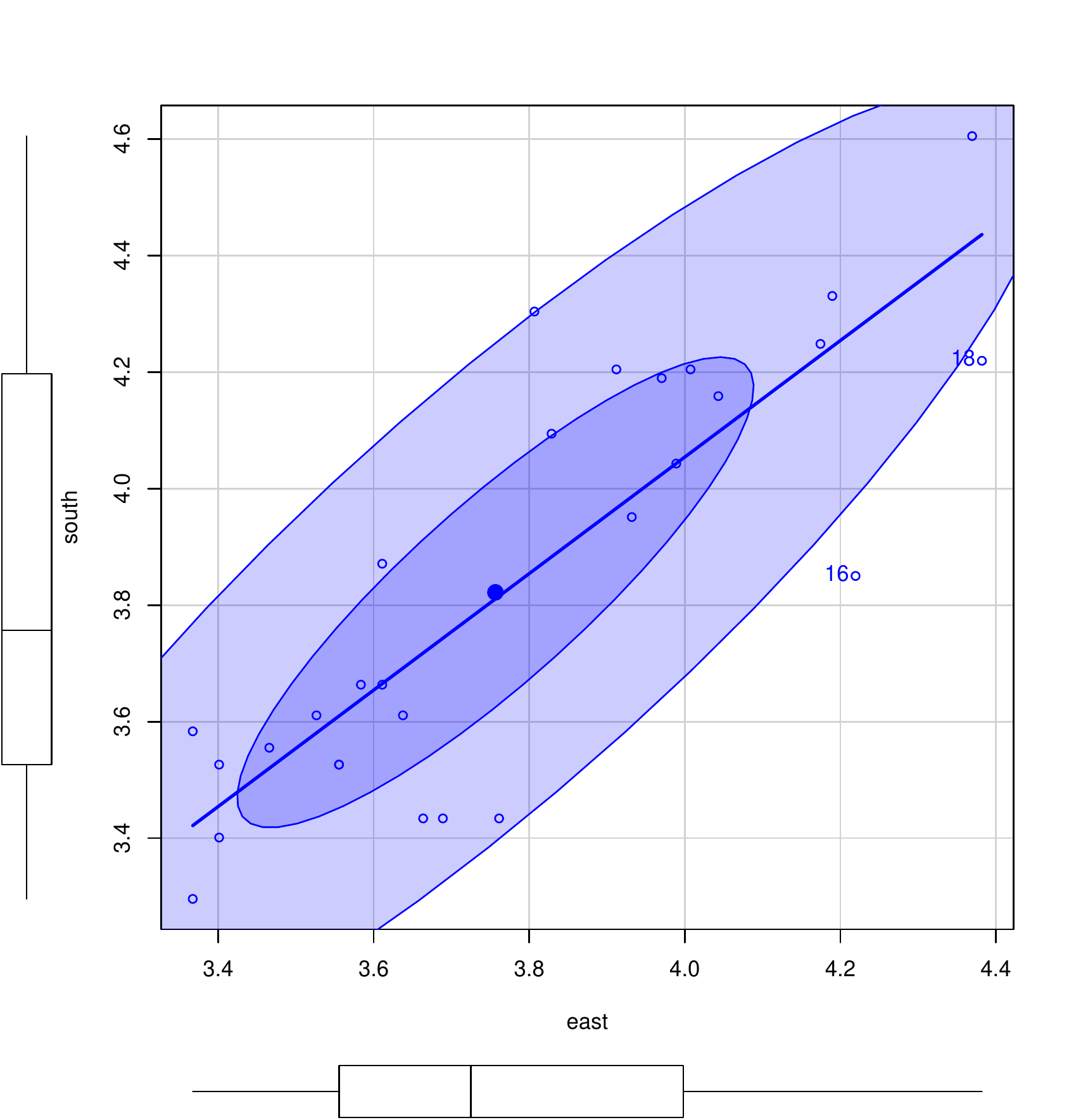}
	\caption{Scatter-plot of east and south variables, with confidence ellipses,
		for log-transformed cork data set.}
	\label{fig:scatter1}
\end{figure}

\begin{table}[tbp]
	\caption{Classic exact tests of equal means or equal variances for the
		log-transformed corn data set with respect to full data or ouliers deleted
		data.}
	\label{TableC}
	\begin{center}
		\begin{tabular}{ccccc}
			& \multicolumn{2}{c}{full data} & \multicolumn{2}{c}{outliers deleted data}
			\\ \hline
			classic exact t-test & value & $p$-value & value & $p$-value \\ \hline
			\multicolumn{1}{l}{\texttt{Paired t-test }(equal means), $T_{V}$} & 
			\multicolumn{1}{r}{\texttt{-1.454}} & \multicolumn{1}{r}{\texttt{0.157}} & 
			\multicolumn{1}{r}{\texttt{-2.233}} & \texttt{0.035} \\ \hline
			\multicolumn{1}{l}{\texttt{Morgan-Pitman test }(equal variances), $T_{MP}$}
			& \multicolumn{1}{r}{\texttt{-1.656}} & \multicolumn{1}{r}{\texttt{0.110}} & 
			\multicolumn{1}{r}{\texttt{-3.033}} & \texttt{0.005} \\ \hline
		\end{tabular}%
	\end{center}
\end{table}

The advantage of these new expressions, (\ref{z3})-(\ref{z1}), is that
one-sided tests can be considered, apart from the two sided ones (as in the
example given in Section \ref{Ex2}). Based on (\ref{z3})-(\ref{z1}), in
Figures \ref{fig:CorkAB}-\ref{fig:CorkC} the values of the test statistics
(left hand side) and the values of the estimates of parameters used to
construct the test statistics (right hand sides) are shown, in solid lines
the ones associated with the full log-transformed cork data set and in
dashed lines the ones associated with the outliers deleted log-transformed
cork data set. All the left hand side figures suggest rejecting the null,
equal means or variances, as an appropriate decision with 0.05 significance
level.

\begin{figure}[tbp]
	\centering
	\begin{tabular}{cc}
		\includegraphics[scale=0.35]{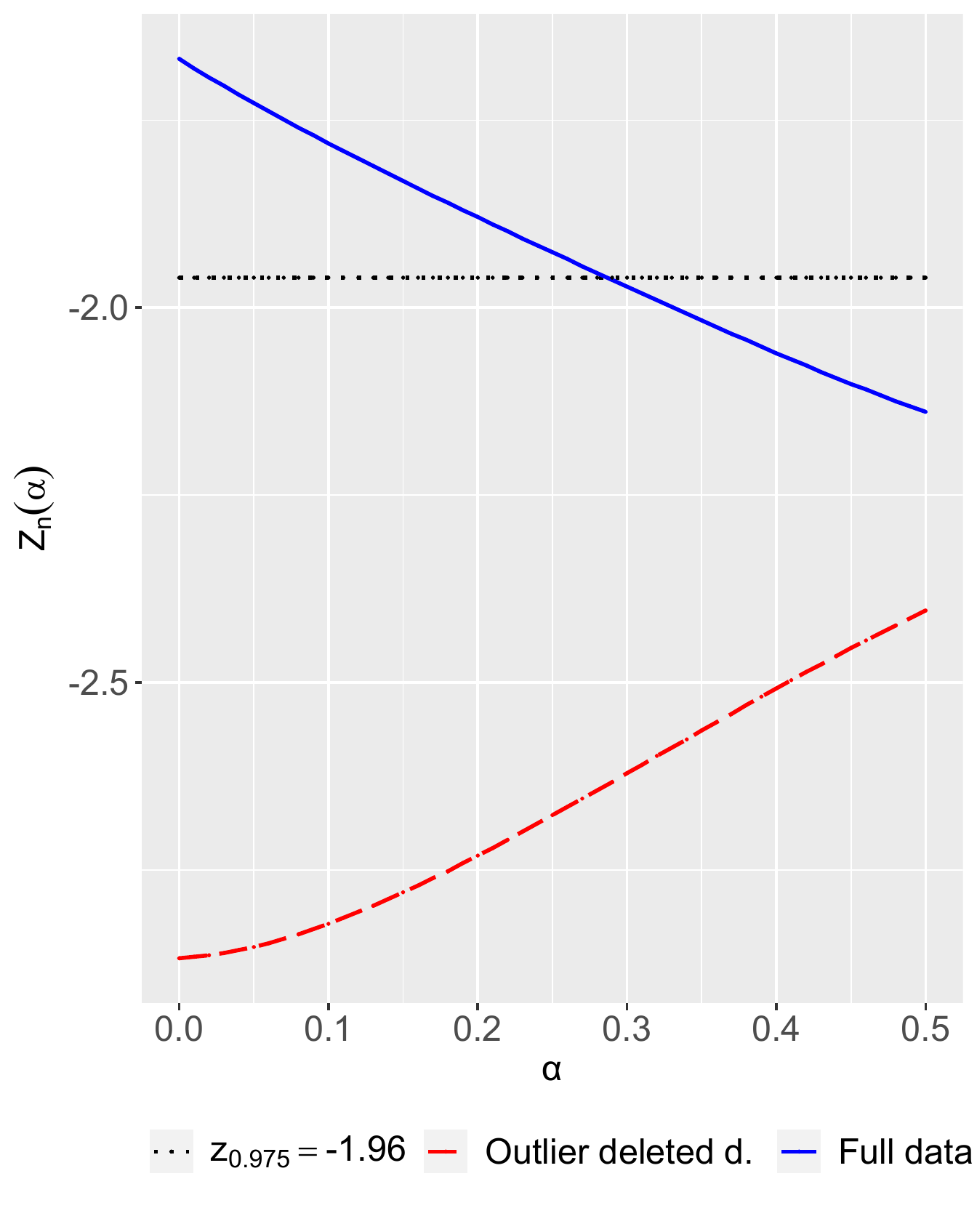} & %
		\includegraphics[scale=0.35]{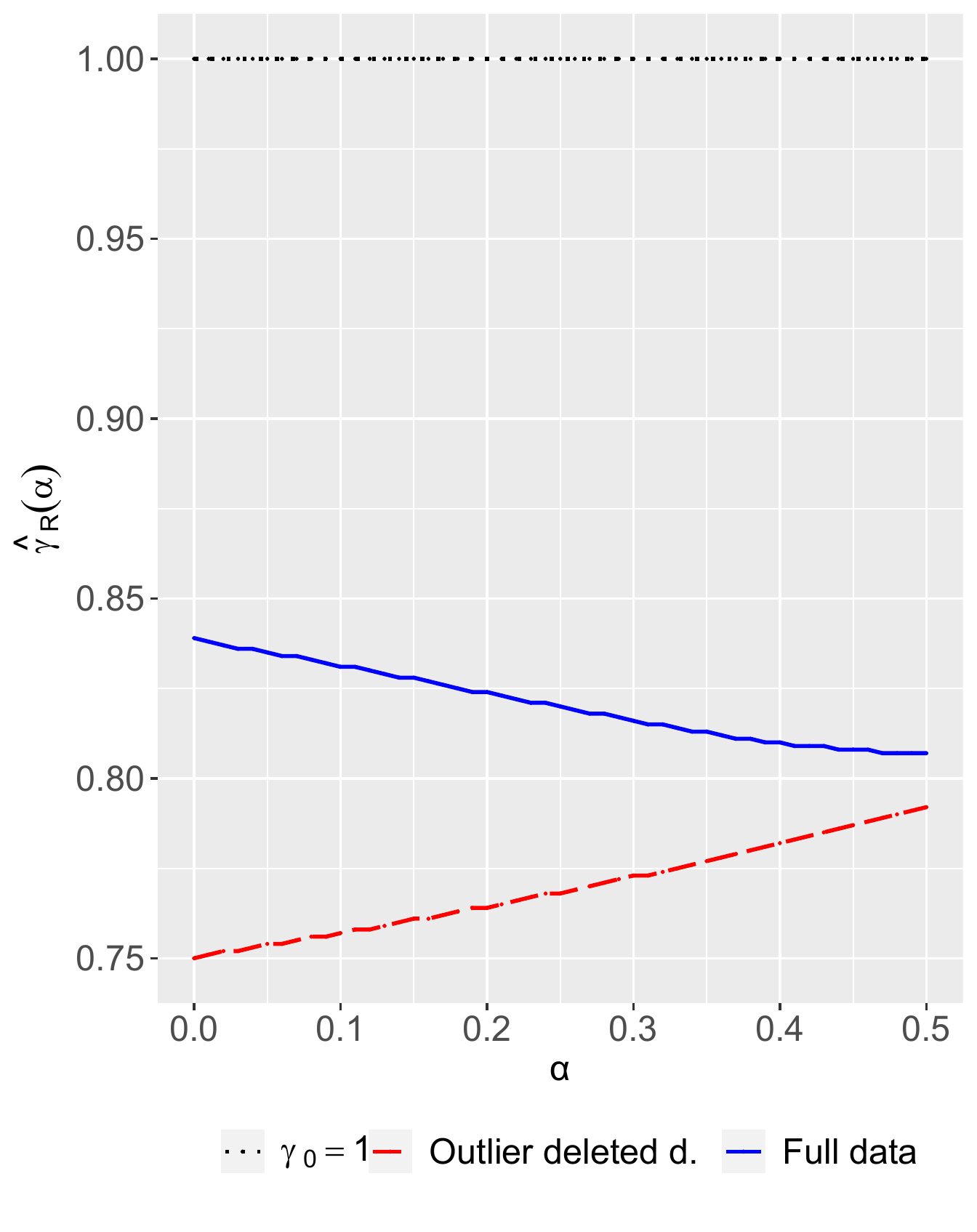} \\ 
		\includegraphics[scale=0.35]{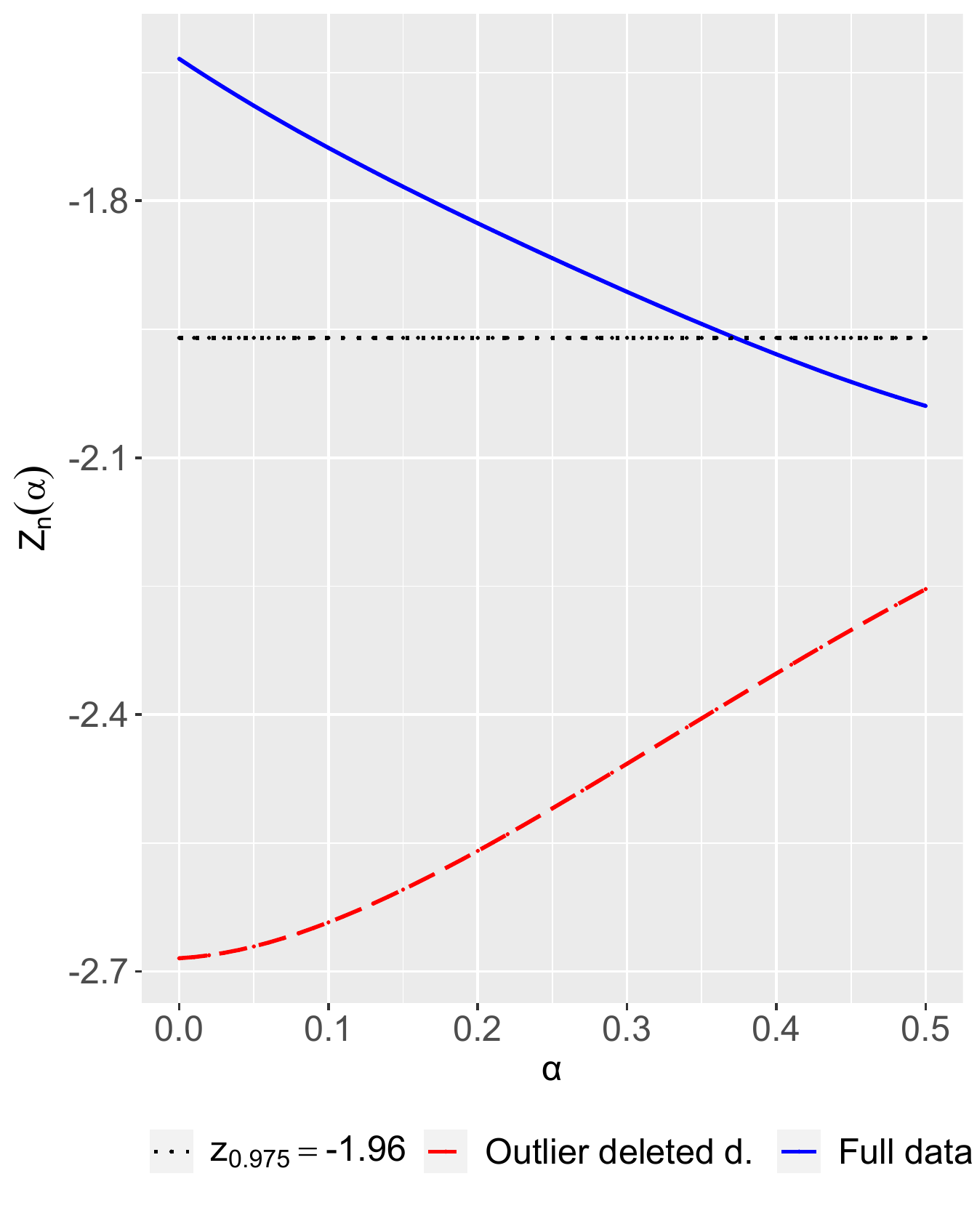} & %
		\includegraphics[scale=0.35]{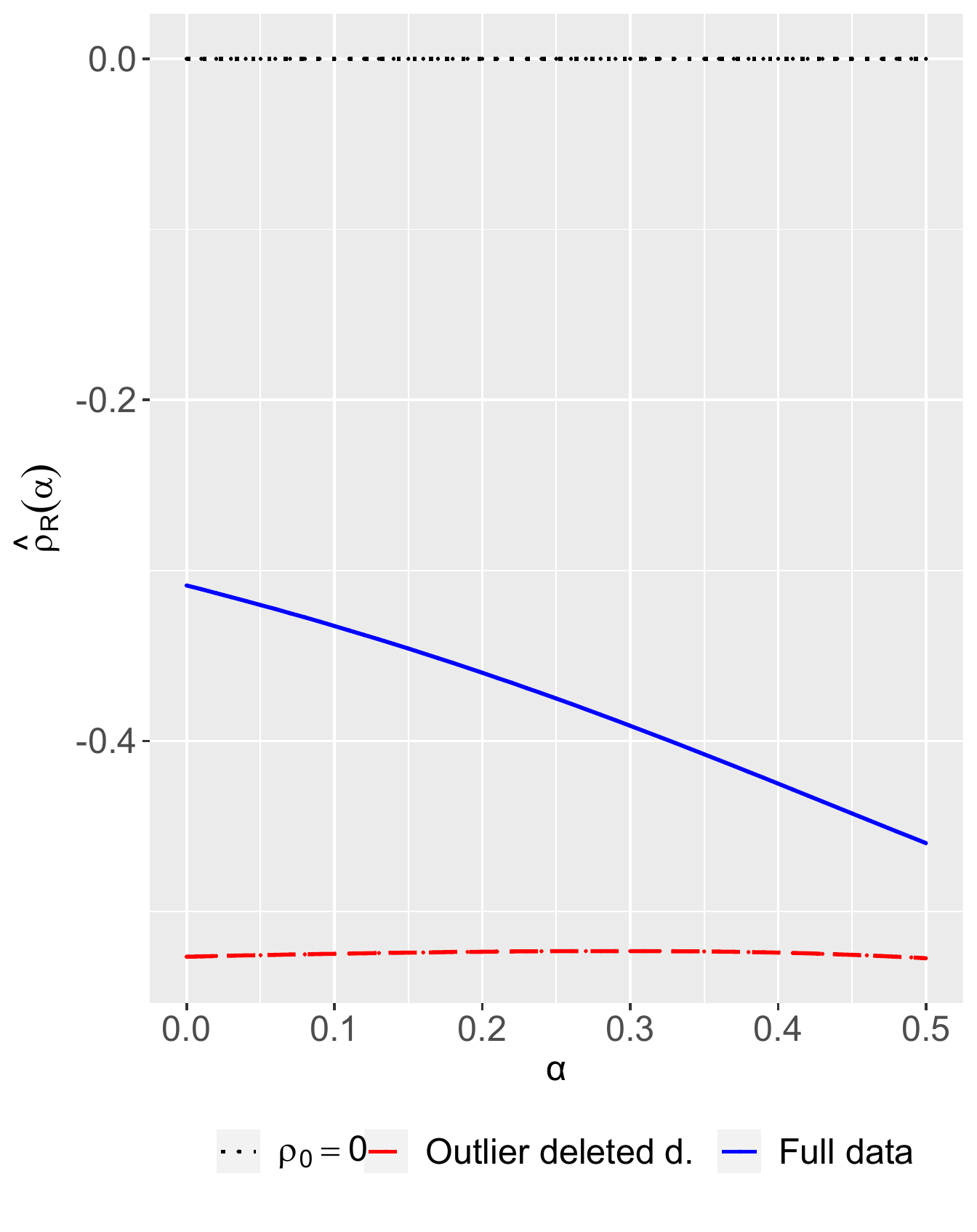} \\ 
		& 
	\end{tabular}%
	\caption{Wald-type tests (left) and estimates (right) for log-transformed
		cork data set: Case 2 (above) and Case 3 (below)}
	\label{fig:CorkAB}
\end{figure}
\begin{figure}[tbp]
	\centering
	\begin{tabular}{cc}
		\includegraphics[scale=0.35]{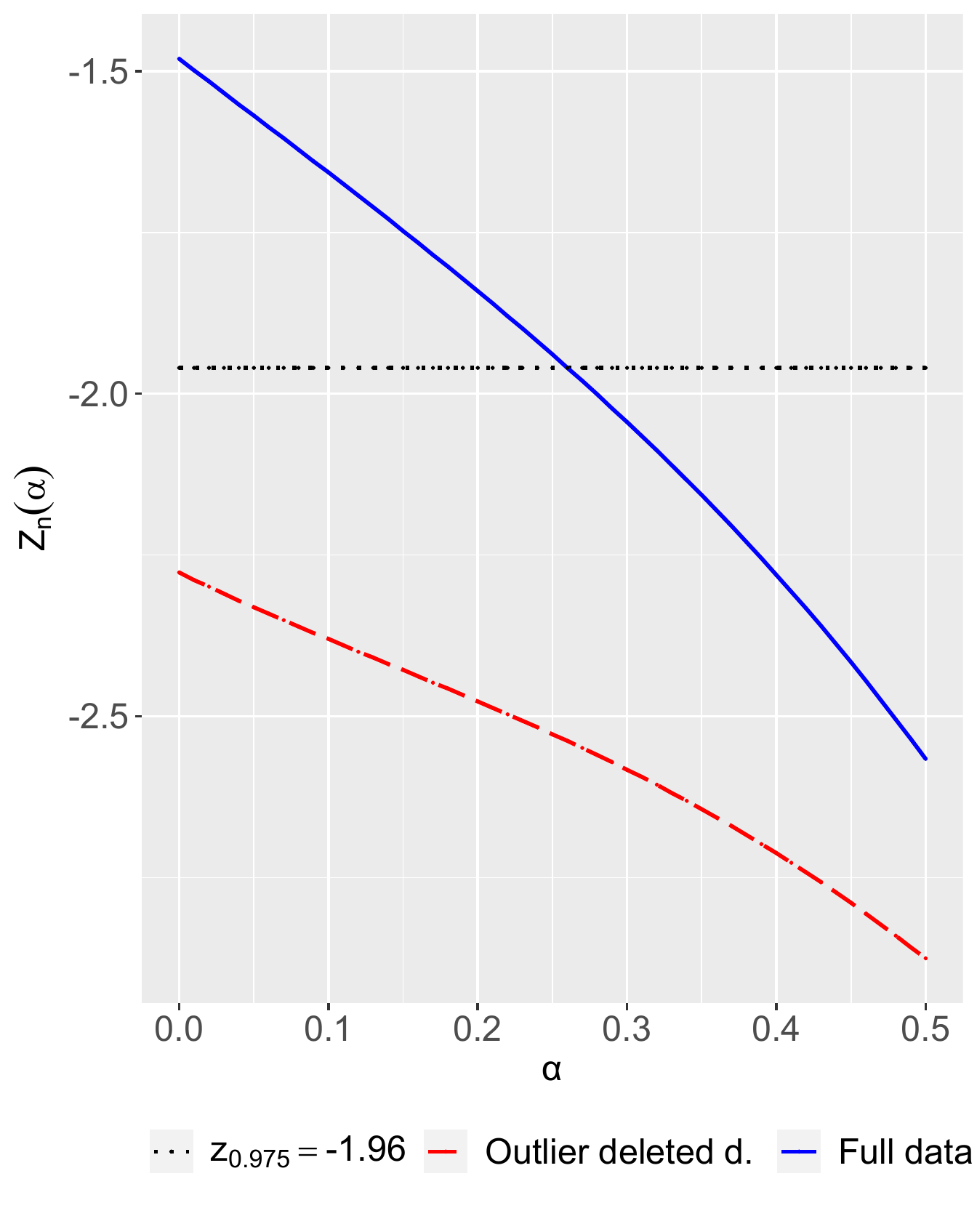} & %
		\includegraphics[scale=0.35]{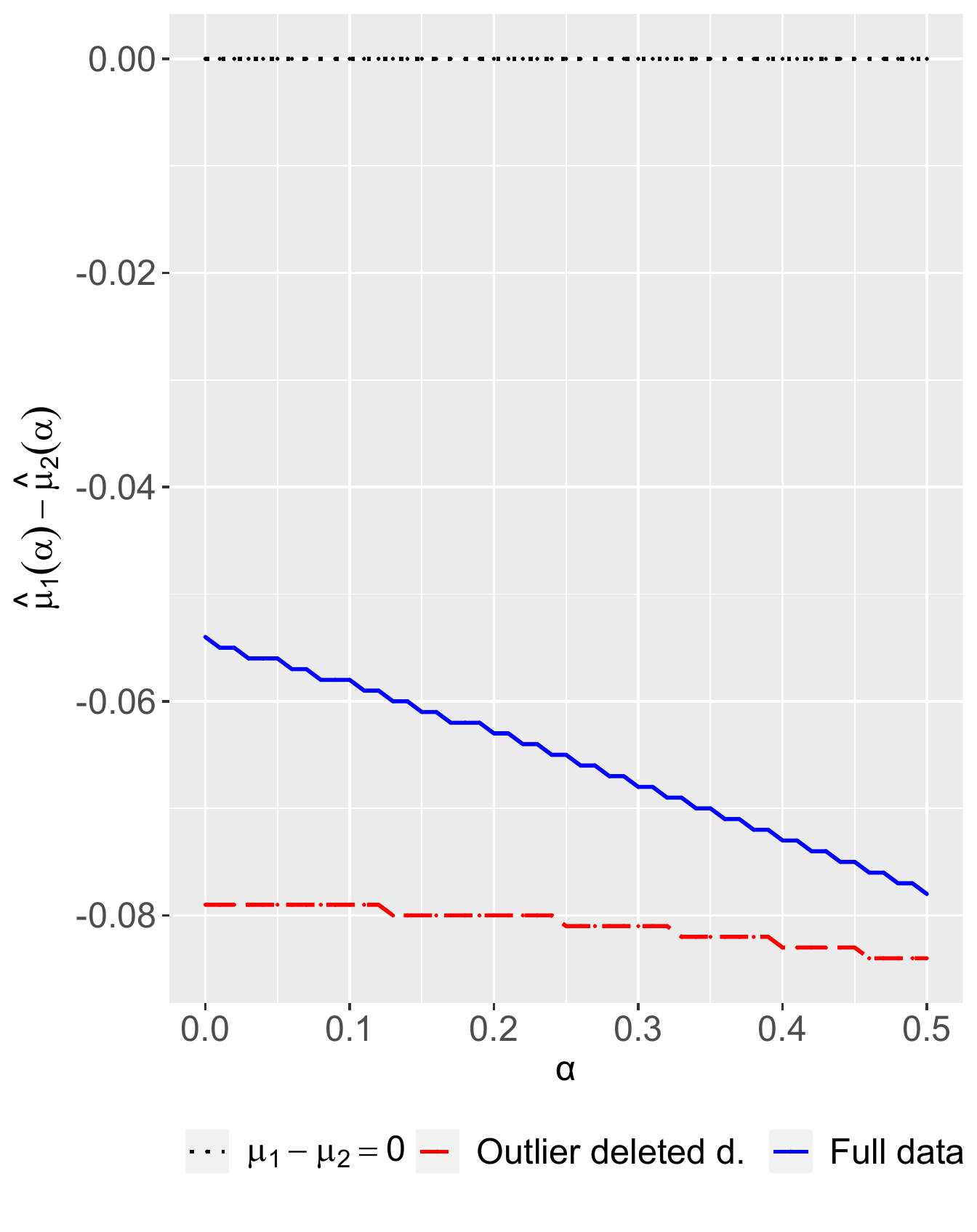}%
	\end{tabular}%
	\caption{Wald-type tests in Case 1 (left) and mean diference estimates
		(right) for log-transformed cork data set }
	\label{fig:CorkC}
\end{figure}

\subsection{Lactate levels data set: fixing a positive correlation
	coefficient\label{Ex2}}

\cite{hutson2019} studied the one sided test (\ref{H0b}), $H_{0}:\rho =\rho _{0}$
vs. $H_{0}:\rho >\rho _{0}$, where $\rho _{0}=0$, for lactate levels
measured in the blood and the cerebrospinal fluid on $13$ female subjects.
The study was done with a newly proposed robust test statistic, but in the
sense of being resistant to data coming from distribution different from the
bivariate normal. Using the R package \texttt{MVN}, in none of tests could
multivariate normality be rejected with significance level $0.05$, as shown
in Table \ref{tableN2}. Again, we highlight that our proposed estimators and
test statistics are robust in the sense of being resistant to outliers once
normality is being assumed. Deleting the two most influential observations,
i.e. taking observations $1$ and $7$ as influential (rather than outliers),
the sample Pearson correlation is modified from $0.572$ for the full data to 
$0.471$ for the influential observations deleted data (see Figure \ref%
{fig:scatter2}) and according to Table \ref{TableC2}, using the
Morgan-Pitman exact test statistic, the decision of being accepted a
positive correlation with $0.05$ significance level is modified to not being
possible to be accepted. With Figure \ref{fig:LactateAB} we try to test
whether with the $Z$-test statistic based on MRPDE of $\rho $, (\ref{z2}), a positive correlation could be accepted
for the lactate levels data and actually it suggest as desirable decision
nor being possible to reject it.

\begin{figure}[ht]
	\centering
	\includegraphics[width=0.85\textwidth, keepaspectratio]{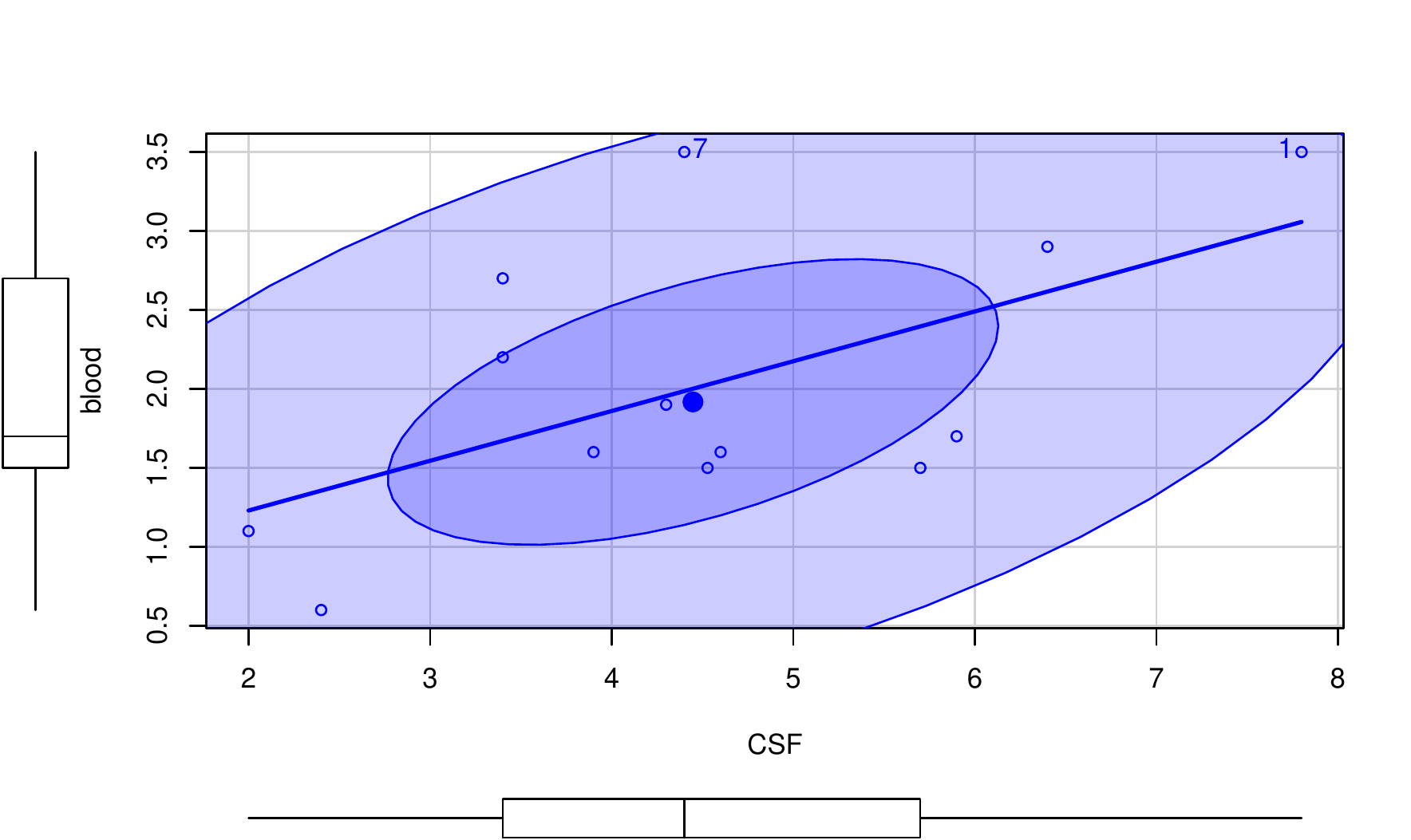}
	\caption{Scatter-plot of CSF and blood variables, with confidence ellipses,
		for lactate levels data set.}
	\label{fig:scatter2}
\end{figure}

\begin{table}[ht]
	\caption{Normality tests for lactate levels data set}
	\label{tableN2}
	\begin{center}
		\begin{tabular}{ccc}
			\hline
			normality test & value & $p$-value \\ \hline
			\multicolumn{1}{l}{1.- Doornik-Hansen} & \multicolumn{1}{r}{1.436} & 
			\multicolumn{1}{r}{0.838} \\ 
			\multicolumn{1}{l}{2.- Henze-Zirkler} & \multicolumn{1}{r}{0.352} & 
			\multicolumn{1}{r}{0.461} \\ 
			\multicolumn{1}{l}{3.- Royston} & \multicolumn{1}{r}{0.923} & 
			\multicolumn{1}{r}{0.642} \\ 
			\multicolumn{1}{l}{4.- E-statistic} & \multicolumn{1}{r}{0.620} & 
			\multicolumn{1}{r}{0.509} \\ 
			\multicolumn{1}{l}{5a.- Mardia: Skewness} & \multicolumn{1}{r}{1.814} & 
			\multicolumn{1}{r}{0.770} \\ 
			\multicolumn{1}{l}{5b.- Mardia: Kurtosis} & \multicolumn{1}{r}{-0.774} & 
			\multicolumn{1}{r}{0.439} \\ \hline
		\end{tabular}%
	\end{center}
\end{table}

\begin{figure}[ht]
	\centering
	\begin{tabular}{cc}
		\includegraphics[scale=0.35]{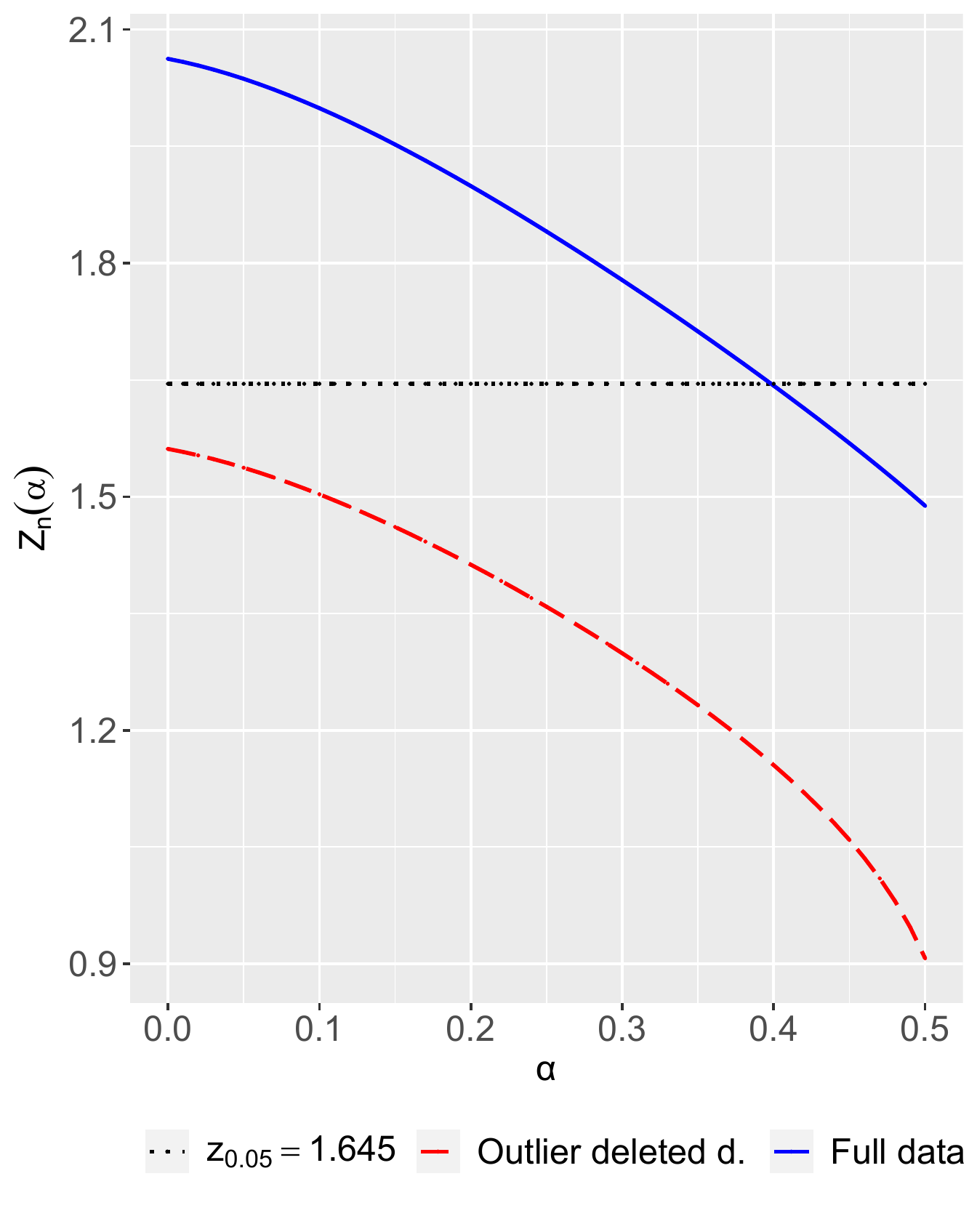} & %
		\includegraphics[scale=0.35]{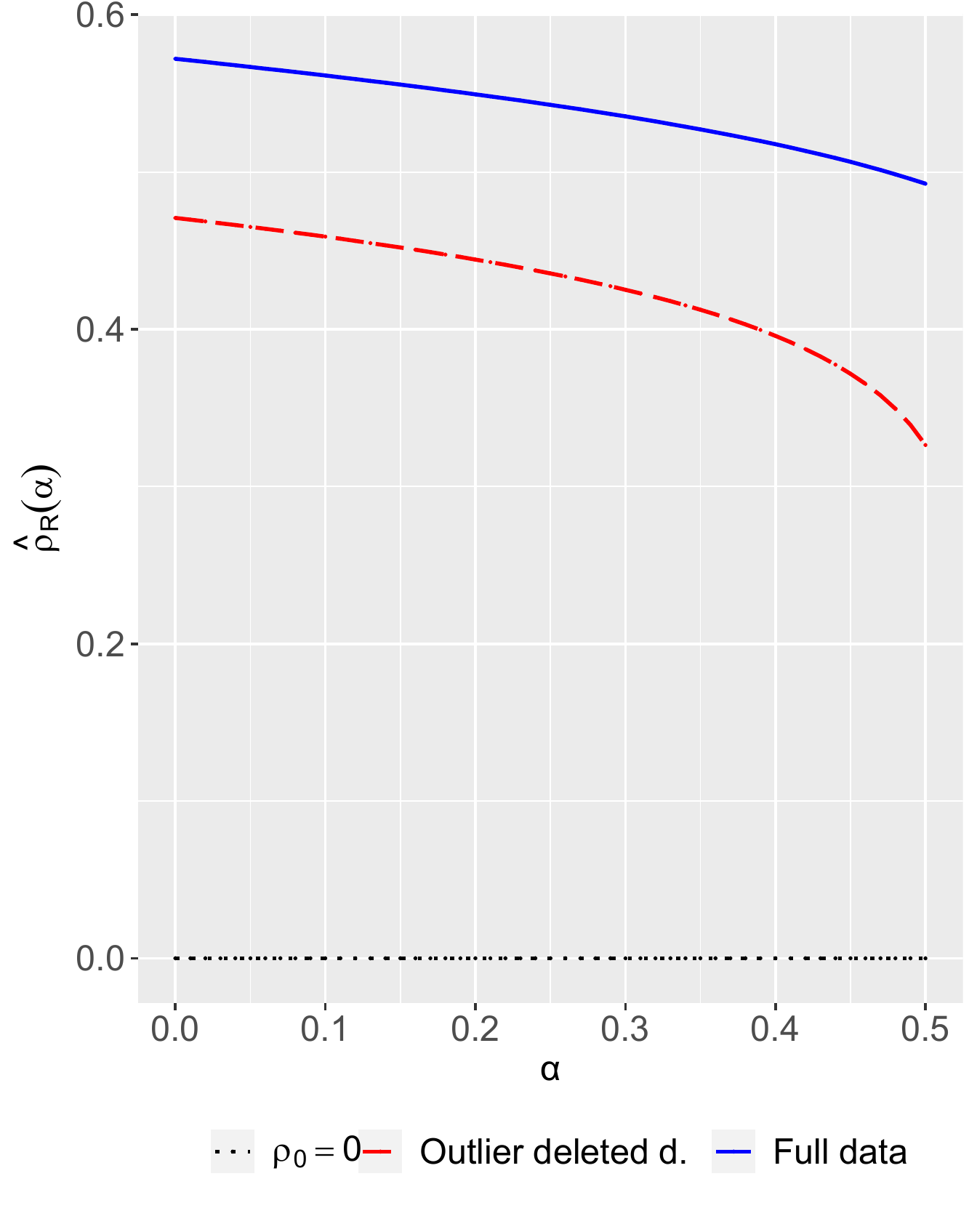} \\ 
		& 
	\end{tabular}%
	\caption{Wald-type tests (left) and estimates (right) for the lactate levels
		data set.}
	\label{fig:LactateAB}
\end{figure}

\begin{table}[ht]
	\caption{Classic exact test of uncorrelation for the lactate levels data set
		with respect to full data or influential observations deleted data.}
	\label{TableC2}
	\begin{center}
		\begin{tabular}{ccccc}
			& \multicolumn{2}{c}{full data} & \multicolumn{2}{c}{infl. obs. deleted data}
			\\ \hline
			classic exact t-test & value & $p$-value & value & $p$-value \\ \hline
			\multicolumn{1}{l}{one-sided positive correlation with $T_{MP}$} & 
			\multicolumn{1}{r}{2.313} & \multicolumn{1}{r}{0.020} & \multicolumn{1}{r}{
				1.601} & 0.072 \\ \hline
		\end{tabular}%
	\end{center}
\end{table}

\newpage

\section{Concluding Remarks}

In practice, it is very important finding out a robust estimator and
test statistic which does not loose much efficiency. Under the null
hypothesis $\rho =0$, the Morgan-Pitman exact test is the most efficient one
but among the classic asymptotic tests there are several versions we should
know. However, we would like to highlight that in comparative studies of
recent papers the most competitive one, the Rao test given in Case \ref%
{subSec3} (Section \ref{secWald}), $R_{n,\alpha =0}(\widehat{\boldsymbol{%
		\theta }}_{R,\alpha =0})$, is not often being recognized. Since the MRPDEs are regulated through a positive $\alpha $
tuning parameter, being the tuning parameter $\alpha =0$ the cornerstone as
being the most efficient one out of all possible values of $\alpha \geq 0$.
In case of having a poor efficiency for the asymptotic test statistic with
the null tuning parameter of the MRPDEs, the
test statistics constructed with the other values of the tuning parameters
will increase such lack of efficiency and the obtained robustness could not
compensate such drawback. This is just what happened with the Wald-type
test statistic $W_{n,\alpha =0}(\widehat{\boldsymbol{\theta }}_{R,\alpha =0})
$, given in Case \ref{subSec3} (Section \ref{secWald}) for testing $\rho =0$%
. Further, we used a modified version of the Wald-type test statistic $%
W_{n,\alpha =0}^{\prime }(\widehat{\boldsymbol{\theta }}_{R,\alpha =0})$,
given in Case \ref{subSec3} (Section \ref{secWald}), which matches the Rao
test statistic only when $\rho _{0}=0$, and has provided for $\alpha =0$
magnificent results in efficiency and also for $\alpha >0$ strong robust
properties. The improvements and properties are shown by simulation for the
specific null hypothesis $\rho =0$, but proven in the framework of the
developed general theory.


%
%
%

\section*{Acknowledgement}

This research is supported by the Spanish Grants PGC2018-095194-B-100 and FPU/018240 (M. Jaenada). E. Castilla, M.Jaenada, N. Mart\'in and L. Pardo
are members of the Instituto de Matematica Interdisciplinar, Complutense University of Madrid.

%
%
%
%

\begin{appendices}

	\section{Proof of Theorem \protect\ref{Th1}\label{A1}}
	
	\begin{align*}
		& \tfrac{\partial }{\partial \boldsymbol{\theta }}\boldsymbol{\Psi _{\alpha
			}^{T}}(X;\boldsymbol{\theta }) \\
		& =\tfrac{\partial }{\partial \boldsymbol{\theta }}f_{\boldsymbol{\theta }%
		}^{\alpha }(X)\left( \boldsymbol{u}_{\boldsymbol{\theta }}^{T}(X)-%
		\boldsymbol{c}_{\alpha }^{T}\left( \boldsymbol{\theta }\right) \right) +f_{%
			\boldsymbol{\theta }}^{\alpha }(X)\left( \tfrac{\partial }{\partial 
			\boldsymbol{\theta }}\boldsymbol{u}_{\boldsymbol{\theta }}^{T}(X)-\tfrac{%
			\partial }{\partial \boldsymbol{\theta }}\boldsymbol{c}_{\alpha }^{T}\left( 
		\boldsymbol{\theta }\right) \right) \\
		& =\alpha f_{\boldsymbol{\theta }}^{\alpha }(X)\boldsymbol{u}_{\boldsymbol{%
				\theta }}(X)\left( \boldsymbol{u}_{\boldsymbol{\theta }}^{T}(X)-\boldsymbol{c%
		}_{\alpha }^{T}\left( \boldsymbol{\theta }\right) \right) +f_{\boldsymbol{%
				\theta }}^{\alpha }(X)\left( \tfrac{\partial }{\partial \boldsymbol{\theta }}%
		\boldsymbol{u}_{\boldsymbol{\theta }}^{T}(X)-\tfrac{\partial }{\partial 
			\boldsymbol{\theta }}\boldsymbol{c}_{\alpha }^{T}\left( \boldsymbol{\theta }%
		\right) \right) \\
		& =\alpha f_{\boldsymbol{\theta }}^{\alpha }(X)\boldsymbol{u}_{\boldsymbol{%
				\theta }}(X)\boldsymbol{u}_{\boldsymbol{\theta }}^{T}(X)-\alpha f_{%
			\boldsymbol{\theta }}^{\alpha }(X)\boldsymbol{u}_{\boldsymbol{\theta }}(X)%
		\boldsymbol{c}_{\alpha }^{T}\left( \boldsymbol{\theta }\right) +f_{%
			\boldsymbol{\theta }}^{\alpha }(X)\tfrac{\partial }{\partial \boldsymbol{%
				\theta }}\boldsymbol{u}_{\boldsymbol{\theta }}^{T}(X)-f_{\boldsymbol{\theta }%
		}^{\alpha }(X)\tfrac{\partial }{\partial \boldsymbol{\theta }}\boldsymbol{c}%
		_{\alpha }^{T}\left( \boldsymbol{\theta }\right) ,
	\end{align*}%
	\begin{align*}
		& E\left[ \tfrac{\partial }{\partial \boldsymbol{\theta }}\boldsymbol{\Psi
			_{\alpha }^{T}}(X;\boldsymbol{\theta })\right] \\
		& =\alpha E\left[ f_{\boldsymbol{\theta }}^{\alpha }(X)\boldsymbol{u}_{%
			\boldsymbol{\theta }}(X)\boldsymbol{u}_{\boldsymbol{\theta }}^{T}(X)\right]
		-\alpha E\left[ f_{\boldsymbol{\theta }}^{\alpha }(X)\boldsymbol{u}_{%
			\boldsymbol{\theta }}(X)\right] \boldsymbol{c}_{\alpha }^{T}\left( 
		\boldsymbol{\theta }\right) \\
		& +E\left[ f_{\boldsymbol{\theta }}^{\alpha }(X)\tfrac{\partial }{\partial 
			\boldsymbol{\theta }}\boldsymbol{u}_{\boldsymbol{\theta }}^{T}(X)\right] -E%
		\left[ f_{\boldsymbol{\theta }}^{\alpha }(X)\right] \tfrac{\partial }{%
			\partial \boldsymbol{\theta }}\boldsymbol{c}_{\alpha }^{T}\left( \boldsymbol{%
			\theta }\right) \\
		& =\alpha E\left[ f_{\boldsymbol{\theta }}^{\alpha }(X)\boldsymbol{u}_{%
			\boldsymbol{\theta }}(X)\boldsymbol{u}_{\boldsymbol{\theta }}^{T}(X)\right]
		-\alpha E\left[ f_{\boldsymbol{\theta }}^{\alpha }(X)\right] \boldsymbol{c}%
		_{\alpha }\left( \boldsymbol{\theta }\right) \boldsymbol{c}_{\alpha
		}^{T}\left( \boldsymbol{\theta }\right) \\
		& +E\left[ f_{\boldsymbol{\theta }}^{\alpha }(X)\tfrac{\partial }{\partial 
			\boldsymbol{\theta }}\boldsymbol{u}_{\boldsymbol{\theta }}^{T}(X)\right] -E%
		\left[ f_{\boldsymbol{\theta }}^{\alpha }(X)\right] \tfrac{\partial }{%
			\partial \boldsymbol{\theta }}\boldsymbol{c}_{\alpha }^{T}\left( \boldsymbol{%
			\theta }\right) \\
		& =\alpha E\left[ f_{\boldsymbol{\theta }}^{\alpha }(X)\boldsymbol{u}_{%
			\boldsymbol{\theta }}(X)\boldsymbol{u}_{\boldsymbol{\theta }}^{T}(X)\right]
		-\alpha E\left[ f_{\boldsymbol{\theta }}^{\alpha }(X)\right] \boldsymbol{c}%
		_{\alpha }\left( \boldsymbol{\theta }\right) \boldsymbol{c}_{\alpha
		}^{T}\left( \boldsymbol{\theta }\right) \\
		& +(\alpha +1)E\left[ f_{\boldsymbol{\theta }}^{\alpha }(X)\boldsymbol{u}_{%
			\boldsymbol{\theta }}(X)\right] \boldsymbol{c}_{\alpha }^{T}\left( 
		\boldsymbol{\theta }\right) -(\alpha +1)E\left[ f_{\boldsymbol{\theta }%
		}^{\alpha }(X)\boldsymbol{u}_{\boldsymbol{\theta }}(X)\boldsymbol{u}_{%
			\boldsymbol{\theta }}^{T}(X)\right] \\
		& =E\left[ f_{\boldsymbol{\theta }}^{\alpha }(X)\boldsymbol{u}_{\boldsymbol{%
				\theta }}(X)\right] \boldsymbol{c}_{\alpha }^{T}\left( \boldsymbol{\theta }%
		\right) -E\left[ f_{\boldsymbol{\theta }}^{\alpha }(X)\boldsymbol{u}_{%
			\boldsymbol{\theta }}(X)\boldsymbol{u}_{\boldsymbol{\theta }}^{T}(X)\right]
		\\
		& =E\left[ f_{\boldsymbol{\theta }}^{\alpha }(X)\right] \boldsymbol{c}%
		_{\alpha }\left( \boldsymbol{\theta }\right) \boldsymbol{c}_{\alpha
		}^{T}\left( \boldsymbol{\theta }\right) -E\left[ f_{\boldsymbol{\theta }%
		}^{\alpha }(X)\boldsymbol{u}_{\boldsymbol{\theta }}(X)\boldsymbol{u}_{%
			\boldsymbol{\theta }}^{T}(X)\right] ,
	\end{align*}%
	\begin{align*}
		\boldsymbol{S}_{\alpha }\left( \boldsymbol{\theta }\right) & =-E\left[ 
		\tfrac{\partial }{\partial \boldsymbol{\theta }}\boldsymbol{\Psi _{\alpha
			}^{T}}(x;\boldsymbol{\theta })\right] =E\left[ f_{\boldsymbol{\theta }%
		}^{\alpha }(X)\boldsymbol{u}_{\boldsymbol{\theta }}(X)\boldsymbol{u}_{%
			\boldsymbol{\theta }}^{T}(X)\right] -E\left[ f_{\boldsymbol{\theta }%
		}^{\alpha }(X)\right] \boldsymbol{c}_{\alpha }\left( \boldsymbol{\theta }%
		\right) \boldsymbol{c}_{\alpha }^{T}\left( \boldsymbol{\theta }\right) \\
		& =\boldsymbol{J}_{\alpha }\left( \boldsymbol{\theta }\right) -\kappa
		_{\alpha }(\boldsymbol{\theta })\boldsymbol{c}_{\alpha }\left( \boldsymbol{%
			\theta }\right) \boldsymbol{c}_{\alpha }^{T}\left( \boldsymbol{\theta }%
		\right) ,
	\end{align*}
	
	\begin{align*}
		\boldsymbol{K}_{\alpha }\left( \boldsymbol{\theta }\right) & =\mathrm{E}%
		\left[ \left( f_{\boldsymbol{\theta }}^{\alpha }(X)\left( \boldsymbol{u}_{%
			\boldsymbol{\theta }}(X)-\boldsymbol{c}_{\alpha }\left( \boldsymbol{\theta }%
		\right) \right) \right) \left( f_{\boldsymbol{\theta }}^{\alpha }(X)\left( 
		\boldsymbol{u}_{\boldsymbol{\theta }}^{T}(X)-\boldsymbol{c}_{\alpha
		}^{T}\left( \boldsymbol{\theta }\right) \right) \right) \right] \\
		& =\mathrm{E}\left[ f_{\boldsymbol{\theta }}^{2\alpha }(X)\boldsymbol{u}_{%
			\boldsymbol{\theta }}(X)\boldsymbol{u}_{\boldsymbol{\theta }}^{T}(X)\right] +%
		\mathrm{E}\left[ f_{\boldsymbol{\theta }}^{2\alpha }(X)\right] \boldsymbol{c}%
		_{\alpha }\left( \boldsymbol{\theta }\right) \boldsymbol{c}_{\alpha
		}^{T}\left( \boldsymbol{\theta }\right) \\
		& -\mathrm{E}\left[ f_{\boldsymbol{\theta }}^{2\alpha }(X)\boldsymbol{u}_{%
			\boldsymbol{\theta }}(X)\right] \boldsymbol{c}_{\alpha }^{T}\left( 
		\boldsymbol{\theta }\right) -\boldsymbol{c}_{\alpha }\left( \boldsymbol{%
			\theta }\right) \mathrm{E}\left[ f_{\boldsymbol{\theta }}^{2\alpha }(X)%
		\boldsymbol{u}_{\boldsymbol{\theta }}^{T}(X)\right] \\
		& =\boldsymbol{J}_{2\alpha }\left( \boldsymbol{\theta }\right) +\kappa
		_{2\alpha }(\boldsymbol{\theta })\boldsymbol{c}_{\alpha }\left( \boldsymbol{%
			\theta }\right) \boldsymbol{c}_{\alpha }^{T}\left( \boldsymbol{\theta }%
		\right) \\
		& -\kappa _{2\alpha }(\boldsymbol{\theta })\boldsymbol{c}_{2\alpha }\left( 
		\boldsymbol{\theta }\right) \boldsymbol{c}_{\alpha }^{T}\left( \boldsymbol{%
			\theta }\right) -\kappa _{2\alpha }(\boldsymbol{\theta })\boldsymbol{c}%
		_{\alpha }\left( \boldsymbol{\theta }\right) \boldsymbol{c}_{2\alpha
		}^{T}\left( \boldsymbol{\theta }\right) \\
		& =\boldsymbol{S}_{2\alpha }\left( \boldsymbol{\theta }\right) +\kappa
		_{2\alpha }(\boldsymbol{\theta })\left( \boldsymbol{c}_{2\alpha }(%
		\boldsymbol{\theta })-\boldsymbol{c}_{\alpha }(\boldsymbol{\theta })\right)
		\left( \boldsymbol{c}_{2\alpha }(\boldsymbol{\theta })-\boldsymbol{c}%
		_{\alpha }(\boldsymbol{\theta })\right) ^{T},
	\end{align*}%
	where%
	\begin{align*}
		\boldsymbol{S}_{2\alpha }\left( \boldsymbol{\theta }\right) & =\boldsymbol{J}%
		_{2\alpha }\left( \boldsymbol{\theta }\right) -\kappa _{2\alpha }(%
		\boldsymbol{\theta })\boldsymbol{c}_{2\alpha }\left( \boldsymbol{\theta }%
		\right) \boldsymbol{c}_{2\alpha }^{T}\left( \boldsymbol{\theta }\right) \\
		\left( \boldsymbol{c}_{2\alpha }(\boldsymbol{\theta })-\boldsymbol{c}%
		_{\alpha }(\boldsymbol{\theta })\right) \left( \boldsymbol{c}_{2\alpha }(%
		\boldsymbol{\theta })-\boldsymbol{c}_{\alpha }(\boldsymbol{\theta })\right)
		^{T}& =\boldsymbol{c}_{\alpha }\left( \boldsymbol{\theta }\right) 
		\boldsymbol{c}_{\alpha }^{T}\left( \boldsymbol{\theta }\right) -\boldsymbol{c%
		}_{2\alpha }\left( \boldsymbol{\theta }\right) \boldsymbol{c}_{\alpha
		}^{T}\left( \boldsymbol{\theta }\right) -\boldsymbol{c}_{\alpha }\left( 
		\boldsymbol{\theta }\right) \boldsymbol{c}_{2\alpha }^{T}\left( \boldsymbol{%
			\theta }\right) +\boldsymbol{c}_{2\alpha }\left( \boldsymbol{\theta }\right) 
		\boldsymbol{c}_{2\alpha }^{T}\left( \boldsymbol{\theta }\right) .
	\end{align*}
	
	\section{Proof of Theorem \protect\ref{Th4}\label{A2}}
	
	We shall follow \ref{Th1} as well as Propositions \ref{Prop2}, \ref{Prop4}, %
	\ref{Prop3}. About the first partition, since $\boldsymbol{c}_{1,\alpha
	}\left( \boldsymbol{\theta }\right) =\boldsymbol{0}_{2}$ it is trivial that%
	\begin{align*}
		\boldsymbol{S}_{1,\alpha }\left( \boldsymbol{\theta }\right) & =\boldsymbol{J%
		}_{1,\alpha }\left( \boldsymbol{\theta }\right) , \\
		\boldsymbol{K}_{1,\alpha }\left( \boldsymbol{\theta }\right) & =\boldsymbol{J%
		}_{1,2\alpha }\left( \boldsymbol{\theta }\right) .
	\end{align*}%
	On the other hand,%
	\begin{align*}
		\boldsymbol{S}_{2,\alpha }\left( \boldsymbol{\theta }\right) & =\boldsymbol{J%
		}_{2,\alpha }\left( \boldsymbol{\theta }\right) -\kappa _{\alpha }(%
		\boldsymbol{\theta })\boldsymbol{c}_{2,\alpha }\left( \boldsymbol{\theta }%
		\right) \boldsymbol{c}_{2,\alpha }^{T}\left( \boldsymbol{\theta }\right) \\
		& =\frac{1}{k^{\alpha }(\boldsymbol{\theta })(\alpha +1)^{3}}\boldsymbol{D}%
		_{2,\sigma _{1},\sigma _{2}}^{-1}\boldsymbol{J}_{2,\alpha }(\rho )%
		\boldsymbol{D}_{2,\sigma _{1},\sigma _{2}}^{-1} \\
		& -\frac{\alpha ^{2}}{k^{\alpha }(\boldsymbol{\theta })\left( \alpha
			+1\right) ^{3}}\boldsymbol{D}_{2,\sigma _{1},\sigma _{2}}^{-1}\boldsymbol{S}%
		_{2,2}\left( \rho \right) \boldsymbol{D}_{2,\sigma _{1},\sigma _{2}}^{-1} \\
		& =\frac{1}{k^{\alpha }(\boldsymbol{\theta })(\alpha +1)^{3}}\boldsymbol{D}%
		_{2,\sigma _{1},\sigma _{2}}^{-1}\left[ \boldsymbol{J}_{2,\alpha }(\rho
		)-\alpha ^{2}\boldsymbol{S}_{2,2}\left( \rho \right) \right] \boldsymbol{D}%
		_{2,\sigma _{1},\sigma _{2}}^{-1},
	\end{align*}%
	where%
	\begin{align*}
		& \boldsymbol{J}_{2,\alpha }(\rho )-\alpha ^{2}\boldsymbol{S}_{2,2}\left(
		\rho \right) \\
		& =\frac{1}{1-\rho ^{2}}\left[ 
		\begin{pmatrix}
			\alpha ^{2}-\rho ^{2}(\alpha ^{2}+1)+2 & \alpha ^{2}-\rho ^{2}(\alpha ^{2}+1)
			& -\rho (\alpha ^{2}+1) \\ 
			\alpha ^{2}-\rho ^{2}(\alpha ^{2}+1) & \alpha ^{2}-\rho ^{2}(\alpha ^{2}+1)+2
			& -\rho (\alpha ^{2}+1) \\ 
			-\rho (\alpha ^{2}+1) & -\rho (\alpha ^{2}+1) & \frac{\rho ^{2}(\alpha
				^{2}+1)+1}{1-\rho ^{2}}%
		\end{pmatrix}%
		\right. \\
		& \left. -\alpha ^{2}%
		\begin{pmatrix}
			1-\rho ^{2} & 1-\rho ^{2} & -\rho \\ 
			1-\rho ^{2} & 1-\rho ^{2} & -\rho \\ 
			-\rho & -\rho & \frac{\rho ^{2}}{1-\rho ^{2}}%
		\end{pmatrix}%
		\right] \\
		& =\frac{1}{1-\rho ^{2}}%
		\begin{pmatrix}
			2-\rho ^{2} & -\rho ^{2} & -\rho \\ 
			-\rho ^{2} & 2-\rho ^{2} & -\rho \\ 
			-\rho & -\rho & \frac{\rho ^{2}+1}{1-\rho ^{2}}%
		\end{pmatrix}%
		=\boldsymbol{S}_{2,1}(\rho ).
	\end{align*}
	
	In addition,%
	\begin{equation*}
		\boldsymbol{c}_{2\alpha }(\boldsymbol{\theta })-\boldsymbol{c}_{\alpha }(%
		\boldsymbol{\theta })=\frac{\alpha }{(2\alpha +1)(\alpha +1)}\boldsymbol{D}%
		_{2,\sigma _{1},\sigma _{2}}^{-1}%
		\begin{pmatrix}
			-1 \\ 
			-1 \\ 
			\frac{\rho }{1-\rho ^{2}}%
		\end{pmatrix}%
	\end{equation*}%
	\begin{equation*}
		\left( \boldsymbol{c}_{2\alpha }(\boldsymbol{\theta })-\boldsymbol{c}%
		_{\alpha }(\boldsymbol{\theta })\right) \left( \boldsymbol{c}_{2\alpha }(%
		\boldsymbol{\theta })-\boldsymbol{c}_{\alpha }(\boldsymbol{\theta })\right)
		^{T}=\frac{\alpha ^{2}}{(2\alpha +1)^{2}(\alpha +1)^{2}}\boldsymbol{D}%
		_{2,\sigma _{1},\sigma _{2}}^{-1}\boldsymbol{S}_{2,2}\left( \rho \right) 
		\boldsymbol{D}_{2,\sigma _{1},\sigma _{2}}^{-1}
	\end{equation*}%
	\begin{align*}
		\boldsymbol{K}_{\alpha }\left( \boldsymbol{\theta }\right) & =\boldsymbol{S}%
		_{2\alpha }\left( \boldsymbol{\theta }\right) +\kappa _{2\alpha }(%
		\boldsymbol{\theta })\left( \boldsymbol{c}_{2\alpha }(\boldsymbol{\theta })-%
		\boldsymbol{c}_{\alpha }(\boldsymbol{\theta })\right) \left( \boldsymbol{c}%
		_{2\alpha }(\boldsymbol{\theta })-\boldsymbol{c}_{\alpha }(\boldsymbol{%
			\theta })\right) ^{T} \\
		& =\frac{1}{k^{2\alpha }(\boldsymbol{\theta })(2\alpha +1)^{3}}\boldsymbol{D}%
		_{2,\sigma _{1},\sigma _{2}}^{-1}\boldsymbol{S}_{2,1}(\rho )\boldsymbol{D}%
		_{2,\sigma _{1},\sigma _{2}}^{-1}+\kappa _{2\alpha }(\boldsymbol{\theta })%
		\frac{\alpha ^{2}}{(2\alpha +1)^{2}(\alpha +1)^{2}}\boldsymbol{D}_{2,\sigma
			_{1},\sigma _{2}}^{-1}\boldsymbol{S}_{2,2}\left( \rho \right) \boldsymbol{D}%
		_{2,\sigma _{1},\sigma _{2}}^{-1} \\
		& =\frac{1}{k^{2\alpha }(\boldsymbol{\theta })(2\alpha +1)^{3}}\boldsymbol{D}%
		_{2,\sigma _{1},\sigma _{2}}^{-1}\boldsymbol{S}_{2,1}(\rho )\boldsymbol{D}%
		_{2,\sigma _{1},\sigma _{2}}^{-1}+\frac{\alpha ^{2}}{k^{2\alpha }(%
			\boldsymbol{\theta })(2\alpha +1)^{3}(\alpha +1)^{2}}\boldsymbol{D}%
		_{2,\sigma _{1},\sigma _{2}}^{-1}\boldsymbol{S}_{2,2}\left( \rho \right) 
		\boldsymbol{D}_{2,\sigma _{1},\sigma _{2}}^{-1} \\
		& =\frac{1}{k^{2\alpha }(\boldsymbol{\theta })(2\alpha +1)^{3}}\boldsymbol{D}%
		_{2,\sigma _{1},\sigma _{2}}^{-1}\left( (\alpha +1)^{2}\boldsymbol{S}%
		_{2,1}(\rho )+\alpha ^{2}\boldsymbol{S}_{2,2}\left( \rho \right) \right) 
		\boldsymbol{D}_{2,\sigma _{1},\sigma _{2}}^{-1}.
	\end{align*}
	
	\section{Proof of Theorem \ref{Thm7} \label{app:Thm7}}
	
	Let ${\boldsymbol{\theta }}_{0}\in \Theta _{0}$ be the true value of ${%
		\boldsymbol{\theta }}$. Using a Taylor series expansion we get 
	\begin{align}
		\boldsymbol{m}(\widehat{\boldsymbol{\theta }}_{R,\alpha })& =\boldsymbol{m}({%
			\boldsymbol{\theta }}_{0})+\boldsymbol{M}^{T}({\boldsymbol{\theta }}_{0})(%
		\widehat{\boldsymbol{\theta }}_{R,\alpha }-{\boldsymbol{\theta }}%
		_{0})+o_{p}\left( \left\Vert \widehat{\boldsymbol{\theta }}_{R,\alpha }-{%
			\boldsymbol{\theta }}_{0}\right\Vert \right)  \notag \\
		& =\boldsymbol{M}^{T}({\boldsymbol{\theta }}_{0})(\widehat{\boldsymbol{%
				\theta }}_{R,\alpha }-{\boldsymbol{\theta }}_{0})+o_{p}\left( \left\Vert 
		\widehat{\boldsymbol{\theta }}_{R,\alpha }-{\boldsymbol{\theta }}%
		_{0}\right\Vert \right) ,  \label{m}
	\end{align}%
	because from equation (\ref{2.5}) we have $\boldsymbol{m}({\boldsymbol{%
			\theta }}_{0})=\boldsymbol{0}_{r}$. Now, under $H_{0}$, 
	\begin{equation*}
		n^{1/2}(\widehat{\boldsymbol{\theta }}_{R,\alpha }-{\boldsymbol{\theta }}%
		_{0})\underset{n\rightarrow \infty }{\overset{\mathcal{L}}{\longrightarrow }}%
		\mathcal{N}(\boldsymbol{0}_{p},\boldsymbol{V}_{\alpha }\left( {\boldsymbol{%
				\theta }}_{0}\right) ).
	\end{equation*}%
	Therefore, from equation (\ref{m}) we get, under $H_{0}$, 
	\begin{equation*}
		n^{1/2}\boldsymbol{m}(\widehat{\boldsymbol{\theta }}_{R,\alpha })\underset{%
			n\rightarrow \infty }{\overset{\mathcal{L}}{\longrightarrow }}\mathcal{N}(%
		\boldsymbol{0}_{r},\boldsymbol{M}^{T}\left( {\boldsymbol{\theta }}%
		_{0}\right) \boldsymbol{V}_{\alpha }\left( {\boldsymbol{\theta }}_{0}\right) 
		\boldsymbol{M}\left( \boldsymbol{\theta }_{0}\right) ).
	\end{equation*}%
	As rank$\left( \boldsymbol{M}({\boldsymbol{\theta }})\right) =r$, we get 
	\begin{equation*}
		n\boldsymbol{m}^{T}(\widehat{\boldsymbol{\theta }}_{R,\alpha })\left( 
		\boldsymbol{M}^{T}({\boldsymbol{\theta }}_{0})\boldsymbol{V}_{\alpha }\left( 
		{\boldsymbol{\theta }}_{0}\right) \boldsymbol{M}({\boldsymbol{\theta }}%
		_{0})\right) ^{-1}\boldsymbol{m}(\widehat{\boldsymbol{\theta }}_{R,\alpha })%
		\underset{n\rightarrow \infty }{\overset{\mathcal{L}}{\longrightarrow }}\chi
		_{r}^{2}.
	\end{equation*}%
	Now $\boldsymbol{M}^{T}(\widehat{\boldsymbol{\theta }}_{R,\alpha })%
	\boldsymbol{V}_{\alpha }(\widehat{\boldsymbol{\theta }}_{R,\alpha })%
	\boldsymbol{M}(\widehat{\boldsymbol{\theta }}_{R,\alpha })$ is a consistent
	estimator of 
	\begin{equation*}
		\boldsymbol{M}^{T}({\boldsymbol{\theta }}_{0})\boldsymbol{V}_{\alpha }\left( 
		{\boldsymbol{\theta }}_{0}\right) \boldsymbol{M}({\boldsymbol{\theta }}_{0}).
	\end{equation*}%
	Hence, under $H_{0}$, 
	\begin{equation*}
		W_{n,\alpha }(\widehat{\boldsymbol{\theta }}_{R,\alpha })\underset{%
			n\rightarrow \infty }{\overset{\mathcal{L}}{\longrightarrow }}\chi _{r}^{2}.
	\end{equation*}
	
	\section{Proof of the formulas of the inner iterations of Algorithm \protect
		\ref{Algo}\label{A3}}
	
	Taking into account Theorem \ref{Th5} and the components of $\boldsymbol{u}_{%
		\boldsymbol{\theta }}(x,y)-\boldsymbol{c}_{\alpha }(\boldsymbol{\theta })$,
	given by 
	\begin{align*}
		u_{\mu _{1}}(x,y)-c_{\alpha }(\mu _{1})=& \tfrac{1}{\sigma _{1}\left( 1-\rho
			^{2}\right) }\left[ \tfrac{x-\mu _{1}}{\sigma _{1}}-\rho \left( \tfrac{y-\mu
			_{2}}{\sigma _{2}}\right) \right] , \\
		u_{\mu _{2}}(x,y)-c_{\alpha }(\mu _{2})=& \tfrac{1}{\sigma _{2}\left( 1-\rho
			^{2}\right) }\left[ \tfrac{y-\mu _{2}}{\sigma _{2}}-\rho \left( \tfrac{x-\mu
			_{1}}{\sigma _{1}}\right) \right] , \\
		u_{\sigma _{1}}(x,y)-c_{\alpha }(\sigma _{1})=& -\tfrac{1}{\sigma _{1}}%
		\left\{ \tfrac{1}{\alpha +1}+\tfrac{1}{1-\rho ^{2}}\left[ \rho \left( \tfrac{%
			x-\mu _{1}}{\sigma _{1}}\right) \left( \tfrac{y-\mu _{2}}{\sigma _{2}}%
		\right) -\left( \tfrac{x-\mu _{1}}{\sigma _{1}}\right) ^{2}\right] \right\} ,
		\\
		u_{\sigma _{2}}(x,y)-c_{\alpha }(\sigma _{2})=& -\tfrac{1}{\sigma _{2}}%
		\left\{ \tfrac{1}{\alpha +1}+\tfrac{1}{1-\rho ^{2}}\left[ \rho \left( \tfrac{%
			x-\mu _{1}}{\sigma _{1}}\right) \left( \tfrac{y-\mu _{2}}{\sigma _{2}}%
		\right) -\left( \tfrac{y-\mu _{2}}{\sigma _{2}}\right) ^{2}\right] \right\} ,
		\\
		u_{\rho }(x,y)-c_{\alpha }(\rho )=& \tfrac{1}{(1-\rho ^{2})^{2}}\left\{
		(1+\rho ^{2})\left( \tfrac{x-\mu _{1}}{\sigma _{1}}\right) \left( \tfrac{%
			y-\mu _{2}}{\sigma _{2}}\right) -\rho \left[ \tfrac{1-\rho ^{2}}{\alpha +1}%
		+\left( \tfrac{x-\mu _{1}}{\sigma _{1}}\right) ^{2}+\left( \tfrac{y-\mu _{2}%
		}{\sigma _{2}}\right) ^{2}\right] \right\} ,
	\end{align*}%
	the estimating equations are%
	\begin{align}
		\sum_{i=1}^{n}w_{i,\boldsymbol{\theta }}^{-\alpha }\widetilde{X}_{i}-\rho
		\sum_{i=1}^{n}w_{i,\boldsymbol{\theta }}^{-\alpha }\widetilde{Y}_{i}& =0,
		\label{aa} \\
		\sum_{i=1}^{n}w_{i,\boldsymbol{\theta }}^{-\alpha }\widetilde{Y}_{i}-\rho
		\sum_{i=1}^{n}w_{i,\boldsymbol{\theta }}^{-\alpha }\widetilde{X}_{i}& =0,
		\label{bb} \\
		\frac{1-\rho ^{2}}{\alpha +1}\sum_{i=1}^{n}w_{i,\boldsymbol{\theta }%
		}^{-\alpha }+\rho \sum_{i=1}^{n}w_{i,\boldsymbol{\theta }}^{-\alpha }%
		\widetilde{X}_{i}\widetilde{Y}_{i}-\sum_{i=1}^{n}w_{i,\boldsymbol{\theta }%
		}^{-\alpha }\widetilde{X}_{i}^{2}& =0,  \label{ccc} \\
		\frac{1-\rho ^{2}}{\alpha +1}\sum_{i=1}^{n}w_{i,\boldsymbol{\theta }%
		}^{-\alpha }+\rho \sum_{i=1}^{n}w_{i,\boldsymbol{\theta }}^{-\alpha }%
		\widetilde{X}_{i}\widetilde{Y}_{i}-\sum_{i=1}^{n}w_{i,\boldsymbol{\theta }%
		}^{-\alpha }\widetilde{Y}_{i}^{2}& =0,  \label{dd} \\
		(1+\rho ^{2})\sum_{i=1}^{n}w_{i,\boldsymbol{\theta }}^{-\alpha }\widetilde{X}%
		_{i}\widetilde{Y}_{i}+\rho \frac{1-\rho ^{2}}{\alpha +1}\sum_{i=1}^{n}w_{i,%
			\boldsymbol{\theta }}^{-\alpha }-\rho \sum_{i=1}^{n}w_{i,\boldsymbol{\theta }%
		}^{-\alpha }\widetilde{X}_{i}^{2}-\rho \sum_{i=1}^{n}w_{i,\boldsymbol{\theta 
		}}^{-\alpha }\widetilde{Y}_{i}^{2}& =0,  \label{ee}
	\end{align}%
	\begin{align*}
		w_{i,\boldsymbol{\theta }}& =\exp \left\{ \tfrac{1}{2(1-\rho ^{2})}\left[ 
		\widetilde{X}_{i}^{2}+\widetilde{Y}_{i}^{2}-2\rho \widetilde{X}_{i}%
		\widetilde{Y}_{i}\right] \right\} , \\
		\widetilde{X}_{i}& =\tfrac{X_{i}-\mu _{1}}{\sigma _{1}},\qquad \widetilde{Y}%
		_{i}=\tfrac{Y_{i}-\mu _{2}}{\sigma _{2}}.
	\end{align*}%
	Since $\rho \in (-1,1)$, from (\ref{aa})-(\ref{bb}) it holds%
	\begin{eqnarray}
		\sum_{i=1}^{n}w_{i,\boldsymbol{\theta }}^{-\alpha }\widetilde{X}_{i} &=&0
		\label{bb2} \\
		\sum_{i=1}^{n}w_{i,\boldsymbol{\theta }}^{-\alpha }\widetilde{Y}_{i} &=&0,
	\end{eqnarray}%
	from (\ref{ccc})-(\ref{dd}) 
	\begin{equation}
		\sum_{i=1}^{n}w_{i,\boldsymbol{\theta }}^{-\alpha }\widetilde{X}%
		_{i}^{2}=\sum_{i=1}^{n}w_{i,\boldsymbol{\theta }}^{-\alpha }\widetilde{Y}%
		_{i}^{2},  \label{cd2}
	\end{equation}%
	with%
	\begin{align*}
		\sum_{i=1}^{n}w_{i,\boldsymbol{\theta }}^{-\alpha }\widetilde{X}_{i}^{2}& =%
		\frac{1-\rho ^{2}}{\alpha +1}\sum_{i=1}^{n}w_{i,\boldsymbol{\theta }%
		}^{-\alpha }+\rho \sum_{i=1}^{n}w_{i,\boldsymbol{\theta }}^{-\alpha }%
		\widetilde{X}_{i}\widetilde{Y}_{i}, \\
		\sum_{i=1}^{n}w_{i,\boldsymbol{\theta }}^{-\alpha }\widetilde{Y}_{i}^{2}& =%
		\frac{1-\rho ^{2}}{\alpha +1}\sum_{i=1}^{n}w_{i,\boldsymbol{\theta }%
		}^{-\alpha }+\rho \sum_{i=1}^{n}w_{i,\boldsymbol{\theta }}^{-\alpha }%
		\widetilde{X}_{i}\widetilde{Y}_{i}.
	\end{align*}%
	Replacing both in (\ref{ee}) we get%
	\begin{equation*}
		(1+\rho ^{2})\sum_{i=1}^{n}w_{i,\boldsymbol{\theta }}^{-\alpha }\widetilde{X}%
		_{i}\widetilde{Y}_{i}+\rho \frac{1-\rho ^{2}}{\alpha +1}\sum_{i=1}^{n}w_{i,%
			\boldsymbol{\theta }}^{-\alpha }-2\rho \left( \frac{1-\rho ^{2}}{\alpha +1}%
		\sum_{i=1}^{n}w_{i,\boldsymbol{\theta }}^{-\alpha }+\rho \sum_{i=1}^{n}w_{i,%
			\boldsymbol{\theta }}^{-\alpha }\widetilde{X}_{i}\widetilde{Y}_{i}\right) =0,
	\end{equation*}%
	i.e.%
	\begin{equation}
		\rho =\frac{(\alpha +1)\sum\limits_{i=1}^{n}w_{i,\boldsymbol{\theta }%
			}^{-\alpha }\widetilde{X}_{i}\widetilde{Y}_{i}}{\sum\limits_{i=1}^{n}w_{i,%
				\boldsymbol{\theta }}^{-\alpha }}.  \label{e2}
	\end{equation}%
	Hence%
	\begin{align}
		\frac{1-\rho ^{2}}{\alpha +1}\sum_{i=1}^{n}w_{i,\boldsymbol{\theta }%
		}^{-\alpha }+\rho \sum_{i=1}^{n}w_{i,\boldsymbol{\theta }}^{-\alpha }%
		\widetilde{X}_{i}\widetilde{Y}_{i}& =\sum_{i=1}^{n}w_{i,\boldsymbol{\theta }%
		}^{-\alpha }\widetilde{X}_{i}^{2},  \notag \\
		\frac{1-\rho ^{2}}{\alpha +1}\sum_{i=1}^{n}w_{i,\boldsymbol{\theta }%
		}^{-\alpha }+\frac{\rho ^{2}}{\alpha +1}\sum\limits_{i=1}^{n}w_{i,%
			\boldsymbol{\theta }}^{-\alpha }& =\sum_{i=1}^{n}w_{i,\boldsymbol{\theta }%
		}^{-\alpha }\widetilde{X}_{i}^{2},  \notag \\
		\frac{1}{\alpha +1}\sum\limits_{i=1}^{n}w_{i,\boldsymbol{\theta }}^{-\alpha
		}& =\sum_{i=1}^{n}w_{i,\boldsymbol{\theta }}^{-\alpha }\widetilde{X}_{i}^{2},
		\label{c3}
	\end{align}%
	and%
	\begin{align}
		\frac{1-\rho ^{2}}{\alpha +1}\sum_{i=1}^{n}w_{i,\boldsymbol{\theta }%
		}^{-\alpha }+\rho \sum_{i=1}^{n}w_{i,\boldsymbol{\theta }}^{-\alpha }%
		\widetilde{X}_{i}\widetilde{Y}_{i}& =\sum_{i=1}^{n}w_{i,\boldsymbol{\theta }%
		}^{-\alpha }\widetilde{Y}_{i}^{2},  \notag \\
		\frac{1-\rho ^{2}}{\alpha +1}\sum_{i=1}^{n}w_{i,\boldsymbol{\theta }%
		}^{-\alpha }+\frac{\rho ^{2}}{\alpha +1}\sum\limits_{i=1}^{n}w_{i,%
			\boldsymbol{\theta }}^{-\alpha }& =\sum_{i=1}^{n}w_{i,\boldsymbol{\theta }%
		}^{-\alpha }\widetilde{Y}_{i}^{2},  \notag \\
		\frac{1}{\alpha +1}\sum\limits_{i=1}^{n}w_{i,\boldsymbol{\theta }}^{-\alpha
		}& =\sum_{i=1}^{n}w_{i,\boldsymbol{\theta }}^{-\alpha }\widetilde{Y}_{i}^{2}.
		\label{d3}
	\end{align}%
	Finally,%
	\begin{align*}
		\mu _{1}& =\frac{\sum\limits_{i=1}^{n}w_{i,\boldsymbol{\theta }}^{-\alpha
			}X_{i}}{\sum\limits_{i=1}^{n}w_{i,\boldsymbol{\theta }}^{-\alpha }},\quad
		\mu _{2}=\frac{\sum\limits_{i=1}^{n}w_{i,\boldsymbol{\theta }}^{-\alpha
			}Y_{i}}{\sum\limits_{i=1}^{n}w_{i,\boldsymbol{\theta }}^{-\alpha }}, \\
		\frac{\sigma _{1}^{2}}{\alpha +1}& =\frac{ \sum\limits_{i=1}^{n}w_{i,%
				\boldsymbol{\theta }}^{-\alpha }\left( X_{i}-\mu _{1}\right) ^{2}}{%
			\sum\limits_{i=1}^{n}w_{i,\boldsymbol{\theta }}^{-\alpha }},\quad \frac{%
			\sigma _{2}^{2}}{\alpha +1}=\frac{ \sum\limits_{i=1}^{n}w_{i,\boldsymbol{%
					\theta }}^{-\alpha }\left( Y_{i}-\mu _{2}\right) ^{2}}{ \sum%
			\limits_{i=1}^{n}w_{i,\boldsymbol{\theta }}^{-\alpha }}, \\
		\rho & =\frac{(\alpha +1) \sum\limits_{i=1}^{n}w_{i,\boldsymbol{\theta }%
			}^{-\alpha }\frac{X_{i}-\mu _{1}}{\sigma _{1}}\frac{Y_{i}-\mu _{2}}{\sigma
				_{2}}}{ \sum\limits_{i=1}^{n}w_{i,\boldsymbol{\theta }}^{-\alpha }},
	\end{align*}%
	from which are derived the main formulas of the inner iterations of the
	Iteratively Reweighted Moments Algorithm.
	
	\section{Proof of Theorem \protect\ref{th:IF}\label{A4}}
	
	From Theorem 5 in \cite{Broniatowski2012}, the IF associated to the
	MRPDE of $\boldsymbol{\theta }$ is given by%
	\begin{equation*}
		\mathcal{IF}((x,y)^{T},\boldsymbol{T}_{\alpha },F_{\boldsymbol{\theta }})=%
		\boldsymbol{S}_{\alpha }^{-1}\left( \boldsymbol{\theta }\right) f_{%
			\boldsymbol{\theta }}^{\alpha }(x,y)\left[ \boldsymbol{u}_{\boldsymbol{%
				\theta }}(x,y)-\boldsymbol{c}_{\alpha }(\boldsymbol{\theta })\right] ,
	\end{equation*}%
	where 
	\begin{equation*}
		\boldsymbol{S}_{\alpha }^{-1}\left( \boldsymbol{\theta }\right) =%
		\begin{pmatrix}
			\boldsymbol{S}_{1,\alpha }^{-1}(\boldsymbol{\boldsymbol{\theta }}) & 
			\boldsymbol{0}_{2\times 3} \\ 
			\boldsymbol{0}_{3\times 2} & \boldsymbol{S}_{2,\alpha }^{-1}(\boldsymbol{%
				\boldsymbol{\theta }})%
		\end{pmatrix}%
	\end{equation*}%
	is the inverse of the matrix, $\boldsymbol{S}_{\alpha }\left( \boldsymbol{%
		\theta }\right) $, defined in Theorem \ref{Th4}. The expressions of $%
	\boldsymbol{S}_{1,\alpha }^{-1}(\boldsymbol{\boldsymbol{\theta }})$\ and $%
	\boldsymbol{S}_{2,\alpha }^{-1}(\boldsymbol{\boldsymbol{\theta }})$\ are
	given in (\ref{s1}) and (\ref{s2}) respectively and the ones of $\boldsymbol{%
		u}_{\boldsymbol{\theta }}(x,y)-\boldsymbol{c}_{\alpha }(\boldsymbol{\theta }%
	) $\ in (\ref{aa})-(\ref{dd}). On one hand%
	\begin{equation*}
		\begin{pmatrix}
			\mathcal{IF}_{\alpha }(\mu _{1}) \\ 
			\mathcal{IF}_{\alpha }(\mu _{2})%
		\end{pmatrix}%
		=\frac{k^{\alpha }(\boldsymbol{\theta })\left( \alpha +1\right) ^{2}}{1-\rho
			^{2}}f_{\boldsymbol{\theta }}^{\alpha }(x,y)\left( 
		\begin{array}{cc}
			\sigma _{1} & \sigma _{1}\rho \\ 
			\sigma _{2}\rho & \sigma _{2}%
		\end{array}%
		\right) 
		\begin{pmatrix}
			\tfrac{x-\mu _{1}}{\sigma _{1}}-\rho \left( \tfrac{y-\mu _{2}}{\sigma _{2}}%
			\right) \\ 
			\tfrac{y-\mu _{2}}{\sigma _{2}}-\rho \left( \tfrac{x-\mu _{1}}{\sigma _{1}}%
			\right)%
		\end{pmatrix}%
		,
	\end{equation*}%
	where%
	\begin{equation*}
		\left( 
		\begin{array}{cc}
			\sigma _{1} & \sigma _{1}\rho \\ 
			\sigma _{2}\rho & \sigma _{2}%
		\end{array}%
		\right) 
		\begin{pmatrix}
			\tfrac{x-\mu _{1}}{\sigma _{1}}-\rho \left( \tfrac{y-\mu _{2}}{\sigma _{2}}%
			\right) \\ 
			\tfrac{y-\mu _{2}}{\sigma _{2}}-\rho \left( \tfrac{x-\mu _{1}}{\sigma _{1}}%
			\right)%
		\end{pmatrix}%
		=(1-\rho ^{2})%
		\begin{pmatrix}
			x-\mu _{1} \\ 
			y-\mu _{2}%
		\end{pmatrix}%
		,
	\end{equation*}%
	and 
	\begin{align*}
		f_{\boldsymbol{\theta }}^{\alpha }(x,y)& =\frac{w_{\alpha ,\boldsymbol{%
					\theta }}^{-\alpha /(1-\rho ^{2})}(x,y)}{k^{\alpha }(\boldsymbol{\theta })}
		\\
		& =\frac{1}{k^{\alpha }(\boldsymbol{\theta })}\exp \left\{ -\tfrac{\alpha }{%
			2(1-\rho ^{2})}\left[ (\tfrac{x-\mu _{1}}{\sigma _{1}})^{2}+(\tfrac{y-\mu
			_{2}}{\sigma _{2}})^{2}-2\rho \tfrac{x-\mu _{1}}{\sigma _{1}}\tfrac{y-\mu
			_{2}}{\sigma _{2}}\right] \right\} ,
	\end{align*}%
	hence we obtain (\ref{IF1})-(\ref{IF2}). On the other hand, we have%
	\begin{align*}
		\begin{pmatrix}
			\mathcal{IF}_{\alpha }(\sigma _{1}) \\ 
			\mathcal{IF}_{\alpha }(\sigma _{2})%
		\end{pmatrix}%
		& =k^{\alpha }(\boldsymbol{\theta })\frac{\left( \alpha +1\right) ^{3}}{2}f_{%
			\boldsymbol{\theta }}^{\alpha }(x,y)%
		\begin{pmatrix}
			\sigma _{1} & \sigma _{1}\rho ^{2} & \sigma _{1}\rho (1-\rho ^{2}) \\ 
			\sigma _{2}\rho ^{2} & \sigma _{2} & \sigma _{2}\rho (1-\rho ^{2})%
		\end{pmatrix}
		\\
		& \hspace{-1.5cm} \times 
		\begin{pmatrix}
			-\left\{ \tfrac{1}{\alpha +1}+\tfrac{1}{1-\rho ^{2}}\left[ \rho \tfrac{%
				(x-\mu _{1})(y-\mu _{2})}{\sigma _{1}\sigma _{2}}-\left( \tfrac{x-\mu _{1}}{%
				\sigma _{1}}\right) ^{2}\right] \right\} \\ 
			-\left\{ \tfrac{1}{\alpha +1}+\tfrac{1}{1-\rho ^{2}}\left[ \rho \tfrac{%
				(x-\mu _{1})(y-\mu _{2})}{\sigma _{1}\sigma _{2}}-\left( \tfrac{y-\mu _{2}}{%
				\sigma _{2}}\right) ^{2}\right] \right\} \\ 
			\tfrac{1}{(1-\rho ^{2})^{2}}\left\{ (1+\rho ^{2})\left( \tfrac{x-\mu _{1}}{%
				\sigma _{1}}\right) \left( \tfrac{y-\mu _{2}}{\sigma _{2}}\right) -\rho %
			\left[ \tfrac{1-\rho ^{2}}{\alpha +1}+\left( \tfrac{x-\mu _{1}}{\sigma _{1}}%
			\right) ^{2}+\left( \tfrac{y-\mu _{2}}{\sigma _{2}}\right) ^{2}\right]
			\right\}%
		\end{pmatrix}%
		,
	\end{align*}%
	where%
	\begin{eqnarray*}
		&&%
		\begin{pmatrix}
			\sigma _{1} & \sigma _{1}\rho ^{2} & \sigma _{1}\rho (1-\rho ^{2}) \\ 
			\sigma _{2}\rho ^{2} & \sigma _{2} & \sigma _{2}\rho (1-\rho ^{2})%
		\end{pmatrix}\\
		&&
		\cdot \begin{pmatrix}
			-\left\{ \tfrac{1}{\alpha +1}+\tfrac{1}{1-\rho ^{2}}\left[ \rho \tfrac{%
				(x-\mu _{1})(y-\mu _{2})}{\sigma _{1}\sigma _{2}}-\left( \tfrac{x-\mu _{1}}{%
				\sigma _{1}}\right) ^{2}\right] \right\} \\ 
			-\left\{ \tfrac{1}{\alpha +1}+\tfrac{1}{1-\rho ^{2}}\left[ \rho \tfrac{%
				(x-\mu _{1})(y-\mu _{2})}{\sigma _{1}\sigma _{2}}-\left( \tfrac{y-\mu _{2}}{%
				\sigma _{2}}\right) ^{2}\right] \right\} \\ 
			\tfrac{1}{(1-\rho ^{2})^{2}}\left\{ (1+\rho ^{2})\left( \tfrac{x-\mu _{1}}{%
				\sigma _{1}}\right) \left( \tfrac{y-\mu _{2}}{\sigma _{2}}\right) -\rho %
			\left[ \tfrac{1-\rho ^{2}}{\alpha +1}+\left( \tfrac{x-\mu _{1}}{\sigma _{1}}%
			\right) ^{2}+\left( \tfrac{y-\mu _{2}}{\sigma _{2}}\right) ^{2}\right]
			\right\}%
		\end{pmatrix}
		\\
		&=&%
		\begin{pmatrix}
			\tfrac{\sigma _{1}}{1-\rho ^{2}}\left[ \left( \tfrac{x-\mu _{1}}{\sigma _{1}}%
			\right) ^{2}-\rho ^{2}\left( \tfrac{y-\mu _{2}}{\sigma _{2}}\right)
			^{2}-(1-\rho ^{2})(1+2\rho ^{2})\tfrac{1}{\alpha +1}\right] \\ 
			\tfrac{\sigma _{2}}{1-\rho ^{2}}\left[ \left( \tfrac{y-\mu _{2}}{\sigma _{2}}%
			\right) ^{2}-\rho ^{2}\left( \tfrac{x-\mu _{1}}{\sigma _{1}}\right)
			^{2}-(1-\rho ^{2})(1+2\rho ^{2})\tfrac{1}{\alpha +1}\right]%
		\end{pmatrix}%
		.
	\end{eqnarray*}%
	Hence, we obtain (\ref{IF3})-(\ref{IF4}). Finally,%
	\begin{align*}
		\mathcal{IF}_{\alpha }(\rho )& =k^{\alpha }(\boldsymbol{\theta })\frac{%
			\left( \alpha +1\right) ^{3}}{2}f_{\boldsymbol{\theta }}^{\alpha }(x,y)%
		\begin{pmatrix}
			\rho (1-\rho ^{2}) & \rho (1-\rho ^{2}) & 2(1-\rho ^{2})^{2}%
		\end{pmatrix}
		\\
		& \hspace{-1cm} \times 
		\begin{pmatrix}
			-\left\{ \tfrac{1}{\alpha +1}+\tfrac{1}{1-\rho ^{2}}\left[ \rho \tfrac{x-\mu
				_{1}}{\sigma _{1}}\tfrac{y-\mu _{2}}{\sigma _{2}}-\left( \tfrac{x-\mu _{1}}{%
				\sigma _{1}}\right) ^{2}\right] \right\} \\ 
			-\left\{ \tfrac{1}{\alpha +1}+\tfrac{1}{1-\rho ^{2}}\left[ \rho \tfrac{x-\mu
				_{1}}{\sigma _{1}}\tfrac{y-\mu _{2}}{\sigma _{2}}-\left( \tfrac{y-\mu _{2}}{%
				\sigma _{2}}\right) ^{2}\right] \right\} \\ 
			\tfrac{1}{(1-\rho ^{2})^{2}}\left\{ (1+\rho ^{2})\left( \tfrac{x-\mu _{1}}{%
				\sigma _{1}}\right) \left( \tfrac{y-\mu _{2}}{\sigma _{2}}\right) -\rho %
			\left[ \tfrac{1-\rho ^{2}}{\alpha +1}+\left( \tfrac{x-\mu _{1}}{\sigma _{1}}%
			\right) ^{2}+\left( \tfrac{y-\mu _{2}}{\sigma _{2}}\right) ^{2}\right]
			\right\}%
		\end{pmatrix}%
	\end{align*}%
	where%
	\begin{align*}
		& 
		\begin{pmatrix}
			\rho (1-\rho ^{2}) & \rho (1-\rho ^{2}) & 2(1-\rho ^{2})^{2}%
		\end{pmatrix}\\
		&
		\cdot \begin{pmatrix}
			-\left\{ \tfrac{1}{\alpha +1}+\tfrac{1}{1-\rho ^{2}}\left[ \rho \tfrac{x-\mu
				_{1}}{\sigma _{1}}\tfrac{y-\mu _{2}}{\sigma _{2}}-\left( \tfrac{x-\mu _{1}}{%
				\sigma _{1}}\right) ^{2}\right] \right\} \\ 
			-\left\{ \tfrac{1}{\alpha +1}+\tfrac{1}{1-\rho ^{2}}\left[ \rho \tfrac{x-\mu
				_{1}}{\sigma _{1}}\tfrac{y-\mu _{2}}{\sigma _{2}}-\left( \tfrac{y-\mu _{2}}{%
				\sigma _{2}}\right) ^{2}\right] \right\} \\ 
			\tfrac{1}{(1-\rho ^{2})^{2}}\left\{ (1+\rho ^{2})\left( \tfrac{x-\mu _{1}}{%
				\sigma _{1}}\right) \left( \tfrac{y-\mu _{2}}{\sigma _{2}}\right) -\rho %
			\left[ \tfrac{1-\rho ^{2}}{\alpha +1}+\left( \tfrac{x-\mu _{1}}{\sigma _{1}}%
			\right) ^{2}+\left( \tfrac{y-\mu _{2}}{\sigma _{2}}\right) ^{2}\right]
			\right\}%
		\end{pmatrix}
		\\
		& =-\rho \left[ \left( \tfrac{x-\mu _{1}}{\sigma _{1}}\right) ^{2}+\left( 
		\tfrac{x-\mu _{1}}{\sigma _{1}}\right) ^{2}\right] +2\left( \tfrac{x-\mu _{1}%
		}{\sigma _{1}}\right) \left( \tfrac{y-\mu _{2}}{\sigma _{2}}\right) ,
	\end{align*}%
	from which it follows (\ref{IF5}).
	
	\section{Proofs of  cases of testing problems (Section \protect\ref%
		{secWald})\label{A5}}
	
	\noindent \textbf{Case \ref{subSec1}\ (Comparing means of two
		dependent populations with normal distributions)}.\textbf{\medskip }\texttt{%
		\newline
	}
	If we consider the function 
	\begin{equation*}
		m\left( \mu _{1},\mu _{2},\sigma _{1},\sigma _{2},\rho \right) =\mu _{1}-\mu
		_{2},
	\end{equation*}%
	the null hypothesis can be given by $m\left( \mu _{1},\mu _{2},\sigma
	_{1},\sigma _{2},\rho \right) =0$. In this case 
	\begin{align*}
		\boldsymbol{M}^{T}\left( \mu _{1},\mu _{2},\sigma _{1},\sigma _{2},\rho
		\right) & =\left( 
		\begin{array}{ccccc}
			1 & -1 & 0 & 0 & 0%
		\end{array}%
		\right) , \\
		\left( \boldsymbol{M}^{T}(\widehat{\boldsymbol{\theta }}_{R,\alpha })%
		\boldsymbol{V}_{\alpha }(\widehat{\boldsymbol{\theta }}_{R,\alpha })%
		\boldsymbol{M}(\widehat{\boldsymbol{\theta }}_{R,\alpha })\right) ^{-1}&
		=\left( \left( 
		\begin{array}{cc}
			1 & -1%
		\end{array}%
		\right) \boldsymbol{V}_{1,\alpha }(\widehat{\boldsymbol{\theta }}_{R,\alpha
		})\left( 
		\begin{array}{c}
			1 \\ 
			-1%
		\end{array}%
		\right) \right) ^{-1} \\
		& =\frac{\left( 2\alpha +1\right) ^{2}}{\left( \alpha +1\right) ^{4}} (%
		\widehat{\sigma }_{1,R,\alpha }^{2}-2\widehat{\rho }_{R,\alpha }\widehat{%
			\sigma }_{1,R,\alpha }\widehat{\sigma }_{2,R,\alpha }+\widehat{\sigma }%
		_{2,R,\alpha }^{2})^{-1} \\
		& =\frac{\left( 2\alpha +1\right) ^{2}}{\left( \alpha +1\right) ^{4}}  \big[ (%
		\widehat{\sigma }_{1,R,\alpha }-\widehat{\sigma }_{2,R,\alpha })^{2}\\
		& \hspace{1.8cm}+2(1-%
		\widehat{\rho }_{R,\alpha })\widehat{\sigma }_{1,R,\alpha },\widehat{\sigma }%
		_{2,R,\alpha }\big] ,
	\end{align*}%
	where $\boldsymbol{V}_{1,\alpha }(\cdot )$ is given by (\ref{v1}).
	Therefore, (\ref{waldCase1})\ is obtained.
	
	\noindent \textbf{Case \ref{subSec2}\ (Comparing variances of two
		dependent populations with normal distributions)}.\textbf{\medskip }\texttt{%
		\newline
	}
	If we consider the function 
	\begin{equation*}
		m\left( \mu _{1},\mu _{2},\sigma _{1},\sigma _{2},\rho \right) =\sigma
		_{1}-\sigma _{2},
	\end{equation*}%
	the null hypothesis can be given by $m\left( \mu _{1},\mu _{2},\sigma
	_{1},\sigma _{2},\rho \right) =0$. In this case 
	\begin{equation*}
		\boldsymbol{M}^{T}\left( \mu _{1},\mu _{2},\sigma _{1},\sigma _{2},\rho
		\right) =\left( 
		\begin{array}{ccccc}
			0 & 0 & 1 & -1 & 0%
		\end{array}%
		\right)
	\end{equation*}%
	and taking%
	\begin{align*}
		& \boldsymbol{M}^{T}\left( \mu _{1},\mu _{2},\sigma _{1},\sigma _{2},\rho
		\right) \boldsymbol{V}_{\alpha }(\boldsymbol{\theta })\boldsymbol{M}\left(
		\mu _{1},\mu _{2},\sigma _{1},\sigma _{2},\rho \right) \\
		& =\left( 
		\begin{array}{ccc}
			\sigma _{1} & -\sigma _{2} & 0%
		\end{array}%
		\right) \boldsymbol{V}_{2,\alpha }(\rho )%
		\begin{pmatrix}
			\sigma _{1} \\ 
			-\sigma _{2} \\ 
			0%
		\end{pmatrix}%
		=\dfrac{\left( \alpha +1\right) ^{4}}{\left( 2\alpha +1\right) ^{3}}%
		b_{\alpha }(\boldsymbol{\theta }),
	\end{align*}%
	we denote%
	\begin{align}
		b_{\alpha }(\boldsymbol{\theta })& =\left( \alpha +1\right) ^{2}b_{1,\alpha
		}(\boldsymbol{\theta })+\alpha ^{2}b_{2,\alpha }(\boldsymbol{\theta })
		\label{b} \\
		& =\frac{2\left( \alpha +1\right) ^{2}+\alpha ^{2}}{4}(\sigma _{1}-\sigma
		_{2})^{2}+\left( \alpha +1\right) ^{2}(1-\rho ^{2})\sigma _{1}\sigma _{2} 
		\notag \\
		& =\frac{1}{4}\left( \alpha +1\right) ^{2}\left\{ \left[ 2+\left( \tfrac{%
			\alpha }{\alpha +1}\right) ^{2}\right] (\sigma _{1}-\sigma
		_{2})^{2}+4(1-\rho ^{2})\sigma _{1}\sigma _{2}\right\} ,  \notag
	\end{align}%
	where%
	\begin{align*}
		b_{1,\alpha }(\boldsymbol{\theta })& =\frac{1}{2}\left( 
		\begin{array}{cc}
			\sigma _{1} & -\sigma _{2}%
		\end{array}%
		\right) 
		\begin{pmatrix}
			1 & \rho ^{2} \\ 
			\rho ^{2} & 1%
		\end{pmatrix}%
		\left( 
		\begin{array}{c}
			\sigma _{1} \\ 
			-\sigma _{2}%
		\end{array}%
		\right) \\
		& =\frac{1}{2}\left( \sigma _{1}^{2}+\sigma _{2}^{2})-\rho ^{2}\sigma
		_{1}\sigma _{2}\right) \\
		& =\frac{1}{2}(\sigma _{1}-\sigma _{2})^{2}+(1-\rho ^{2})\sigma _{1}\sigma
		_{2},
	\end{align*}%
	\begin{align*}
		b_{2,\alpha }(\boldsymbol{\theta })& =\frac{1}{4}\tfrac{1}{1-\rho ^{2}}%
		\left( 
		\begin{array}{ccc}
			\sigma _{1} & -\sigma _{2} & 0%
		\end{array}%
		\right) 
		\begin{pmatrix}
			1 & \rho ^{2} & \rho (1-\rho ^{2}) \\ 
			\rho ^{2} & 1 & \rho (1-\rho ^{2}) \\ 
			\rho (1-\rho ^{2}) & \rho (1-\rho ^{2}) & 2(1-\rho ^{2})^{2}%
		\end{pmatrix}
		\\
		& \times 
		\begin{pmatrix}
			1-\rho ^{2} & 1-\rho ^{2} & -\rho \\ 
			1-\rho ^{2} & 1-\rho ^{2} & -\rho \\ 
			-\rho & -\rho & \frac{\rho ^{2}}{1-\rho ^{2}}%
		\end{pmatrix}%
		\begin{pmatrix}
			1 & \rho ^{2} & \rho (1-\rho ^{2}) \\ 
			\rho ^{2} & 1 & \rho (1-\rho ^{2}) \\ 
			\rho (1-\rho ^{2}) & \rho (1-\rho ^{2}) & 2(1-\rho ^{2})^{2}%
		\end{pmatrix}%
		\begin{pmatrix}
			\sigma _{1} \\ 
			-\sigma _{2} \\ 
			0%
		\end{pmatrix}
		\\
		& =\frac{1}{4}\left( 
		\begin{array}{cc}
			\sigma _{1} & -\sigma _{2}%
		\end{array}%
		\right) \boldsymbol{1}_{2}\boldsymbol{1}_{2}^{T}%
		\begin{pmatrix}
			\sigma _{1} \\ 
			-\sigma _{2}%
		\end{pmatrix}%
		=\frac{1}{4}(\sigma _{1}-\sigma _{2})^{2}.
	\end{align*}%
	Finally, denoting $b_{\alpha }(\widehat{\boldsymbol{\theta }}_{R,\alpha
	})=\left( \alpha +1\right) ^{2}\beta _{\alpha }(\widehat{\boldsymbol{\theta }%
	}_{R,\alpha })$, we get (\ref{TS}).
	
	\noindent \textbf{Case \ref{subSec3}\ (Fixing a value of the correlation coefficient of two
		dependent populations with normal distributions)}.\textbf{\medskip }\texttt{%
		\newline
	}
	If we consider the function 
	\begin{equation*}
		m\left( \mu _{1},\mu _{2},\sigma _{1},\sigma _{2},\rho \right) =\rho -\rho
		_{0},
	\end{equation*}%
	the null hypothesis can be given by $m\left( \mu _{1},\mu _{2},\sigma
	_{1},\sigma _{2},\rho \right) =0$. In this case $\boldsymbol{M}^{T}(\mu
	_{1},\mu _{2},\sigma _{1},\sigma _{2},\rho )=\left( 
	\begin{array}{ccccc}
		0 & 0 & 0 & 0 & 1%
	\end{array}%
	\right) $ and we have%
	\begin{equation}
		\left( \boldsymbol{M}^{T}(\widehat{\boldsymbol{\theta }}_{R,\alpha })%
		\boldsymbol{V}_{\alpha }(\widehat{\boldsymbol{\theta }}_{R,\alpha })%
		\boldsymbol{M}(\widehat{\boldsymbol{\theta }}_{R,\alpha })\right) ^{-1}=%
		\frac{\left( 2\alpha +1\right) ^{3}}{\left( \alpha +1\right) ^{6}}\frac{1}{%
			(1-\widehat{\rho }_{R,\alpha }^{2})^{2}}.  \label{key}
	\end{equation}%
	Therefore, we get (\ref{TS2}).

	\noindent \textbf{Case \ref{subSec4}\ (Comparing means and variances of two
		dependent populations with normal distribution)}.\textbf{\medskip }\texttt{%
		\newline
	}If we consider the function%
	\begin{equation*}
		\boldsymbol{m}^{T}\left( \mu _{1},\mu _{2},\sigma _{1},\sigma _{2},\rho
		\right) =\left( \mu _{1}-\mu _{2},\sigma _{1}-\sigma _{2}\right) ,
	\end{equation*}%
	the null hypothesis can be written by $\boldsymbol{m}\left( \mu _{1},\mu
	_{2},\sigma _{1},\sigma _{2},\rho \right) =\boldsymbol{0}_{2}$. In this case,%
	\begin{equation*}
		\boldsymbol{M}^{T}(\widehat{\boldsymbol{\theta }}_{R,\alpha })=\left( 
		\begin{array}{ccccc}
			1 & -1 & 0 & 0 & 0 \\ 
			0 & 0 & 1 & -1 & 0%
		\end{array}%
		\right)
	\end{equation*}%
	and 
	\begin{equation*}
		\boldsymbol{M}^{T}\left( \boldsymbol{\theta }\right) \boldsymbol{V}_{\alpha
		}\left( \boldsymbol{\theta }\right) \boldsymbol{M}\left( \boldsymbol{\theta }%
		\right) =\frac{\left( \alpha +1\right) ^{4}}{\left( 2\alpha +1\right) ^{2}}%
		\left( 
		\begin{array}{cc}
			\sigma _{1}^{2}-2\rho \sigma _{1}\sigma _{2}+\sigma _{2}^{2} & 0 \\ 
			0 & \frac{\left( \alpha +1\right) ^{2}}{2\alpha +1}\beta _{\alpha }(%
			\boldsymbol{\theta })%
		\end{array}%
		\right) ,
	\end{equation*}%
	with $\beta _{\alpha }(\boldsymbol{\theta })$ given in (\ref{b}). Therefore,
	we get (\ref{waldCase2}).\textbf{\medskip }
	
	\noindent \textbf{Case \ref{subSec5}\ (Fixing a value for covariance of two
		normal populations)}.\textbf{\medskip }\texttt{\newline
	}If we consider the function%
	\begin{equation*}
		m\left( \mu _{1},\mu _{2},\sigma _{1},\sigma _{2},\rho \right) =\sigma
		_{1}\sigma _{2}\rho -\sigma _{12,0},
	\end{equation*}%
	the null hypothesis can be written as $m\left( \mu _{1},\mu _{2},\sigma
	_{1},\sigma _{2},\rho \right) =0$ and 
	\begin{eqnarray*}
		\boldsymbol{M}^{T}\left( \boldsymbol{\theta }\right) \boldsymbol{V}_{\alpha
		}\left( \boldsymbol{\theta }\right) \boldsymbol{M}\left( \boldsymbol{\theta }%
		\right) &=&\sigma _{1}^{2}\sigma _{2}^{2}\left( 
		\begin{array}{ccc}
			\rho & \rho & 1%
		\end{array}%
		\right) \boldsymbol{D}_{1,\sigma _{1},\sigma _{2}}\boldsymbol{V}_{2,\alpha
		}(\rho )\boldsymbol{D}_{1,\sigma _{1},\sigma _{2}}\left( 
		\begin{array}{c}
			\rho \\ 
			\rho \\ 
			1%
		\end{array}%
		\right) \\
		&=&\sigma _{1}^{2}\sigma _{2}^{2}\left[ \left( \alpha +1\right) ^{2}(%
		\widehat{\rho }_{R,\alpha }^{2}+1)+\frac{\alpha ^{2}}{2}\widehat{\rho }%
		_{R,\alpha }^{2}\right] ,
	\end{eqnarray*}%
	where $\boldsymbol{V}_{2,\alpha }(\rho )$ is given by (\ref{v2rho}%
	).Therefore, we get (\ref{chi}).\textbf{\medskip }
	
	\noindent \textbf{Case \ref{subSec6}\ (Fixing values for means of two
		dependent populations with normal distribution)}.\textbf{\medskip }\texttt{%
		\newline
	}If we consider the function%
	\begin{equation*}
		\boldsymbol{m}^{T}(\mu _{1},\mu _{2},\sigma _{1},\sigma _{2},\rho )=\left(
		\mu _{1}-\mu _{1,0},\mu _{2}-\mu _{2,0}\right) ,
	\end{equation*}%
	the null hypothesis can be written by $m\left( \mu _{1},\mu _{2},\sigma
	_{1},\sigma _{2},\rho \right) =0$. It is clear that 
	\begin{equation*}
		\boldsymbol{M}^{T}\left( \mu _{1},\mu _{2},\sigma _{1},\sigma _{2},\rho
		\right) =\left( 
		\begin{array}{ccccc}
			1 & 0 & 0 & 0 & 0 \\ 
			0 & 1 & 0 & 0 & 0%
		\end{array}%
		\right) .
	\end{equation*}%
	Therefore, we get (\ref{w}) since 
	\begin{align*}
		\left( \boldsymbol{M}^{T}\left( \boldsymbol{\theta }\right) \boldsymbol{V}%
		_{\alpha }\left( \boldsymbol{\theta }\right) \boldsymbol{M}\left( 
		\boldsymbol{\theta }\right) \right) ^{-1}& =\boldsymbol{V}_{1,\alpha }^{-1}%
		\boldsymbol{\left( {\boldsymbol{\theta }}\right) } \\
		& =\left( \frac{\left( \alpha +1\right) ^{4}}{\left( 2\alpha +1\right) ^{2}}%
		\left( 
		\begin{array}{cc}
			\sigma _{1}^{2} & \rho \sigma _{1}\sigma _{2} \\ 
			\rho \sigma _{1}\sigma _{2} & \sigma _{2}^{2}%
		\end{array}%
		\right) \right) ^{-1} \\
		& =\frac{\left( 2\alpha +1\right) ^{2}}{(1-\rho ^{2})\left( \alpha +1\right)
			^{4}}\left( 
		\begin{array}{cc}
			\frac{1}{\sigma _{1}^{2}} & -\frac{\rho }{\sigma _{1}\sigma _{2}} \\ 
			-\frac{\rho }{\sigma _{1}\sigma _{2}} & \frac{1}{\sigma _{2}^{2}}%
		\end{array}%
		\right) ,
	\end{align*}%
	\begin{align*}
		& \boldsymbol{m}^{T}\left( \mu _{1},\mu _{2},\sigma _{1},\sigma _{2},\rho 
		\text{ }\right) \left( \boldsymbol{M}^{T}\left( \boldsymbol{\theta }\right) 
		\boldsymbol{V}_{\alpha }\left( \boldsymbol{\theta }\right) \boldsymbol{M}%
		\left( \boldsymbol{\theta }\right) \right) ^{-1}\boldsymbol{m}\left( \mu
		_{1},\mu _{2},\sigma _{1},\sigma _{2},\rho \text{ }\right) \\
		& =\frac{\left( 2\alpha +1\right) ^{2}}{\left( \alpha +1\right)^{4}}
		\bigg[\frac{(%
			\widehat{\mu }_{1,R,\alpha }-\mu _{1,0})^{2}\widehat{\sigma }_{2,R,\alpha
			}^{2}}{\widehat{\sigma }_{1,R,\alpha }^{2}%
			\widehat{\sigma }_{2,R,\alpha }^{2}(1-\widehat{\rho }_{R,\alpha }^{2})} \\
		&  \hspace{0.5cm} + \frac{
			-2\widehat{\rho }_{R,\alpha }(\widehat{\mu }_{1,R,\alpha }-\mu _{1,0})(%
			\widehat{\mu }_{2,R,\alpha }-\mu _{2,0})\widehat{\sigma }_{1,R,\alpha }%
			\widehat{\sigma }_{2,R,\alpha }
			+
			(\widehat{\mu }_{2,R,\alpha }-\mu _{2,0})^{2}%
			\widehat{\sigma }_{1,R,\alpha }^{2}}{\widehat{\sigma }_{1,R,\alpha }^{2}%
			\widehat{\sigma }_{2,R,\alpha }^{2}(1-\widehat{\rho }_{R,\alpha }^{2})}\bigg] \\
		& =\frac{\left( 2\alpha +1\right) ^{2}}{\left( \alpha +1\right) ^{4}}
		\Bigg[\frac{%
			\left( \frac{\widehat{\mu }_{1,R,\alpha }-\mu _{1,0}}{\widehat{\sigma }%
				_{1,R,\alpha }}\right) ^{2}-2\widehat{\rho }_{R,\alpha }\left(\frac{\widehat{\mu }%
				_{1,R,\alpha }-\mu _{1,0}}{\widehat{\sigma }_{1,R,\alpha }}\right)\left(\frac{\widehat{%
					\mu }_{2,R,\alpha }-\mu _{2,0}}{\widehat{\sigma }_{2,R,\alpha }}\right)}{1-\widehat{\rho }_{R,\alpha }^{2}} \\
		&	\hspace{2.3cm} + \frac{%
			\left( 
			\dfrac{\widehat{\mu }_{2,R,\alpha }-\mu _{2,0}}{\widehat{\sigma }_{2,R,\alpha }}\right) ^{2}}{1-\widehat{\rho }_{R,\alpha }^{2}} \Bigg] \\
		& =\dfrac{\left( 2\alpha +1\right) ^{2}}{\left( \alpha +1\right) ^{4}}
		\bigg[\frac{%
			\left( \dfrac{\widehat{\mu }_{1,R,\alpha }-\mu _{1,0}}{\widehat{\sigma }%
				_{1,R,\alpha }}-\dfrac{\widehat{\mu }_{2,R,\alpha }-\mu _{2,0}}{\widehat{%
					\sigma }_{2,R,\alpha }}\right) ^{2}}{1-\widehat{\rho }_{R,\alpha }^{2}}\\
		&	\hspace{2.3cm} +\frac{+2(1-\widehat{\rho }_{R,\alpha })\left(\dfrac{%
				\widehat{\mu }_{1,R,\alpha }-\mu _{1,0}}{\widehat{\sigma }_{1,R,\alpha }}\right)
			\left(\dfrac{\widehat{\mu }_{2,R,\alpha }-\mu _{2,0}}{\widehat{\sigma }%
				_{2,R,\alpha }}\right)}{1-\widehat{\rho }_{R,\alpha }^{2}} \Bigg]
	\end{align*}%
	\textbf{\medskip }
	
	\noindent \textbf{Case \ref{SecVarCov}\ (Fixing values for variances and
		covariance of two dependent populations with normal distribution)}.\textbf{%
		\medskip }\texttt{\newline
	}If we consider the function%
	\begin{equation*}
		\boldsymbol{m}^{T}\left( \boldsymbol{\theta }\right) =\left( \sigma
		_{1}-\sigma _{1,0},\sigma _{2}-\sigma _{2,0},\sigma _{1}\sigma _{2}\rho
		-\sigma _{12,0}\right) ,
	\end{equation*}%
	the null hypothesis can be written by $\boldsymbol{m}\left( \boldsymbol{%
		\theta }\right) =\boldsymbol{0}_{3}$. Therefore,%
	\begin{equation*}
		\boldsymbol{M}^{T}(\boldsymbol{\theta })=(\boldsymbol{0}_{3\times 2},%
		\boldsymbol{M}_{22}^{T}(\boldsymbol{\theta })),\qquad \boldsymbol{M}%
		_{22}^{T}(\boldsymbol{\theta })=\left( 
		\begin{array}{ccc}
			1 & 0 & 0 \\ 
			0 & 1 & 0 \\ 
			\sigma _{2}\rho & \sigma _{1}\rho & \sigma _{1}\sigma _{2}%
		\end{array}%
		\right)
	\end{equation*}%
	and we denote by 
	\begin{align*}
		\boldsymbol{M}^{T}(\boldsymbol{\theta })\boldsymbol{V}_{\alpha }\left( 
		\boldsymbol{\theta }\right) \boldsymbol{M}(\boldsymbol{\theta }) =%
		\boldsymbol{M}_{22}^{T}(\boldsymbol{\theta })\boldsymbol{V}_{2,\alpha }%
		\boldsymbol{\left( {\boldsymbol{\theta }}\right) M}_{22}(\boldsymbol{\theta }%
		), 
	\end{align*}
	and
	\begin{align*}
		\left( \boldsymbol{M}^{T}(\boldsymbol{\theta })\boldsymbol{V}_{\alpha
		}\left( \boldsymbol{\theta }\right) \boldsymbol{M}(\boldsymbol{\theta }%
		)\right) ^{-1} & =\left( \boldsymbol{M}_{22}^{T}(\boldsymbol{\theta })%
		\boldsymbol{V}_{2,\alpha }\boldsymbol{\left( {\boldsymbol{\theta }}\right) M}%
		_{22}(\boldsymbol{\theta })\right) ^{-1} \\
		& 
		\hspace{-1.5cm}=\boldsymbol{M}_{22}^{-1}(\boldsymbol{\theta })\boldsymbol{D}_{2,\widehat{%
				\sigma }_{1,R,\alpha },\widehat{\sigma }_{2,R,\alpha }}^{-1}\boldsymbol{V}%
		_{2,\alpha }^{-1}\left( \rho \right) \boldsymbol{D}_{2,\widehat{\sigma }%
			_{1,R,\alpha },\widehat{\sigma }_{2,R,\alpha }}^{-1}\left( \boldsymbol{M}%
		_{22}^{T}(\boldsymbol{\theta })\right) ^{-1},
	\end{align*}%
	and%
	\begin{equation*}
		\boldsymbol{m}^{T}(\widehat{\boldsymbol{\theta }}_{R,\alpha })=(\widehat{%
			\sigma }_{1,R,\alpha }-\sigma _{1,0},\widehat{\sigma }_{2,R,\alpha }-\sigma
		_{2,0},\widehat{\sigma }_{1,R,\alpha }\widehat{\sigma }_{2,R,\alpha }%
		\widehat{\rho }_{R,\alpha }-\sigma _{12,0}).
	\end{equation*}%
	Therefore, we get (\ref{chi2}). \newpage
	
	\section{Complementary tables for Simulation (Section \protect\ref{sim}) 
		\label{A6}}
	
	\begin{center}
		\begin{table}[htbp]  \tabcolsep2.8pt  \centering%
			\begin{tabular}{llllllllllll}
				\hline
				&  &  & \multicolumn{3}{c}{slighly} & \multicolumn{3}{c}{regular} & 
				\multicolumn{3}{c}{heavily} \\ 
				$\rho $ & $\alpha $ & pure & $0.05$ & $0.10$ & $0.20$ & $0.05$ & $0.10$ & $%
				0.20$ & $0.05$ & $0.10$ & $0.20$ \\ \hline
				$0$ & $0$ & 0.225 & 0.232 & 0.239 & 0.249 & 0.229 & 0.234 & 0.245 & 0.305 & 
				0.372 & 0.499 \\ 
				& $0.1$ & 0.227 & 0.231 & 0.236 & 0.243 & 0.229 & 0.232 & 0.239 & 0.268 & 
				0.316 & 0.438 \\ 
				& $0.2$ & 0.237 & 0.239 & 0.244 & 0.251 & 0.239 & 0.241 & 0.245 & 0.250 & 
				0.276 & 0.355 \\ 
				& $0.3$ & 0.258 & 0.259 & 0.266 & 0.270 & 0.259 & 0.262 & 0.264 & 0.264 & 
				0.280 & 0.328 \\ 
				& $0.5$ & 0.347 & 0.360 & 0.361 & 0.368 & 0.353 & 0.363 & 0.353 & 0.364 & 
				0.366 & 0.396 \\ 
				& $0.7$ & 0.596 & 0.580 & 0.761 & 0.597 & 0.612 & 0.558 & 0.597 & 0.575 & 
				0.620 & 0.709 \\ \hline
				$0.3$ & $0$ & 0.214 & 0.226 & 0.230 & 0.237 & 0.220 & 0.225 & 0.234 & 0.299
				& 0.371 & 0.496 \\ 
				& $0.1$ & 0.215 & 0.224 & 0.227 & 0.232 & 0.219 & 0.223 & 0.229 & 0.258 & 
				0.311 & 0.434 \\ 
				& $0.2$ & 0.224 & 0.232 & 0.235 & 0.240 & 0.228 & 0.231 & 0.236 & 0.242 & 
				0.267 & 0.350 \\ 
				& $0.3$ & 0.244 & 0.251 & 0.254 & 0.261 & 0.247 & 0.251 & 0.256 & 0.255 & 
				0.269 & 0.322 \\ 
				& $0.5$ & 0.322 & 0.335 & 0.338 & 0.351 & 0.330 & 0.338 & 0.342 & 0.337 & 
				0.387 & 0.387 \\ 
				& $0.7$ & 0.510 & 0.555 & 0.557 & 0.615 & 0.550 & 0.573 & 0.550 & 0.556 & 
				0.627 & 0.641 \\ \hline
				$0.6$ & $0$ & 0.182 & 0.188 & 0.193 & 0.200 & 0.185 & 0.189 & 0.195 & 0.275
				& 0.362 & 0.494 \\ 
				& $0.1$ & 0.184 & 0.187 & 0.190 & 0.195 & 0.185 & 0.187 & 0.191 & 0.223 & 
				0.285 & 0.415 \\ 
				& $0.2$ & 0.192 & 0.193 & 0.197 & 0.201 & 0.192 & 0.193 & 0.196 & 0.205 & 
				0.233 & 0.317 \\ 
				& $0.3$ & 0.209 & 0.210 & 0.212 & 0.217 & 0.208 & 0.208 & 0.213 & 0.215 & 
				0.231 & 0.281 \\ 
				& $0.5$ & 0.280 & 0.285 & 0.291 & 0.294 & 0.282 & 0.279 & 0.290 & 0.299 & 
				0.298 & 0.332 \\ 
				& $0.7$ & 0.468 & 0.460 & 0.485 & 0.496 & 0.460 & 0.468 & 0.455 & 0.468 & 
				0.512 & 0.513 \\ \hline
				$0.9$ & $0$ & 0.098 & 0.103 & 0.105 & 0.109 & 0.101 & 0.103 & 0.106 & 0.230
				& 0.339 & 0.492 \\ 
				& $0.1$ & 0.098 & 0.102 & 0.104 & 0.107 & 0.101 & 0.102 & 0.104 & 0.131 & 
				0.188 & 0.336 \\ 
				& $0.2$ & 0.102 & 0.106 & 0.107 & 0.110 & 0.105 & 0.106 & 0.107 & 0.112 & 
				0.131 & 0.205 \\ 
				& $0.3$ & 0.112 & 0.115 & 0.116 & 0.119 & 0.114 & 0.114 & 0.115 & 0.118 & 
				0.128 & 0.166 \\ 
				& $0.5$ & 0.149 & 0.153 & 0.155 & 0.161 & 0.153 & 0.151 & 0.157 & 0.159 & 
				0.169 & 0.195 \\ 
				& $0.7$ & 0.247 & 0.263 & 0.248 & 0.247 & 0.251 & 0.249 & 0.299 & 0.271 & 
				0.280 & 0.311 \\ \hline
			\end{tabular}
			\caption{Simulated mean square error of the MRPDE for ratio of variances,
				$\protect\widehat{\gamma}_{R,\alpha}$, when $n=15$\label{table2}} 
		\end{table}%
		
		\begin{table}[htbp]  \tabcolsep2.8pt  \centering%
			\begin{tabular}{llllllllllll}
				\hline
				&  &  & \multicolumn{3}{c}{slighly} & \multicolumn{3}{c}{regular} & 
				\multicolumn{3}{c}{heavily} \\ 
				$\rho $ & $\alpha $ & pure & $0.05$ & $0.10$ & $0.20$ & $0.05$ & $0.10$ & $%
				0.20$ & $0.05$ & $0.10$ & $0.20$ \\ \hline
				$0$ & $0$ & 0.064 & 0.071 & 0.085 & 0.091 & 0.071 & 0.074 & 0.089 & 0.270 & 
				0.426 & 0.687 \\ 
				& $0.1$ & 0.062 & 0.065 & 0.075 & 0.082 & 0.067 & 0.066 & 0.077 & 0.169 & 
				0.290 & 0.563 \\ 
				& $0.2$ & 0.067 & 0.068 & 0.075 & 0.085 & 0.069 & 0.067 & 0.077 & 0.103 & 
				0.167 & 0.354 \\ 
				& $0.3$ & 0.080 & 0.080 & 0.087 & 0.097 & 0.083 & 0.081 & 0.088 & 0.098 & 
				0.131 & 0.249 \\ 
				& $0.5$ & 0.141 & 0.147 & 0.151 & 0.162 & 0.145 & 0.145 & 0.149 & 0.158 & 
				0.172 & 0.235 \\ 
				& $0.7$ & 0.253 & 0.263 & 0.267 & 0.280 & 0.255 & 0.257 & 0.269 & 0.273 & 
				0.288 & 0.338 \\ \hline
				& \multicolumn{1}{c}{MP} & 0.049 & 0.057 & 0.066 & 0.073 & 0.058 & 0.058 & 
				0.071 & 0.250 & 0.405 & 0.666 \\ \hline
				$0.3$ & $0$ & 0.064 & 0.080 & 0.084 & 0.097 & 0.074 & 0.079 & 0.090 & 0.270
				& 0.441 & 0.692 \\ 
				& $0.1$ & 0.062 & 0.074 & 0.076 & 0.086 & 0.068 & 0.071 & 0.080 & 0.167 & 
				0.299 & 0.567 \\ 
				& $0.2$ & 0.064 & 0.075 & 0.077 & 0.085 & 0.069 & 0.074 & 0.078 & 0.108 & 
				0.169 & 0.363 \\ 
				& $0.3$ & 0.078 & 0.084 & 0.090 & 0.099 & 0.082 & 0.086 & 0.091 & 0.100 & 
				0.136 & 0.255 \\ 
				& $0.5$ & 0.138 & 0.147 & 0.150 & 0.164 & 0.142 & 0.145 & 0.151 & 0.157 & 
				0.174 & 0.236 \\ 
				& $0.7$ & 0.250 & 0.255 & 0.268 & 0.277 & 0.257 & 0.260 & 0.267 & 0.265 & 
				0.290 & 0.326 \\ \hline
				& \multicolumn{1}{c}{MP} & 0.049 & 0.063 & 0.065 & 0.074 & 0.057 & 0.060 & 
				0.071 & 0.249 & 0.417 & 0.672 \\ \hline
				$0.6$ & $0$ & 0.070 & 0.081 & 0.083 & 0.100 & 0.075 & 0.080 & 0.092 & 0.286
				& 0.474 & 0.735 \\ 
				& $0.1$ & 0.068 & 0.074 & 0.076 & 0.089 & 0.071 & 0.074 & 0.081 & 0.163 & 
				0.303 & 0.582 \\ 
				& $0.2$ & 0.073 & 0.077 & 0.081 & 0.088 & 0.073 & 0.076 & 0.079 & 0.106 & 
				0.166 & 0.353 \\ 
				& $0.3$ & 0.085 & 0.088 & 0.092 & 0.099 & 0.087 & 0.087 & 0.092 & 0.102 & 
				0.134 & 0.248 \\ 
				& $0.5$ & 0.146 & 0.150 & 0.158 & 0.163 & 0.148 & 0.149 & 0.154 & 0.162 & 
				0.179 & 0.237 \\ 
				& $0.7$ & 0.260 & 0.262 & 0.271 & 0.280 & 0.255 & 0.262 & 0.265 & 0.273 & 
				0.293 & 0.335 \\ \hline
				& \multicolumn{1}{c}{MP} & 0.050 & 0.061 & 0.065 & 0.077 & 0.056 & 0.061 & 
				0.071 & 0.262 & 0.449 & 0.711 \\ \hline
				$0.9$ & $0$ & 0.068 & 0.084 & 0.088 & 0.107 & 0.082 & 0.086 & 0.093 & 0.342
				& 0.547 & 0.805 \\ 
				& $0.1$ & 0.067 & 0.080 & 0.080 & 0.097 & 0.078 & 0.080 & 0.081 & 0.136 & 
				0.248 & 0.528 \\ 
				& $0.2$ & 0.070 & 0.082 & 0.083 & 0.099 & 0.079 & 0.082 & 0.083 & 0.092 & 
				0.124 & 0.276 \\ 
				& $0.3$ & 0.083 & 0.095 & 0.093 & 0.111 & 0.092 & 0.093 & 0.094 & 0.101 & 
				0.113 & 0.193 \\ 
				& $0.5$ & 0.143 & 0.155 & 0.158 & 0.172 & 0.156 & 0.154 & 0.155 & 0.162 & 
				0.181 & 0.225 \\ 
				& $0.7$ & 0.257 & 0.268 & 0.273 & 0.285 & 0.270 & 0.273 & 0.268 & 0.279 & 
				0.300 & 0.345 \\ \hline
				& \multicolumn{1}{c}{MP} & 0.047 & 0.061 & 0.063 & 0.081 & 0.060 & 0.061 & 
				0.068 & 0.312 & 0.519 & 0.786 \\ \hline
			\end{tabular}
			\caption{Simulated significance level for testing equal variances through
				$W_{n,\alpha }(\widehat{\gamma }_{R,\alpha },\widehat{\rho }_{R,\alpha })$ given by
				(\ref{simW1}) and the Morgan-Pitman test, when $n=15$\label{table1}} 
		\end{table}%
		
		\begin{table}[htbp]  \tabcolsep2.8pt  \centering%
			\begin{tabular}{llllllllllll}
				\hline
				&  &  & \multicolumn{3}{c}{slighly} & \multicolumn{3}{c}{regular} & 
				\multicolumn{3}{c}{heavily} \\ 
				$\rho $ & $\alpha $ & pure & $0.05$ & $0.10$ & $0.20$ & $0.05$ & $0.10$ & $%
				0.20$ & $0.05$ & $0.10$ & $0.20$ \\ \hline
				$0$ & $0$ & 0.217 & 0.224 & 0.230 & 0.239 & 0.221 & 0.224 & 0.233 & 0.338 & 
				0.431 & 0.596 \\ 
				& $0.1$ & 0.219 & 0.223 & 0.227 & 0.234 & 0.221 & 0.223 & 0.229 & 0.284 & 
				0.353 & 0.516 \\ 
				& $0.2$ & 0.227 & 0.230 & 0.233 & 0.240 & 0.229 & 0.230 & 0.234 & 0.253 & 
				0.290 & 0.403 \\ 
				& $0.3$ & 0.244 & 0.247 & 0.251 & 0.257 & 0.246 & 0.247 & 0.250 & 0.258 & 
				0.282 & 0.352 \\ 
				& $0.5$ & 0.306 & 0.313 & 0.314 & 0.325 & 0.309 & 0.312 & 0.314 & 0.320 & 
				0.332 & 0.374 \\ 
				& $0.7$ & 0.409 & 0.420 & 0.421 & 0.434 & 0.412 & 0.417 & 0.423 & 0.426 & 
				0.438 & 0.476 \\ \hline
				$0.3$ & $0$ & 0.217 & 0.226 & 0.231 & 0.238 & 0.221 & 0.226 & 0.234 & 0.338
				& 0.439 & 0.601 \\ 
				& $0.1$ & 0.218 & 0.225 & 0.228 & 0.234 & 0.220 & 0.224 & 0.229 & 0.282 & 
				0.357 & 0.518 \\ 
				& $0.2$ & 0.226 & 0.232 & 0.235 & 0.240 & 0.228 & 0.231 & 0.235 & 0.253 & 
				0.292 & 0.404 \\ 
				& $0.3$ & 0.243 & 0.248 & 0.252 & 0.258 & 0.244 & 0.249 & 0.252 & 0.259 & 
				0.282 & 0.354 \\ 
				& $0.5$ & 0.305 & 0.311 & 0.316 & 0.325 & 0.307 & 0.309 & 0.315 & 0.319 & 
				0.334 & 0.373 \\ 
				& $0.7$ & 0.410 & 0.413 & 0.424 & 0.432 & 0.412 & 0.417 & 0.422 & 0.421 & 
				0.441 & 0.468 \\ \hline
				$0.6$ & $0$ & 0.218 & 0.226 & 0.231 & 0.240 & 0.222 & 0.226 & 0.232 & 0.347
				& 0.462 & 0.628 \\ 
				& $0.1$ & 0.220 & 0.225 & 0.228 & 0.235 & 0.222 & 0.224 & 0.228 & 0.279 & 
				0.360 & 0.527 \\ 
				& $0.2$ & 0.228 & 0.232 & 0.235 & 0.240 & 0.230 & 0.231 & 0.234 & 0.252 & 
				0.289 & 0.400 \\ 
				& $0.3$ & 0.246 & 0.250 & 0.251 & 0.256 & 0.247 & 0.246 & 0.251 & 0.259 & 
				0.281 & 0.348 \\ 
				& $0.5$ & 0.311 & 0.314 & 0.319 & 0.324 & 0.311 & 0.311 & 0.317 & 0.322 & 
				0.335 & 0.376 \\ 
				& $0.7$ & 0.415 & 0.423 & 0.432 & 0.439 & 0.416 & 0.417 & 0.424 & 0.431 & 
				0.448 & 0.478 \\ \hline
				$0.9$ & $0$ & 0.216 & 0.226 & 0.230 & 0.239 & 0.223 & 0.228 & 0.232 & 0.377
				& 0.507 & 0.688 \\ 
				& $0.1$ & 0.217 & 0.225 & 0.227 & 0.235 & 0.223 & 0.226 & 0.227 & 0.258 & 
				0.325 & 0.503 \\ 
				& $0.2$ & 0.226 & 0.233 & 0.234 & 0.243 & 0.231 & 0.233 & 0.233 & 0.239 & 
				0.261 & 0.354 \\ 
				& $0.3$ & 0.244 & 0.251 & 0.253 & 0.264 & 0.250 & 0.251 & 0.252 & 0.255 & 
				0.267 & 0.319 \\ 
				& $0.5$ & 0.323 & 0.333 & 0.338 & 0.353 & 0.333 & 0.333 & 0.333 & 0.340 & 
				0.355 & 0.397 \\ 
				& $0.7$ & 0.459 & 0.472 & 0.476 & 0.497 & 0.468 & 0.473 & 0.471 & 0.480 & 
				0.497 & 0.539 \\ \hline
			\end{tabular}%
			\caption{Simulated mean square error of the MRPDE for correlation coefficient,
				$\protect\widehat{\rho}_{R,\alpha}$, when $n=15$\label{table2b}} 
		\end{table}%
		
		\begin{table}[htbp]  \tabcolsep2.8pt  \centering%
			\begin{tabular}{llllllllllll}
				\hline
				&  &  & \multicolumn{3}{c}{slighly} & \multicolumn{3}{c}{regular} & 
				\multicolumn{3}{c}{heavily} \\ 
				$\rho $ & $\alpha $ & pure & $0.05$ & $0.10$ & $0.20$ & $0.05$ & $0.10$ & $%
				0.20$ & $0.05$ & $0.10$ & $0.20$ \\ \hline
				$0$ & $0$ & 0.054 & 0.061 & 0.072 & 0.079 & 0.063 & 0.063 & 0.076 & 0.061 & 
				0.072 & 0.079 \\ 
				& $0.1$ & 0.052 & 0.055 & 0.063 & 0.070 & 0.057 & 0.056 & 0.066 & 0.055 & 
				0.063 & 0.070 \\ 
				& $0.2$ & 0.056 & 0.055 & 0.064 & 0.072 & 0.058 & 0.056 & 0.064 & 0.055 & 
				0.064 & 0.072 \\ 
				& $0.3$ & 0.068 & 0.065 & 0.075 & 0.083 & 0.070 & 0.067 & 0.074 & 0.065 & 
				0.075 & 0.083 \\ 
				& $0.5$ & 0.122 & 0.128 & 0.133 & 0.143 & 0.127 & 0.128 & 0.130 & 0.128 & 
				0.133 & 0.143 \\ 
				& $0.7$ & 0.229 & 0.238 & 0.240 & 0.254 & 0.228 & 0.231 & 0.241 & 0.238 & 
				0.240 & 0.254 \\ \hline
				& \multicolumn{1}{c}{MP} & 0.049 & 0.057 & 0.066 & 0.073 & 0.058 & 0.058 & 
				0.071 & 0.250 & 0.405 & 0.666 \\ \hline
				$0.3$ & $0$ & 0.053 & 0.067 & 0.070 & 0.081 & 0.061 & 0.065 & 0.077 & 0.067
				& 0.070 & 0.081 \\ 
				& $0.1$ & 0.051 & 0.062 & 0.063 & 0.070 & 0.057 & 0.060 & 0.066 & 0.062 & 
				0.063 & 0.070 \\ 
				& $0.2$ & 0.052 & 0.062 & 0.064 & 0.071 & 0.058 & 0.060 & 0.065 & 0.062 & 
				0.064 & 0.071 \\ 
				& $0.3$ & 0.065 & 0.071 & 0.074 & 0.085 & 0.067 & 0.073 & 0.076 & 0.071 & 
				0.074 & 0.085 \\ 
				& $0.5$ & 0.119 & 0.127 & 0.131 & 0.144 & 0.124 & 0.126 & 0.132 & 0.127 & 
				0.131 & 0.144 \\ 
				& $0.7$ & 0.224 & 0.230 & 0.243 & 0.253 & 0.231 & 0.233 & 0.241 & 0.230 & 
				0.243 & 0.253 \\ \hline
				& \multicolumn{1}{c}{MP} & 0.049 & 0.063 & 0.065 & 0.074 & 0.057 & 0.060 & 
				0.071 & 0.249 & 0.417 & 0.672 \\ \hline
				$0.6$ & $0$ & 0.056 & 0.065 & 0.069 & 0.083 & 0.062 & 0.066 & 0.076 & 0.065
				& 0.069 & 0.083 \\ 
				& $0.1$ & 0.054 & 0.060 & 0.062 & 0.072 & 0.058 & 0.059 & 0.066 & 0.060 & 
				0.062 & 0.072 \\ 
				& $0.2$ & 0.058 & 0.060 & 0.065 & 0.072 & 0.060 & 0.061 & 0.064 & 0.060 & 
				0.065 & 0.072 \\ 
				& $0.3$ & 0.069 & 0.071 & 0.075 & 0.082 & 0.072 & 0.072 & 0.076 & 0.071 & 
				0.075 & 0.082 \\ 
				& $0.5$ & 0.127 & 0.129 & 0.138 & 0.142 & 0.129 & 0.130 & 0.133 & 0.129 & 
				0.138 & 0.142 \\ 
				& $0.7$ & 0.233 & 0.241 & 0.249 & 0.258 & 0.231 & 0.236 & 0.242 & 0.241 & 
				0.249 & 0.258 \\ \hline
				& \multicolumn{1}{c}{MP} & 0.050 & 0.061 & 0.065 & 0.077 & 0.056 & 0.061 & 
				0.071 & 0.262 & 0.449 & 0.711 \\ \hline
				$0.9$ & $0$ & 0.051 & 0.067 & 0.069 & 0.087 & 0.065 & 0.067 & 0.072 & 0.067
				& 0.069 & 0.087 \\ 
				& $0.1$ & 0.051 & 0.062 & 0.063 & 0.076 & 0.061 & 0.062 & 0.063 & 0.062 & 
				0.063 & 0.076 \\ 
				& $0.2$ & 0.053 & 0.063 & 0.064 & 0.078 & 0.063 & 0.062 & 0.063 & 0.063 & 
				0.064 & 0.078 \\ 
				& $0.3$ & 0.067 & 0.077 & 0.076 & 0.094 & 0.076 & 0.075 & 0.076 & 0.077 & 
				0.076 & 0.094 \\ 
				& $0.5$ & 0.147 & 0.160 & 0.164 & 0.185 & 0.159 & 0.155 & 0.158 & 0.160 & 
				0.164 & 0.185 \\ 
				& $0.7$ & 0.296 & 0.313 & 0.317 & 0.347 & 0.304 & 0.312 & 0.309 & 0.313 & 
				0.317 & 0.347 \\ \hline
				& \multicolumn{1}{c}{MP} & 0.047 & 0.061 & 0.063 & 0.081 & 0.060 & 0.061 & 
				0.068 & 0.312 & 0.519 & 0.786 \\ \hline
			\end{tabular}%
			\caption{Simulated significance level for testing null correlation coefficient through
				$W_{n,\alpha }^{\prime }(\widehat{\rho }_{UV,R,\alpha })$ given by
				(\ref{simW2}) and the Morgan-Pitman test, when $n=15$\label{table1b}} 
		\end{table}%
		
		\begin{table}[htbp]  \tabcolsep2.8pt  \centering%
			\begin{tabular}{llllllllllll}
				\hline
				&  &  & \multicolumn{3}{c}{slighly} & \multicolumn{3}{c}{regular} & 
				\multicolumn{3}{c}{heavily} \\ 
				$\rho $ & $\alpha $ & pure & $0.05$ & $0.10$ & $0.20$ & $0.05$ & $0.10$ & $%
				0.20$ & $0.05$ & $0.10$ & $0.20$ \\ \hline
				$0$ & $0$ & 0.115 & 0.123 & 0.128 & 0.131 & 0.121 & 0.123 & 0.131 & 0.278 & 
				0.403 & 0.552 \\ 
				& $0.1$ & 0.117 & 0.120 & 0.123 & 0.126 & 0.119 & 0.118 & 0.121 & 0.160 & 
				0.251 & 0.440 \\ 
				& $0.2$ & 0.121 & 0.123 & 0.125 & 0.128 & 0.122 & 0.122 & 0.124 & 0.134 & 
				0.168 & 0.296 \\ 
				& $0.3$ & 0.127 & 0.129 & 0.130 & 0.133 & 0.127 & 0.127 & 0.130 & 0.133 & 
				0.148 & 0.217 \\ 
				& $0.5$ & 0.144 & 0.145 & 0.147 & 0.151 & 0.144 & 0.144 & 0.148 & 0.147 & 
				0.153 & 0.179 \\ 
				& $0.7$ & 0.168 & 0.171 & 0.173 & 0.178 & 0.171 & 0.168 & 0.175 & 0.172 & 
				0.176 & 0.193 \\ \hline
				$0.3$ & $0$ & 0.110 & 0.117 & 0.121 & 0.124 & 0.114 & 0.119 & 0.124 & 0.274
				& 0.402 & 0.552 \\ 
				& $0.1$ & 0.111 & 0.114 & 0.116 & 0.119 & 0.112 & 0.114 & 0.116 & 0.155 & 
				0.242 & 0.438 \\ 
				& $0.2$ & 0.115 & 0.116 & 0.118 & 0.121 & 0.116 & 0.117 & 0.118 & 0.127 & 
				0.158 & 0.290 \\ 
				& $0.3$ & 0.121 & 0.122 & 0.123 & 0.127 & 0.121 & 0.122 & 0.124 & 0.126 & 
				0.139 & 0.210 \\ 
				& $0.5$ & 0.137 & 0.137 & 0.139 & 0.143 & 0.138 & 0.137 & 0.140 & 0.139 & 
				0.143 & 0.171 \\ 
				& $0.7$ & 0.159 & 0.161 & 0.163 & 0.168 & 0.161 & 0.161 & 0.164 & 0.162 & 
				0.166 & 0.184 \\ \hline
				$0.6$ & $0$ & 0.093 & 0.097 & 0.102 & 0.106 & 0.097 & 0.100 & 0.105 & 0.270
				& 0.402 & 0.553 \\ 
				& $0.1$ & 0.094 & 0.095 & 0.098 & 0.101 & 0.095 & 0.096 & 0.097 & 0.134 & 
				0.224 & 0.430 \\ 
				& $0.2$ & 0.097 & 0.098 & 0.100 & 0.102 & 0.098 & 0.098 & 0.099 & 0.108 & 
				0.138 & 0.262 \\ 
				& $0.3$ & 0.101 & 0.102 & 0.104 & 0.106 & 0.102 & 0.103 & 0.103 & 0.107 & 
				0.121 & 0.180 \\ 
				& $0.5$ & 0.114 & 0.115 & 0.118 & 0.120 & 0.116 & 0.116 & 0.116 & 0.118 & 
				0.125 & 0.146 \\ 
				& $0.7$ & 0.134 & 0.135 & 0.139 & 0.141 & 0.136 & 0.136 & 0.137 & 0.137 & 
				0.144 & 0.157 \\ \hline
				$0.9$ & $0$ & 0.050 & 0.053 & 0.056 & 0.057 & 0.052 & 0.054 & 0.057 & 0.263
				& 0.401 & 0.555 \\ 
				& $0.1$ & 0.051 & 0.052 & 0.054 & 0.054 & 0.051 & 0.052 & 0.053 & 0.068 & 
				0.128 & 0.365 \\ 
				& $0.2$ & 0.053 & 0.053 & 0.054 & 0.055 & 0.053 & 0.053 & 0.054 & 0.056 & 
				0.066 & 0.138 \\ 
				& $0.3$ & 0.055 & 0.056 & 0.057 & 0.057 & 0.055 & 0.055 & 0.056 & 0.058 & 
				0.062 & 0.086 \\ 
				& $0.5$ & 0.063 & 0.063 & 0.064 & 0.065 & 0.063 & 0.062 & 0.063 & 0.064 & 
				0.067 & 0.077 \\ 
				& $0.7$ & 0.073 & 0.074 & 0.075 & 0.076 & 0.074 & 0.074 & 0.074 & 0.075 & 
				0.079 & 0.088 \\ \hline
			\end{tabular}
			\caption{Simulated mean square error of the MRPDE for ratio of variances,
				$\protect\widehat{\gamma}_{R,\alpha}$, when $n=50$\label{table6}} 
		\end{table}%
		
		\begin{table}[htbp]  \tabcolsep2.8pt  \centering%
			\begin{tabular}{llllllllllll}
				\hline
				&  &  & \multicolumn{3}{c}{slighly} & \multicolumn{3}{c}{regular} & 
				\multicolumn{3}{c}{heavily} \\ 
				$\rho $ & $\alpha $ & pure & $0.05$ & $0.10$ & $0.20$ & $0.05$ & $0.10$ & $%
				0.20$ & $0.05$ & $0.10$ & $0.20$ \\ \hline
				$0$ & $0$ & 0.054 & 0.072 & 0.081 & 0.089 & 0.063 & 0.071 & 0.091 & 0.535 & 
				0.809 & 0.980 \\ 
				& $0.1$ & 0.054 & 0.061 & 0.066 & 0.068 & 0.057 & 0.056 & 0.063 & 0.206 & 
				0.485 & 0.885 \\ 
				& $0.2$ & 0.055 & 0.058 & 0.061 & 0.064 & 0.057 & 0.053 & 0.061 & 0.100 & 
				0.215 & 0.593 \\ 
				& $0.3$ & 0.055 & 0.059 & 0.061 & 0.065 & 0.058 & 0.055 & 0.062 & 0.079 & 
				0.131 & 0.348 \\ 
				& $0.5$ & 0.062 & 0.064 & 0.071 & 0.076 & 0.065 & 0.063 & 0.071 & 0.072 & 
				0.097 & 0.182 \\ 
				& $0.7$ & 0.079 & 0.082 & 0.090 & 0.096 & 0.083 & 0.081 & 0.091 & 0.088 & 
				0.107 & 0.151 \\ \hline
				& \multicolumn{1}{c}{MP} & 0.050 & 0.067 & 0.077 & 0.084 & 0.061 & 0.068 & 
				0.086 & 0.529 & 0.805 & 0.979 \\ \hline
				$0.3$ & $0$ & 0.057 & 0.067 & 0.077 & 0.087 & 0.063 & 0.074 & 0.089 & 0.542
				& 0.817 & 0.981 \\ 
				& $0.1$ & 0.057 & 0.058 & 0.065 & 0.071 & 0.055 & 0.059 & 0.063 & 0.206 & 
				0.485 & 0.890 \\ 
				& $0.2$ & 0.056 & 0.056 & 0.061 & 0.066 & 0.054 & 0.056 & 0.061 & 0.099 & 
				0.205 & 0.592 \\ 
				& $0.3$ & 0.058 & 0.060 & 0.060 & 0.066 & 0.057 & 0.057 & 0.063 & 0.080 & 
				0.119 & 0.345 \\ 
				& $0.5$ & 0.065 & 0.065 & 0.067 & 0.073 & 0.067 & 0.066 & 0.070 & 0.076 & 
				0.088 & 0.175 \\ 
				& $0.7$ & 0.082 & 0.081 & 0.086 & 0.092 & 0.082 & 0.082 & 0.090 & 0.090 & 
				0.099 & 0.146 \\ \hline
				& \multicolumn{1}{c}{MP} & 0.052 & 0.063 & 0.072 & 0.081 & 0.059 & 0.070 & 
				0.084 & 0.536 & 0.814 & 0.980 \\ \hline
				$0.6$ & $0$ & 0.058 & 0.067 & 0.079 & 0.092 & 0.066 & 0.079 & 0.090 & 0.580
				& 0.847 & 0.986 \\ 
				& $0.1$ & 0.056 & 0.057 & 0.068 & 0.072 & 0.058 & 0.059 & 0.061 & 0.197 & 
				0.487 & 0.891 \\ 
				& $0.2$ & 0.056 & 0.056 & 0.065 & 0.066 & 0.058 & 0.057 & 0.060 & 0.093 & 
				0.199 & 0.565 \\ 
				& $0.3$ & 0.056 & 0.059 & 0.064 & 0.069 & 0.058 & 0.058 & 0.060 & 0.074 & 
				0.119 & 0.315 \\ 
				& $0.5$ & 0.065 & 0.066 & 0.072 & 0.073 & 0.066 & 0.064 & 0.067 & 0.075 & 
				0.090 & 0.161 \\ 
				& $0.7$ & 0.080 & 0.083 & 0.090 & 0.094 & 0.084 & 0.080 & 0.084 & 0.090 & 
				0.104 & 0.143 \\ \hline
				& \multicolumn{1}{c}{MP} & 0.053 & 0.062 & 0.074 & 0.085 & 0.060 & 0.074 & 
				0.084 & 0.574 & 0.843 & 0.985 \\ \hline
				$0.9$ & $0$ & 0.054 & 0.068 & 0.085 & 0.088 & 0.064 & 0.075 & 0.090 & 0.679
				& 0.907 & 0.996 \\ 
				& $0.1$ & 0.054 & 0.058 & 0.069 & 0.071 & 0.056 & 0.060 & 0.064 & 0.113 & 
				0.322 & 0.806 \\ 
				& $0.2$ & 0.054 & 0.055 & 0.066 & 0.066 & 0.057 & 0.056 & 0.061 & 0.066 & 
				0.103 & 0.339 \\ 
				& $0.3$ & 0.056 & 0.055 & 0.067 & 0.066 & 0.060 & 0.058 & 0.062 & 0.061 & 
				0.079 & 0.164 \\ 
				& $0.5$ & 0.067 & 0.067 & 0.074 & 0.074 & 0.068 & 0.064 & 0.070 & 0.070 & 
				0.083 & 0.112 \\ 
				& $0.7$ & 0.084 & 0.082 & 0.090 & 0.095 & 0.084 & 0.083 & 0.085 & 0.089 & 
				0.100 & 0.125 \\ \hline
				& \multicolumn{1}{c}{MP} & 0.048 & 0.062 & 0.079 & 0.081 & 0.057 & 0.070 & 
				0.084 & 0.673 & 0.904 & 0.996 \\ \hline
			\end{tabular}%
			\caption{Simulated significance level for testing equal variances through
				$W_{n,\alpha }(\widehat{\gamma }_{R,\alpha },\widehat{\rho }_{R,\alpha })$ given by
				(\ref{simW1}) and the Morgan-Pitman test, when $n=50$\label{table5}} 
		\end{table}%
		
		\begin{table}[htbp]  \tabcolsep2.8pt  \centering%
			\begin{tabular}{llllllllllll}
				\hline
				&  &  & \multicolumn{3}{c}{slighly} & \multicolumn{3}{c}{regular} & 
				\multicolumn{3}{c}{heavily} \\ 
				$\rho $ & $\alpha $ & pure & $0.05$ & $0.10$ & $0.20$ & $0.05$ & $0.10$ & $%
				0.20$ & $0.05$ & $0.10$ & $0.20$ \\ \hline
				$0$ & $0$ & 0.114 & 0.121 & 0.126 & 0.130 & 0.119 & 0.122 & 0.129 & 0.322 & 
				0.479 & 0.665 \\ 
				& $0.1$ & 0.115 & 0.119 & 0.121 & 0.124 & 0.117 & 0.117 & 0.120 & 0.174 & 
				0.288 & 0.525 \\ 
				& $0.2$ & 0.119 & 0.121 & 0.123 & 0.126 & 0.120 & 0.121 & 0.123 & 0.139 & 
				0.184 & 0.345 \\ 
				& $0.3$ & 0.125 & 0.127 & 0.129 & 0.132 & 0.126 & 0.126 & 0.129 & 0.136 & 
				0.157 & 0.245 \\ 
				& $0.5$ & 0.141 & 0.142 & 0.145 & 0.148 & 0.141 & 0.142 & 0.146 & 0.147 & 
				0.157 & 0.193 \\ 
				& $0.7$ & 0.164 & 0.166 & 0.168 & 0.173 & 0.166 & 0.164 & 0.170 & 0.169 & 
				0.177 & 0.201 \\ \hline
				$0.3$ & $0$ & 0.115 & 0.121 & 0.125 & 0.128 & 0.118 & 0.123 & 0.129 & 0.326
				& 0.487 & 0.673 \\ 
				& $0.1$ & 0.116 & 0.118 & 0.120 & 0.123 & 0.117 & 0.118 & 0.120 & 0.175 & 
				0.285 & 0.532 \\ 
				& $0.2$ & 0.120 & 0.121 & 0.122 & 0.125 & 0.120 & 0.121 & 0.122 & 0.138 & 
				0.179 & 0.346 \\ 
				& $0.3$ & 0.125 & 0.126 & 0.127 & 0.131 & 0.126 & 0.126 & 0.128 & 0.135 & 
				0.153 & 0.244 \\ 
				& $0.5$ & 0.141 & 0.142 & 0.143 & 0.147 & 0.142 & 0.142 & 0.145 & 0.146 & 
				0.154 & 0.192 \\ 
				& $0.7$ & 0.164 & 0.164 & 0.166 & 0.171 & 0.165 & 0.165 & 0.168 & 0.167 & 
				0.174 & 0.200 \\ \hline
				$0.6$ & $0$ & 0.114 & 0.120 & 0.126 & 0.130 & 0.119 & 0.124 & 0.129 & 0.348
				& 0.518 & 0.704 \\ 
				& $0.1$ & 0.116 & 0.118 & 0.121 & 0.124 & 0.117 & 0.119 & 0.120 & 0.171 & 
				0.289 & 0.552 \\ 
				& $0.2$ & 0.119 & 0.121 & 0.123 & 0.125 & 0.121 & 0.122 & 0.122 & 0.136 & 
				0.177 & 0.338 \\ 
				& $0.3$ & 0.125 & 0.126 & 0.128 & 0.130 & 0.126 & 0.127 & 0.127 & 0.134 & 
				0.154 & 0.232 \\ 
				& $0.5$ & 0.141 & 0.142 & 0.145 & 0.146 & 0.142 & 0.142 & 0.143 & 0.146 & 
				0.156 & 0.186 \\ 
				& $0.7$ & 0.163 & 0.165 & 0.169 & 0.171 & 0.165 & 0.165 & 0.166 & 0.169 & 
				0.178 & 0.196 \\ \hline
				$0.9$ & $0$ & 0.114 & 0.121 & 0.127 & 0.129 & 0.118 & 0.123 & 0.129 & 0.411
				& 0.588 & 0.767 \\ 
				& $0.1$ & 0.115 & 0.118 & 0.122 & 0.123 & 0.116 & 0.118 & 0.120 & 0.141 & 
				0.228 & 0.539 \\ 
				& $0.2$ & 0.119 & 0.121 & 0.123 & 0.125 & 0.120 & 0.121 & 0.122 & 0.124 & 
				0.139 & 0.242 \\ 
				& $0.3$ & 0.125 & 0.126 & 0.128 & 0.130 & 0.125 & 0.126 & 0.127 & 0.128 & 
				0.135 & 0.171 \\ 
				& $0.5$ & 0.141 & 0.141 & 0.144 & 0.146 & 0.141 & 0.141 & 0.142 & 0.143 & 
				0.149 & 0.164 \\ 
				& $0.7$ & 0.164 & 0.164 & 0.169 & 0.171 & 0.165 & 0.165 & 0.166 & 0.167 & 
				0.174 & 0.188 \\ \hline
			\end{tabular}%
			\caption{Simulated mean square error of the MRPDE for correlation coefficient,
				$\protect\widehat{\rho}_{R,\alpha}$, when $n=50$\label{table6b}} 
		\end{table}%
		
		\begin{table}[htbp]  \tabcolsep2.8pt  \centering%
			\begin{tabular}{llllllllllll}
				\hline
				&  &  & \multicolumn{3}{c}{slighly} & \multicolumn{3}{c}{regular} & 
				\multicolumn{3}{c}{heavily} \\ 
				$\rho $ & $\alpha $ & pure & $0.05$ & $0.10$ & $0.20$ & $0.05$ & $0.10$ & $%
				0.20$ & $0.05$ & $0.10$ & $0.20$ \\ \hline
				$0$ & $0$ & 0.051 & 0.068 & 0.078 & 0.086 & 0.062 & 0.069 & 0.088 & 0.532 & 
				0.807 & 0.979 \\ 
				& $0.1$ & 0.052 & 0.058 & 0.063 & 0.065 & 0.055 & 0.053 & 0.061 & 0.199 & 
				0.480 & 0.883 \\ 
				& $0.2$ & 0.052 & 0.055 & 0.058 & 0.061 & 0.054 & 0.051 & 0.058 & 0.096 & 
				0.210 & 0.588 \\ 
				& $0.3$ & 0.052 & 0.057 & 0.057 & 0.062 & 0.056 & 0.052 & 0.060 & 0.075 & 
				0.126 & 0.343 \\ 
				& $0.5$ & 0.059 & 0.061 & 0.067 & 0.072 & 0.062 & 0.059 & 0.067 & 0.069 & 
				0.092 & 0.176 \\ 
				& $0.7$ & 0.074 & 0.078 & 0.085 & 0.092 & 0.079 & 0.075 & 0.085 & 0.084 & 
				0.101 & 0.145 \\ \hline
				& \multicolumn{1}{c}{MP} & 0.050 & 0.067 & 0.077 & 0.084 & 0.061 & 0.068 & 
				0.086 & 0.529 & 0.805 & 0.979 \\ \hline
				$0.3$ & $0$ & 0.053 & 0.064 & 0.074 & 0.083 & 0.060 & 0.071 & 0.086 & 0.538
				& 0.815 & 0.980 \\ 
				& $0.1$ & 0.054 & 0.055 & 0.062 & 0.069 & 0.051 & 0.056 & 0.060 & 0.201 & 
				0.480 & 0.888 \\ 
				& $0.2$ & 0.054 & 0.053 & 0.058 & 0.063 & 0.051 & 0.054 & 0.058 & 0.095 & 
				0.200 & 0.586 \\ 
				& $0.3$ & 0.055 & 0.056 & 0.057 & 0.064 & 0.054 & 0.054 & 0.058 & 0.076 & 
				0.114 & 0.339 \\ 
				& $0.5$ & 0.062 & 0.061 & 0.063 & 0.070 & 0.063 & 0.061 & 0.067 & 0.072 & 
				0.084 & 0.168 \\ 
				& $0.7$ & 0.077 & 0.077 & 0.080 & 0.088 & 0.078 & 0.077 & 0.084 & 0.085 & 
				0.094 & 0.139 \\ \hline
				& \multicolumn{1}{c}{MP} & 0.052 & 0.063 & 0.072 & 0.081 & 0.059 & 0.070 & 
				0.084 & 0.536 & 0.814 & 0.980 \\ \hline
				$0.6$ & $0$ & 0.054 & 0.063 & 0.076 & 0.086 & 0.061 & 0.076 & 0.086 & 0.575
				& 0.845 & 0.985 \\ 
				& $0.1$ & 0.052 & 0.054 & 0.064 & 0.067 & 0.055 & 0.056 & 0.058 & 0.191 & 
				0.481 & 0.889 \\ 
				& $0.2$ & 0.052 & 0.053 & 0.060 & 0.062 & 0.055 & 0.053 & 0.056 & 0.088 & 
				0.192 & 0.558 \\ 
				& $0.3$ & 0.053 & 0.055 & 0.061 & 0.064 & 0.055 & 0.054 & 0.055 & 0.070 & 
				0.114 & 0.306 \\ 
				& $0.5$ & 0.060 & 0.061 & 0.067 & 0.069 & 0.060 & 0.060 & 0.062 & 0.069 & 
				0.085 & 0.155 \\ 
				& $0.7$ & 0.074 & 0.077 & 0.083 & 0.087 & 0.078 & 0.074 & 0.078 & 0.084 & 
				0.098 & 0.134 \\ \hline
				& \multicolumn{1}{c}{MP} & 0.053 & 0.062 & 0.074 & 0.085 & 0.060 & 0.074 & 
				0.084 & 0.574 & 0.843 & 0.985 \\ \hline
				$0.9$ & $0$ & 0.050 & 0.064 & 0.080 & 0.082 & 0.059 & 0.071 & 0.085 & 0.674
				& 0.905 & 0.996 \\ 
				& $0.1$ & 0.050 & 0.053 & 0.064 & 0.065 & 0.051 & 0.055 & 0.059 & 0.108 & 
				0.315 & 0.804 \\ 
				& $0.2$ & 0.051 & 0.051 & 0.061 & 0.061 & 0.053 & 0.052 & 0.056 & 0.060 & 
				0.096 & 0.331 \\ 
				& $0.3$ & 0.051 & 0.051 & 0.062 & 0.061 & 0.056 & 0.052 & 0.058 & 0.056 & 
				0.073 & 0.155 \\ 
				& $0.5$ & 0.061 & 0.060 & 0.067 & 0.067 & 0.062 & 0.059 & 0.064 & 0.063 & 
				0.076 & 0.104 \\ 
				& $0.7$ & 0.077 & 0.075 & 0.083 & 0.088 & 0.079 & 0.076 & 0.079 & 0.082 & 
				0.093 & 0.118 \\ \hline
				& \multicolumn{1}{c}{MP} & 0.048 & 0.062 & 0.079 & 0.081 & 0.057 & 0.070 & 
				0.084 & 0.673 & 0.904 & 0.996 \\ \hline
			\end{tabular}%
			\caption{Simulated significance level for testing null correlation coefficient through
				$W_{n,\alpha }^{\prime }(\widehat{\rho }_{UV,R,\alpha })$ given by
				(\ref{simW2}) and the Morgan-Pitman test, when $n=50$\label{table5b}} 
		\end{table}%
		
		\begin{table}[htbp]  \tabcolsep2.8pt  \centering%
			\begin{tabular}{llllllllllll}
				\hline
				&  &  & \multicolumn{3}{c}{slighly} & \multicolumn{3}{c}{regular} & 
				\multicolumn{3}{c}{heavily} \\ 
				$\rho $ & $\alpha $ & pure & $0.05$ & $0.10$ & $0.20$ & $0.05$ & $0.10$ & $%
				0.20$ & $0.05$ & $0.10$ & $0.20$ \\ \hline
				$0$ & $0$ & 0.081 & 0.086 & 0.089 & 0.093 & 0.085 & 0.089 & 0.094 & 0.288 & 
				0.429 & 0.569 \\ 
				& $0.1$ & 0.082 & 0.084 & 0.085 & 0.088 & 0.083 & 0.084 & 0.085 & 0.133 & 
				0.241 & 0.451 \\ 
				& $0.2$ & 0.085 & 0.086 & 0.087 & 0.089 & 0.086 & 0.086 & 0.086 & 0.100 & 
				0.141 & 0.292 \\ 
				& $0.3$ & 0.089 & 0.090 & 0.090 & 0.092 & 0.090 & 0.089 & 0.089 & 0.095 & 
				0.114 & 0.197 \\ 
				& $0.5$ & 0.099 & 0.100 & 0.100 & 0.103 & 0.100 & 0.099 & 0.099 & 0.102 & 
				0.108 & 0.140 \\ 
				& $0.7$ & 0.111 & 0.113 & 0.113 & 0.116 & 0.112 & 0.111 & 0.112 & 0.114 & 
				0.117 & 0.135 \\ \hline
				$0.3$ & $0$ & 0.077 & 0.082 & 0.086 & 0.088 & 0.080 & 0.084 & 0.090 & 0.288
				& 0.427 & 0.569 \\ 
				& $0.1$ & 0.078 & 0.079 & 0.082 & 0.084 & 0.078 & 0.080 & 0.081 & 0.128 & 
				0.234 & 0.449 \\ 
				& $0.2$ & 0.081 & 0.081 & 0.083 & 0.084 & 0.080 & 0.082 & 0.082 & 0.094 & 
				0.134 & 0.284 \\ 
				& $0.3$ & 0.084 & 0.085 & 0.086 & 0.087 & 0.084 & 0.085 & 0.085 & 0.090 & 
				0.109 & 0.190 \\ 
				& $0.5$ & 0.093 & 0.094 & 0.096 & 0.097 & 0.093 & 0.094 & 0.095 & 0.096 & 
				0.104 & 0.135 \\ 
				& $0.7$ & 0.105 & 0.107 & 0.108 & 0.110 & 0.105 & 0.107 & 0.106 & 0.108 & 
				0.113 & 0.130 \\ \hline
				$0.6$ & $0$ & 0.064 & 0.070 & 0.072 & 0.074 & 0.068 & 0.071 & 0.076 & 0.289
				& 0.427 & 0.571 \\ 
				& $0.1$ & 0.065 & 0.068 & 0.069 & 0.070 & 0.066 & 0.067 & 0.068 & 0.109 & 
				0.210 & 0.443 \\ 
				& $0.2$ & 0.067 & 0.069 & 0.070 & 0.070 & 0.068 & 0.068 & 0.069 & 0.079 & 
				0.112 & 0.255 \\ 
				& $0.3$ & 0.070 & 0.072 & 0.073 & 0.073 & 0.071 & 0.071 & 0.072 & 0.076 & 
				0.091 & 0.161 \\ 
				& $0.5$ & 0.077 & 0.080 & 0.081 & 0.081 & 0.079 & 0.079 & 0.080 & 0.081 & 
				0.088 & 0.114 \\ 
				& $0.7$ & 0.087 & 0.090 & 0.091 & 0.092 & 0.089 & 0.089 & 0.090 & 0.091 & 
				0.095 & 0.111 \\ \hline
				$0.9$ & $0$ & 0.035 & 0.038 & 0.039 & 0.040 & 0.037 & 0.039 & 0.041 & 0.286
				& 0.428 & 0.570 \\ 
				& $0.1$ & 0.035 & 0.037 & 0.037 & 0.038 & 0.036 & 0.037 & 0.037 & 0.049 & 
				0.106 & 0.387 \\ 
				& $0.2$ & 0.036 & 0.037 & 0.038 & 0.039 & 0.037 & 0.037 & 0.038 & 0.040 & 
				0.049 & 0.112 \\ 
				& $0.3$ & 0.038 & 0.039 & 0.039 & 0.040 & 0.039 & 0.039 & 0.039 & 0.040 & 
				0.044 & 0.064 \\ 
				& $0.5$ & 0.042 & 0.043 & 0.044 & 0.045 & 0.043 & 0.043 & 0.044 & 0.044 & 
				0.046 & 0.055 \\ 
				& $0.7$ & 0.048 & 0.049 & 0.049 & 0.050 & 0.049 & 0.049 & 0.050 & 0.050 & 
				0.052 & 0.058 \\ \hline
			\end{tabular}
			\caption{Simulated mean square error of the MRPDE for ratio of variances,
				$\protect\widehat{\gamma}_{R,\alpha}$, when $n=100$\label{table8}} 
		\end{table}%
		
		\begin{table}[htbp]  \tabcolsep2.8pt  \centering%
			\begin{tabular}{llllllllllll}
				\hline
				&  &  & \multicolumn{3}{c}{slighly} & \multicolumn{3}{c}{regular} & 
				\multicolumn{3}{c}{heavily} \\ 
				$\rho $ & $\alpha $ & pure & $0.05$ & $0.10$ & $0.20$ & $0.05$ & $0.10$ & $%
				0.20$ & $0.05$ & $0.10$ & $0.20$ \\ \hline
				$0$ & $0$ & 0.053 & 0.068 & 0.080 & 0.087 & 0.067 & 0.080 & 0.096 & 0.748 & 
				0.963 & 1.000 \\ 
				& $0.1$ & 0.054 & 0.059 & 0.063 & 0.067 & 0.057 & 0.056 & 0.061 & 0.280 & 
				0.684 & 0.983 \\ 
				& $0.2$ & 0.053 & 0.058 & 0.059 & 0.062 & 0.057 & 0.055 & 0.057 & 0.120 & 
				0.303 & 0.795 \\ 
				& $0.3$ & 0.054 & 0.058 & 0.060 & 0.063 & 0.056 & 0.055 & 0.058 & 0.086 & 
				0.166 & 0.500 \\ 
				& $0.5$ & 0.057 & 0.059 & 0.063 & 0.066 & 0.057 & 0.058 & 0.059 & 0.069 & 
				0.097 & 0.227 \\ 
				& $0.7$ & 0.062 & 0.064 & 0.069 & 0.072 & 0.065 & 0.062 & 0.063 & 0.072 & 
				0.089 & 0.153 \\ \hline
				& \multicolumn{1}{c}{MP} & 0.052 & 0.066 & 0.077 & 0.085 & 0.065 & 0.078 & 
				0.094 & 0.746 & 0.962 & 1.000 \\ \hline
				$0.3$ & $0$ & 0.051 & 0.069 & 0.079 & 0.086 & 0.063 & 0.077 & 0.095 & 0.760
				& 0.964 & 1.000 \\ 
				& $0.1$ & 0.051 & 0.057 & 0.063 & 0.069 & 0.050 & 0.057 & 0.057 & 0.280 & 
				0.678 & 0.988 \\ 
				& $0.2$ & 0.052 & 0.055 & 0.058 & 0.066 & 0.052 & 0.054 & 0.054 & 0.117 & 
				0.291 & 0.793 \\ 
				& $0.3$ & 0.052 & 0.055 & 0.058 & 0.065 & 0.051 & 0.055 & 0.056 & 0.083 & 
				0.161 & 0.494 \\ 
				& $0.5$ & 0.053 & 0.057 & 0.060 & 0.068 & 0.055 & 0.059 & 0.058 & 0.068 & 
				0.096 & 0.223 \\ 
				& $0.7$ & 0.059 & 0.065 & 0.065 & 0.073 & 0.061 & 0.063 & 0.065 & 0.070 & 
				0.087 & 0.148 \\ \hline
				& \multicolumn{1}{c}{MP} & 0.049 & 0.067 & 0.077 & 0.084 & 0.060 & 0.074 & 
				0.093 & 0.759 & 0.964 & 1.000 \\ \hline
				$0.6$ & $0$ & 0.049 & 0.071 & 0.084 & 0.088 & 0.066 & 0.078 & 0.098 & 0.803
				& 0.975 & 1.000 \\ 
				& $0.1$ & 0.047 & 0.059 & 0.068 & 0.070 & 0.055 & 0.059 & 0.062 & 0.267 & 
				0.661 & 0.987 \\ 
				& $0.2$ & 0.049 & 0.058 & 0.065 & 0.063 & 0.054 & 0.056 & 0.056 & 0.102 & 
				0.262 & 0.773 \\ 
				& $0.3$ & 0.049 & 0.059 & 0.063 & 0.062 & 0.056 & 0.057 & 0.056 & 0.076 & 
				0.146 & 0.464 \\ 
				& $0.5$ & 0.053 & 0.059 & 0.066 & 0.068 & 0.059 & 0.061 & 0.058 & 0.066 & 
				0.095 & 0.207 \\ 
				& $0.7$ & 0.058 & 0.065 & 0.070 & 0.074 & 0.063 & 0.066 & 0.065 & 0.070 & 
				0.089 & 0.144 \\ \hline
				& \multicolumn{1}{c}{MP} & 0.047 & 0.069 & 0.081 & 0.085 & 0.063 & 0.076 & 
				0.096 & 0.801 & 0.974 & 1.000 \\ \hline
				$0.9$ & $0$ & 0.051 & 0.070 & 0.079 & 0.091 & 0.068 & 0.078 & 0.096 & 0.881
				& 0.994 & 1.000 \\ 
				& $0.1$ & 0.050 & 0.058 & 0.062 & 0.072 & 0.055 & 0.057 & 0.060 & 0.131 & 
				0.423 & 0.948 \\ 
				& $0.2$ & 0.050 & 0.058 & 0.059 & 0.065 & 0.054 & 0.056 & 0.059 & 0.068 & 
				0.118 & 0.443 \\ 
				& $0.3$ & 0.050 & 0.059 & 0.058 & 0.065 & 0.054 & 0.056 & 0.059 & 0.063 & 
				0.084 & 0.201 \\ 
				& $0.5$ & 0.056 & 0.062 & 0.058 & 0.067 & 0.057 & 0.057 & 0.063 & 0.064 & 
				0.071 & 0.116 \\ 
				& $0.7$ & 0.062 & 0.068 & 0.066 & 0.075 & 0.062 & 0.064 & 0.070 & 0.069 & 
				0.073 & 0.106 \\ \hline
				& \multicolumn{1}{c}{MP} & 0.047 & 0.067 & 0.076 & 0.088 & 0.065 & 0.075 & 
				0.093 & 0.880 & 0.993 & 1.000 \\ \hline
			\end{tabular}%
			\caption{Simulated significance level for testing equal variances through
				$W_{n,\alpha }(\widehat{\gamma }_{R,\alpha },\widehat{\rho }_{R,\alpha })$ given by
				(\ref{simW1}) and the Morgan-Pitman test, when $n=100$\label{table7}} 
		\end{table}%
		
		\begin{table}[htbp]  \tabcolsep2.8pt  \centering%
			\begin{tabular}{llllllllllll}
				\hline
				&  &  & \multicolumn{3}{c}{slighly} & \multicolumn{3}{c}{regular} & 
				\multicolumn{3}{c}{heavily} \\ 
				$\rho $ & $\alpha $ & pure & $0.05$ & $0.10$ & $0.20$ & $0.05$ & $0.10$ & $%
				0.20$ & $0.05$ & $0.10$ & $0.20$ \\ \hline
				$0$ & $0$ & 0.081 & 0.086 & 0.089 & 0.092 & 0.085 & 0.089 & 0.093 & 0.334 & 
				0.510 & 0.685 \\ 
				& $0.1$ & 0.082 & 0.084 & 0.085 & 0.087 & 0.083 & 0.083 & 0.084 & 0.144 & 
				0.274 & 0.538 \\ 
				& $0.2$ & 0.085 & 0.086 & 0.086 & 0.088 & 0.085 & 0.085 & 0.085 & 0.104 & 
				0.154 & 0.337 \\ 
				& $0.3$ & 0.088 & 0.089 & 0.090 & 0.092 & 0.089 & 0.089 & 0.089 & 0.098 & 
				0.121 & 0.221 \\ 
				& $0.5$ & 0.098 & 0.099 & 0.099 & 0.102 & 0.099 & 0.098 & 0.098 & 0.103 & 
				0.112 & 0.152 \\ 
				& $0.7$ & 0.110 & 0.111 & 0.112 & 0.115 & 0.111 & 0.110 & 0.111 & 0.114 & 
				0.120 & 0.144 \\ \hline
				$0.3$ & $0$ & 0.081 & 0.085 & 0.089 & 0.092 & 0.084 & 0.088 & 0.093 & 0.342
				& 0.517 & 0.695 \\ 
				& $0.1$ & 0.082 & 0.083 & 0.085 & 0.087 & 0.081 & 0.083 & 0.084 & 0.143 & 
				0.274 & 0.545 \\ 
				& $0.2$ & 0.084 & 0.084 & 0.086 & 0.088 & 0.084 & 0.085 & 0.085 & 0.103 & 
				0.152 & 0.337 \\ 
				& $0.3$ & 0.088 & 0.088 & 0.090 & 0.091 & 0.087 & 0.089 & 0.089 & 0.097 & 
				0.120 & 0.220 \\ 
				& $0.5$ & 0.097 & 0.098 & 0.099 & 0.101 & 0.097 & 0.098 & 0.098 & 0.102 & 
				0.112 & 0.152 \\ 
				& $0.7$ & 0.109 & 0.110 & 0.112 & 0.114 & 0.109 & 0.111 & 0.110 & 0.113 & 
				0.120 & 0.143 \\ \hline
				$0.6$ & $0$ & 0.080 & 0.086 & 0.089 & 0.092 & 0.085 & 0.088 & 0.094 & 0.373
				& 0.550 & 0.726 \\ 
				& $0.1$ & 0.081 & 0.084 & 0.086 & 0.087 & 0.083 & 0.083 & 0.084 & 0.140 & 
				0.271 & 0.569 \\ 
				& $0.2$ & 0.083 & 0.085 & 0.087 & 0.088 & 0.085 & 0.085 & 0.086 & 0.101 & 
				0.144 & 0.330 \\ 
				& $0.3$ & 0.087 & 0.089 & 0.091 & 0.091 & 0.089 & 0.089 & 0.089 & 0.096 & 
				0.116 & 0.208 \\ 
				& $0.5$ & 0.096 & 0.099 & 0.100 & 0.100 & 0.098 & 0.098 & 0.099 & 0.102 & 
				0.111 & 0.147 \\ 
				& $0.7$ & 0.108 & 0.111 & 0.113 & 0.114 & 0.111 & 0.110 & 0.112 & 0.114 & 
				0.120 & 0.141 \\ \hline
				$0.9$ & $0$ & 0.079 & 0.086 & 0.088 & 0.092 & 0.085 & 0.089 & 0.094 & 0.446
				& 0.626 & 0.786 \\ 
				& $0.1$ & 0.080 & 0.084 & 0.085 & 0.087 & 0.082 & 0.084 & 0.085 & 0.104 & 
				0.194 & 0.572 \\ 
				& $0.2$ & 0.083 & 0.085 & 0.086 & 0.088 & 0.084 & 0.085 & 0.086 & 0.088 & 
				0.104 & 0.205 \\ 
				& $0.3$ & 0.086 & 0.089 & 0.089 & 0.092 & 0.088 & 0.089 & 0.090 & 0.090 & 
				0.097 & 0.132 \\ 
				& $0.5$ & 0.096 & 0.099 & 0.099 & 0.102 & 0.098 & 0.098 & 0.100 & 0.099 & 
				0.103 & 0.118 \\ 
				& $0.7$ & 0.109 & 0.111 & 0.112 & 0.115 & 0.110 & 0.111 & 0.113 & 0.112 & 
				0.115 & 0.126 \\ \hline
			\end{tabular}%
			\caption{Simulated mean square error of the MRPDE for correlation coefficient,
				$\protect\widehat{\rho}_{R,\alpha}$, when $n=100$\label{table8b}} 
		\end{table}%
		
		\begin{table}[htbp]  \tabcolsep2.8pt  \centering%
			\begin{tabular}{llllllllllll}
				\hline
				&  &  & \multicolumn{3}{c}{slighly} & \multicolumn{3}{c}{regular} & 
				\multicolumn{3}{c}{heavily} \\ 
				$\rho $ & $\alpha $ & pure & $0.05$ & $0.10$ & $0.20$ & $0.05$ & $0.10$ & $%
				0.20$ & $0.05$ & $0.10$ & $0.20$ \\ \hline
				$0$ & $0$ & 0.053 & 0.067 & 0.078 & 0.086 & 0.066 & 0.079 & 0.095 & 0.747 & 
				0.962 & 1.000 \\ 
				& $0.1$ & 0.052 & 0.058 & 0.062 & 0.066 & 0.056 & 0.055 & 0.060 & 0.278 & 
				0.682 & 0.983 \\ 
				& $0.2$ & 0.052 & 0.057 & 0.058 & 0.061 & 0.056 & 0.053 & 0.056 & 0.118 & 
				0.301 & 0.794 \\ 
				& $0.3$ & 0.053 & 0.056 & 0.058 & 0.061 & 0.055 & 0.053 & 0.057 & 0.085 & 
				0.164 & 0.497 \\ 
				& $0.5$ & 0.055 & 0.057 & 0.061 & 0.064 & 0.056 & 0.056 & 0.057 & 0.068 & 
				0.095 & 0.224 \\ 
				& $0.7$ & 0.061 & 0.063 & 0.067 & 0.070 & 0.063 & 0.060 & 0.062 & 0.070 & 
				0.087 & 0.150 \\ \hline
				& \multicolumn{1}{c}{MP} & 0.052 & 0.066 & 0.077 & 0.085 & 0.065 & 0.078 & 
				0.094 & 0.746 & 0.962 & 1.000 \\ \hline
				$0.3$ & $0$ & 0.049 & 0.068 & 0.077 & 0.085 & 0.061 & 0.076 & 0.093 & 0.759
				& 0.964 & 1.000 \\ 
				& $0.1$ & 0.050 & 0.055 & 0.061 & 0.068 & 0.049 & 0.056 & 0.055 & 0.276 & 
				0.675 & 0.988 \\ 
				& $0.2$ & 0.051 & 0.053 & 0.057 & 0.065 & 0.050 & 0.053 & 0.052 & 0.116 & 
				0.289 & 0.791 \\ 
				& $0.3$ & 0.050 & 0.054 & 0.056 & 0.063 & 0.050 & 0.054 & 0.054 & 0.081 & 
				0.159 & 0.490 \\ 
				& $0.5$ & 0.052 & 0.056 & 0.059 & 0.066 & 0.053 & 0.057 & 0.056 & 0.066 & 
				0.094 & 0.220 \\ 
				& $0.7$ & 0.057 & 0.063 & 0.064 & 0.071 & 0.059 & 0.060 & 0.063 & 0.067 & 
				0.085 & 0.145 \\ \hline
				& \multicolumn{1}{c}{MP} & 0.049 & 0.067 & 0.077 & 0.084 & 0.060 & 0.074 & 
				0.093 & 0.759 & 0.964 & 1.000 \\ \hline
				$0.6$ & $0$ & 0.048 & 0.070 & 0.082 & 0.086 & 0.064 & 0.077 & 0.096 & 0.801
				& 0.974 & 1.000 \\ 
				& $0.1$ & 0.046 & 0.058 & 0.066 & 0.068 & 0.053 & 0.058 & 0.061 & 0.264 & 
				0.659 & 0.987 \\ 
				& $0.2$ & 0.047 & 0.056 & 0.062 & 0.060 & 0.053 & 0.054 & 0.054 & 0.099 & 
				0.259 & 0.771 \\ 
				& $0.3$ & 0.047 & 0.058 & 0.060 & 0.060 & 0.054 & 0.056 & 0.054 & 0.074 & 
				0.143 & 0.460 \\ 
				& $0.5$ & 0.051 & 0.057 & 0.063 & 0.065 & 0.057 & 0.059 & 0.056 & 0.063 & 
				0.093 & 0.204 \\ 
				& $0.7$ & 0.055 & 0.061 & 0.068 & 0.071 & 0.061 & 0.063 & 0.062 & 0.068 & 
				0.086 & 0.140 \\ \hline
				& \multicolumn{1}{c}{MP} & 0.047 & 0.069 & 0.081 & 0.085 & 0.063 & 0.076 & 
				0.096 & 0.801 & 0.974 & 1.000 \\ \hline
				$0.9$ & $0$ & 0.048 & 0.068 & 0.076 & 0.089 & 0.065 & 0.076 & 0.094 & 0.880
				& 0.994 & 1.000 \\ 
				& $0.1$ & 0.048 & 0.056 & 0.060 & 0.069 & 0.053 & 0.055 & 0.058 & 0.128 & 
				0.418 & 0.947 \\ 
				& $0.2$ & 0.047 & 0.056 & 0.056 & 0.063 & 0.052 & 0.054 & 0.057 & 0.066 & 
				0.115 & 0.438 \\ 
				& $0.3$ & 0.048 & 0.056 & 0.056 & 0.062 & 0.052 & 0.053 & 0.057 & 0.061 & 
				0.081 & 0.197 \\ 
				& $0.5$ & 0.053 & 0.060 & 0.056 & 0.064 & 0.054 & 0.054 & 0.059 & 0.061 & 
				0.068 & 0.112 \\ 
				& $0.7$ & 0.059 & 0.065 & 0.062 & 0.071 & 0.059 & 0.061 & 0.067 & 0.066 & 
				0.069 & 0.101 \\ \hline
				& \multicolumn{1}{c}{MP} & 0.047 & 0.067 & 0.076 & 0.088 & 0.065 & 0.075 & 
				0.093 & 0.880 & 0.993 & 1.000 \\ \hline
			\end{tabular}%
			\caption{Simulated significance level for testing null correlation coefficient through
				$W_{n,\alpha }^{\prime }(\widehat{\rho }_{UV,R,\alpha })$ given by
				(\ref{simW2}) and the Morgan-Pitman test, when $n=100$\label{table7b}} 
		\end{table}%
	\end{center}
	
	
	
\end{appendices}


\bibliography{sn-bibliography}


\end{document}